\documentclass[12pt]{amsart}



\usepackage{amssymb}
\usepackage{hyperref} 
\usepackage{amsmath}
\usepackage{latexsym}
\usepackage{extarrows}
\usepackage{enumerate}
\usepackage{mathtools}
\usepackage{bm}
\usepackage[all]{xy}
\usepackage{tikz-cd}
\usepackage{xcolor}
\usepackage{mathrsfs}
\usepackage{euscript}
\usepackage{csquotes}
\usepackage{comment}
\usepackage{bbm}
\usepackage{cleveref}

\makeatletter
\@namedef{subjclassname@2020}{%
  \textup{2020} Mathematics Subject Classification}
\makeatother

\usepackage[T1]{fontenc}

\setlength{\textwidth}{\paperwidth}
\addtolength{\textwidth}{-6cm}
\calclayout

\theoremstyle{plain}

\newtheorem{theorem}{Theorem}[section]
\newtheorem{proposition}[theorem]{Proposition}

\newtheorem{lemma}[theorem]{Lemma}
\newtheorem{corollary}[theorem]{Corollary}

\theoremstyle{definition}

\newtheorem{definition}[theorem]{Definition}

\newtheorem{remark}[theorem]{Remark}
\newtheorem{remarks}[theorem]{Remarks}
\newtheorem{example}[theorem]{Example}
\newtheorem{examples}[theorem]{Examples}

\newcommand\Z{\mathbb{Z}}
\newcommand\R{\mathbb{R}}

\newcommand\T{\mathbb{T}}
\newcommand\C{\mathbb{C}}
\newcommand\N{\mathbb{N}}

\renewcommand\subset{\subseteq}
\renewcommand\phi{\varphi}

\newcommand\euE{\EuScript{E}}

\newcommand\euS{\EuScript{S}}
\newcommand\euT{\EuScript{T}}

\newcommand\uX{\mathrm{X}}
\newcommand\uY{\mathrm{Y}}

\newcommand\fix{\operatorname{fix}}
\newcommand\lin{\operatorname{lin}}


\begin{document}


\baselineskip=17pt
\setlength\parindent{0pt}


\title[Relatively ergodic compact extensions]{A Peter--Weyl theorem for compact group bundles and the geometric representation of relatively ergodic compact extensions}
\author[N. Edeko]{Nikolai Edeko}
\address{Nikolai Edeko, Institut für Mathematik, Universität Zürich, Winterthurerstrasse 190, CH-8057 Zürich, Switzerland}
\email{nikolai.edeko@math.uzh.ch}

\author[A. Jamneshan]{Asgar Jamneshan}
\address{Asgar Jamneshan, Mathematical Institute, University of Bonn, Endenicher Allee 60, D-53115 Bonn, Germany}
\email{ajamnesh@math.uni-bonn.de}

\author[H. Kreidler]{Henrik Kreidler}
\address{Henrik Kreidler, Fachgruppe für Mathematik und Informatik, Bergische Universität Wuppertal, Gaussstrasse 20, D-42119 Wuppertal, Germany}
\email{kreidler@uni-wuppertal.de}

\date{\today}

\begin{abstract}  
We show that a relatively ergodic extension of measure-preserving dynamical systems has relative discrete spectrum if and only if it can be represented as a skew-product by a bundle of compact homogeneous spaces. Our result holds without restrictions on the acting group or the underlying probability spaces. This generalizes previous work by Mackey, Zimmer, Ellis, Austin, and the second author and Tao, and is inspired by the Furstenberg--Zimmer and Host--Kra structure theories for actions of uncountable groups. Our approach translates the ergodic-theoretic question into topological dynamics, where we establish a corresponding classification: an extension in topological dynamics has relative discrete spectrum precisely when it admits a skew-product representation by bundles of compact homogeneous spaces. A key step in our argument is establishing a Peter--Weyl-type theorem for bundles of compact groups which might be of independent interest.
\end{abstract}

\subjclass[2020]{Primary 37A15, 43A65; Secondary 22A22, 37A35.} 

\keywords{}

\maketitle

\section{Introduction}

There are numerous interactions between topological dynamics and ergodic theory, some heuristic and others explicit (see, e.g., \cite{ellis,furstenberg2014recurrence,gw,ggy,hkm,linden}). This paper strengthens one of these structural analogies into a precise correspondence. Specifically, we develop a geometric representation theory for extensions of topological dynamical systems with relative discrete spectrum. Our work yields a geometric representation of measure-preserving dynamical systems with relative discrete spectrum in settings that are non-ergodic and involve uncountable groups and inseparable probability spaces, aligning with recent efforts to extend ergodic-theoretic structural results to such settings (see, e.g., \cite{EHK2021,jt19,jt20,jamneshan2019fz}).

One motivation for our study comes from seeking uncountable and non-ergodic versions of the Furstenberg--Zimmer \cite{furstenberg1977ergodic,zimmer1976ergodic,zimmer1976extension} and Host--Kra structure theories \cite{host2005nonconventional,ziegler2007universal,bergelson2010inverse,jst}, which are relevant, for example, in combinatorial and ergodic applications involving ultraproducts (see, e.g., \cite{cz,jt21-1,szegedy-inv2,tz-finite,minghao}) or Stone-\v{C}ech compactifications (see, e.g., \cite{hs,hs1,MRR2019}) that naturally yield uncountable groups and inseparable spaces. 

Before detailing our contributions, we first summarize prior results and methodologies related to the Mackey--Zimmer representation theorem.

\subsection{Related literature}

For the sake of the discussion, let us recall the classical Mackey--Zimmer representation theorem \cite{Mackey1966,zimmer1976extension}. Let $G$ be a (discrete) countable group, let $\uX=(X,\Sigma_X,\mu_X)$ be a Lebesgue probability space, and let $\varphi_X\colon G\times X\to X, \, (t,x) \mapsto (\varphi_X)_t(x)$ be an ergodic measure-preserving near-action, i.e., $(\varphi_X)_1 = \mathrm{id}_X$ almost everywhere and $(\varphi_X)_{st} = (\varphi_X)_{s} \circ (\varphi_X)_t$ almost everywhere for $s,t \in G$ (see \cite[Definition 3.1]{zimmer1976extension}, and see \cite{Einsiedler2011} for a general introduction.). An extension $q \colon (X,\Sigma_X,\mu_X,\varphi_X) \rightarrow (Y,\Sigma_Y,\mu_Y,\varphi_Y)$ of such systems is said to have \emph{relative discrete spectrum} if $\mathrm{L}^2(\uX)$ is the closure of all finitely generated invariant $\mathrm{L}^\infty(\uY)$-submodules of $\mathrm{L}^2(\uX)$, where $\mathrm{L}^\infty(\uY)$ embedded in $\mathrm{L}^2(\uX)$ by pullback acts on it by multiplication\footnote{Other authors say that the extension is \emph{compact} or \emph{relatively compact}, e.g., see Furstenberg in \cite[Chapter 6]{furstenberg2014recurrence}. Our terminology is  borrowed from Zimmer \cite{zimmer1976extension}.}. 

The Mackey--Zimmer theorem now states that any ergodic extension with relative discrete spectrum has the following geometric representation. There are a compact group $H$, a closed subgroup $U \subset H$ and a cocycle $c \colon G\times Y \rightarrow H$ such that $q$ is isomorphic (as extension of measure-preserving systems) to the skew-product extension $q'\colon \uY \rtimes_c H/U \rightarrow (Y,\Sigma_Y,\mu_Y, \phi_Y)$, see \cite[\S 3.5]{glasner2015ergodic} for notation and \cite[Theorem 9.14]{glasner2015ergodic} for a textbook proof, see also \cite{Tao-mz} for a streamlined proof. The converse statement, namely that a skew-product extension  has relative discrete spectrum, follows from the Peter--Weyl theorem (for this direction, no ergodicity assumptions are needed). 

We now discuss two extensions of the Mackey--Zimmer representation theorem that are relevant to this work.   
Ellis \cite{ellis} extends the theorem to ergodic actions of arbitrary (not necessarily countable) discrete groups on general (not necessarily separable) probability spaces by passing to a topological model and using tools from topological dynamics, see also \cite{jt20,jamneshan2019fz} for an alternative measure-theoretic proof. 
 In another direction, Austin \cite{austin-fund} relaxes the ergodicity assumption by extending the theorem to the relatively ergodic case, that is, when the invariant sets of $\uX$ are (up to null sets) given by the preimages of invariant sets in $\uY$.  More precisely, he establishes that a relatively ergodic extension with relative discrete spectrum is isomorphic to a skew-product, where the homogeneous space is allowed to vary measurably along the fibers in the ergodic decomposition of the base space. Similarly to the original result of Mackey and Zimmer, his theorem holds for actions of countable groups on standard Lebesgue spaces. 

A main goal of this article is to join both these extensions and prove a geometric representation of relatively ergodic extensions with relative discrete spectrum for actions of arbitrary groups on general probability spaces.  A relatively ergodic version of the classical Halmos--von Neumann theorem \cite{HaNe1942}, a precursor of the Mackey--Zimmer theorem, was established by the first author in \cite{edeko}.

The formal statement of our extension of the Mackey--Zimmer theorem is inspired by Austin's statement, although our methods are different. Indeed, Austin's proof relies on measurable selection theorems and the ergodic decomposition both of which break down for actions of uncountable groups on inseparable probability spaces. Our approach to pass to a natural topological model and studying the problem in topological dynamics is inspired by Ellis' investigation of the Mackey--Zimmer theorem. 

For the action of uncountable groups on inseparable spaces a \emph{canonical disintegration} is available after passing to a suitable topological model, see \cite[Theorem 2.3]{ellis} and see \cite[\S 7]{jt-foundational} for background and proof. However this disintegration is in general \emph{not} an ergodic decomposition, see the counterexample constructed in \cite[Appendix B]{jt-foundational}. Therefore, the  relatively ergodic case imposes serious formal and technical difficulties. We suggest a framework, where we can work \emph{simultaneously} in all the \enquote{fibers} over the invariant factor. To this end, it is essential to revisit the classical Peter--Weyl theorem and develop an analogous representation theorem for compact group bundles which may be of independent interest.  This along with the geometric representation theory of relatively ergodic isometric extensions in topological dynamics form the main results of this paper. Next, we state precisely these results and discuss our innovations. 

\subsection{Innovations and main results}

Our general approach to the classification result is the following: Instead of working directly within the setting of measure-preserving transformations, we use topological models (see Section \ref{measpres}) to transform the classification problem for extensions with relative discrete spectrum into a question of topological dynamics, i.e., the study of continuous actions $\varphi\colon G \times K \rightarrow K, \, (t,x) \mapsto \varphi_t(x)$ of a topological group $G$ on compact spaces $K$ (here, $G$ will be discrete). This idea is quite old and already applied effectively by  Ellis \cite{ellis} (see also  the proof of the Halmos-von Neumann theorem given in \cite[Theorem VIII.4]{DNP}, as well as \cite[Section 3]{kreidler-hermle}). However, we highlight that even in the case of ergodic systems considered by Ellis, our methods yield a new proof of the classification result avoiding the Choquet theoretic techniques used in \cite{ellis}. 

Due to this bridge between topological dynamics and ergodic theory, we first study extensions $q \colon (K;\varphi) \rightarrow (L;\psi)$ between topological dynamical systems which are \enquote{relatively ergodic} and \enquote{have relative discrete spectrum} in a topological sense.
We also have the crucial additional conditions (coming from the topological models) that $q$ is an open map and that $L$ is a Stonean  (i.e., extremally disconnected compact) space. 

Similar to the measure-theoretic situation described above, a typical example for an extension with relative discrete spectrum in this topological framework is a \emph{skew-product extension} $q_c \colon (L \times H/U;\varphi_c) \rightarrow (L;\psi)$ constructed from the following data: a topological dynamical system $(L;\psi)$, a compact group $H$, a closed subgroup $U \subset H$, and a \emph{continuous} cocycle $G \times L \rightarrow H$ into $H$. The action of $G$ on $L \times H/U$  is then given by $(\varphi_c)_t(l,xU) \coloneqq (\psi_t(l),c(t,l)xU)$ for $(l,xU) \in L \times H/U$ and $t \in G$.

We generalize this construction to obtain a \emph{relative skew-product extension} $q_c \colon (L \times_{L_{\fix}} H/U;\varphi_c) \rightarrow (L;\psi)$ by considering \enquote{relativized} data with respect to the invariant factor $L_{\fix}$ of $(L;\psi)$, , i.e., the maximal factor on which the dynamics is trivial. These consist of a \emph{compact group bundle} $p \colon H \rightarrow L_{\fix}$, a \emph{closed subgroup bundle} $p|_U \colon U \rightarrow L_{\fix}$, and a \emph{relative cocycle} $c \colon G \times L \rightarrow H$ into $H$ (see Section \ref{relskew} for the definition).

Theorems \ref{reldiscsp} and \ref{reptheorem1} then provide the following result on topological extensions with relative discrete spectrum which depends on certain openness conditions for the involved bundles (cf. Remark \ref{gap}).
\begin{theorem}\label{intro1}
    For an open relatively topologically ergodic extension $q \colon (K;\varphi) \rightarrow (L;\psi)$ with $L$ being Stonean consider the following assertions. 
        \begin{enumerate}[(a)]
            \item $q$ has relative discrete spectrum.\label{intro1a}
            \item There are 
            \begin{enumerate}[(i)]
                \item an open compact group bundle $p \colon H \rightarrow L_{\fix}$,
                \item a closed subgroup bundle $p|_U \colon U \rightarrow L_{\fix}$ of $p$, and
                \item a relative cocycle $c \colon G \times L \rightarrow H$, 
        \end{enumerate}
        such that the induced relative skew-product extension $q_c$ is isomorphic to $q$.\label{intro1b}
        \end{enumerate}
    Then (a) implies (b). Conversely, if (b) holds and the subgroup bundle $p|_U\colon U \rightarrow L_{\fix}$ in (ii) can also be chosen to be open, then (a) holds.
\end{theorem}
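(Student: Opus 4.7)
The plan is to split the proof along the two implications, each corresponding to one of the structural theorems (Theorem~\ref{reldiscsp} and Theorem~\ref{reptheorem1}) proved later in the paper. Throughout I would rely on the Peter--Weyl theorem for compact group bundles from innovation~(i) as the main technical engine.

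For the direction (b)$\Rightarrow$(a), which is the softer one, I would start from the relative skew-product $K=L\times_{L_{\fix}}H/U$ and apply the bundle Peter--Weyl theorem to write $\mathrm{C}(H)$ as the $\mathrm{C}(L_{\fix})$-module closure of its finite-rank isotypic components. Averaging over the subgroup bundle $p|_U$, where openness of $p|_U$ is exactly the hypothesis that makes $U$-invariants a well-behaved $\mathrm{C}(L_{\fix})$-submodule, yields a dense family of finite-rank $\mathrm{C}(L_{\fix})$-submodules in $\mathrm{C}(H/U)$. Pulling these back along $K\to H/U$ over $L_{\fix}$ and scaling by $\mathrm{C}(L)$ produces finite-rank $\mathrm{C}(L)$-submodules of $\mathrm{C}(K)$; invariance under $\varphi_c$ is immediate from the cocycle identity, since the $G$-action rotates each $H/U$-fiber by the $H$-valued function $c(t,\cdot)$ and the modules are already $H$-invariant. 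Their density is precisely what it means for $q$ to have relative discrete spectrum.

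For the direction (a)$\Rightarrow$(b), the geometric representation, I would proceed in four steps: first, use relative discrete spectrum to extract a spanning family of finite-rank $G$-invariant $\mathrm{C}(L)$-submodules of $\mathrm{C}(K)$ and, using Stoneanness of $L$ and openness of $q$, reorganize them as a continuous field over $L_{\fix}$ of finite-dimensional representation spaces; second, define $H\to L_{\fix}$ fiberwise as the compact group of isometries of the fiber of $K\to L_{\fix}$ over $\ell$ that intertwine every finite-rank module, topologized via its simultaneous action on them; third, show that relative topological ergodicity forces $H$ to act transitively on each fiber, so that $K$ is a bundle of homogeneous spaces $H/U$ with $U$ the stabilizer bundle of a chosen section; fourth, read off the cocycle $c$ from the $G$-action on $L\times_{L_{\fix}}H/U$.

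The main obstacle will be the second step: constructing and topologizing the group bundle $H\to L_{\fix}$. In the countable, separable setting one would work pointwise using measurable selection or the ergodic decomposition and then glue; both tools are unavailable here. The substitute is to bypass pointwise constructions: Stoneanness of $L$ provides enough clopen partitions and global sections, and the bundle Peter--Weyl theorem from earlier in the paper is precisely the tool that packages the fiberwise automorphism groups into a single open compact group bundle with the required continuity. The openness of $H$ itself (as opposed to $U$, which appears as a separate hypothesis on the reverse implication) should then be automatic from the transitivity argument in step three.
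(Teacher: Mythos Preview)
Your outline for (b)$\Rightarrow$(a) is essentially the paper's argument (Theorem~\ref{reldiscsp}): the Peter--Weyl theorem for homogeneous space bundles (Theorem~\ref{pw2}) produces locally trivial subbundles $\mathrm{R}_{\hat{\sigma},U}'$ of $\mathrm{C}_{p_{/U}}(H/U)$, and Stoneanness of $L_{\fix}$ lets one restrict the local sections $\hat{\sigma}$ to clopen sets so that the associated $\mathrm{C}(L)$-modules in $\mathrm{C}(L\times_{L_{\fix}}H/U)$ are finitely generated and projective. Openness of $p|_U$ enters exactly where you say, via Theorem~\ref{loctrivrep2}.

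For (a)$\Rightarrow$(b), however, there is a genuine gap. You identify step~2 as the obstacle and propose that the bundle Peter--Weyl theorem ``packages the fiberwise automorphism groups into a single open compact group bundle''. It does not: Peter--Weyl decomposes a group bundle that is already given; it does not construct one. The paper's actual tool here is the \emph{uniform enveloping semigroupoid} $\euE_{\mathrm{u}}(q)$ (Subsection~\ref{uniformenv}), the closure in $\mathrm{C}_q^q(K,K)$ of the fiber maps $\varphi_t|_{K_l}$. Relative discrete spectrum forces $\euE_{\mathrm{u}}(q)$ to be a compact groupoid via an Arzel\`a--Ascoli argument, and this is what supplies both the topology and the compactness you are missing. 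There is also a structural issue in your step~2: you speak of ``isometries of the fiber of $K\to L_{\fix}$'', but that fiber is a union $\bigcup_{q_{\fix}(l)=m}K_l$, and the relevant maps send one $K_l$ to another $K_{l'}$ --- a groupoid, not a group. The paper resolves this by first using Gleason's selection theorem (Theorem~\ref{gleason}) to choose a continuous section $\tau\colon L_{\fix}\to L$, then defining $H_m$ as the isotropy of $\euE_{\mathrm{u}}(q)$ at $\tau(m)$; a further Gleason section $\varrho\colon L\to\euE_{\mathrm{u}}(q)$ produces the cocycle. Openness of $p\colon H\to L_{\fix}$ is not automatic from transitivity as you suggest; it is deduced separately from ergodicity of the cocycle (Lemma~\ref{continuousHaar}).
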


In fact, assuming (\ref{intro1a}), one can even achieve that the closed subgroup bundle $p|_U \colon U \rightarrow L_{\fix}$ and the relative cocycle $c \colon G \times L \rightarrow H$ in (\ref{intro1b}) satisfy certain irreducibility conditions, see Theorem \ref{reptheorem1} below for the details.

A relative skew-product extension induced by an open compact group bundle $p \colon H \rightarrow L_{\fix}$ admits a canonical unique and fully supported relatively invariant measure induced by the Haar bundle of $p \colon H \rightarrow L_{\fix}$. This allows to \enquote{lift} any invariant measure of $(L;\psi)$. Thus, topological skew-product extensions also give rise to measure-preserving ones. It is the main result of this article that, up to an isomorphism, the relative skew-product extensions (induced by open bundles) are precisely the relatively ergodic extensions with relative discrete spectrum (see Theorems \ref{skewdiscr} and \ref{mainres}). Here, measure-preserving systems on general probability spaces and their extensions are defined operator theoretically, or equivalently in terms of homomorphisms of measure algebras, see Section \ref{measpres} below. 

\begin{theorem}\label{intro2}
    For a relatively ergodic extension $J \colon (\uY;S) \rightarrow (\uX;T)$ of measure-preserving systems the following assertions are equivalent.
        \begin{enumerate}[(a)]
            \item $J$ has relative discrete spectrum.\label{intro2a}
            \item There are
        \begin{enumerate}[(i)]
            \item a topological dynamical system $(L;\psi)$,
            \item an open compact group bundle $p \colon H \rightarrow L_{\fix}$,
            \item an open closed subgroup bundle $p|_U \colon U \rightarrow L_{\fix}$ of $p$, 
            \item a relative cocycle $c \colon G \times L \rightarrow H$, and
            \item a fully supported invariant measure $\mu_L \in \mathrm{P}^\psi(L)$ on $L$,
        \end{enumerate}
            such that the induced relative skew-product extension $J_{q_c}$ is isomorphic to $J$.\label{intro2b}
        \end{enumerate}
\end{theorem}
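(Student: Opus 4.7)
The plan is to use the topological models from Section \ref{measpres} to reduce the measure-preserving statement to the topological classification already established in Theorem \ref{intro1}, and then to transport the conclusions back through the model. This follows the general strategy outlined in the introduction.

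For the easy direction \emph{(b)$\Rightarrow$(a)}, I would begin from the given topological data and use that the Haar bundle of the open compact group bundle $p \colon H \to L_{\fix}$ produces a canonical fully supported relatively invariant probability measure on $L \times_{L_{\fix}} H/U$ (see Subsection \ref{RelInvMeas}), thereby giving rise to the measure-preserving extension $J_{q_c}$. The converse implication in Theorem \ref{intro1} provides topological relative discrete spectrum for $q_c$, and embedding finitely generated invariant $\mathrm{C}(L)$-submodules of continuous sections into $\mathrm{L}^2$ via the lifted measure produces finitely generated invariant $\mathrm{L}^\infty(\uY)$-submodules whose closure exhausts $\mathrm{L}^2(\uX)$. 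Since the hypothesis says $J \cong J_{q_c}$, relative discrete spectrum transfers to $J$.

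For the harder direction \emph{(a)$\Rightarrow$(b)}, I would first build a topological model $q \colon (K;\varphi) \to (L;\psi)$ of $J$ in which $L$ is Stonean, $q$ is open, and the invariant measures $\mu_L$, $\mu_K$ representing $\uY$ and $\uX$ are fully supported. The Stonean structure provides the identification $\mathrm{C}(L) \simeq \mathrm{L}^\infty(\uY)$, which is essential for what follows. I would then verify that relative ergodicity of $J$ passes to relative topological ergodicity of $q$ once the topological invariant factor $L_{\fix}$ is identified with the measure-theoretic one, and that measure-theoretic relative discrete spectrum of $J$ passes to topological relative discrete spectrum of $q$ by realizing finitely generated invariant $\mathrm{L}^\infty(\uY)$-submodules of $\mathrm{L}^2(\uX)$ as modules of continuous sections of $q$. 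Applying Theorem \ref{intro1}, together with the refinement in Theorem \ref{reptheorem1} ensuring that $p|_U$ can also be taken open, then delivers the bundle data $(p, p|_U, c)$ and a topological isomorphism $q \cong q_c$; pushing the canonical relatively invariant measure through this isomorphism gives the required $J \cong J_{q_c}$.

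The main obstacle is the bridge between measure-theoretic and topological relative discrete spectrum: one must turn $\mathrm{L}^2$-density of finitely generated invariant $\mathrm{L}^\infty$-submodules into uniform density of the corresponding modules of continuous sections. This rests delicately on the Stonean structure of $L$ and the openness of $q$ in the chosen model. A secondary subtlety is arranging all technical conditions on the topological model — openness of $q$, Stoneanness of $L$, full support of $\mu_L$ and $\mu_K$, and topological relative ergodicity — simultaneously with the openness of $p|_U$ needed to close the loop via the converse implication of Theorem \ref{intro1}.
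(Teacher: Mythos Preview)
Your overall strategy---pass to a Stonean topological model, invoke Theorem~\ref{intro1}/\ref{reptheorem1}, then push the measure through---is exactly the paper's approach, and your identification of the model-building step as the main bridge is correct. However, there is a genuine gap in the direction (a)$\Rightarrow$(b).

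You write that Theorem~\ref{reptheorem1} ``ensures that $p|_U$ can also be taken open''. It does not. Theorem~\ref{reptheorem1} only guarantees that the compact group bundle $p\colon H\to L_{\fix}$ is open; the closed subgroup bundle $p|_U$ is merely core-free. This is precisely the asymmetry flagged in Remark~\ref{gap}: the topological representation theorem produces a subgroup bundle that need not be open, while the converse direction (and statement~(b) of Theorem~\ref{intro2} itself) requires openness of $p|_U$. The paper closes this gap by a substantial additional argument occupying the second half of the proof of Theorem~\ref{mz}: one must show, via Theorem~\ref{loctrivrep2}, that the bundles $\mathrm{R}_{\hat{\sigma},U}$ are locally trivial. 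This uses the extension lemma (Lemma~\ref{extensionlemma}), which in turn relies on the \emph{specific} choice of the natural model---not just any Stonean model, but the one where $\mathcal{A}$ is generated exactly by the finitely generated, projective, closed, invariant $\mathrm{L}^\infty(\uY)$-submodules. The argument extends a densely defined orthonormal frame of $\mathrm{R}_{\hat{\sigma},U}$ to the boundary by first lifting it to an invariant module of continuous functions on the total space, then using the defining property of the natural model to force these functions to live in $\mathrm{C}(K)$. None of this is automatic from the topological theorem.

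A secondary point you skate over: the isomorphism $q\cong q_c$ is only topological, and you must still argue that the given measure $\mu_K$ corresponds to the Haar measure $\int_L \delta_L\otimes\lambda_{H/U}\,\mathrm{d}\mu_L$. The paper does this via Proposition~\ref{dynamicaldisint} (bijection between invariant measures over a normal $\mu_L$ and relatively invariant measures) combined with the uniqueness of the relatively invariant measure for irreducible data (Theorem~\ref{uniquerel}); this is why ergodicity of the cocycle and core-freeness of $U$ matter, not just existence of some skew-product representation.
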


Again, one can achieve that the closed subgroup bundle and the relative cocycle in  (\ref{intro2b}) are irreducible, and in addition the measure $\mu_L$ can be chosen to be normal (meaning that $L$ is the Stone space of the induced measure space), see Theorem \ref{mainres} below.

The proofs of Theorems \ref{intro1} and \ref{intro2} require relative versions of several known concepts and results which we now briefly discuss.

As indicated above, the proof that a skew-product extension has relative discrete spectrum rests on the Peter--Weyl theorem stating, in particular, that for a compact group $H$ the space $\mathrm{C}(H)$ of continuous functions on $H$ is generated by the (finite-dimensional) subspaces of representative functions $\mathrm{R}_{[\pi]}$ of $[\pi] \in \hat{H}$, where $\hat{H}$ is the unitary dual of $H$ (see, e.g., \cite[Theorem 5.12]{folland-harmonic-analysis}). To prove that every topological, and then also measure-preserving, relative skew-product extension (induced by open compact group bundles) has relative discrete spectrum, we therefore first establish a Peter--Weyl theorem for open compact group bundles as well as for the induced bundles of homogeneous spaces. 

As a first step, we introduce the dual $\hat{p} \colon \hat{H} \rightarrow M$ of such an open compact group bundle $p \colon H \rightarrow M$. Inspired by the classical connection between representations of locally compact groups and the associated C*-algebras (as discussed, e.g., in Dixmier's book \cite{Dixm1977}), as well as earlier work of Muhly, Renault and Williams (see \cite{MRW1996} and also \cite{Rena2021}) and Goehle (see \cite{Goeh2008}), we do so via the \emph{group bundle C*-algebra}\footnote{This is a special case of the more general construction of groupoid C*-algebras, see \cite{Rena1980}.} $\mathrm{C}^*(H)$ associated to $p$. In this way we obtain $\hat{H}$ as a locally compact (Hausdorff) space (see Proposition \ref{proptop}) fibered over $M$ which admits \enquote{many} local continuous sections $\hat{\sigma} \colon O \rightarrow \hat{H}$ (see Theorem \ref{section}). We can then define corresponding \emph{bundles of representative functions} $\mathrm{R}_{\hat{\sigma}} \rightarrow M$ which turn out to be locally trivial vector bundles over $M$ (see Theorem \ref{loctrivrep}).
By replacing the space of continuous functions on a compact group from the classical Peter--Weyl theorem by the \emph{Banach bundle of continuous fiber maps} $\mathrm{C}_p(H) \coloneqq \bigcup_{m \in M} \mathrm{C}(p^{-1}(\{m\})) \rightarrow M$, we then prove the following (see Theorem \ref{pw1} for a more detailed formulation).

\begin{theorem}
    Let $p \colon H \rightarrow M$ be an open compact group bundle. Then $\mathrm{C}_p(H)  \rightarrow M$ is generated by the locally trivial vector bundles  $\mathrm{R}_{\hat{\sigma}} \rightarrow O$ defined by all local continuous sections $\hat{\sigma} \colon  O \rightarrow \hat{H}$.
\end{theorem}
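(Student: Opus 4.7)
The plan is to mimic the classical proof of the Peter--Weyl theorem fiberwise and then globalize using the abundance of local continuous sections of $\hat{H} \to M$ provided by Theorem \ref{section}, together with the local triviality of the representative function bundles $\mathrm{R}_{\hat{\sigma}}$ from Theorem \ref{loctrivrep}. By the standard Fell--Doran generation criterion for upper semi-continuous Banach bundles, it suffices to show that for each $m \in M$ the values at $m$ of continuous sections of the various bundles $\mathrm{R}_{\hat{\sigma}} \to O$ (ranging over all local sections $\hat{\sigma}$ with $m \in O$) span a dense subspace of the fiber $\mathrm{C}(p^{-1}(\{m\}))$.

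First I would fix $m \in M$, set $H_m \coloneqq p^{-1}(\{m\})$, and invoke the classical Peter--Weyl theorem on the compact group $H_m$: the linear span of the representative subspaces $\mathrm{R}_{[\pi]}$, with $[\pi]$ ranging over $\hat{H}_m$, is dense in $\mathrm{C}(H_m)$. It is therefore enough to show that every $f_0 \in \mathrm{R}_{[\pi_0]}$, for an arbitrarily fixed $[\pi_0] \in \hat{H}_m$, is the value at $m$ of some continuous section of a bundle $\mathrm{R}_{\hat{\sigma}}$. Theorem \ref{section} provides an open neighborhood $O$ of $m$ and a continuous section $\hat{\sigma}\colon O \to \hat{H}$ with $\hat{\sigma}(m) = [\pi_0]$, so that the fiber of $\mathrm{R}_{\hat{\sigma}}$ over $m$ agrees with $\mathrm{R}_{[\pi_0]}$.

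Next I would exploit local triviality: by Theorem \ref{loctrivrep}, after possibly shrinking $O$, $\mathrm{R}_{\hat{\sigma}} \to O$ is a trivial finite-rank vector subbundle of the restriction $\mathrm{C}_p(H)|_O$. Hence every element of its fiber over $m$, and in particular $f_0$, is the value at $m$ of a continuous section $s$ of $\mathrm{R}_{\hat{\sigma}}$, which is then automatically a continuous section of $\mathrm{C}_p(H)$ over $O$. Multiplying by a continuous cutoff $\chi \in \mathrm{C}(M)$ with $\chi(m) = 1$ and support in $O$ produces a global continuous section $\chi \cdot s$ of $\mathrm{C}_p(H)$ whose value at $m$ is still $f_0$. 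Combined with the previous paragraph this yields the required fiberwise density, and the Fell--Doran criterion concludes the proof.

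The main obstacle, in my view, is not in the bundle-theoretic repackaging just described but in its two main inputs: the existence of sufficiently many continuous local sections of $\hat{H} \to M$ (Theorem \ref{section}), which requires the full C*-algebraic machinery of $\mathrm{C}^*(H)$ together with the openness of $p$, and the local triviality of the representative function bundles $\mathrm{R}_{\hat{\sigma}} \to O$ (Theorem \ref{loctrivrep}), which in turn relies on a uniform control of the finite-dimensional representation spaces along continuous sections. Once these two theorems are secured, the Peter--Weyl theorem for compact group bundles becomes a clean combination of classical Peter--Weyl applied in each fiber with the standard cutoff-and-extend technique for Banach bundles, and no further separation or Stone--Weierstrass argument is needed at the bundle level.
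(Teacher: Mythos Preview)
Your proposal is correct and follows essentially the same route as the paper: the paper's proof of Theorem \ref{pw1} is the one-line observation that Theorem \ref{section} plus the classical Peter--Weyl theorem (Theorem \ref{pworg}) immediately give fiberwise totality of $\bigcup_{\hat{\sigma}} \mathrm{R}_{\hat{\sigma}}$ in $\mathrm{C}_p(H)$. Your additional appeal to local triviality (Theorem \ref{loctrivrep}) and the cutoff-and-extend step are not needed for the fiberwise totality statement itself---since $(\mathrm{R}_{\hat{\sigma}})_m = \mathrm{R}_{[\pi_0]}$ by definition once $\hat{\sigma}(m) = [\pi_0]$---but they are exactly the ingredients the paper uses in the subsequent Theorem \ref{pw} to pass from fiberwise totality to totality of the relative representative functions in $\mathrm{C}(H)$ via the Stone--Weierstrass theorem for bundles.
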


This and similar results (see Theorem \ref{pw}, as well as Theorems \ref{pw2} and \ref{pw3} for bundles of homogeneous spaces) are purely abstract harmonic analytic statements and are of independent interest. However, applied to topological dynamics and ergodic theory, they show that relative skew-product extensions (induced by open group bundles) have relative discrete spectrum.

In order to go in the converse direction and represent topological extensions via relative skew-products, we rely on the concept of uniform enveloping semigroupoid introduced by the first and the third author in \cite{EdKr2021}. They are inspired by the classical notion of enveloping semigroups due to Ellis (see \cite{Ellis1960}). For a topological dynamical system $(K;\varphi)$  with discrete spectrum it is known that the \emph{uniform enveloping semigroup} given by the closure $\mathrm{E}_{\mathrm{u}}(K;\varphi) \coloneqq \overline{\{\varphi_t \mid t \in G\}} \subset \mathrm{C}(K,K)$ with respect to the topology of uniform convergence (i.e., the compact-open topology) is a compact group. In the situation of extensions $q \colon (K;\varphi) \rightarrow (L;\psi)$ we obtain a similar statement if we consider the so-called \emph{uniform enveloping semigroupoid} $\euE_{\mathrm{u}}(q)$ which is constructed in the space of continuous fiber maps $\mathrm{C}_q^q(K,K) \coloneqq \bigcup_{m \in M} \mathrm{C}(p^{-1}(\{m\}),p^{-1}(\{m\})) $. This provides a canonical way to construct compact groups from extensions with relative discrete spectrum. Combined with a continuous selection theorem due to Gleason (see \cite{gleason}), the uniform enveloping semigroupoid is the key for representing extensions via relative skew-products.

The last essential ingredient in the proof of Theorem \ref{intro2} is a thorough inspection of relatively invariant measures and their connections to usual invariant measures. Given an extension of topological dynamical systems $q \colon (K;\varphi) \rightarrow (L;\psi)$ and an invariant normal measure $\mu_L \in \mathrm{P}^\psi(L)$ we show that the measures $\mu_K \in \mathrm{P}^\varphi(K)$ with $q_*\mu_K =\mu_L$ are in one-to-one correspondence with the relatively invariant measures of $q$ (see Proposition \ref{dynamicaldisint}). This observation rests on the previously mentioned canonical disintegration result already used by Ellis in \cite{ellis} which works without any countability assumptions. With this bijection, one can readily check that the topological model of a relatively ergodic extension with relative discrete spectrum is not only topologically a relative skew-product extension, but also measure-theoretically. This is the main step for deducing Theorem \ref{intro2} from Theorem \ref{intro1}.



\subsection*{Organization of this paper} Our article is arranged into four parts: a preliminary section on topological bundles, a section on compact group bundles and their representations, a section on structured extensions in topological dynamics over Stonean spaces, and finally a section on structured extensions in ergodic theory. We briefly give an outline of these different parts.

In Section \ref{topb1} we first discuss some basic concepts concerning topological bundles, i.e., continuous surjections $p \colon \Omega \rightarrow M$ from a topological space $\Omega$ (usually carrying additional algebraic structure) to a topological (usually compact or locally compact) space $M$ (see Section \ref{basicconc}). To study structured extensions in ergodic theory and topological dynamics, we then \enquote{relativize} notions of continuous functions (Subsection \ref{contfib}) and probability measures (see Section \ref{secrelmeas}). Finally, the important case of so-called Banach bundles (i.e., a relative notion of a Banach space) is considered in Subsection \ref{bbundl}. This first section is both an introduction to topological bundles and a toolbox for subsequent sections.

Section \ref{seccompgrp} is devoted to an extensive study of compact group bundles and their representation theory. We first recall the concepts of compact group bundles and their associated homogeneous spaces (Subsection \ref{compgroupbundles}) and the induced C*-algebras (see Section \ref{staralg}). We then develop the representation theory of compact group bundles involving the definition of their duals and a study of their representative functions in Subsections \ref{dual}, \ref{posdefsec}, and \ref{dualtop}. This culminates in Peter--Weyl type theorems for compact group bundles and bundles of homogeneous spaces (see Sections \ref{secpw} and \ref{secpw2}).

The results of the second part are applied in Section \ref{sectop} to structured extensions in topological dynamics. We recall basic concepts and set up the notation in Subsection \ref{sectopdyn}. Relative skew-product extensions are defined in Subsection \ref{relskew} and then shown to have relative discrete spectrum (see Section \ref{secStone}). The main result of the second part is a converse result: A topological Mackey--Zimmer representation theorem for structured extensions over Stonean spaces (see Section \ref{topmz}). This requires the concepts of uniform enveloping semigroupoids and relatively invariant measures, which are treated in Subsections \ref{uniformenv} and \ref{RelInvMeas}, respectively.

Finally, in Section \ref{measure} the topological results of the fourth section are transferred to ergodic theory. To do so, we recall in Subsection \ref{measpres} (among other basic concepts for measure-preserving systems) the important notion of topological models. In Subsection \ref{measdiscr} we establish that every topological skew-product extension gives rise to a measure-preserving extension with relative discrete spectrum. To show our main result, representing every structured ergodic extension as a relative skew-product, we first construct a canonical model of an extension in Subsection \ref{canmod}. We can then apply the topological results of the previous part and obtain the Mackey--Zimmer-type representation theorem in Subsection \ref{mainres}.

\subsection{Convention and notation} We assume that all compact and locally compact spaces are Hausdorff and all vector spaces are complex. Moreover, we write $\mathscr{L}(E,F)$ for the space of all bounded linear operators between normed spaces $E$ and $F$ and abbreviate $\mathscr{L}(E) \coloneqq \mathscr{L}(E,E)$ and $E' \coloneqq \mathscr{L}(E, \mathbb{C})$. 

\subsection*{Acknowledgements} The authors are grateful to Markus Haase, Rainer Nagel, and Jean Renault for inspiring discussions. They also are thankful for many helpful suggestions by the anonymous reviewers of an earlier version of this article. Asgar Jamneshan acknowledges funding by the Deutsche Forschungsgemeinschaft (DFG, German Research Foundation) - 547294463. 
Henrik Kreidler acknowledges the financial support from the DFG (project number 451698284). Some of the work was conceived during his time as a scholarship holder of the Friedrich-Ebert-Stiftung and he also expresses his gratitude towards the FES. 

\section{Topological bundles}\label{topb1}

\subsection{Basic concepts}\label{basicconc}
We start by recalling the concept of topological bundle: A \emph{topological bundle} is a continuous surjection $p \colon \Omega \rightarrow M$ between topological spaces $\Omega$ and $M$, called the \emph{total space} and \emph{base space}, respectively. The base space $M$ will often be locally compact. We sometimes refer to $\Omega$ as the bundle and suppress the map $p$ if it is understood from the context. Usually, we consider \emph{open} bundles, i.e., the map $p$ is open. Moreover, we write $\Omega_{m} \coloneqq p^{-1}(\{m\})$ for the \emph{fiber} over a point $m \in M$.

With this notation, we state the following  characterization of openness in terms of an approximation property. (see \cite[Theorems 17.7 and 17.19]{AliBor}). This will be used at many occasions throughout the article. 

\begin{lemma}\label{openchar}
    For a continuous surjection $p \colon \Omega \rightarrow M$ between topological spaces the following assertions are equivalent.
            \begin{enumerate}[(a)]
                \item $p$ is open.
                \item Whenever $(m_\alpha)_{\alpha \in A}$ is a net in $M$ converging to some $m \in M$ and $x \in \Omega_m$ is in the fiber over $m$, there is a subnet of $(m_\beta)_{\beta \in B}$ of $(m_\alpha)_{\alpha \in A}$ and elements $x_\beta \in \Omega_{m_\beta}$ such that the net $(x_\beta)_{\beta \in B}$ converges to $x$.
            \end{enumerate}
\end{lemma}

A \emph{(global) continuous section} of $p\colon \Omega \to M$ is a continuous map $\sigma \colon M \rightarrow \Omega$ such that $p \circ \sigma = \mathrm{id}_{M}$. If $O \subset M$ is any open subset, then a continuous section for the restricted bundle $p|_{p^{-1}(O)}\colon p^{-1}(O) \rightarrow O$ is called a \emph{local continuous section} of the original bundle $p \colon \Omega \rightarrow M$. Suppressing $p$ we write $\Gamma(E)$ for the \emph{set of continuous sections} and $\Gamma_{\mathrm{loc}}(E)$ for the \emph{set of all local continuous sections} of $p$.

Finally, for two bundles $p \colon \Omega \rightarrow M$ and $p' \colon \Omega' \rightarrow M$, we write 
    \begin{align*} 
        \Omega \times_{p,p'} \Omega' \coloneqq \{(x,x') \in \Omega \times \Omega'\mid p(x) = p'(x')\} \subset \Omega \times \Omega'
    \end{align*}
for the corresponding \emph{fiber product} which is itself a bundle over $M$ in a canonical way. We also use the notation $\Omega \times_{M} \Omega'$ for the fiber product if the maps $p$ and $p'$ are understood from the context.

\subsection{Continuous fiber maps}\label{contfib}
It is convenient to work with a space of continuous maps reflecting the fibered structure of a topological bundle. We recall the following construction from \cite[Definition 2.1]{EdKr2021}.

\begin{definition}\label{defcontfib}
    Let $p \colon \Omega \rightarrow M$ and $p' \colon \Omega' \rightarrow M'$ be continuous surjections from topological spaces $\Omega$ and $\Omega'$ to compact spaces $M$ and $M'$. The set
        \begin{align*}
		    \mathrm{C}_{p}^{p'}(\Omega,\Omega') \coloneqq \bigcup_{m \in M,\, m' \in M'} \mathrm{C}(\Omega_m, \Omega'_{m'})
	    \end{align*}
    is the \emph{space of continuous fiber maps}. We define $s(\vartheta) \coloneqq m$ and $r(\vartheta) \coloneqq m'$ for $\vartheta \in \mathrm{C}(\Omega_m,\Omega'_{m'}) \subset \mathrm{C}_p^{p'}(\Omega,\Omega')$, where $m \in M$, $m' \in M'$, and call $s\colon \mathrm{C}_{p}^{p'}(\Omega,\Omega') \rightarrow M$ and $r\colon \mathrm{C}_{p}^{p'}(\Omega,\Omega') \rightarrow M'$ the \emph{source map} and \emph{range map}, respectively.
    If $\Omega' = \C$ and $M' = \{\mathrm{pt}\}$ is a singleton set, we abbreviate $\mathrm{C}_p(\Omega) \coloneqq \mathrm{C}_p^{p'}(\Omega,\Omega')$.
\end{definition}

In analogy to the space of continuous maps between two topological spaces, spaces of continuous fiber maps can be equipped with a canonical topology (see  \cite[Definition 2.3]{EdKr2021}).

\begin{definition}
    For continuous surjections $p \colon \Omega \rightarrow M$ and $p' \colon \Omega'  \rightarrow M'$ from topological spaces $\Omega$ and $\Omega'$ to compact spaces $M$ and $M'$, we call the topology on $\mathrm{C}_{p}^{p'}(\Omega,\Omega')$ generated by the sets 
	\begin{align*}
		\mathrm{W}(C,O) \coloneqq \{\vartheta \in \mathrm{C}_p^{p'}(\Omega,\Omega') \mid \vartheta(C \cap \Omega_{s(\vartheta)}) \subset O \cap \Omega'_{r(\vartheta)} \}
	\end{align*}
where $C \subset \Omega$ is compact and $O \subset \Omega'$ is open the \emph{compact-open topology} on $\mathrm{C}_{p}^{p'}(\Omega,\Omega')$.
\end{definition}
From now on, we equip any space of continuous fiber maps with the compact-open topology. We recall the following characterization of convergence with respect to the compact-open topology from \cite[Proposition 2.4]{EdKr2021}.\footnote{In the statement of \cite[Proposition 2.4]{EdKr2021} only local compactness of $\Omega$ is assumed, but the proof uses compactness.}
	\begin{proposition}\label{descriptiontop}
	    Let $p \colon \Omega \rightarrow M$ and $p' \colon \Omega' \rightarrow M'$ be continuous surjections from topological spaces $\Omega$ and $\Omega'$ to compact spaces $M$ and $M'$. Assume that $\Omega$ is compact. For a net $(\vartheta_\alpha)_{\alpha \in A}$ in $\mathrm{C}_{p}^{p'}(\Omega,\Omega')$ and an element $\vartheta \in \mathrm{C}_{p}^{p'}(\Omega,\Omega')$ the following assertions are equivalent.
			\begin{enumerate}[(a)]
				\item The net $(\vartheta_\alpha)_{\alpha \in A}$ converges to $\vartheta$.
				\item The following two conditions are satisfied.
					\begin{enumerate}[(i)]
						\item $\lim_{\alpha} s(\vartheta_\alpha) = s(\vartheta)$.
						\item Whenever $(\vartheta_\beta)_{\beta \in B}$ is a subnet of $(\vartheta_\alpha)_{\alpha \in A}$, and $(x_\beta)_{\beta \in B}$ is a net in $\Omega$ with $p(x_\beta) = s(\vartheta_\beta)$ for every $\beta \in B$ converging to some $x \in \Omega$, then $\lim_\beta \vartheta_\beta(x_\beta) = \vartheta(x)$.
					\end{enumerate}
			\end{enumerate}
	\end{proposition}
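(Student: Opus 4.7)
I will show the two implications separately, in both cases working with the subbasic open sets $\mathrm{W}(C,O)$ directly.

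\textbf{From (a) to (b)(i).} The plan is to show that the source map $s \colon \mathrm{C}_p^{p'}(\Omega,\Omega') \rightarrow M$ is continuous. Given an open $V \subset M$ with $s(\vartheta) \in V$, I set $C \coloneqq p^{-1}(M \setminus V)$, which is compact in $\Omega$ since $M$ is compact and $\Omega$ is compact. Then $\mathrm{W}(C, \emptyset)$ is a subbasic open set (with $O = \emptyset$), it contains $\vartheta$ because $C \cap \Omega_{s(\vartheta)} = \emptyset$, and any $\vartheta' \in \mathrm{W}(C, \emptyset)$ must satisfy $C \cap \Omega_{s(\vartheta')} = \emptyset$, i.e., $s(\vartheta') \in V$. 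Hence $\mathrm{W}(C, \emptyset) \subset s^{-1}(V)$, proving continuity of $s$.

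\textbf{From (a) to (b)(ii).} Let $(\vartheta_\beta)_{\beta \in B}$ be a subnet of $(\vartheta_\alpha)_{\alpha \in A}$ and $(x_\beta)_{\beta \in B}$ a net in $\Omega$ with $p(x_\beta) = s(\vartheta_\beta)$ converging to $x \in \Omega$. By (i), $p(x) = s(\vartheta)$, so $\vartheta(x)$ is defined. For any open $O \subset \Omega'$ with $\vartheta(x) \in O$, I want to produce a compact $C \subset \Omega$ with $x$ in its interior and $\vartheta(C \cap \Omega_{s(\vartheta)}) \subset O$. The fiber continuity of $\vartheta$ gives an open $U \subset \Omega$ with $U \cap \Omega_{s(\vartheta)} \subset \vartheta^{-1}(O)$ and $x \in U$; compactness (hence regularity) of $\Omega$ then allows me to choose a compact neighborhood $C$ of $x$ with $C \subset U$. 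Then $\vartheta \in \mathrm{W}(C,O)$, so eventually $\vartheta_\beta \in \mathrm{W}(C,O)$ and $x_\beta \in \mathrm{int}(C)$, which yields $\vartheta_\beta(x_\beta) \in O$ eventually. Since $O$ was arbitrary, $\vartheta_\beta(x_\beta) \to \vartheta(x)$.

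\textbf{From (b) to (a).} Assume (i) and (ii) and suppose for contradiction that $(\vartheta_\alpha)$ does not converge to $\vartheta$. Then some basic open neighborhood of $\vartheta$, which is a finite intersection $\bigcap_{i=1}^n \mathrm{W}(C_i, O_i)$, is missed frequently; passing to a subnet and then to a further subnet (with constant failing index) I get a subnet $(\vartheta_\beta)$ and a pair $(C,O)$ with $\vartheta \in \mathrm{W}(C,O)$ but $\vartheta_\beta \notin \mathrm{W}(C,O)$ for all $\beta$. Pick witnesses $y_\beta \in C \cap \Omega_{s(\vartheta_\beta)}$ with $\vartheta_\beta(y_\beta) \notin O$. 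Since $C$ is compact, a subnet $(y_\gamma)$ converges to some $y \in C$; by (i), $p(y) = s(\vartheta)$, so $y \in C \cap \Omega_{s(\vartheta)}$ and $\vartheta(y) \in O$. Applying (ii) to the subnet $(\vartheta_\gamma)$ and $(y_\gamma)$, one obtains $\vartheta_\gamma(y_\gamma) \to \vartheta(y) \in O$, which contradicts $\vartheta_\gamma(y_\gamma) \notin O$.

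\textbf{Expected obstacle.} The step likely to require the most care is the construction of the compact set $C$ in the implication $(a) \Rightarrow (b)(ii)$: one must pass from a fiberwise open condition (continuity of $\vartheta$ restricted to $\Omega_{s(\vartheta)}$) to a genuinely ambient compact neighborhood in $\Omega$ whose fiber trace still lies in $\vartheta^{-1}(O)$. The compactness of $\Omega$ (as assumed in the proposition, correcting the locally compact assumption stated in the cited source) is exactly what makes this regularity-style argument go through, and this explains the footnote in the statement.
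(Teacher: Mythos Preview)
Your argument is correct in all three parts; the use of $\mathrm{W}(C,\emptyset)$ to prove continuity of $s$, the regularity argument to produce the compact neighborhood $C$ in (a)$\Rightarrow$(b)(ii), and the subnet/pigeonhole contradiction for (b)$\Rightarrow$(a) are all sound. Note that the paper does not actually supply its own proof of this proposition---it merely recalls the statement from \cite[Proposition 2.4]{EdKr2021}---so there is no in-paper argument to compare against, but your proof is a clean and standard verification that would fit naturally in its place.
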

  
    We give a simple example to illustrate this kind of convergence.
    \begin{example}\label{simpleex}
        Let $a \in [0,1]$ be fixed and consider the closed subset $\Omega = \{0\} \times [0,1]  \cup (0,1]  \times [0,a]$ of the unit square $[0,1]^2$. Projecting onto the first component yields a continuous surjection $p \colon \Omega \rightarrow M$, where $M = [0,1]$. The zero functions $0_m \colon \{m\} \times [0,a] \rightarrow \C, \, (m,x) \mapsto 0$ for $m\in(0,1]$ converge for $m \rightarrow 0$ in the space $\mathrm{C}_p(\Omega)$ to any continuous function $f \colon \{0\} \times [0,1] \rightarrow \C$ which vanishes on $\{0\} \times [0,a]$. In particular, the limit is unique (and then given by the zero function on $\{0\} \times [0,1]$) if and only if $a = 1$. This is precisely the case when $p$ is an open map.
    \end{example}
      We obtain the following important corollary (cf. \cite[Remark 2.5]{EdKr2021}) which directly follows from Proposition \ref{descriptiontop} via Lemma \ref{openchar}.
    \begin{corollary}
         Let $p \colon \Omega \rightarrow M$ and $p' \colon \Omega' \rightarrow M'$ be continuous surjections from topological spaces $\Omega$ and $\Omega'$ to compact spaces $M$ and $M'$. Assume that $\Omega$ is compact and $p$ is open, then $\mathrm{C}_{p}^{p'}(\Omega,\Omega')$ is a Hausdorff space.
    \end{corollary}

\subsection{Fiber measures and relative measures}\label{secrelmeas}
We need a \emph{relative} version of measure theory for bundles for the study of extensions of dynamical systems. 
We denote by $\mathrm{P}(\Omega)$ the space of regular Borel probability measures on a compact space $\Omega$. By the Riesz representation theorem (see, e.g.,  \cite[Appendix E]{EFHN} or \cite[Section 5]{jt-foundational}) the map
        \begin{align*}
            \mathrm{P}(\Omega) \rightarrow \mathrm{C}(\Omega)', \quad\mu \mapsto \left[f \mapsto \int_\Omega f\, \mathrm{d}\mu\right] 
        \end{align*}
is an affine bijection onto a convex subset of the dual space $\mathrm{C}(\Omega)'$ which is compact with respect to the weak$^*$-topology. We will often identify $\mathrm{P}(\Omega)$ with its image in $\mathrm{C}(\Omega)'$ and write $\mu(f) \coloneqq \int_\Omega f \, \mathrm{d}\mu$ for $f \in \mathrm{C}(\Omega)$ and $\mu \in \mathrm{P}(\Omega)$.
\begin{definition}
    Let $p \colon \Omega \rightarrow M$ be a continuous surjection between compact spaces. A measure $\mu \in \mathrm{P}(\Omega)$ is called a \emph{fiber measure} if its pushforward measure $p_*\mu$ with respect to $p$ is a point measure on $M$. We write $\mathrm{P}_p(\Omega) \subset \mathrm{P}(\Omega)$ for the weak* closed subspace of all \emph{fiber measures}.
\end{definition}
\begin{remark} 
    An equivalent condition for $\mu \in \mathrm{P}(\Omega)$ to be a fiber measure is that $\mu$ is supported in a fiber $\Omega_m$ for some $m \in M$, see the proof of \cite[Lemma V.3.20]{deVr1993} or \cite[Lemma 2.16]{edeko}.
\end{remark}
The push-forward of measures then defines a continuous surjection $p_* \colon \mathrm{P}_p(\Omega) \rightarrow M$, therefore $\mathrm{P}_p(\Omega)$ is a \enquote{topological bundle} over $M$.\medskip\\
The following result is a straightforward generalization of \cite[Lemma 5.5]{EdKr2021} and will be useful later.
	
\begin{proposition}\label{weakstaconvmeas}
    Let $p \colon \Omega \rightarrow M$ be a continuous surjection between compact spaces and let $(m_{\alpha})_{\alpha \in A}$ a net in $M$ converging to some $m \in M$. Assume that $\mu_{\alpha} \in \mathrm{C}(\Omega_{m_{\alpha}})'$ for each $\alpha \in A$ with $\limsup_{\alpha \in A} \|\mu_{\alpha}\| < \infty$. Then the following assertions are equivalent.
        \begin{enumerate}[(a)]
            \item $\lim_{\alpha} \mu_{\alpha}(f|_{\Omega_{m_{\alpha}}}) = \mu(f|_{\Omega_m})$ for each $f \in \mathrm{C}(\Omega)$.
            \item $\lim_{\alpha} \mu_{\alpha}(f_\alpha) = \mu(f)$ whenever $(f_{\alpha})_{\alpha \in A}$ is a net in $\mathrm{C}_p(\Omega)$ with $s(f_{\alpha}) = m_{\alpha}$ for each $\alpha \in A$ converging to some $f \in \mathrm{C}_p(\Omega)$.
        \end{enumerate}
\end{proposition}

\begin{proof}
    The implication \enquote{(b) $\Rightarrow$ (a)} is easy. For the converse implication  \enquote{(a) $\Rightarrow$ (b)} let $(f_{\alpha})_{\alpha \in A}$ be a net in $\mathrm{C}_p(\Omega)$ such that $s(f_{\alpha}) = m_{\alpha}$ for each $\alpha \in A$ converging to some $f \in \mathrm{C}_p(\Omega)$. Choose $F \in \mathrm{C}(\Omega)$ with $F|_{\Omega_m} = f$. For every $\alpha \in A$ there is $x_{\alpha} \in \Omega_{m_{\alpha}}$ with 
       \begin{align*}
           |f_{\alpha}(x_{\alpha}) - F(x_{\alpha})| = \max_{y \in \Omega_{m_{\alpha}}} |f_{\alpha}(y) - F(y)|.
        \end{align*}
   Since it suffices to show that every subnet of $(\mu_{\alpha}(f_{\alpha}))_{\alpha \in A}$ has a subnet converging to $\mu(f)$, we may assume by compactness of $\Omega$ that the limit $x \coloneqq \lim_{\alpha} x_{\alpha} \in \Omega_{m}$ exists. Hence, 
        \begin{align*}
            \lim_{\alpha} \max_{x \in \Omega_{m_{\alpha}}} |f_{\alpha}(y) - F(y)| = \lim_{\alpha} |f_{\alpha}(x_{\alpha}) - F(x_{\alpha})| = |f(x) - F(x)| = 0.
        \end{align*}
    Since $\limsup_{\alpha} \|\mu_{\alpha}\| < \infty$, we conclude that
        \begin{align*}
            \lim_{i} \mu_{\alpha}(f_{\alpha} -F|_{\Omega_{m_\alpha}}) = 0.
        \end{align*}
    On the other hand, $\lim_{\alpha} \mu_{\alpha}(F|_{\Omega_{m_\alpha}}) = \mu(F) = \mu(f)$. In combination we obtain $\lim_{\alpha} \mu_{\alpha}(f_{\alpha}) = \mu(f)$ as desired.
\end{proof}

In particular, we obtain the following consequence of Proposition \ref{weakstaconvmeas}.  Recall that here, $\mathrm{C}_p(\Omega)$ is a bundle over $M$ via the source map $s$ (cf. Definition \ref{defcontfib}) and hence we can form the fiber product $\mathrm{P}_p(\Omega) \times_{p^*,s} \mathrm{C}_p(\Omega)$ (see Section \ref{basicconc}).

\begin{corollary}\label{convlemma}
    Let $p \colon \Omega \rightarrow M$ be a continuous surjection between compact spaces. Then the map
        \begin{align*}
            \mathrm{P}_p(\Omega) \times_{p^*,s} \mathrm{C}_p(\Omega) \rightarrow \C, \quad (\mu,f) \mapsto \mu(f)
        \end{align*}
    is continuous.
\end{corollary}

We also need the following concept of relative measure. This is the non-dynamical version of the well-known concept of relatively invariant measure, see, e.g., \cite{Glas1975} and Subsection \ref{RelInvMeas} below.

\begin{definition}
    Let $p \colon \Omega \rightarrow M$ be a continuous surjection between compact spaces.  A \emph{relative measure} is a continuous section $\nu \colon M \rightarrow \mathrm{P}_p(\Omega), \, m \mapsto \nu_m$ of $p_* \colon \mathrm{P}_p(\Omega) \rightarrow M$. It is \emph{fully supported} if $\mathrm{supp}\, \nu_m = \Omega_m$ for every $m \in M$. We denote the \emph{space of all relative measures} of $p$ by $\mathrm{RM}(p)$.
\end{definition}

We again provide a simple example.

\begin{example}
    In the case $a = 1$ of Example \ref{simpleex}, the Lebesgue measure on the unit interval induces a fiber measure on every fiber $\Omega_m = \{m\} \times [0,1]$ for $m \in M = [0,1]$ and these form a relative measure for the continuous surjection $p\colon \Omega \rightarrow M$.
\end{example}

Note that, in general, relative measures do not need to exist (consider, e.g., the map $p \colon [0,1] \rightarrow [0,1]/\sim$ identifying the two endpoints $0$ and $1$).\medskip\\

We note the following lemma for later use.

\begin{lemma}\label{lemmaforlowersemi}
	Let $p \colon \Omega \rightarrow M$ be a continuous surjection between compact spaces and $\nu \colon M \rightarrow \mathrm{P}_{p}(\Omega), \, m \mapsto \nu_m$ a fully supported relative measure. Let further $r \in [1,\infty)$ and $m \in M$. Then every $g \in \mathrm{C}(\Omega_m)$ has an extension $h \in \mathrm{C}(\Omega)$ with 
        \begin{align*}
            \max_{m' \in M} \|h|_{\Omega_{m'}}\|_{\mathrm{L}^r(\Omega_{m'},\nu_{m'})} = \|g\|_{\mathrm{L}^r(\Omega_m,\nu_m)}.
        \end{align*}
\end{lemma}
\begin{proof}
    We may assume that $c \coloneqq \|g\|_{\mathrm{L}^r(\Omega_m,\nu_m)} \neq 0$. First, pick any continuous extension $g \in \mathrm{C}(\Omega)$. By the continuity of the relative measure, we find some neighborhood $O$ of $m$ with $\|g\|_{\mathrm{L}^r(\Omega_{m'},\nu_{m'})} > \frac{c}{2}$ for all $m' \in O$. Let $f \in \mathrm{C}(M)$ with $f(m) = 1$, $0 \leq f \leq \mathbbm{1}$, and support in $O$. Then
        \begin{align*}
            h(x) \coloneqq \begin{cases}
                f(p(x)) \frac{g(x)}{\|g\|_{\mathrm{L}^r(\Omega_{p(x)},\nu_{p(x)})}}, & \textrm{ if } x \in p^{-1}(O),\\
                0 & \textrm{ else},
                \end{cases}
        \end{align*}
    defines an extension $h \in \mathrm{C}(\Omega)$ with the desired property.
\end{proof}

The next result establishes a connection between relative and ordinary measures.
Given a continuous surjection $p \colon \Omega \rightarrow M$ between compact spaces, relative measures for $p$ allow to \enquote{lift} measures $\mu_M \in \mathrm{P}(M)$ to measures $\mu_\Omega$ on $\Omega$ with $p_*\mu_\Omega = \mu_M$. More is true if $\mu_M$ is \emph{normal} meaning that the canonical map $\mathrm{C}(M) \rightarrow \mathrm{L}^\infty(M,\mu_M)$ is a bijection (hence $M$ is isomorphic to the Stone space of the measure space $(M,\mu_M)$, and in particular, a Stonean, i.e., an extremally disconnected and compact, space, see \cite[Section 9]{jt-foundational} and \cite[Section 12.4]{EFHN}). In this case, any measure $\mu_\Omega \in \mathrm{P}(\Omega)$ with $p_*\mu_\Omega = \mu_M$ can be disintegrated to obtain a relative measure (cf. \cite[Section 2]{ellis} and \cite[Section 8]{jt20}).
\begin{proposition}\label{disintegration}
	Let $p \colon \Omega \rightarrow M$ be a continuous surjection between compact spaces and $\mu_M \in \mathrm{P}(M)$ fully supported. The canonical map
				\begin{align*}
					\int_M \mathrm{d}\mu_M \colon \mathrm{RM}(p) \rightarrow \{\mu_\Omega \in \mathrm{P}(\Omega)\mid p_*\mu_\Omega = \mu_M\}, \quad \nu \mapsto \int_M \nu\,\mathrm{d}\mu_M
				\end{align*}
			with $(\int_M \nu\,\mathrm{d}\mu_M)(f) \coloneqq \int_M \nu_m(f) \, \mathrm{d}\mu_M(m)$ for $f \in \mathrm{C}(\Omega)$ and $\nu \in \mathrm{RM}(p)$ is
			\begin{enumerate}[(i)]
				\item injective, and\label{disintegration1}
				\item bijective if $\mu_M$ is normal.\label{disintegration2}
			\end{enumerate}
		\end{proposition}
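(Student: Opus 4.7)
The map $\int_M\cdot\,\mathrm{d}\mu_M$ is well-defined because for $\nu\in\mathrm{RM}(p)$ and $f\in\mathrm{C}(\Omega)$ the function $m\mapsto \nu_m(f)$ is continuous by Lemma \ref{convlemma} (applied to $\nu$ and the constant section $m\mapsto f|_{\Omega_m}\in\mathrm{C}_p(\Omega)$), and for $g\in\mathrm{C}(M)$ one checks $\int_M \nu_m(g\circ p)\,\mathrm{d}\mu_M(m)=\int_M g(m)\,\mathrm{d}\mu_M(m)$ using that $\nu_m$ is concentrated on $\Omega_m$, so $p_*(\int_M\nu\,\mathrm{d}\mu_M)=\mu_M$.

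For injectivity (i), if $\nu,\nu'\in\mathrm{RM}(p)$ induce the same measure on $\Omega$, then for every $f\in\mathrm{C}(\Omega)$ and $g\in\mathrm{C}(M)$ the equality $\int_M g(m)\nu_m(f)\,\mathrm{d}\mu_M(m)=\int_M g(m)\nu'_m(f)\,\mathrm{d}\mu_M(m)$ holds, since $(g\circ p)f\in\mathrm{C}(\Omega)$ and both sides compute $\int_\Omega (g\circ p)f\,\mathrm{d}\mu_\Omega$. As $m\mapsto \nu_m(f)-\nu'_m(f)$ is continuous and $\mu_M$ is fully supported, we conclude $\nu_m(f)=\nu'_m(f)$ for every $m\in M$, whence $\nu=\nu'$.

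For surjectivity (ii) under the normality assumption, the plan is to construct the disintegration out of a conditional expectation. Given $\mu_\Omega$ with $p_*\mu_\Omega=\mu_M$, pullback along $p$ embeds $\mathrm{L}^\infty(M,\mu_M)$ isometrically into $\mathrm{L}^\infty(\Omega,\mu_\Omega)$; the associated conditional expectation yields a contractive positive map $\mathrm{E}\colon \mathrm{C}(\Omega)\to \mathrm{L}^\infty(M,\mu_M)$ characterized by $\int_M g\cdot \mathrm{E}(f)\,\mathrm{d}\mu_M=\int_\Omega (g\circ p)f\,\mathrm{d}\mu_\Omega$ for all $g\in\mathrm{C}(M)$. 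Because $\mu_M$ is normal, the canonical map $\mathrm{C}(M)\to \mathrm{L}^\infty(M,\mu_M)$ is an isometric isomorphism, so $\mathrm{E}(f)$ has a unique continuous representative $g_f\in\mathrm{C}(M)$, and we set $\nu_m(f)\coloneqq g_f(m)$. Positivity and $\nu_m(1)=1$ are inherited from $\mathrm{E}$, so each $\nu_m$ is a probability measure on $\Omega$, and continuity of $m\mapsto \nu_m$ in the weak$^*$-topology is immediate from the continuity of each $g_f$.

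The main obstacle is to verify that $\nu_m$ is actually a \emph{fiber} measure concentrated on $\Omega_m$. For this, fix $f\in\mathrm{C}(\Omega)$ with $f|_{\Omega_m}=0$ and $\varepsilon>0$. The compact set $C_\varepsilon\coloneqq\{|f|\geq\varepsilon\}\subset\Omega$ is disjoint from $\Omega_m$, so $p(C_\varepsilon)$ is a compact subset of $M\setminus\{m\}$, and Urysohn's lemma yields $h\in\mathrm{C}(M)$ with $0\leq h\leq 1$, $h(m)=1$, and $h\equiv 0$ on $p(C_\varepsilon)$. Then $\|(h\circ p)\,f\|_\infty\leq \varepsilon$, and since $\mathrm{E}$ is $\mathrm{L}^\infty$-contractive and commutes with multiplication by pullbacks from $M$, the normality isomorphism gives $\|h\cdot g_f\|_\infty=\|\mathrm{E}((h\circ p)f)\|_\infty\leq \varepsilon$; evaluating at $m$ yields $|\nu_m(f)|\leq\varepsilon$. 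Letting $\varepsilon\to 0$ forces $\nu_m(f)=0$, so by Tietze extension $\nu_m$ factors through the restriction $\mathrm{C}(\Omega)\to\mathrm{C}(\Omega_m)$ and hence lies in $\mathrm{P}_p(\Omega)$ with $p_*\nu_m=\delta_m$. Finally, $\int_M\nu\,\mathrm{d}\mu_M=\mu_\Omega$ holds by construction of $g_f$, completing the proof.
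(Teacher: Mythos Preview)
Your proof is correct and follows essentially the same approach as the paper: both use the $(g\circ p)f$ trick together with full support for injectivity, and both construct the disintegration in (ii) via the conditional expectation composed with the inverse of the normality isomorphism $\mathrm{C}(M)\to\mathrm{L}^\infty(M,\mu_M)$. The paper simply asserts that ``one can readily check'' that the resulting $\nu_m\coloneqq\delta_m\circ P$ defines a relative measure, whereas you actually carry out this verification---in particular the Urysohn argument showing that $\nu_m$ is supported on the fiber $\Omega_m$---so your write-up is in fact more complete than the original.
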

		\begin{proof}
		    It is clear that the map $\int_M \mathrm{d}\mu_M$ is well-defined.
			For (\ref{disintegration1}) take $\nu,\varrho \in \mathrm{RM}(q) $ with $\int_M \nu \,\mathrm{d}\mu_M = \int_L \varrho \,\mathrm{d}\mu_M$ and $f \in \mathrm{C}(\Omega)$. Then
				\begin{align*}
					\int_M g(m) \nu_m(f)\, \mathrm{d}\mu_M(m) &= \int_M \nu_m((g \circ p) \cdot f) \,\mathrm{d}\mu_M(m) = \int_M \varrho_m((g \circ p) \cdot f) \,\mathrm{d}\mu_M(m)\\
					&=\int_M g(m) \varrho_m(f)\, \mathrm{d}\mu_M(m)
				\end{align*}
			for every $g \in \mathrm{C}(M)$. This implies $\nu_m(f) = \varrho_m(f)$ for almost every $m \in M$. Since $\mu_M$ has full support, this equality even holds for every $m \in M$. We conclude that $\nu_m = \varrho_m$ for every $m \in M$ and thus $\nu = \varrho$.
			
			For (\ref{disintegration2}) we assume that $\mu_M$ is normal and take a measure $\mu_\Omega \in \mathrm{P}(\Omega)$ with $p_*\mu_\Omega = \mu_M$. Consider the corresponding conditional expectation operator $\mathbb{E}_{(M,\mu_M)}\colon \mathrm{L}^\infty(\Omega,\mu_\Omega) \rightarrow \mathrm{L}^\infty(M,\mu_M)$.
			Composing this with the inverse of the canonical isomorphism $J \colon \mathrm{C}(M) \rightarrow \mathrm{L}^\infty(M,\mu_M)$ we obtain a bounded linear map $J^{-1}\mathbb{E}_{(M,\mu_M)} \colon \mathrm{L}^\infty(\Omega,\mu_\Omega) \rightarrow \mathrm{C}(M)$ which induces an operator $P \colon \mathrm{C}(\Omega) \rightarrow \mathrm{C}(M)$. We set $\nu_m \coloneqq \delta_m \circ P \in \mathrm{P}(\Omega)$ for the point measure $\delta_m \in \mathrm{P}(M)$ and $m \in M$. One can readily check that this defines an element $\nu \in \mathrm{RM}(p)$. Moreover,
				\begin{align*}
					\int_\Omega f \,\mathrm{d}\mu_\Omega = \int_{M} Pf(m)\,\mathrm{d}\mu_{M}(m) = \int_M \delta_m(Pf)\, \mathrm{d}\mu_M(m) = \int_M \nu_m(f) \,\mathrm{d}\mu_M(m)
				\end{align*}
			for every $f \in \mathrm{C}(\Omega)$ as desired.
		\end{proof}
        \begin{remark}
          he proof of part Proposition \ref{disintegration} (\ref{disintegration2}) is based on an equivalent description of relative measures via conditional expectation operators, see \cite[Proposition 3.1]{Glas1975}.
        \end{remark}
\subsection{Banach bundles}\label{bbundl}
An important type of bundles are so-called Banach bundles. We recall the definition (see \cite[Definition 1.1]{DuGi1983}, \cite[Section 1 and Theorem 3.2]{Gierz1982} and \cite[Section 4]{EdKr2021}).

\begin{definition}
    A continuous and open surjection $s \colon E \rightarrow M$ from a topological space $E$ to a locally compact space $M$ is a \emph{continuous Banach bundle} over $M$ if
        \begin{enumerate}[(i)]
					\item every fiber $E_m$ for $m \in M$ is a Banach space,
					\item the maps 
						\begin{align*}
							&E \times_M E \rightarrow E, \quad (v,w) \mapsto v + w,\\
							&\C \times E \rightarrow E, \quad (\lambda, v) \mapsto \lambda v\, \textrm{ and}\\
							&E \rightarrow \R_+, \quad v \mapsto \|v\|_{s(x)}
						\end{align*}
						are continuous, and
					\item for every $m \in M$, the sets
					    \begin{align*}
					        V(O,\varepsilon) \coloneqq \{v \in s^{-1}(O)\mid \|v\|_{s(v)}< \varepsilon\}
					    \end{align*}
					    for $\varepsilon > 0$ and neighborhoods $O$ of $m$ in $M$ define a neighborhood base of the zero vector $0_{E_m} \in E_m$.
				\end{enumerate}
	The definition of a \emph{continuous Hilbert bundle} is analogous (every fiber is a Hilbert space and the corresponding inner products on the fibers define a continuous map on the fiber product).
\end{definition}

    The following is the most important example of a continuous Banach bundle in this article.

\begin{example}\label{extensionbundle}
    Let $p \colon \Omega \rightarrow M$ be an open continuous map between compact spaces $\Omega$ and $M$. Then the space of continuous fiber maps $\mathrm{C}_p(\Omega)$ with the compact-open topology defines a continuous Banach bundle $s \colon \mathrm{C}_p(\Omega) \rightarrow M$ over $M$, see \cite[Example 4.5]{EdKr2021}. 
\end{example}

    Continuous Banach bundles $s \colon E \rightarrow M$ always admit many continuous sections: For every $v \in E$ there is $\sigma \in \Gamma(E)$ with $\sigma(s(v)) = v$ (see, e.g., \cite[Theorem 3.2]{Gierz1982}). Moreover, the space of all bounded continuous sections 
        \begin{align*}
            \Gamma_{\mathrm{b}}(E) \coloneqq \left\{\sigma \in \Gamma(E)\mid\sup_{m \in M} \|\sigma(m)\| < \infty\right\}
        \end{align*}
    is canonically a module over the algebra $\mathrm{C}_{\mathrm{b}}(M)$ of bounded continuous functions on $M$ by defining multiplication with functions in $\mathrm{C}_{\mathrm{b}}(M)$ fiberwise. Together with the complete norm defined by $\|\sigma\| \coloneqq \sup_{m \in M} \|\sigma(m)\|_{E_m}$ for $\sigma \in \Gamma_{\mathrm{b}}(E)$ the space $\Gamma_{\mathrm{b}}(E)$ is a Banach module, i.e., $\|f\sigma\|\leq \|f\| \cdot \|\sigma\|$ for all $f \in \mathrm{C}_{\mathrm{b}}(M)$ and $\sigma \in \Gamma_{\mathrm{b}}(E)$, see  \cite[Remark 4.3(i)]{EdKr2021} and \cite[Chapter 2]{DuGi1983}. In fact, there is a correspondence between continuous Banach bundles and certain Banach modules (see, e.g., \cite{HoKe1977} and \cite[Theorem 2.6]{DuGi1983}).
    
    However, here we only need the following Stone-Weierstrass theorem for Banach bundles  (see \cite[Corollary 4.3]{Gierz1982}). For a continuous Banach bundle $s \colon E \rightarrow M$ over a locally compact space $M$ we call a subset $D \subset E$ \emph{fiberwise dense} (resp. \emph{fiberwise total}) if $D \cap E_m$ is dense (resp. total) in $E_m$ for every $m \in M$ (see \cite[Remarks before Theorem 4.8]{EdKr2021}).
    \begin{proposition}\label{swbundles}
        Let $s \colon E \rightarrow M$ be a continuous Banach bundle over a compact space $M$ and $\Gamma' \subset \Gamma(E)$ a $\mathrm{C}(M)$-submodule such that $\{\sigma(m) \mid m \in M, \sigma \in \Gamma\}$ is fiberwise dense in $E$. Then $\Gamma'$ is dense in $\Gamma(E)$.
    \end{proposition}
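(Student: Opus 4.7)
The plan is to run the standard local-to-global Stone--Weierstrass argument: approximate $\tau \in \Gamma(E)$ locally by elements of $\Gamma'$ using fiberwise density, and glue the local approximants together by a partition of unity, exploiting the $\mathrm{C}(M)$-module structure of $\Gamma'$ and the compactness of $M$. Note that since $M$ is compact, every continuous section is automatically bounded, and $\Gamma(E) = \Gamma_{\mathrm{b}}(E)$ is a Banach space under the sup-norm $\|\sigma\| = \sup_{m \in M}\|\sigma(m)\|_{E_m}$; this is the norm in which density is to be proved.

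First, fix $\tau \in \Gamma(E)$ and $\varepsilon > 0$. Using fiberwise density of $\{\sigma(m) \mid m \in M,\ \sigma \in \Gamma'\}$, for every point $m \in M$ I would select some $\sigma_m \in \Gamma'$ with $\|\tau(m) - \sigma_m(m)\|_{E_m} < \varepsilon$. Because addition and scalar multiplication on $E$ are continuous (so $\tau - \sigma_m$ is again a continuous section) and because the norm map $v \mapsto \|v\|_{E_{s(v)}}$ is continuous on $E$, the function
\[
M \to \mathbb{R}_+, \qquad m' \mapsto \|\tau(m') - \sigma_m(m')\|_{E_{m'}}
\]
is continuous. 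Hence $U_m \coloneqq \{m' \in M \mid \|\tau(m') - \sigma_m(m')\|_{E_{m'}} < \varepsilon\}$ is an open neighborhood of $m$.

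Second, I would use compactness of $M$ to extract a finite subcover $U_{m_1},\ldots,U_{m_n}$, and then pick a continuous partition of unity $(f_i)_{i=1}^n \subset \mathrm{C}(M)$ subordinate to it (with $0 \le f_i \le 1$, $\mathrm{supp}\,f_i \subset U_{m_i}$, and $\sum_i f_i \equiv 1$). Because $\Gamma'$ is a $\mathrm{C}(M)$-submodule, the section
\[
\sigma \coloneqq \sum_{i=1}^n f_i \cdot \sigma_{m_i}
\]
lies in $\Gamma'$. The approximation estimate is then the expected convex combination: for every $m \in M$,
\[
\|\tau(m) - \sigma(m)\|_{E_m} \;=\; \Bigl\|\sum_{i=1}^n f_i(m)\bigl(\tau(m) - \sigma_{m_i}(m)\bigr)\Bigr\|_{E_m} \;\le\; \sum_{i=1}^n f_i(m)\,\|\tau(m) - \sigma_{m_i}(m)\|_{E_m} \;<\; \varepsilon,
\]
where the last inequality uses that $f_i(m) > 0$ forces $m \in U_{m_i}$.

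There is no real obstacle: the proof is essentially the classical Stone--Weierstrass argument transplanted into the bundle setting. The only points one has to be attentive to are (i) that fiberwise continuity of the norm and of addition on $E$ (built into the definition of a continuous Banach bundle) make $U_m$ open, and (ii) that the $\mathrm{C}(M)$-module hypothesis on $\Gamma'$ is what permits gluing via a partition of unity. Both are granted, so the argument goes through verbatim.
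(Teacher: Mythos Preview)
Your argument is correct and is precisely the standard partition-of-unity proof of the bundle Stone--Weierstrass theorem. The paper does not supply its own proof of this proposition but simply cites \cite[Corollary 4.3]{Gierz1982}, so there is nothing to compare against; your proof is essentially the one found in the cited reference.
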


In the case of Example \ref{extensionbundle}, the space of continuous sections can be represented as follows (see again \cite[Example 4.5]{EdKr2021}). 

\begin{example}\label{extensionbundle2}
    Let $p \colon \Omega \rightarrow M$ be an open continuous map between compact spaces $\Omega$ and $M$. Then the map
        \begin{align*}
            \mathrm{C}(\Omega) \rightarrow \Gamma(\mathrm{C}_p(\Omega)), \quad f \mapsto [m \mapsto f|_{\Omega_m}]
        \end{align*}
    is an isometric $\mathrm{C}(M)$-linear bijection, where $\mathrm{C}(\Omega)$ is equipped with the multiplication $f \cdot g \coloneqq (f \circ p) g$ for $f \in \mathrm{C}(M)$ and $g \in \mathrm{C}(\Omega)$.
\end{example}

We also need the following concept of continuous Banach subbundle.
\begin{definition}\label{subbundledef}
    Let $s \colon E \rightarrow M$ be a continuous Banach bundle over a locally compact space $M$. A restriction $s|_F \colon F \rightarrow M$ to a subset $F \subset E$ is a \emph{continuous Banach subbundle} if $s|_F$ is still open and $F_m \coloneqq E_m \cap F$ is a closed linear subspace of $E_m$ for every $m \in M$. 
    
    Moreover, if $O \subset M$ is an open subset, then we call a continuous Banach subbundle of the restriction $s|_{s^{-1}(O)} \colon s^{-1}(O)  \subset E \rightarrow O$ a \emph{local continuous Banach subbundle of $s$ (over $O$)}.
\end{definition}
A continuous Banach subbundle of a continuous Banach bundle is then indeed a Banach bundle in its own right (see \cite[Section 8]{Gierz1982}).

We can also extend a local continuous Banach subbundle over an open subset to a continuous subbundle over the whole base space:

\begin{lemma}\label{remsub}
    Consider a continuous Banach bundle $s \colon E \rightarrow M$ over a locally compact space $M$ and a local continuous Banach subbundle $s|_F \colon F \rightarrow O$ of $s$ over an open subset $O \subseteq M$. Set
        \begin{align*}
            F_+ \coloneqq F \cup \bigcup_{m \in M \setminus O} \{0_m\} \subseteq E.
        \end{align*}
    Then $s|_{F_+} \colon F_+ \rightarrow M$ is a continuous Banach subbundle of $s$.
\end{lemma}
\begin{proof}
    The only non-trivial part is to show that $s|_{F_+} \colon F_+ \rightarrow M$ is an open map. So take $v \in F_+$ and $V \subseteq E$ a neighborhood of $v \in E$. We show that $s(V \cap F_+)$ is then a neighborhood of $s(v)$ in $M$.
    
    If $v \in F$, then, since $s|_F \colon F \rightarrow O$ is a localy continuous Banach subbundle, the set $s(F \cap V)$ is a neighborhood of $s(v)$ in $O$, and hence in $M$. Since $s(F \cap V) \subseteq s(F_+ \cap V)$, $s(F_+ \cap V)$ is a neighborhood of $s(v)$ as desired.
    
    If $v \notin F$, then we find some $m \in M \setminus O$ with $v = 0_m$. By the definition of a Banach bundle, we may therefore assume that $V = V(O',\varepsilon) = \{w \in s^{-1}(O') \mid \|w\|_{s(w)} < \varepsilon\}$ for some neighborhood $O'$ of $m = s(v)$ and $\varepsilon > 0$. But then $s(F_+ \cap V) = O'$.
\end{proof}

 
In the remainder of this section we will deal with Banach and Hilbert bundles which are locally trivial. Recall first the concept of locally trivial vector bundle (see, e.g., \cite[Chapter I]{Karo1978}, \cite[Chapter 3]{Huse1994}).

\begin{definition}\label{defloctrivvb}
    A continuous surjection $s \colon E \rightarrow M$ from a topological space $E$ to a locally compact space $M$ is a \emph{locally trivial vector bundle} over $M$ if every fiber $E_m$ for $m \in M$ is a vector space and for every point $m \in M$ there are a compact neighborhood $C \subset M$ of $m$, some $d \in \N_0$ and a homeomorphism $\Phi \colon s^{-1}(C) \rightarrow C \times \C^d$ such that 
        \begin{enumerate}[(i)]
            \item with the projection $\mathrm{pr}_1 \colon C \times \C^d \rightarrow C$ onto the first component, the diagram
                	\[
		                \xymatrix{
			                s^{-1}(C) \ar[rr]^{\Phi}  \ar[dr]_s & & C \times \C^d \ar[ld]^{\mathrm{pr}_1}\\
			                 & C & 
		                }
	                \]	
	           commutes, and
            \item the restriction $\Phi|_{E_{m'}} \colon E_{m'} \rightarrow \{m'\} \times \C^d$ is a vector space isomorphism for every $m' \in C$.
        \end{enumerate}
\end{definition}

Notice that here the condition of \enquote{local triviality} automatically implies that the map $s \colon E \rightarrow M$ is open.

We now define locally trivial Banach bundles.

\begin{definition}
    A continuous Banach bundle $s \colon E \rightarrow M$ over a locally compact space $M$ is a \emph{locally trivial Banach bundle} if $s$ (with the given vector space structure on the fibers) is a locally trivial vector bundle in the sense of Definition \ref{defloctrivvb}.
\end{definition}

The next result is a consequence of \cite[Proposition 10.9]{Gierz1982} and yields an automatic compatibility with the norm. It shows that our definition of locally trivial continuous Banach bundles is equivalent to that of  \cite[Section 17]{Gierz1982} and \cite[Definition 4.2]{EdKr2021}). 

\begin{lemma}
       Let $s \colon E \rightarrow M$ be a locally trivial Banach bundle over a locally compact space $M$ and  $\Phi \colon s^{-1}(C) \rightarrow C \times \C^d$ a \enquote{trivialization} as in Definition \ref{defloctrivvb}. Then there are constants $c_1, c_2 >0$ such that 
	            \begin{align*}
	                c_1 \|v\| \leq \|\mathrm{pr}_2(\Phi(v))\| \leq c_2 \|v\| \textrm{ for } v \in s^{-1}(C)
	            \end{align*}
	   , where $\mathrm{pr}_2 \colon C \times \C^d \rightarrow \C^d$  is the projection onto the second component and $\C^d$ carries the Euclidean norm.
\end{lemma}

The following Serre--Swan-type duality result (see \cite[Proposition 4.13]{EdKr2021}) shows that local triviality of a Banach bundle over a compact space is reflected nicely by its space of continuous sections (cf. \cite{Swan1962}). Recall here that a module $\Gamma$ over a ring $R$ is \emph{projective} if there is an $R$-module $\Lambda$ such that $\Gamma \oplus \Lambda$ is a free $R$-module.
    \begin{proposition}\label{SerreSwan}
    For a continuous Banach bundle $s \colon E \rightarrow M$ over a compact space $M$ the following assertions are equivalent.
        \begin{enumerate}[(a)]
            \item $s \colon E \rightarrow M$ is locally trivial.
            \item $\Gamma(E)$ is finitely generated and projective.
        \end{enumerate}
    \end{proposition}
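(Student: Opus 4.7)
The plan is to prove (a) $\Rightarrow$ (b) via a partition-of-unity argument producing a finite generating set together with a $\mathrm{C}(M)$-linear splitting, and to prove (b) $\Rightarrow$ (a) by realizing $E$ fiberwise as the image of a continuous family of idempotent matrices over $\mathrm{C}(M)$. By the remark immediately following the definition (invoking \cite[Proposition 10.9]{Gierz1982}), it suffices to establish local triviality in the vector-bundle sense, since condition (iii) of the definition is automatic in finite dimensions on a compact neighborhood.

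For (a) $\Rightarrow$ (b), compactness of $M$ yields a finite cover by open sets $U_1,\ldots,U_n$ carrying trivializations $\Phi_i \colon s^{-1}(U_i) \to U_i \times \C^{d_i}$. I would choose a partition of unity $(\rho_i^2)_{i=1}^n$ with $\rho_i$ compactly supported in $U_i$ (so that $\sum_i \rho_i^2 = 1$), let $\tilde\sigma_j^i(m) \coloneqq \Phi_i^{-1}(m,e_j)$ be the basis sections on $U_i$, and define global continuous sections $\sigma_j^i \coloneqq \rho_i \tilde\sigma_j^i$ by extension by zero. Setting $N \coloneqq \sum_i d_i$, I then consider the $\mathrm{C}(M)$-linear map $\alpha \colon \mathrm{C}(M)^N \to \Gamma(E)$, sending the basis vector indexed by $(i,j)$ to $\sigma_j^i$, and its candidate splitting $\beta \colon \Gamma(E) \to \mathrm{C}(M)^N$ whose $(i,j)$-th coordinate sends $\tau \in \Gamma(E)$ to $\rho_i b_j^{i,\tau}$ (extended by zero), where $b_j^{i,\tau} \in \mathrm{C}(U_i)$ are the coefficients in $\tau|_{U_i} = \sum_j b_j^{i,\tau} \tilde\sigma_j^i$ coming from the trivialization. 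A direct calculation using $\sum_i \rho_i^2 = 1$ will give $\alpha \circ \beta = \mathrm{id}_{\Gamma(E)}$, realizing $\Gamma(E)$ as a direct summand of the free module $\mathrm{C}(M)^N$.

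For (b) $\Rightarrow$ (a), I would pick generators $\eta_1,\ldots,\eta_N \in \Gamma(E)$ and, using projectivity, a $\mathrm{C}(M)$-linear splitting $\beta$ of the surjection $\alpha \colon \mathrm{C}(M)^N \to \Gamma(E)$, $(f_i) \mapsto \sum_i f_i \eta_i$. The composition $P \coloneqq \beta \circ \alpha$ is an idempotent endomorphism of $\mathrm{C}(M)^N$ and thus corresponds to a continuous matrix-valued function $P(\cdot) \in M_N(\mathrm{C}(M))$ with $P(m)^2 = P(m)$. Because every $v \in E_m$ is attained as $\sigma(m)$ for some $\sigma \in \Gamma(E)$, the vectors $\eta_1(m),\ldots,\eta_N(m)$ span $E_m$ at every point, so the fiberwise map $\alpha_m \colon \C^N \to E_m$ is surjective with linear right inverse $\beta_m$, giving $E_m \cong \mathrm{Image}(P(m))$. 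Fixing $m_0$ with $d \coloneqq \dim E_{m_0}$, I would select $d$ columns of $P(m_0)$ forming a basis of $\mathrm{Image}(P(m_0))$; continuity of $P(\cdot)$ and constancy of the trace (hence rank) in a neighborhood then force the corresponding columns of $P(m)$ to remain linearly independent and to span $\mathrm{Image}(P(m))$ on a neighborhood $C$ of $m_0$. This yields a vector-bundle trivialization of the image bundle over $C$, which transports through $\alpha$ to the desired local trivialization of $s \colon E \to M$.

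The principal obstacle is the final step of (b) $\Rightarrow$ (a): showing that the image of a continuous idempotent-valued function forms a locally trivial vector subbundle of the trivial bundle $M \times \C^N$, and that this trivialization can be transported to $E$ as a continuous bundle isomorphism and not just a fiberwise one. The first point is a rank-constancy argument for continuous projections; the second uses that $\alpha$ is a continuous bundle map because the $\eta_i$ are continuous sections, and that on any compact local neighborhood the given Banach norms on $E_m$ are uniformly equivalent to the Euclidean norms on $\mathrm{Image}(P(m))$ thanks to finite-dimensionality and the axiom on neighborhood bases at the zero vectors in the definition of a continuous Banach bundle.
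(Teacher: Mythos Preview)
The paper does not prove this proposition; it cites it as \cite[Proposition 4.13]{EdKr2021} (with a further pointer to Swan's classical paper \cite{Swan1962}) and uses it as a black box. Your argument is the standard Serre--Swan proof adapted to continuous Banach bundles, and it is correct. The direction (a) $\Rightarrow$ (b) via a partition of unity and an explicit split surjection $\mathrm{C}(M)^N \twoheadrightarrow \Gamma(E)$ is exactly Swan's argument; the direction (b) $\Rightarrow$ (a) via the idempotent $P \in M_N(\mathrm{C}(M))$ and local rank constancy is likewise classical.

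One point worth making explicit in your final paragraph: the splitting $\beta \colon \Gamma(E) \to \mathrm{C}(M)^N$ furnished by projectivity is a priori only $\mathrm{C}(M)$-linear, not bounded, so one should justify that it induces well-defined fiber maps $\beta_m$. This is easily handled: $P = \beta\alpha$ is automatically a matrix over $\mathrm{C}(M)$ (hence bounded), so $\alpha$ restricted to the complemented submodule $P\cdot\mathrm{C}(M)^N$ is a bounded bijection onto the Banach space $\Gamma(E)$, and the open mapping theorem makes its inverse $\beta$ bounded. Then the usual Urysohn localization (which you implicitly invoke) shows $\sigma(m)=0$ implies $(\beta\sigma)(m)=0$, giving the fiberwise maps and the continuous bundle isomorphism $E \cong \mathrm{Image}(P(\cdot))$.
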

    
    Obviously, every locally trivial Banach or Hilbert bundle defines, by forgetting the norms or the inner products, a locally trivial vector bundle in the usual sense. We have the following well-known converse result, see \cite[Lemma 2]{Swan1962}, showing that every locally trivial vector bundle over a compact base space can be turned into a locally trivial continuous Hilbert bundle (and in particular, a locally trivial Banach bundle).
        \begin{proposition}\label{existinnerprod}
            If $s \colon E \rightarrow M$ is a locally trivial vector bundle over a compact space, then there is a continuous map $(\cdot|\cdot) \colon E \times_M E \rightarrow \C$ such that the restriction $(\cdot|\cdot)_{E_m \times E_m}$ is an inner product on $E_m$ for every $m \in M$.
        \end{proposition}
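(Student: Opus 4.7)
The plan is to construct the fiberwise inner product by gluing together locally defined ones via a partition of unity, which is the standard argument for this type of Serre--Swan-adjacent fact.

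First I would exploit local triviality together with compactness of $M$. For each $m \in M$ pick a compact neighborhood $C_m$, a nonnegative integer $d_m$, and a trivializing homeomorphism $\Phi_m \colon s^{-1}(C_m) \to C_m \times \C^{d_m}$ as in the definition of local triviality. The interiors $U_m \coloneqq \mathrm{int}(C_m)$ form an open cover of $M$; by compactness we extract a finite subcover $U_1, \dots, U_n$ coming from compact neighborhoods $C_1, \dots, C_n$ with trivializations $\Phi_i \colon s^{-1}(C_i) \to C_i \times \C^{d_i}$. Since $M$ is compact Hausdorff (hence normal), I would choose a continuous partition of unity $\rho_1, \dots, \rho_n$ on $M$ subordinate to $\{U_i\}$, with $\mathrm{supp}(\rho_i) \subset U_i \subset C_i$ and $\sum_i \rho_i \equiv 1$.

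Next, on each $s^{-1}(C_i)$ I would pull back the standard Hermitian inner product of $\C^{d_i}$ via $\Phi_i$ to obtain a map
\begin{align*}
    (\cdot|\cdot)_i \colon s^{-1}(C_i) \times_{C_i} s^{-1}(C_i) \to \C, \quad (v,w) \mapsto \langle \mathrm{pr}_2\Phi_i(v), \mathrm{pr}_2\Phi_i(w)\rangle_{\C^{d_i}},
\end{align*}
which is continuous (since $\Phi_i$ is a homeomorphism) and whose restriction to each fiber $E_{m'}$ for $m' \in C_i$ is a genuine inner product (since $\Phi_i|_{E_{m'}}$ is a linear isomorphism onto $\{m'\} \times \C^{d_i}$). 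Then I would define the global candidate
\begin{align*}
    (v|w) \coloneqq \sum_{i=1}^n \rho_i(s(v)) \cdot (v|w)_i,
\end{align*}
with the convention that the $i$-th summand is interpreted as $0$ whenever $s(v) \notin C_i$; this is legitimate because $\rho_i$ vanishes on a neighborhood of $M \setminus C_i$, so each summand extends continuously by zero from $s^{-1}(C_i) \times_{C_i} s^{-1}(C_i)$ to the whole fiber product $E \times_M E$.

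Finally I would verify the two required properties. Continuity of $(\cdot|\cdot)$ follows since it is a finite sum of continuous functions on $E \times_M E$. For the fiberwise inner product property: on a fiber $E_m$, the restriction equals $\sum_{i \colon m \in C_i} \rho_i(m) \cdot (\cdot|\cdot)_i|_{E_m \times E_m}$, which is a non-trivial convex combination of inner products on $E_m$ and is therefore sesquilinear, Hermitian, and positive definite (at least one coefficient $\rho_i(m)$ is strictly positive, and each $(\cdot|\cdot)_i|_{E_m \times E_m}$ is an inner product). No step presents a genuine obstacle; the only mild subtlety is ensuring that the partition of unity is chosen so that each $\mathrm{supp}(\rho_i)$ lies in the open interior of the trivializing compact neighborhood $C_i$, which is what makes the extension by zero of $\rho_i \cdot (\cdot|\cdot)_i$ continuous across the boundary of $C_i$.
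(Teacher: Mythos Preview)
Your proof is correct and is precisely the standard partition-of-unity argument; the paper does not give its own proof but simply cites \cite[Lemma 2]{Swan1962}, which is exactly this construction. Nothing further is needed.
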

    A map as in Proposition \ref{existinnerprod} is called a \emph{continuous inner product} of the locally trivial bundle.\medskip\\
    The following basic result on locally trivial Hilbert bundles will be of use later on (cf. \cite[Proof of Theorem 5.12]{ellis}).
    \begin{proposition}\label{denselydeforth}
        Let $s \colon E \rightarrow M$ be a locally trivial continuous Hilbert bundle over a locally compact space $M$ and $d \in \N_0$ with $\dim E_m = d$ for all $m \in M$. Then there are local continuous sections $\tau_1, \dots, \tau_d \colon O \rightarrow E$ defined on a dense open subset $O$ of $M$ such that $\{\tau_1(m), \dots, \tau_d(m)\}$ is an orthonormal basis of $E_m$ for every $m \in O$.
    \end{proposition}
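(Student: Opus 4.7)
The plan is to combine a local Gram--Schmidt construction with a Zorn's lemma argument to build the orthonormal sections on a maximal dense open set.

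First I would fix a point $m_0 \in M$ and use local triviality to obtain a compact neighborhood $C$ of $m_0$ together with a trivialization $\Phi \colon s^{-1}(C) \to C \times \C^d$. Restricting to the interior $\mathrm{int}(C)$, the maps $\sigma_i(m) \coloneqq \Phi^{-1}(m,e_i)$ (where $e_1, \dots, e_d$ is the standard basis of $\C^d$) are continuous local sections forming a basis of each fiber $E_m$ for $m \in \mathrm{int}(C)$. Applying the Gram--Schmidt procedure fiberwise yields sections
\[
    \tau_1 \coloneqq \frac{\sigma_1}{\|\sigma_1\|}, \qquad \tau_k \coloneqq \frac{\sigma_k - \sum_{j<k} (\sigma_k \mid \tau_j)\tau_j}{\bigl\|\sigma_k - \sum_{j<k} (\sigma_k \mid \tau_j)\tau_j\bigr\|} \quad (k = 2, \dots, d).
\]
Continuity of each $\tau_k$ as a map $\mathrm{int}(C) \to E$ follows from continuity of the bundle operations (addition, scalar multiplication, norm) and of the continuous inner product $(\cdot \mid \cdot)$ on $E \times_M E$ provided by Proposition \ref{existinnerprod}; the denominators never vanish since the $\sigma_i(m)$ form a basis, hence are linearly independent in each fiber. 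In this way, every point of $M$ has an open neighborhood carrying $d$ continuous local sections forming an orthonormal basis on each fiber.

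Next I would consider the family $\mathcal{F}$ of all systems $\{(U_\alpha, \tau^{(\alpha)})\}_{\alpha \in I}$, where the $U_\alpha$ are pairwise disjoint open subsets of $M$ and $\tau^{(\alpha)} = (\tau_1^{(\alpha)}, \dots, \tau_d^{(\alpha)})$ consists of continuous local sections on $U_\alpha$ giving an orthonormal basis on every fiber. Ordering $\mathcal{F}$ by inclusion and observing that any chain has an upper bound given by the union, Zorn's lemma produces a maximal element $\{(U_\alpha, \tau^{(\alpha)})\}_{\alpha \in I}$. Setting $O \coloneqq \bigcup_{\alpha} U_\alpha$ and $\tau_i|_{U_\alpha} \coloneqq \tau_i^{(\alpha)}$ defines continuous sections $\tau_1, \dots, \tau_d \colon O \to E$ (continuity follows from continuity on each piece of the disjoint open cover) which form an orthonormal basis of $E_m$ for every $m \in O$.

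Finally I would verify that $O$ is dense in $M$. If not, the open set $M \setminus \overline{O}$ is nonempty. Pick $m \in M \setminus \overline{O}$; by the first step, there is an open neighborhood $W$ of $m$ together with continuous orthonormal sections $\tau' = (\tau'_1, \dots, \tau'_d)$ on $W$, and we may shrink $W$ so that $W \subset M \setminus \overline{O}$. Since $U_\alpha \subset O \subset \overline{O}$ for every $\alpha$, the set $W$ is disjoint from every $U_\alpha$, so adjoining $(W, \tau')$ to the family produces a strictly larger element of $\mathcal{F}$, contradicting maximality. Hence $O$ is dense. The main obstacle to double-check is the continuity of the Gram--Schmidt step, since it requires genuine continuity of $(\cdot \mid \cdot)$, addition, scalar multiplication and norm on the total space $E$; this is exactly what the Hilbert bundle axioms together with Proposition \ref{existinnerprod} provide, so no further work is needed.
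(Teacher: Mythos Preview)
Your proof is correct and follows essentially the same approach as the paper: a local construction of orthonormal sections near each point followed by a Zorn's lemma argument to obtain a densely defined family. The paper's proof is just a two-line sketch of exactly this, so your version simply fills in the details (one minor remark: since $E$ is already assumed to be a continuous Hilbert bundle, the inner product is part of the data and you do not need to invoke Proposition~\ref{existinnerprod}).
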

    \begin{proof}
      For each $m \in M$, we find an open neighborhood and local continuous sections $\tau_1, \dots, \tau_d$ such that $\{\tau_1(m), \dots, \tau_d(m)\}$ is an orthonormal basis of $E_m$. The result now follows from Zorn's lemma.
    \end{proof}
    We call a collection of sections $\tau_1, \dots, \tau_d$ as in Proposition \ref{denselydeforth} a \emph{densely defined local orthonormal basis} of the bundle.\medskip\\
    As a consequence of \cite[Theorem 18.5]{Gierz1982} we have the following characterization of locally trivial continuous Banach bundles. 
    \begin{proposition}\label{locconst}
        For a continuous Banach bundle $s \colon E \rightarrow M$ over a locally compact space $M$ the following assertions are equivalent.
        \begin{enumerate}[(a)]
            \item $s \colon E \rightarrow M$ is locally trivial.
            \item All fibers $E_m$ for $m \in M$ are finite-dimensional and the dimension map
                \begin{align*}
                    \dim \colon M \rightarrow \mathbb{N}_0, \quad m \mapsto \dim(E_m)
                \end{align*}
                is locally constant.
        \end{enumerate}
    \end{proposition}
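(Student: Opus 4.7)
The plan is to reduce the statement to the structural result of Gierz, Theorem 18.5, cited just before the proposition, treating the two directions separately.

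For the implication (a) $\Rightarrow$ (b), I would argue directly from the definition of local triviality. Given $m \in M$, choose the compact neighborhood $C$, the natural number $d \in \mathbb{N}_0$, and the homeomorphism $\Phi \colon s^{-1}(C) \to C \times \mathbb{C}^d$ from the definition. Since $\Phi$ restricts to a vector space isomorphism $E_{m'} \cong \{m'\} \times \mathbb{C}^d$ on each fiber over $m' \in C$, the fiber $E_{m'}$ is $d$-dimensional. Hence every fiber is finite-dimensional, and the function $\dim \colon M \to \mathbb{N}_0$ takes the constant value $d$ on the neighborhood $C$ of $m$, which gives local constancy.

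For the harder implication (b) $\Rightarrow$ (a), the plan is to invoke Gierz's Theorem 18.5 directly. That theorem characterizes local triviality of a continuous Banach bundle over a locally compact Hausdorff base in terms of finite-dimensional fibers of locally constant dimension, provided the total space is Hausdorff. The Hausdorff property of the total space $E$ is automatic for continuous Banach bundles by Gierz, Proposition 16.4, as noted in the footnote accompanying the statement. Thus, both hypotheses of Theorem 18.5 are met, and local triviality follows. Conceptually, one first picks sections $\sigma_1, \dots, \sigma_d$ near a point $m$ whose values form a basis of $E_m$; continuity of the linear operations and the norm, together with the neighborhood base axiom of a continuous Banach bundle, ensures that these sections remain linearly independent on a neighborhood $C$ of $m$ and that the map $(m', \lambda) \mapsto \sum_i \lambda_i \sigma_i(m')$ provides a trivialization $C \times \mathbb{C}^d \to s^{-1}(C)$ satisfying the metric comparability condition.

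The main obstacle, which is entirely encapsulated in Gierz's argument, is precisely the propagation of a fiberwise algebraic basis at $m$ to a continuous trivialization over a neighborhood of $m$. This requires the openness of $s$ and the third axiom in the definition of a continuous Banach bundle to produce the two-sided norm bounds $c_1 \|v\| \leq \|\operatorname{pr}_2(\Phi(v))\| \leq c_2 \|v\|$. Since this technical core is already available from the cited reference, the proof reduces to a short verification that the hypotheses of Theorem 18.5 are met.
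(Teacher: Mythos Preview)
Your proposal is correct and matches the paper's treatment: the paper does not give its own proof but simply cites \cite[Theorem 18.5]{Gierz1982}, together with the footnote that \cite[Proposition 16.4]{Gierz1982} guarantees the total space is Hausdorff. Your write-up adds a direct argument for (a) $\Rightarrow$ (b) and a sketch of the idea behind Gierz's theorem, but the logical content is identical to the paper's citation.
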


As a corollary, we obtain the following interesting and helpful characterization of locally trivial subbundles.
\begin{corollary}\label{charloctriv}
    Let $s \colon E \rightarrow M$ be a locally trivial vector bundle over a locally compact space $M$, and $F \subset E$ a subset such that $F \cap E_m$ is a linear subspace of $E_m$ for every $m \in M$. Then the restriction $s|_{F} \colon F \rightarrow M$ defines a locally trivial vector bundle if and only if the following two conditions are satisfied.
        \begin{enumerate}[(i)]
            \item $F$ is a closed subset of $E$.
            \item The restriction $s|_F \colon F \rightarrow M$ is open.
        \end{enumerate}
\end{corollary}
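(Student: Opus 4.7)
My plan is to handle the two implications separately, with the bulk of the work in the "if" direction.

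For the forward direction, openness of $s|_F$ is built into the definition of a Banach/vector bundle. To show $F$ is closed in $E$, I would argue locally: fix $m_0\in M$, shrink to a compact neighborhood $C$ on which both $s$ and $s|_F$ are trivializable via homeomorphisms $\Phi\colon s^{-1}(C)\to C\times\C^d$ and $\Psi\colon (s|_F)^{-1}(C)\to C\times\C^{d'}$. The inclusion $F|_C\hookrightarrow E|_C$ then reads, in coordinates, $(m,y)\mapsto (m,A(m)y)$ for continuous linear injections $A(m)\colon\C^{d'}\to\C^d$. The norm estimates in the definition of local triviality (applied on both sides, using that the fiber norm on $F_m$ is the restriction of that on $E_m$) force $\|A(m)y\|\asymp\|y\|$ uniformly for $m\in C$. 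A standard limiting argument then shows that the image of $\Psi^{-1}$ is closed in $E|_C$, hence $F$ is closed in $E$.

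For the converse, assume (i) and (ii). The first step is to verify that $s|_F\colon F\to M$ is itself a continuous Banach bundle: each $F_m$ is a finite-dimensional (hence closed) subspace of $E_m$, so a Banach space; addition, scalar multiplication and the norm are inherited from the corresponding continuous operations on $E$; the neighborhood base property at $0_{F_m}$ is inherited from $E$ via the subspace topology (the sets $V(O,\varepsilon)\cap F$ form such a base); surjectivity holds because $0\in F_m$ for every $m$; and openness is assumption (ii). Proposition \ref{locconst} then reduces the local triviality of $s|_F$ to showing that $m\mapsto\dim F_m$ is locally constant.

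To establish local constancy of the dimension at a fixed $m_0\in M$, I would prove lower and upper semicontinuity separately. For lower semicontinuity, pick a basis $v_1,\dots,v_{d'}$ of $F_{m_0}$; since $s|_F$ is a continuous Banach bundle, it admits continuous sections through each $v_i$, yielding local sections $\sigma_i\colon U\to F$ with $\sigma_i(m_0)=v_i$. Reading these in a local trivialization of $E$ around $m_0$, linear independence at $m_0$ corresponds to a non-vanishing minor of the coordinate matrix, which persists on an open neighborhood by continuity; this gives $\dim F_m\ge d'$ nearby. For upper semicontinuity, I equip $E$ with a continuous inner product via Proposition \ref{existinnerprod} and argue by contradiction: if some net $m_\alpha\to m_0$ satisfied $\dim F_{m_\alpha}\ge d'+1$, one picks orthonormal tuples $w_1^\alpha,\dots,w_{d'+1}^\alpha\in F_{m_\alpha}$, uses a local trivialization of $E$ plus the equivalence of norms to see that their coordinates lie in a compact set, extracts a subnet converging to $w_1,\dots,w_{d'+1}\in E_{m_0}$, and uses the closedness of $F$ together with continuity of the inner product to conclude that these limits form an orthonormal set in $F_{m_0}$, contradicting $\dim F_{m_0}=d'$.

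The step I expect to be most delicate is the upper semicontinuity argument, because it is where both hypotheses (i) and (ii) are genuinely used in a nontrivial way: closedness of $F$ is needed to capture the limit inside $F_{m_0}$, while openness of $s|_F$ is what made $F$ a Banach bundle in the first place and hence allowed me to produce the local sections used for lower semicontinuity. Ensuring that the compactness needed to extract a convergent subnet is available—via a local trivialization of $E$ and the continuous inner product—is the main technical point to handle carefully.
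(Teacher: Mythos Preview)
Your proposal is correct and follows essentially the same route as the paper: both reduce the ``if'' direction via Proposition~\ref{locconst} to local constancy of the fiber dimension, then use a continuous inner product (Proposition~\ref{existinnerprod}) together with closedness of $F$ and a compactness/limit-of-orthonormal-vectors argument for the upper-semicontinuity half, while the lower-semicontinuity half comes from the Banach bundle structure on $F$ (the paper cites \cite[Theorem 18.2]{Gierz1982}, you sketch it via local sections). The only organizational difference is that the paper packages this as ``the top-dimensional stratum $M_d$ is clopen, then iterate,'' whereas you argue both semicontinuities pointwise; the content is the same.
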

\begin{proof}
    Since all the properties can be checked locally on $M$, we may assume that $M$ is compact. Using Proposition \ref{existinnerprod} we choose a continuous inner product  $(\cdot| \cdot ) \colon E \times_M E \rightarrow \C$. If $s|_F$ is locally trivial, then condition (ii) is clearly fulfilled and (i) is a consequence of \cite[Lemma 4.16]{EdKr2021}. Now assume conversely that (i) and (ii) hold. Since the dimension of $E$ is locally constant, we may assume that $\sup_{m \in M} \dim(F_m) \leq \sup_{m \in M} \dim(E_m) < \infty$. In view of Proposition \ref{locconst} it now suffices to show that the dimension map $\dim \colon M \rightarrow \mathbb{N}_0, \, m \mapsto \dim(F_m)$ is locally constant. Let $d$ be the maximal dimension among fibers of $F$ and consider the set
        \begin{align*}
            M_{d} \coloneqq \{m \in M \mid \dim F_m = d\}
        \end{align*}
    which is open by \cite[Theorem 18.2]{Gierz1982}. By Proposition \ref{locconst}, $s^{-1}(M_{d}) \rightarrow M_{d}$ is a locally trivial bundle over $M_{d}$. We choose a densely defined local orthonormal basis $\sigma_1, \dots, \sigma_{d} \colon O \rightarrow s^{-1}(M_{d})$. Now pick a point $m \in \overline{M_{d}} \subset M$. We then find a net $(m_\alpha)_{\alpha \in A}$ in $O$ converging to $m$. Since $\|\sigma_k(m_\alpha)\| =1$ for every $\alpha \in A$ and bounded subsets of $E$ are relatively compact in $E$ by \cite[Lemma 4.15]{EdKr2021}, we may assume, by passing to a subnet, that the limit $v_i \coloneqq \lim_{\alpha} \sigma_i(m_\alpha)$ exists in $E$ and hence by (i) in $F$ for every $i \in \{1, \dots, d\}$. But then 
        \begin{align*}
            (v_i|v_j) = \lim_{\alpha}(\sigma_i(m_\alpha)|\sigma_j(m_\alpha)) = \delta_{ij}
        \end{align*}
    for all $i,j \in \{1, \dots, d\}$. Therefore the vectors $v_1, \dots, v_{d}$ form an orthonormal subset of $F_{m}$. We conclude that $\dim F_m = d$. This shows that $M_{d}$ is closed and consequently clopen in $M$. By considering $E$ and $F$ on the complement of $M_{d}$ and iterating the previous steps, we conclude that the dimension of $F$ is in fact locally constant.
\end{proof}

\section{Representations of compact group bundles}\label{seccompgrp}

\subsection{Compact group bundles}\label{compgroupbundles}
To state our version of the Mackey--Zimmer theorem, we need to introduce compact group bundles (cf.~\cite[Definition 2.19]{edeko}).
 \begin{definition}
			A continuous surjection $p \colon H \rightarrow M$ from a compact space $H$ to a compact space $M$ is a \emph{compact group bundle} if
			        \begin{enumerate}[(i)]
					\item every fiber $H_m \coloneqq p^{-1}(m)$ for $m \in M$ is a group, and
					\item the maps
					   \begin{align*}
					        	&H \times_M H \rightarrow H, \quad (x,y) \mapsto xy,\\
					        	&H \rightarrow H, \quad x \mapsto x^{-1}, \textrm{ and }\\
					        	&M \rightarrow H, \quad m \mapsto 1_{H_m}
					   \end{align*}
					   are continuous.
				\end{enumerate}
\end{definition}

Evidently, every compact group can be considered a compact group bundle over a singleton. The following are further basic examples.
\begin{examples}\label{examplescompgroup}
    Let $M$ be any compact space and $H$ a compact group.
    \begin{enumerate}[(i)]
         \item The product $M \times H$ defines an open compact group bundle over $M$ by projecting onto the first component. This is the \emph{trivial group bundle} with fiber $H$.
         \item If $m_0 \in M$, then the quotient of $M \times H$ given by identifying all points $(m_0,x)$ for $x \in H$ defines an open compact group bundle over $M$. The fiber over every point $m \in M \setminus \{m_0\}$ is isomorphic to $H$, whereas the fiber over $m_0$ is given by a trivial group.
         \item If $m_0 \in M$ is an accumulation point of $M$ and the group $H$ is non-trivial, then the subspace $M\setminus\{m_0\} \times \{1_H\} \cup \{m_0\} \times H \subset M \times H$ provides an example of a compact group bundle which is not open.
    \end{enumerate}
\end{examples}

Examples \ref{examplescompgroup} (ii) and (iii) are also examples for the following concepts. 

\begin{definition}
Let $p \colon H \rightarrow M$ be a compact group bundle over a compact space $M$.
    \begin{enumerate}[(i)]
        \item A restriction $p|_U \colon U \rightarrow M$ is a \emph{closed subgroup bundle} of $H$ if $U \subset H$ is a closed subset such that $U_m \coloneqq U \cap H_m$ is a subgroup of $H_m$ for every $m \in M$.
        \item If $p|_U \colon U \rightarrow M$ is a closed subgroup bundle of $p$, then the canonical map $p_{/U} \colon H/U \rightarrow M$ , where $H/U \coloneqq \bigcup_{m \in M} H_m/U_m$ is equipped with the quotient topology is the \emph{homogeneous space bundle} induced by $U$. For $x \in U$ the equivalence class in the quotient space $H/U$ is denoted by $xU$.
    \end{enumerate}
\end{definition}

The following lemma collects some basic facts about compact group bundles.
\begin{lemma}\label{remarkshomspace}
    Let $p \colon H \rightarrow M$ be a compact group bundle over a compact space $M$, and $p|_U \colon U \rightarrow M$ a closed subgroup bundle.
    \begin{enumerate}[(i)]
        \item The quotient space $H/U$ is a Hausdorff space.
        \item For each $m \in M$ the subspace $H_m/U_m \subseteq H/U$ carries the quotient topology with respect to the canonical surjection $H_m \rightarrow H_m/U_m$.
        \item The canonical action $H \times_{M} H/U \rightarrow H/U, \, (x,yU) \mapsto xyU$ is continuous.
        \item If $p\colon H \rightarrow M$ is open, then so is $p_{/U}\colon H/U \rightarrow M$.
        \end{enumerate}
\end{lemma}
\begin{proof}
    Since the equivalence relation defined by the canonical map $q \colon H \rightarrow H/U$ is closed, part (i) follows from \cite[Section I.10.4, Proposition 8]{Bour1995}. Parts (ii) and (iv) are easy to check.
    
    We prove (iii). By the closed graph theorem for compact spaces (see, e.g., \cite[Theorem XI.2.7]{Dug1966}), it suffices to show that the map has a closed graph. So let $((x_{\alpha},y_{\alpha}U))_{\alpha \in A}$ be a net $H \times_{M} H/U$ converging to $(x,yU)$ such that $(x_{\alpha}y_{\alpha}U)_{\alpha \in A}$ converges to $zU$ in $H/U$. We have to show that $zU = xyU$. Using compactness of $H$ and continuity of the quotient map $H \rightarrow H/U$, we may assume, by passing to a subnet, that $\lim_{\alpha} y_{\alpha} = y \in H$. But then $\lim_{\alpha} x_{\alpha}y_{\alpha} = xy$ in $H$ by continuity of multiplication in compact group bundles. We conclude that $\lim_{\alpha} x_{\alpha}y_{\alpha}U = xyU$ in $H/U$ and therefore $z = xyU$ as desired.
\end{proof}

The following special case of a result of Renault (see \cite[Lemma 1.3]{Rena1991}) characterizes when a compact group bundle is open, where we use the fact that a compact group bundle is a special case of a compact groupoid (cf. Section \ref{uniformenv}). Here and in the following the Haar measure of a compact group $H$ is denoted by $\lambda_H$.

\begin{proposition}\label{Renault}
    For a compact group bundle $p \colon H \rightarrow M$ over a compact space $M$ the following assertions are equivalent.
        \begin{enumerate}[(i)]
            \item The compact group bundle $p \colon H \rightarrow M$ is open.
            \item The map $M \rightarrow \mathrm{P}_p(H), \, m \mapsto \lambda_{H_m}$ is continuous, hence a relative measure.
        \end{enumerate}
\end{proposition}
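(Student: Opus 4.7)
The plan is to handle the two implications via the weak$^*$ topology on $\mathrm{P}_p(H)$, using the uniqueness and full support of the Haar measure $\lambda_{H_m}$ on each fiber $H_m$, together with the convergence characterizations of Proposition~\ref{descriptiontop} and Lemma~\ref{convlemma}.

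For (ii) $\Rightarrow$ (i), I will verify the approximation characterization of openness from Remark~\ref{openchar}. Given a net $(m_\alpha)_{\alpha} \to m$ in $M$ and a point $x \in H_m$, let $U \subset H$ be any open neighborhood of $x$; then $U \cap H_m$ is nonempty and open in $H_m$, hence $\lambda_{H_m}(U) > 0$ by full support of the Haar measure on $H_m$. Continuity of the section in (ii) gives $\lambda_{H_{m_\alpha}} \to \lambda_{H_m}$ weak$^*$ in $\mathrm{P}(H)$, and the Portmanteau inequality $\liminf_\alpha \lambda_{H_{m_\alpha}}(U) \geq \lambda_{H_m}(U)$ for open $U$ forces $U \cap H_{m_\alpha} \neq \emptyset$ for cofinally many $\alpha$. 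A routine subnet construction indexed by $A$ paired with a neighborhood base at $x$ then produces a subnet $(m_\beta)_\beta$ and points $x_\beta \in H_{m_\beta}$ with $x_\beta \to x$, as required.

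For (i) $\Rightarrow$ (ii), fix $(m_\alpha)_\alpha \to m$ and an arbitrary subnet $(m_\beta)_\beta$. The space $\mathrm{P}_p(H)$ is weak$^*$-compact (being the preimage under the continuous map $p_*$ of the closed subset $\{\delta_{m'} \mid m' \in M\} \subset \mathrm{P}(M)$), so after passing to a further subnet $(m_\gamma)_\gamma$ we obtain $\lambda_{H_{m_\gamma}} \to \mu$ for some $\mu \in \mathrm{P}_p(H)$, necessarily supported on $H_m$. By uniqueness of the Haar measure it suffices to show that $\mu$ is left-invariant on $H_m$. Given $g \in H_m$ and $f \in \mathrm{C}(H)$, the openness hypothesis on $p$ (via Remark~\ref{openchar}) lets me pass to yet a further subnet $(m_\delta)_\delta$ and choose $g_\delta \in H_{m_\delta}$ with $g_\delta \to g$. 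Left-invariance of each $\lambda_{H_{m_\delta}}$ then reads
\[
    \int_{H_{m_\delta}} f(g_\delta y)\, \mathrm{d}\lambda_{H_{m_\delta}}(y) = \int_{H_{m_\delta}} f(y)\, \mathrm{d}\lambda_{H_{m_\delta}}(y).
\]

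The main obstacle is taking the limit of the left-hand side, where both the integrand and the domain of integration vary with $\delta$. I will recast it via the bundle of continuous fiber maps: define $F_\delta \in \mathrm{C}(H_{m_\delta})$ by $F_\delta(y) \coloneqq f(g_\delta y)$ and $F_\infty \in \mathrm{C}(H_m)$ by $F_\infty(y) \coloneqq f(gy)$, both well-defined thanks to the continuous multiplication $H \times_M H \to H$. That $F_\delta \to F_\infty$ in $\mathrm{C}_p(H)$ follows from Proposition~\ref{descriptiontop}: for any subnet and any $y_{\delta'} \in H_{m_{\delta'}}$ with $y_{\delta'} \to y \in H_m$, joint continuity of multiplication gives $g_{\delta'} y_{\delta'} \to gy$, hence $f(g_{\delta'} y_{\delta'}) \to f(gy)$. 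Applying Lemma~\ref{convlemma} to the convergent pairs $(\lambda_{H_{m_\delta}}, F_\delta) \to (\mu, F_\infty)$ in $\mathrm{P}_p(H) \times_{p^*, s} \mathrm{C}_p(H)$ yields $\int_{H_m} f(gy)\, \mathrm{d}\mu(y) = \int_{H_m} f\, \mathrm{d}\mu$. Since $g \in H_m$ and $f \in \mathrm{C}(H)$ are arbitrary, $\mu$ is left-invariant, so $\mu = \lambda_{H_m}$. Thus every subnet of $(\lambda_{H_{m_\alpha}})_\alpha$ has a further subnet converging to $\lambda_{H_m}$, establishing the continuity in (ii).
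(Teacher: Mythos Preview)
Your proof is correct. The paper does not actually prove Proposition~\ref{Renault} itself but merely cites it as a special case of a result of Renault (\cite[Lemma 1.3]{Rena1991}); your argument supplies a clean self-contained proof using only the tools already developed in Section~\ref{topb1} (the Portmanteau inequality and full support of Haar measure for (ii) $\Rightarrow$ (i), and compactness of $\mathrm{P}_p(H)$ together with Lemma~\ref{convlemma} and uniqueness of Haar measure for (i) $\Rightarrow$ (ii)).
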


\subsection{Group bundle C*-algebras}\label{staralg}
We now consider the C*-algebra associated with an open compact group bundle (see \cite[Section 3]{MRW1996} and \cite{Goeh2008} in the case of bundles of locally compact abelian groups), which is a special case of the general version for locally compact groupoids with a continuous Haar system, see Definition II.1.12 of \cite{Rena1980}.


\begin{definition}
	        Let $p \colon H \rightarrow M$ be an open compact group bundle over a compact space $M$. The space $\mathrm{C}(H)$ equipped with the 
	multiplication given by
	        	\begin{align*}
		        	(f * g)(x)\coloneqq \int_{H_{p(x)}} f(xy)g(y^{-1}) \, \mathrm{d}\lambda_{H_{p(x)}}(y) \quad \text{for every } x \in H
	        	\end{align*}
        	for $f,g \in \mathrm{C}(H)$ and the involution defined by
	        	\begin{align*}
		        	f^*(x) \coloneqq \overline{f(x^{-1})} \quad \text{for every } x \in H
	        	\end{align*}
        	for $f \in \mathrm{C}(H)$ is the \emph{convolution algebra} of $H$.
\end{definition}

One can check that this is indeed a *-algebra, see \cite[Proposition II.1.1]{Rena1980}. The associated C*-algebra is constructed via  the representations of the convolution algebra (see \cite[Definitions II.1.3 and II.1.5]{Rena1980}).
\begin{definition}\label{defboundedrep}
    Let $p \colon H \to M$ be an open compact group bundle over a compact space $M$. A \emph{(non-degenerate) bounded representation} of $\mathrm{C}(H)$ is a *-algebra homomorphism 
		$\pi \colon \mathrm{C}(H) \to \mathscr{L}(E)$ with $E$ a Hilbert space such that
			\begin{enumerate}[(i)]
				\item $\|\pi(f)\| \leq \sup_{m \in M} \int_{H_m} |f|\, \mathrm{d}\lambda_m$ for every $f \in \mathrm{C}(H)$, and
				\item $E = \overline{\mathrm{lin}} \{\pi(f)v\mid f \in \mathrm{C}(H), v \in E\}$.
			\end{enumerate}
\end{definition}
One can show that there is always a \enquote{faithful family of bounded representations} (see \cite[Proposition II.1.11]{Rena1980}) and this enables the following definition (see \cite[Definition II.1.12]{Rena1980}).
\begin{definition} 
	The completion of $\mathrm{C}(H)$ with respect to the norm $\|\cdot\|$ defined by
			\begin{align*}
			 	\|f\| \coloneqq \sup \{\|\pi(f)\|\mid \pi \textrm{ bounded representation of } \mathrm{C}(H)\} \textrm{ for } f \in \mathrm{C}(H)
			\end{align*}
			is called the \emph{group bundle C*-algebra} associated to an open compact group bundle $p \colon H \to M$ over a compact space $M$. We denote it by $\mathrm{C}^*(H)$.
\end{definition}
In this way, every open compact group bundle gives rise to a C*-algebra. In the case of the trivial bundle over a singleton defined by a compact group, this agrees with the usual group C*-algebra (as defined, e.g., in \cite[Section 13.9]{Dixm1977} or \cite[Section 7.1]{folland-harmonic-analysis}).

\subsection{Representations and the dual space}\label{dual}

The following result relates the irreducible representations of the group bundle C*-algebra to the irreducible unitary representation of its fiber groups (see \cite[Corollary 3.4]{MRW1996} for the case of abelian fiber groups). Recall here that a representation on a Hilbert space $E$ is \emph{irreducible} if $E$ is non-zero and there are no non-trivial closed subspaces of $E$ which are invariant with respect to the representation, see \cite[Sections 2.3 and 13.1]{Dixm1977}. Moreover, unitary representations are assumed to be continuous with respect to the strong operator topology, and vector-valued integrals are understood in the weak sense (see, e.g., \cite[Definition 3.26]{rudinfa}).

\begin{proposition}\label{spectrum}
	Let $p \colon H \rightarrow M$ be an open compact group bundle over a compact space $M$.
	\begin{enumerate}[(i)]
		\item If $m \in M$ and $\varrho \colon H_m \to \mathscr{L}(E)$ is a unitary 
		representation of $H_m$ on a Hilbert space $E$, then there is a unique non-degenerate representation $\pi_\varrho \colon \mathrm{C}^*(H) \rightarrow \mathscr{L}(E)$ with
		\begin{align*}
			\pi_\varrho(f)v = \int_{H_m} f(x) \varrho(x)v  \,\mathrm{d}\lambda_{H_m}(x) \textrm{ for every } v \in E \textrm{ and } f \in \mathrm{C}(H).
		\end{align*}
	    Moreover, $\varrho$ is irreducible if and only if $\pi_\varrho$ is irreducible.\label{spectrum1}
	\item Conversely, for each non-zero irreducible representation
	$\pi \colon \mathrm{C}^*(H) \to \mathscr{L}(E)$ there is a unique $q(\pi) \in M$ 
	and a unique irreducible unitary representation $\varrho_\pi \colon H_{p(\pi)} \to \mathscr{L}(E)$ with $\pi = \pi_{\varrho_\pi}$.\label{spectrum2}
	 \item Two non-zero irreducible representations $\pi_1$ and $\pi_2$ of $\mathrm{C}^*(H)$ are equivalent if and only if $q(\pi_1) = q(\pi_2)$ and the corresponding unitary group representations 
	 $\varrho_{\pi_1}$ and $\varrho_{\pi_2}$ are equivalent.\label{spectrum3}
	 \end{enumerate}
\end{proposition}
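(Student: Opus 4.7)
The plan is to handle (i), (ii), (iii) in turn, following the classical Dauns–Hofmann / Gelfand strategy for $\mathrm{C}^*$-algebras that fibre over a central commutative subalgebra, applied here to the bundle C*-algebra $\mathrm{C}^*(H)$ whose central subalgebra is $\mathrm{C}(M)$.

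For (i), I would first define $\pi_\varrho$ by the stated integral formula on the dense subalgebra $\mathrm{C}(H) \subset \mathrm{C}^*(H)$, checking that it is a $*$-homomorphism by a direct Fubini computation using invariance of $\lambda_{H_m}$ (this is formally identical to the classical computation for $\mathrm{C}^*(G)$ of a compact group, done on the single fibre $H_m$; note that $f*g$ supported on $H_m$ depends only on $f|_{H_m}$ and $g|_{H_m}$). The crude pointwise bound $\|\pi_\varrho(f)v\| \le \int_{H_m}|f|\,\mathrm{d}\lambda_{H_m}\|v\| \le \sup_{m'\in M}\int_{H_{m'}}|f|\,\mathrm{d}\lambda_{H_{m'}}\|v\|$ is exactly the bound in Definition \ref{defboundedrep}, so $\pi_\varrho$ extends uniquely and continuously to $\mathrm{C}^*(H)$. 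Non-degeneracy then follows from an approximate identity argument: choose continuous functions $f_\alpha\in \mathrm{C}(H_m)$ with $\int f_\alpha \, \mathrm{d}\lambda_{H_m} = 1$ and supports shrinking to $1_{H_m}$, use Tietze (valid since $H_m\subset H$ is closed in a compact Hausdorff space) to extend them to $\mathrm{C}(H)$, and verify $\pi_\varrho(f_\alpha)v\to v$ using strong continuity of $\varrho$.

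The irreducibility equivalence in (i) is now a standard transfer between $\varrho$ and $\pi_\varrho$. A $\varrho$-invariant closed subspace is visibly $\pi_\varrho$-invariant by pulling the projection under the integral. Conversely, if $F \subset E$ is $\pi_\varrho$-invariant, apply $\pi_\varrho$ to an approximate delta family concentrating at a fixed $x\in H_m$ (again using Tietze extensions from $H_m$ to $H$ and the continuity of the Haar system provided by Proposition \ref{Renault}) to conclude $\varrho(x)F \subset F$. Uniqueness of $\pi_\varrho$ is immediate since $\mathrm{C}(H)$ is dense in $\mathrm{C}^*(H)$.

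For (ii), the key mechanism is the canonical action of $\mathrm{C}(M)$ on $\mathrm{C}^*(H)$ by central multipliers
\[
   (\phi\cdot f)(x) \coloneqq \phi(p(x))\,f(x),\qquad \phi \in \mathrm{C}(M),\ f\in \mathrm{C}(H).
\]
A direct computation using that $p(y^{-1}) = p(y) = p(x)$ whenever $xy$ is defined shows $\phi\cdot(f*g) = (\phi\cdot f)*g = f*(\phi\cdot g)$ and $(\phi\cdot f)^* = \bar\phi \cdot f^*$, and the trivial $L^1$-bound shows this action extends to a unital $*$-homomorphism $\mathrm{C}(M)\to Z(M(\mathrm{C}^*(H)))$. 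Given an irreducible $\pi\colon \mathrm{C}^*(H)\to \mathscr{L}(E)$, extend $\pi$ to the multiplier algebra; by Schur's lemma the centre acts by scalars, so $\phi\mapsto \pi(\phi)$ is a character of $\mathrm{C}(M)$, i.e.\ evaluation at a unique point $m_0 \in M$, which I set equal to $q(\pi)$. Then $\pi$ annihilates the closed ideal $J_{m_0} \coloneqq \overline{\{\phi\cdot f\mid \phi(m_0)=0,\ f\in \mathrm{C}^*(H)\}}$, so $\pi$ factors through the quotient $\mathrm{C}^*(H)/J_{m_0}$. The main step — and the principal obstacle of the whole proof — is identifying this quotient with the classical group C*-algebra $\mathrm{C}^*(H_{m_0})$ via the restriction map $\mathrm{C}(H)\to \mathrm{C}(H_{m_0})$, $f\mapsto f|_{H_{m_0}}$. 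Surjectivity on the dense subalgebra follows from Tietze, the kernel clearly contains $J_{m_0}$, and the norm on the quotient agrees with the C*-norm on $\mathrm{C}^*(H_{m_0})$ because every bounded representation of $\mathrm{C}(H_{m_0})$ can be inflated (through Tietze) to a bounded representation of $\mathrm{C}(H)$ trivial on $J_{m_0}$, while conversely every bounded representation of $\mathrm{C}(H)$ annihilating $J_{m_0}$ descends to $\mathrm{C}^*(H_{m_0})$; here the continuity of the Haar system (Proposition \ref{Renault}) is what makes the $L^1$-bound uniform over fibres and lets one match the two representation norms. Having identified the quotient, (\ref{spectrum1}) applied to $H_{m_0}$ yields the desired irreducible $\varrho_\pi\colon H_{m_0}\to\mathscr{L}(E)$, unique because the construction of $\pi_{\varrho_\pi}$ inverts the factorisation step.

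Part (iii) follows from (i) and (ii): a unitary intertwiner between $\pi_1$ and $\pi_2$ must commute with the scalar actions of $\mathrm{C}(M)$, forcing $q(\pi_1)=q(\pi_2)=m_0$, after which the intertwiner for $\pi_1,\pi_2$ as representations of $\mathrm{C}^*(H_{m_0})$ is, by the classical group case already recovered in (i), exactly an intertwiner for $\varrho_{\pi_1},\varrho_{\pi_2}$; the converse direction is immediate from the integral formula. The delicate point throughout is the fibration of $\mathrm{C}^*(H)$ over $M$ in Step (ii), which rests crucially on openness of $p$.
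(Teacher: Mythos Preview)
Your proposal is correct and follows essentially the same route as the paper: both use the embedding of $\mathrm{C}(M)$ into the centre of the multiplier algebra (the paper's Lemma~\ref{multiplier}), apply Schur's lemma to an irreducible $\pi$ to extract a point $m_0\in M$, and then reduce to the classical correspondence between unitary representations of $H_{m_0}$ and representations of its group $\mathrm{C}^*$-algebra.

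The only organisational difference is in part~(ii). You frame the reduction as identifying the full quotient $\mathrm{C}^*(H)/J_{m_0}\cong \mathrm{C}^*(H_{m_0})$, whereas the paper bypasses this and shows directly, via a single bump-function estimate exploiting the continuity of the Haar system, that $\|\pi(g)\|\le \int_{H_{m_0}}|g|\,\mathrm{d}\lambda_{H_{m_0}}$ for the specific irreducible $\pi$ at hand; this immediately forces $\pi$ to factor through $\mathrm{C}(H_{m_0})$ with the required $\mathrm{L}^1$-bound, after which Dixmier's results for a single compact group finish the job. Your approach is a bit more structural (and implicitly recovers the continuous-field structure of $\mathrm{C}^*(H)$ over $M$), but the underlying estimate is the same bump-function argument you allude to when you write that ``continuity of the Haar system \dots\ lets one match the two representation norms''. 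In part~(i) you spell out the approximate-identity and Tietze arguments that the paper simply delegates to \cite{Dixm1977}; this is fine and arguably more self-contained.
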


Note that this result is well-known for compact (even locally compact) groups, see \cite[Sections 13.3 and 13.9]{Dixm1977}.

\begin{remark}\label{irrednec}
    The assumption of irreducibility in (ii) cannot be dropped as can be seen easily by considering the trivial group bundle $M \times \{1\} \rightarrow M$ over a compact space $M$ with at least two elements.
\end{remark}

For the proof of Proposition \ref{spectrum} we recall the concept of \emph{multiplier algebra} which is a non-commutative analogue of the Stone--\v{C}ech compactification of locally compact spaces. The following is a possible construction (see \cite[Section II.7.3]{Blac2006}, see also \cite{busby}). Given a C*-algebra $\mathcal{A}$, consider the set $\mathcal{M}(\mathcal{A})$ of \emph{double centralizers}, i.e., the set of all pairs $(L,R)$ of linear maps on $\mathcal{A}$ with $fL(g) = R(f)g$ for all $f,g \in \mathcal{A}$. If $(L,R)$ is a double centralizer, then $L$ and $R$ are bounded operators on $\mathcal{A}$ with $\|L\| = \|R\|$. Moreover, $\mathcal{M}(\mathcal{A})$ becomes a unital C*-algebra when equipped with 
    \begin{enumerate}[(i)]
        \item the norm $\|(L,R)\| \coloneqq \|L\|$ for $(L,R) \in \mathcal{M}(\mathcal{A})$,
        \item addition and scalar multiplication defined componentwise,
        \item the multiplication defined by $(L_1,R_1) \cdot (L_2,R_2) \coloneqq (L_2L_1,R_1R_2)$ for $(L_1,R_1)$, $(L_2,R_2) \in \mathcal{M}(\mathcal{A})$, and
        \item the involution given by $(L,R)^* \coloneqq (R^*,L^*)$ for $(L,R) \in \mathcal{M}(\mathcal{A})$, where $L^*f \coloneqq (Lf^*)^*$ and $R^*f \coloneqq (Rf^*)^*$ for $f \in \mathcal{A}$.
    \end{enumerate}
    The canonical map 
	\begin{align*}
		\mathcal{A} \rightarrow \mathcal{M}(\mathcal{A}), \quad f \rightarrow (L_f,R_f)
	\end{align*}
	with $L_fg\coloneqq fg$ and $R_fg\coloneqq gf$ for $f,g \in \mathcal{A}$ is an embedding of the C*-algebra $\mathcal{A}$ onto a closed ideal of $\mathcal{M}(\mathcal{A})$. Moreover, every non-degenerate representation $\pi \colon \mathcal{A} \rightarrow \mathscr{L}(E)$ of $\mathcal{A}$ on a Hilbert space $E$ has a unique extension to a unital representation of $\mathcal{M}(\mathcal{A})$ and this satisfies $\pi((L,R))\pi(f) = \pi(Rf)$ for all $(L,R) \in \mathcal{M}(\mathcal{A})$ and $f \in \mathcal{A}$ (see \cite[Theorem II.7.3.9]{Blac2006}).
	
	Further note that a double centralizer is of the form $(S,S)$ for $S \in \mathscr{L}(A)$ if and only if it is in the center of $\mathcal{M}(\mathcal{A})$ (this can be checked easily by using the density of $\mathcal{A}$ in $\mathcal{M}(A)$ with respect to the strict topology, see, e.g., \cite[Proposition 3.5]{busby}, and \cite[Proposition 2.5]{busby}). Combining this observation with \cite[Proposition II.1.14]{Rena1980} yields the following.

\begin{lemma}\label{multiplier}
	 Let $p \colon H \rightarrow M$ be an open compact group bundle over a compact space $M$. For each $f \in \mathrm{C}(M)$ the map 
		\begin{align*}
			S_f \colon \mathrm{C}(H) \rightarrow \mathrm{C}(H), \quad g \mapsto (f \circ p) \cdot g
		\end{align*}
	has a unique extension to a bounded operator $S_f \in \mathscr{L}(\mathrm{C}^*(H))$. Moreover, the mapping 
		\begin{align*}
			\mathrm{C}(M) \rightarrow \mathcal{Z}(\mathcal{M}(\mathrm{C}^*(H))), \quad f \mapsto (S_f,S_f)
		\end{align*}
	is a unital *-embedding of the commutative unital C*-algebra $\mathrm{C}(M)$ into the center $\mathcal{Z}(\mathcal{M}(\mathrm{C}^*(H)))$ of the multiplier algebra $\mathcal{M}(\mathrm{C}^*(H))$.
\end{lemma}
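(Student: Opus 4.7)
The plan is to verify the three assertions (bounded extension, unital $*$-embedding, image in the center) in order, reducing everything to the fact that $f \circ p \in \mathrm{C}(H)$ is constant on each fiber $H_m$ of $p$ and commutes (up to the convolution integral) with the bundle multiplication.

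First I would check at the level of the dense $*$-subalgebra $\mathrm{C}(H) \subset \mathrm{C}^*(H)$ that $(S_f, S_f)$ is a symmetric double centralizer of $\mathrm{C}(H)$. For $g, h \in \mathrm{C}(H)$ and $x \in H$, the identity $p(xy) = p(x)$ for $y \in H_{p(x)}$ gives
\begin{align*}
    S_f(g * h)(x) &= f(p(x)) \int_{H_{p(x)}} g(xy) h(y^{-1}) \, \mathrm{d}\lambda_{H_{p(x)}}(y) \\
    &= \int_{H_{p(x)}} f(p(xy)) g(xy) h(y^{-1}) \, \mathrm{d}\lambda_{H_{p(x)}}(y) = ((S_f g) * h)(x),
\end{align*}
and the analogous computation using $p(y^{-1}) = p(y) = p(x)$ shows $S_f(g * h) = g * (S_f h)$. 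A pointwise calculation also yields $(S_f g)^* = S_{\bar f}\, g^*$, so $(S_f, S_f)$ is compatible with the involution.

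The extension to $\mathrm{C}^*(H)$ is the technical heart of the lemma and I would handle it by invoking Renault's \cite[Proposition II.1.14]{Rena1980}, which is designed precisely to promote fiberwise-multiplication centralizers of the convolution algebra of a locally compact groupoid to bounded multipliers of the associated C*-algebra; this yields a unique bounded extension $S_f \in \mathscr{L}(\mathrm{C}^*(H))$ with $\|S_f\| \leq \|f\|_{\infty}$ and $(S_f, S_f) \in \mathcal{M}(\mathrm{C}^*(H))$. A direct alternative would use Definition \ref{defboundedrep}: for a bounded non-degenerate representation $\pi$ of $\mathrm{C}^*(H)$, the centralizer identity gives
\begin{align*}
\pi(S_f g)^* \pi(S_f g) = \pi((S_f g)^* * (S_f g)) = \pi(S_{|f|^2}(g^* * g)),
\end{align*}
and one then bootstraps from the case $f \geq 0$, where the $\mathrm{L}^1$-estimate in Definition \ref{defboundedrep}(i) combined with positivity gives $\|\pi(S_{|f|^2} w)\| \leq \| |f|^2\|_\infty \|\pi(w)\|$ for positive $w$. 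The main obstacle is precisely this norm control, which in the groupoid framework is exactly what Renault's result supplies.

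The remaining properties are formal. Linearity in $f$ and $S_1 = \mathrm{id}$ (unitality) are immediate. The involution convention $(L, R)^* = (R^*, L^*)$ combined with $S_f^* g = (S_f g^*)^* = S_{\bar f} g$ yields $(S_f, S_f)^* = (S_{\bar f}, S_{\bar f})$. From $(f_1 f_2) \circ p = (f_1 \circ p)(f_2 \circ p)$ we obtain $S_{f_1 f_2} = S_{f_1} S_{f_2} = S_{f_2} S_{f_1}$, and the multiplier-algebra convention $(L_1, R_1)(L_2, R_2) = (L_2 L_1, R_1 R_2)$ then gives multiplicativity of $f \mapsto (S_f, S_f)$. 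For injectivity, note that evaluating $S_f$ on any approximate identity of $\mathrm{C}(H)$ reproduces $f \circ p$ pointwise, so $S_f = 0$ together with surjectivity of $p$ forces $f = 0$. Finally, since each $S_f$ belongs to $\mathscr{L}(\mathrm{C}^*(H))$ and the associated double centralizer is of the symmetric form $(S_f, S_f)$, the observation recalled immediately before the lemma (a double centralizer $(L,R)$ lies in $\mathcal{Z}(\mathcal{M}(\mathrm{C}^*(H)))$ if and only if $L = R$) identifies the image of our embedding with a subalgebra of the center, completing the verification.
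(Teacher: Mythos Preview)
Your proposal is correct and follows essentially the same approach as the paper: both invoke \cite[Proposition II.1.14]{Rena1980} for the bounded extension and use the characterization of central multipliers as symmetric double centralizers. You simply fill in the routine verifications (centralizer identity, $*$-compatibility, multiplicativity, injectivity) that the paper leaves implicit.
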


We are now ready to prove Proposition \ref{spectrum} by using ideas of the proof of \cite[Theorem 2.4]{DuGi1983}.

\begin{proof}[Proof of Proposition \ref{spectrum}]
    For part (\ref{spectrum1}) observe that for $m \in M$ every unitary representation $\varrho \colon H_m \to \mathscr{L}(E)$ on a Hilbert space $E$ gives rise to a unique non-degenerate bounded representation $\pi_\varrho'$ of the convolution algebra $\mathrm{C}(H_m)$ of the fiber group $H_m$ via
        \begin{align*}
            \pi_\varrho'(f)v = \int_{H_m} f(x) \varrho(x)v  \,\mathrm{d}\lambda_{H_m}(x) \textrm{ for every } v \in E \textrm{ and } f \in \mathrm{C}(H_m)
        \end{align*}
    by \cite[Proposition 13.3.1]{Dixm1977}. By composing $\pi_\varrho'$ with the restriction map $\mathrm{C}(H) \rightarrow \mathrm{C}(H_m), \, f \mapsto f|_{H_m}$ we therefore obtain a non-degenerate bounded representation $\pi_{\varrho}$ of $\mathrm{C}(H)$ in the sense of Definition \ref{defboundedrep}. By definition of the group bundle C*-algebra, this representation extends to a representation of $\mathrm{C}^*(H)$. Uniqueness is clear since $\mathrm{C}(H)$ is a dense subspace of $\mathrm{C}^*(H)$. Moreover, by \cite[Remark 13.3.5]{Dixm1977} $\varrho$ is irreducible if and only if $\pi_\varrho'$ is irreducible which is the case if and only if $\pi_\varrho$ is irreducible. \medskip\\
	We now prove (\ref{spectrum2}) and let $\pi \colon \mathrm{C}^*(H) \to \mathscr{L}(E)$ be a non-zero irreducible 
	representation of the group bundle C*-algebra $\mathrm{C}^*(H)$. Then $\pi$ is non-degenerate and therefore has a unique extension to an irreducible unital representation of the mutilipier algebra $\mathcal{M}(\mathrm{C}^*(H))$, which we also denote by $\pi$, and this satisfies $\pi((L,R))\pi(f) = \pi(Rf)$ for every $f \in \mathrm{C}(H)$. We first show that $\pi$ induces a representation of the convolution algebra $\mathrm{C}(H_m)$ of a fiber group $H_m$ for some $m \in M$.
	
	For $f \in \mathrm{C}(M)$ we obtain with Lemma \ref{multiplier} that $\pi((S_f,S_f))$ is in the center of the image $\pi(\mathcal{M}(\mathrm{C}^*(H)))$. But the latter is given by $\{z \mathrm{Id}_{E}\mid z \in \C\} \subset \mathscr{L}(E)$ since the representation $\pi$ is irreducible, see \cite[Section 2.3]{Dixm1977}. We conclude that there is a multiplicative linear functional $\chi \colon \mathrm{C}(M) \rightarrow \C$ with $\pi((S_f,S_f)) = \chi(f) \mathrm{Id}_E$ for all $f \in \mathrm{C}(M)$. As $\pi$ is unital, we have
	    \begin{align*}
	        \pi((S_{\mathbbm{1}},S_{\mathbbm{1}})) = \pi((\mathrm{Id}_{\mathrm{C}^*(H)},\mathrm{Id}_{\mathrm{C}^*(H)})) = \mathrm{Id}_E.
	    \end{align*}
	Thus, $\chi(\mathbbm{1}) = 1$, and hence $\chi$ is a point evaluation (see, e.g., \cite[Lemma 4.10]{EFHN}). Therefore, there is a unique $m \in M$ such that $\pi(S_f,S_f) = f(m)\mathrm{Id}_E$ 
	for every $f \in \mathrm{C}(M)$.
	
	We now show that $\|\pi(g)\| \leq \int_{H_m} |g| \, \mathrm{d}\lambda_{H_m}$ for every $g \in \mathrm{C}(H)$. So take $g \in \mathrm{C}(H)$ and a neighborhood $O$ of $m$ in $M$. We then find
	$f \in \mathrm{C}(M)$ with $0 \leq f(m') \leq 1$ for all $m' \in M$ and support in $O$ such that $f(m)=1$. Since 
	$\|\pi(h)\| \leq \sup_{m' \in M} \int |h|\,\mathrm{d}\lambda_{H_{m'}}$ for every 
	$h \in \mathrm{C}(H)$, we obtain
	\begin{align*}
		\|\pi(g)\| = \|\pi((f \circ p)g)\| \leq \sup_{m' \in M}\int_{H_{m'}} f(m')|g|\,\mathrm{d}\lambda_{H_{m'}}\leq  \sup_{m' \in O}\int_{H_{m'}} |g|\,\mathrm{d}\lambda_{H_{m'}}. 
	\end{align*}
	Thus, $\|\pi(g)\| \leq \int_{H_{m}} |g|\,\mathrm{d}\lambda_{H_{m}}$ as desired by continuity of the Haar system (see Proposition \ref{Renault}). In particular, $\pi$ factors through the quotient $\mathrm{C}(H_m) \cong \mathrm{C}(H)/\{f \in \mathrm{C}(H)\mid f|_{H_m} = 0\}$.
	
	We conclude that $\pi$ induces a non-degenerate *-algebra representation $\tau$ of the convolution algebra $\mathrm{C}(H_m)$ on $E$ which admits no non-trivial closed invariant subspaces and is bounded with respect to the $\mathrm{L}^1$-norm on $\mathrm{C}(H_m)$. 	By \cite[Proposition 13.3.4 and Remark 13.3.5]{Dixm1977} there is a unique irreducible representation $\varrho$ of $H_m$ such that
	\begin{align*}
		\pi(g) = \varrho(g|_{H_m})
		= \int_{H_m} f(x) \varrho(x) \, \mathrm{d}\lambda_{H_m}(x) = \pi_{\varrho}(g) \textrm{ for every } g \in \mathrm{C}(H)
	\end{align*}	 
	and this shows (\ref{spectrum2}).\medskip\\
	Finally, part (\ref{spectrum3}) is an easy consequence of the uniqueness properties in (i) and (ii).
\end{proof}

Recall that for a C*-algebra $\mathcal{A}$, the spectrum $\hat{\mathcal{A}}$ are equivalence classes of non-zero irreducible representations of $\mathcal{A}$ (see \cite[Chapters 2 and 3]{Dixm1977}). Proposition \ref{spectrum} motivates the following definition (cf. \cite[Section 3]{MRW1996}).

\begin{definition}
	Let $p \colon H \to M$ be an open compact group bundle over a compact space $M$. The \emph{dual $\hat{H}$} of $H$ is the spectrum 
	of the C*-algebra $\mathrm{C}^*(H)$.
\end{definition}

In view of Proposition \ref{spectrum}, $\hat{H}$ is (as a set) in a one-to-one correspondence with the union $\bigcup_{m\in M} \hat{H}_m$, where $\hat{H}_m$ is the unitary dual of the compact group $H_m$ 
for each $m \in M$ (cf. \cite[Sections 7.1 and 7.2]{folland-harmonic-analysis} and \cite[Chapter 18]{Dixm1977}. In the following we will identify elements of $\hat{H}$ with their counterparts 
in $\bigcup_{m \in M} \hat{H}_m$. Moreover, we denote the canonical mapping 
$\hat{H} \to M$ by $\hat{p}$.

\subsection{Positive definite functions}\label{posdefsec}

For a better understanding of the dual  of an open compact group bundle, we recall the concept of positive definite function (see, e.g., \cite[Section 13.4]{Dixm1977}). 
Given a compact group $H$, a unitary representation $\pi \colon H \to \mathscr{L}(E)$ of $H$ on a non-zero Hilbert space $E$, and $v \in E$ with $\|v\| = 1$ we call the mapping 
    \begin{align*}
        f \colon H \to \C, \quad x \mapsto (\pi(x)v|v)
    \end{align*}
a \emph{normalized positive definite function associated with} $\pi$. Note that  if the vector $v \in E$ is \emph{cyclic}, i.e., $\lin \{\pi(x)v\mid x \in H\}$ is dense in $H$, then by \cite[Theorem 13.4.5]{Dixm1977} the unitary representation $\pi$ is determined by the induced map $f$ up to equivalence.
 A normalized positive definite function is \emph{pure} if it is associated with an irreducible unitary representation $\pi \colon H \rightarrow \mathscr{L}(E)$. We write $\mathrm{Pos}_1(H)$ for the set of all normalized positive definite functions associated to unitary representations of $H$ and $\mathrm{PPos}_1(H)$ for the subset of pure ones.

Now if $p \colon H \rightarrow M$ is an open compact group bundle over a compact space $M$, we set
    \begin{align*}
        \mathrm{Pos}_1(H) &\coloneqq \bigcup_{m \in M} \mathrm{Pos}_1(H_m), \textrm{ and }\\
        \mathrm{PPos}_1(H) &\coloneqq \bigcup_{m \in M} \mathrm{PPos}_1(H_m),
    \end{align*}
and equip these spaces with the subspace topology induced by $\mathrm{C}_{p}(H)$.

Similar to the case of locally compact groups, we can relate the space of normalized positive definite functions with the state space of the corresponding C*-algebra  (cf. \cite[Theorem 13.5.2 and Remark 18.1.3]{Dixm1977}). Recall that here, in analogy to the definition of positive definite functions above, for a C*-algebra $\mathcal{A}$, a representation $\pi \colon \mathcal{A} \rightarrow \mathscr{L}(E)$ of $\mathcal{A}$ on a Hilbert space $E$, and a vector $v \in E$ with $\|v\| = 1$ the map
    \begin{align*}
        \omega \colon \mathcal{A} \rightarrow \C,\quad f \mapsto (\pi(f)v|v)
    \end{align*}
is called a \emph{state associated to $\pi$} (see \cite[Sections 2.1 and 2.4]{Dixm1977}). Again, if $v \in E$ is \emph{cyclic}, i.e., $\{\pi(f)v\mid f \in \mathcal{A}\}$ is dense in $H$, then by \cite[Theorem 13.4.5]{Dixm1977}, $\pi$ is uniquely determined up to equivalence by $\omega$ (see \cite[Proposition 2.4.1]{Dixm1977}). The state $\omega$ is called \emph{pure} if it is associated with a non-zero irreducible representation (see \cite[Section 2.5]{Dixm1977}). The convex set of all states is denoted by $\mathrm{S}(\mathcal{A})$ and becomes a compact space when equipped with the weak* topology. We write $\mathrm{PS}(\mathcal{A})$ for the subspace of pure states.

\begin{proposition}\label{posdef}
    Let $p \colon H \rightarrow M$ be an open compact group bundle over a compact space $M$. 
        \begin{enumerate}[(i)]
            \item For every $f \in \mathrm{Pos}_1(H)$ associated to a unitary representation $\varrho$ of a fiber group there is a unique state $\omega_f \in \mathrm{S}(\mathrm{C}^*(H))$ associated to $\pi_\varrho$ with 
                \begin{align*}
                    \omega_f(g) = \int_{H_{s(f)}} f g \, \mathrm{d}\lambda_{H_{s(f)}}
                \end{align*}
                for all $g \in \mathrm{C}(H)$.\label{posdef1}
            \item The map
                \begin{align*} 
                    \mathrm{Pos}_1(H) \rightarrow \mathrm{S}(\mathrm{C}^*(H)), \quad f \mapsto \omega_f
                \end{align*}
            is a homeomorphism onto its range.\label{posdef2}
            \item The restricted map
                 \begin{align*} 
                    \mathrm{PPos}_1(H) \rightarrow \mathrm{PS}(\mathrm{C}^*(H)), \quad f \mapsto \omega_f
                \end{align*}
            is a homeomorphism.\label{posdef3}
        \end{enumerate}
\end{proposition}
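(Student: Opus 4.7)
My plan is to derive (i) from Proposition \ref{spectrum}(i), and then build up (ii) and (iii) via topological arguments based on Lemma \ref{convlemma} and Proposition \ref{descriptiontop}. For (i), write $f(x) = (\varrho(x)v \mid v)$ for a unitary representation $\varrho \colon H_{s(f)} \to \mathscr{L}(E)$ and a unit vector $v \in E$, and set $\omega_f \coloneqq (\pi_\varrho(\cdot)v \mid v)$ on $\mathrm{C}^*(H)$. A direct computation using the defining formula for $\pi_\varrho$ verifies the integral formula on the dense subset $\mathrm{C}(H) \subset \mathrm{C}^*(H)$; uniqueness follows from this density, and independence from the choice of $(\varrho,v)$ is automatic, since two such pairs producing the same $f$ produce the same integral.

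For continuity of $f \mapsto \omega_f$ in (ii), I would note that for any $g \in \mathrm{C}(H)$, Example \ref{extensionbundle2} identifies the family of restrictions $m \mapsto g|_{H_m}$ as a continuous section of the Banach bundle $\mathrm{C}_p(H)\to M$. If $f_\alpha \to f$ in $\mathrm{Pos}_1(H) \subset \mathrm{C}_p(H)$, the joint continuity of fiberwise multiplication yields $f_\alpha \cdot g|_{H_{s(f_\alpha)}} \to f \cdot g|_{H_{s(f)}}$ in $\mathrm{C}_p(H)$, and combining Lemma \ref{convlemma} with the continuity of the Haar system $m \mapsto \lambda_{H_m}$ from Proposition \ref{Renault} gives $\omega_{f_\alpha}(g) \to \omega_f(g)$; the uniform norm bound on states together with density of $\mathrm{C}(H)$ in $\mathrm{C}^*(H)$ extends this to weak$^*$ convergence on all of $\mathrm{C}^*(H)$. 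For injectivity, the identity $\omega_f((h \circ p)g) = h(s(f))\,\omega_f(g)$, valid because $p$ is constant on fibers, lets me recover $s(f)$ from $\omega_f$ by choosing $g \in \mathrm{C}(H)$ with $\omega_f(g) \neq 0$; the integral pairing with elements of $\mathrm{C}(H)$, together with surjectivity of the restriction map $\mathrm{C}(H) \to \mathrm{C}(H_{s(f)})$ via Tietze, then determines $f$ uniquely as a continuous function.

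The hard part, and technical heart of the proof, will be continuity of the inverse. Given a net $\omega_{f_\alpha} \to \omega_f$ in weak$^*$, Proposition \ref{descriptiontop} reduces the task to two conditions: (a) $s(f_\alpha) \to s(f)$, and (b) $f_\beta(x_\beta) \to f(x)$ whenever $(f_\beta)$ is a subnet and $x_\beta \in H_{s(f_\beta)}$ converges to $x \in H$. Condition (a) drops out of the identity above by taking ratios with a fixed $g$ satisfying $\omega_f(g) \neq 0$. For (b), the plan is to exploit the classical equicontinuity of normalized positive definite functions on compact groups, which stems from the Cauchy--Schwarz estimate $|f(x) - f(y)|^2 \leq 2(1 - \mathrm{Re}\, f(y^{-1}x))$ obtained via GNS. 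Combined with the continuity of the group bundle operations, openness of $p$, and the ability to construct approximate bump functions in $\mathrm{C}(H)$ concentrating fiberwise near $x$ with approximately normalized fiberwise integrals, weak$^*$ convergence of integrals against such bumps should deliver the desired pointwise convergence across varying fibers.

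Finally, for (iii) the restricted map is well-defined: by Proposition \ref{spectrum}(i), $f \in \mathrm{PPos}_1(H)$ is associated with an irreducible $\varrho$, whence $\pi_\varrho$ is irreducible and $\omega_f$ is pure. Conversely, every pure state $\omega \in \mathrm{PS}(\mathrm{C}^*(H))$ arises via Proposition \ref{spectrum}(ii) from an irreducible unitary representation $\varrho_\pi$ of some fiber, and a GNS unit vector produces an $f \in \mathrm{PPos}_1(H)$ with $\omega_f = \omega$, yielding surjectivity. Injectivity and the homeomorphism property then follow from the corresponding assertions in (ii) by restriction.
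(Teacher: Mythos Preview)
Your proposal is correct and follows essentially the same route as the paper: the paper isolates the hard direction of (ii)---the equivalence between convergence $f_\alpha \to f$ in $\mathrm{Pos}_1(H)$ and convergence $\lambda_{H_{s(f_\alpha)}}(f_\alpha g) \to \lambda_{H_{s(f)}}(fg)$ for all $g \in \mathrm{C}(H)$---as a separate Lemma \ref{uniformstate}, whose proof uses exactly the Cauchy--Schwarz inequality for positive definite functions together with a bump-function/convolution approximation (with the bump placed near the identity rather than near $x$, a cosmetic difference). The remaining parts (i), (iii), and the easy direction of (ii) are handled just as you describe, via Proposition \ref{spectrum} and Lemma \ref{convlemma}.
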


The following lemma is the essential part of the proof of Proposition \ref{posdef}. It is is an analogue of \cite[Theorem 13.5.2]{Dixm1977} (see also \cite[Theorem 3.31]{folland-harmonic-analysis}) and uses similar methods.

\begin{lemma}\label{uniformstate}
  Let $p \colon H \rightarrow M$ be an open compact group bundle over a compact space $M$. For a net $(f_\alpha)_{\alpha \in A}$ in
  $\mathrm{Pos}_1(H)$ and $f \in \mathrm{Pos}_1(H)$ the following are equivalent.
    \begin{enumerate}[(a)]
      \item $\lim_\alpha f_\alpha = f$.
      \item $\lim_\alpha \lambda_{H_{s(f_\alpha)}} (f_\alpha g) = \lambda_{H_{s(f)}}(f g)$ for every $g \in \mathrm{C}(H)$.
    \end{enumerate}
\end{lemma}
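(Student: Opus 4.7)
The plan is to treat the two implications separately. The direction (a) $\Rightarrow$ (b) is essentially a continuity statement that follows from the machinery already developed; (b) $\Rightarrow$ (a) rests on a concentration argument using the Cauchy--Schwarz bound $|f(y) - f(x)|^2 \leq 2(1 - \mathrm{Re}\, f(x^{-1}y))$ for normalized positive definite $f$.

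For (a) $\Rightarrow$ (b): if $f_\alpha \to f$ in $\mathrm{C}_p(H)$, Proposition \ref{descriptiontop} gives $m_\alpha := s(f_\alpha) \to m := s(f)$, and then Proposition \ref{Renault} yields $\lambda_{H_{m_\alpha}} \to \lambda_{H_m}$ in $\mathrm{P}_p(H)$. For fixed $g \in \mathrm{C}(H)$, the fiber products $f_\alpha \cdot g|_{H_{m_\alpha}}$ converge to $f \cdot g|_{H_m}$ in $\mathrm{C}_p(H)$ (pointwise check via Proposition \ref{descriptiontop} together with continuity of $g$), and Lemma \ref{convlemma} gives the limit of integrals.

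For (b) $\Rightarrow$ (a), I would first establish source convergence. Choose $g \in \mathrm{C}(H)$ with $\lambda_{H_m}(fg) \neq 0$ (Tietze-extend a non-negative bump on $H_m$ concentrated near $1_{H_m}$, where $f = 1$). For arbitrary $h \in \mathrm{C}(M)$, (b) applied to $(h \circ p) \cdot g$ gives $h(m_\alpha) \lambda_{H_{m_\alpha}}(f_\alpha g) \to h(m) \lambda_{H_m}(fg)$; dividing by the nonzero factor $\lambda_{H_{m_\alpha}}(f_\alpha g) \to \lambda_{H_m}(fg)$ (itself from (b)) yields $h(m_\alpha) \to h(m)$, whence $m_\alpha \to m$ since $\mathrm{C}(M)$ separates points. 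To verify Proposition \ref{descriptiontop}(b)(ii), suppose for contradiction that along a subnet $x_\beta \in H_{m_\beta}$ converges to $x \in H_m$ while $f_\beta(x_\beta) \to c \neq f(x)$ (passing to a further subnet using $|f_\beta(x_\beta)| \leq 1$). Fix $\epsilon > 0$, pick a neighborhood $V$ of $1_{H_m}$ in $H$ with $|f(y) - 1| < \epsilon$ on $V \cap H_m$, and a non-negative $G \in \mathrm{C}(H)$ supported in $V$ with $G(1_{H_m}) > 0$. Since $p$ is open, Proposition \ref{Renault} and Lemma \ref{convlemma} make $m' \mapsto \lambda_{H_{m'}}(G)$ continuous and positive near $m$; multiplying $G$ by a $\mathrm{C}(M)$-cutoff of its reciprocal produces $\tilde G \in \mathrm{C}(H)$ with $\lambda_{H_{m'}}(\tilde G) = 1$ for all $m'$ in some neighborhood of $m$, and set $g := \tilde G|_{H_m}$, $g_\beta := \tilde G|_{H_{m_\beta}}$ (for $\beta$ large).

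The Cauchy--Schwarz bound, applied inside $\omega_f(L_x g) = \int g(z) f(xz)\,\mathrm{d}\lambda_{H_m}(z)$ after the substitution $z = x^{-1}y$, yields $|\omega_f(L_x g) - f(x)|^2 \leq 2(1 - \mathrm{Re}\,\omega_f(g)) \leq 2\epsilon$, and analogously on each fiber $|\omega_{f_\beta}(L_{x_\beta} g_\beta) - f_\beta(x_\beta)|^2 \leq 2(1 - \mathrm{Re}\,\omega_{f_\beta}(g_\beta))$; by (b) applied to $\tilde G$ the latter is eventually $\leq 4\epsilon$. To compare the two translated states, Tietze-extend $L_x g$ to $\Psi \in \mathrm{C}(H)$: (b) gives $\omega_{f_\beta}(\Psi) \to \omega_f(\Psi) = \omega_f(L_x g)$, while continuity of multiplication and inversion on $H$ implies $L_{x_\beta} g_\beta \to L_x g$ in $\mathrm{C}_p(H)$ (checked pointwise via Proposition \ref{descriptiontop}), so $\Psi|_{H_{m_\beta}} - L_{x_\beta} g_\beta \to 0$ in $\mathrm{C}_p(H)$ and Lemma \ref{convlemma} gives $\lambda_{H_{m_\beta}}(|\Psi - L_{x_\beta} g_\beta|) \to 0$, whence $\omega_{f_\beta}(L_{x_\beta} g_\beta) \to \omega_f(L_x g)$. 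The triangle inequality then yields $\limsup_\beta |f_\beta(x_\beta) - f(x)| = O(\sqrt{\epsilon})$; letting $\epsilon \to 0$ contradicts $c \neq f(x)$. The main obstacle is the construction of the fiberwise-normalized bump $\tilde G$, where openness of $p$ is essential through continuity of the Haar system in Proposition \ref{Renault} and the pairing in Lemma \ref{convlemma}.
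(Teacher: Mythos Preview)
Your proof is correct and follows essentially the same strategy as the paper's: both reduce (b) $\Rightarrow$ (a) to comparing point values $f_\beta(x_\beta)$ with averages against a bump near the identity via the positive-definite inequality $|f(xz)-f(x)|\leq\sqrt{2(1-\mathrm{Re}\,f(z))}$ (Folland's Lemma 3.30), and then use (b) together with continuity of the Haar system to control those averages. Your choice to pre-normalize the bump so that $\lambda_{H_{m'}}(\tilde G)=1$ near $m$ is a clean variant of the paper's approach of carrying the factors $\lambda_{H_{m_\beta}}(g)$ explicitly, and your direct ratio argument for $m_\alpha\to m$ replaces the paper's short contradiction showing $f=0$; both are minor presentational differences rather than a different route.
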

	
\begin{proof}
	The implication \enquote{(a) $\Rightarrow$ (b)} is a direct consequence of Corollary \ref{convlemma}. 
	Now suppose that (b) holds and write $m_\alpha \coloneqq s(f_\alpha)$ for every 
	$\alpha \in A$ as well as $m \coloneqq s(f)$.
	
	We first show that $\lim_\alpha m_\alpha = m$. Assuming the contrary, we find a subnet $(m_\beta)_{\beta \in B}$ of
	$(m_\alpha)_{\alpha \in A}$ and a neighborhood $O$ of $m$ such that $m_{\beta} \notin O$ for every $\beta \in B$. Choose 
	a function $h \in \mathrm{C}(M)$ with support in $O$ such that $h(m) = 1$. By (b) we obtain
			\begin{align*}
				0 = \lim_\beta \int_{H_{m_{\beta}}} f_{\beta} \cdot (h \circ p) \cdot g  \, \mathrm{d}\lambda_{H_{m_{\beta}}} = \int_{H_{m}} f \cdot (h \circ p) \cdot  g\, \mathrm{d}\lambda_{H_{m}} = \int_{H_m} f \cdot g\, \mathrm{d}\lambda_{H_m}
			\end{align*}
	for every $g \in \mathrm{C}(H)$, hence $f = 0$. Since $f(1_{H_m}) =1$, this is a contradiction.
	
	We now check the second condition of Proposition \ref{descriptiontop} (b). So let $(f_\beta)_{\beta \in B}$ be a subnet of $(f_\alpha)_{\alpha \in A}$ and 
	$(x_\beta)_{\beta \in B}$ a net in $H$ converging to some $x \in H$ such that $p(x_\beta) = s(f_\beta)$ for every $\beta \in B$.
	
	Fix $\varepsilon > 0$ and choose, using the continuity of $f$, a function $g \in \mathrm{C}(H)$ with $0 \leq g(y) \leq 1$ for all $y \in H$ such that $g(1_{H_m}) =1$ and 
		\begin{align}\label{ineq1}
			|f(y) - 1| \leq \varepsilon
			\textrm{ for every } y \in g^{-1}((0,\infty)) \cap H_m.
		\end{align}
    Using the continuity of the Haar system (see Proposition \ref{Renault}) and the fact that $\lambda_{H_m}(g) \neq 0$, we choose $\beta_1 \in B$ such that $\lambda_{H_{m_\beta}}(g) \neq 0$, as well as
		\begin{align}\label{ineq2}
			\frac{ \lambda_{H_m}(g)}{ \lambda_{H_{m_\beta}}(g)}\leq 1+\varepsilon,
		\end{align}
	and 
		\begin{align}\label{ineq3}
			|\lambda_{H_{m_\beta}}(g) - \lambda_{H_{m}}(g)| \leq \lambda_{H_m}(g)\varepsilon
		\end{align}
	for every $\beta \geq \beta_1$. By (b) we find $\beta_2 \geq \beta_1$ such that
		\begin{align}\label{ineq4}
			|\lambda_{H_m}(f g) - \lambda_{H_{m_\beta}}(f_\beta g)| \leq \lambda_{H_m}(g)\varepsilon
		\end{align}
		for every $\beta \geq \beta_2$. 
	For $\beta \geq \beta_2$ we then obtain
		\begin{align*}
		    |\lambda_{H_{m_\beta}}((\mathbbm{1} - f_\beta)g)| \leq |\lambda_{H_{m_\beta}}(g) - \lambda_{H_{m}}(g)| + \lambda_{H_m}(|\mathbbm{1} - f|g) + |\lambda_{H_{m}}(fg) - \lambda_{H_{m_\beta}}(f_\beta g)|,
		\end{align*}
    hence
	    \begin{align}\label{ineq5}
             |\lambda_{H_{m_\beta}}((\mathbbm{1} - f_\beta)g)| \leq 3 \lambda_{H_m}(g)\varepsilon.
	    \end{align}
	by inequalities (\ref{ineq1}), (\ref{ineq3}) and (\ref{ineq4}). Setting $h_{\beta}(y) \coloneqq g(x_\beta y)$ for every $y \in H_{m_{\beta}}$ and $\beta \in B$ as well as $h(y)\coloneqq g(xy)$ for $y \in H_m$, we obtain $\lim_{\beta} h_{\beta} = h$ in $\mathrm{C}_p(H)$. Combining this with (b) yields
        \begin{align*}
            \lim_\beta (g|_{H_{m_\beta}} * f_\beta)(x_\beta)&= \lim_\beta \lambda_{H_{m_\beta}}(h_\beta f_\beta) = \lambda_{H_m}(hf) =  (g|_{H_{m}} * f)(x),
        \end{align*}
    see Proposition \ref{weakstaconvmeas}. We therefore find $\beta_3 \geq \beta_2$ such that
		\begin{align}\label{ineq6}
			\left|\frac{1}{\lambda_{H_{m_\beta}}(g)}\left(g|_{H_{m_\beta}} * f_\beta\right)(x_\beta) - \frac{1}{\lambda_{H_m}(g)} \left(g|_{H_m} * f\right)(x)\right| \leq \varepsilon
		\end{align}
	for every $\beta \geq \beta_3$.	By \cite[Lemma 3.30]{folland-harmonic-analysis} the inequality
		\begin{align*}
			|f_\beta(y^{-1}x_\beta) -f_\beta(x_\beta)|\leq \sqrt{2}(1- \mathrm{Re} \, f_\beta(y))^{\frac{1}{2}}
		\end{align*}
	holds for all $y \in H_{m_\beta}$ and every $\beta \in B$. Hence, 
  \begin{align*}
		\left| \frac{1}{\lambda_{H_{m_\beta}}(g)}\left(g|_{H_{m_\beta}} * f_\beta\right)(x_\beta) - f_\beta(x_\beta)\right| 
		&\leq \frac{1}{\lambda_{H_{m_\beta}}(g)} \int  g(y)|f_\beta(y^{-1}x_\beta) -f_\beta(x_\beta)| \, \mathrm{d}\lambda_{m_\beta}(y)\\
		&\leq \frac{\sqrt{2}}{\lambda_{H_{m_\beta}}(g)} \lambda_{H_{m_\beta}}( g^{\frac{1}{2}} \cdot (g \cdot(\mathbbm{1}-\mathrm{Re}\, f_\beta))^{\frac{1}{2}})
	\end{align*}
    for every $\beta \in B$. Since  $\lambda_{H_{m_\beta}}( g^{\frac{1}{2}} \cdot (g \cdot(\mathbbm{1}-\mathrm{Re}\, f_\beta))^{\frac{1}{2}}) \leq \lambda_{H_{m_\beta}}(g)^{\frac{1}{2}} \cdot  \lambda_{H_{m_\beta}}(g \cdot(\mathbbm{1}-\mathrm{Re}\, f_\beta))^{\frac{1}{2}}$ by Cauchy-Schwarz, we obtain 
        \begin{align*}
            	\left| \frac{1}{\lambda_{H_{m_\beta}}(g)}\left(g|_{H_{m_\beta}} * f_\beta\right)(x_\beta) - f_\beta(x_\beta)\right| 	\leq \left(\frac{2}{\lambda_{H_{m_\beta}}(g)}\right)^{\frac{1}{2}} \lambda_{H_{m_\beta}}(g \cdot(\mathbbm{1}-\mathrm{Re}\, f_\beta))^{\frac{1}{2}},
        \end{align*}
    and, in view of (\ref{ineq2}) and (\ref{ineq5}), 
		\begin{align}\label{ineq7}
		\left| \frac{1}{\lambda_{H_{m_\beta}}(g)}\left(g|_{H_{m_\beta}} * f_\beta\right)(x_\beta) - f_\beta(x_\beta)\right| \leq \sqrt{6\varepsilon}\left(\frac{\lambda_{H_m}(g)}{\lambda_{H_{m_\beta}}(g)}\right)^{\frac{1}{2}} \leq  \sqrt{6\varepsilon}(1 + \varepsilon),
		\end{align}
	for every $\beta \geq \beta_3$. Similar estimates yield
	\begin{align}\label{ineq8}
		\left| \frac{1}{\lambda_{H_m}(g)} (g|_{H_{m}}* f)(x) - f(x)\right|  \leq \sqrt{6\varepsilon}.
	\end{align}
	Combining (\ref{ineq6}), (\ref{ineq7}) and (\ref{ineq8}) we finally obtain
	\begin{align*}
		\left|f_\beta(x_\beta) - f(x)\right| \leq \sqrt{6\varepsilon} + \sqrt{6\varepsilon}(1+\varepsilon) +\varepsilon
	\end{align*}
	for every $\beta \geq \beta_3$.
\end{proof}

\begin{proof}[Proof of Proposition \ref{posdef}]
    Take $m \in M$ and a unitary representation $\varrho \colon H_m \to \mathscr{L}(E)$ of $H_m$ of the compact group $H_m$ on a non-zero Hilbert space $E$ and $v \in E$ with $\|v\|=1$. Let $f$ be the corresponding normalized positive definite function, i.e., $f(x) = (\varrho(x)v|v)$ for all $x \in H_m$. Moreover, consider the non-degenerate representation $\pi_\varrho$ of $\mathrm{C}^*(H)$ of Proposition \ref{spectrum} (\ref{spectrum1}) and the state
        \begin{align*}
            \omega_f \colon \mathrm{C}^*(H) \rightarrow \C, \quad g \mapsto (\pi_\varrho(g)v|v).
        \end{align*}
    associated with $\pi_\varrho$.
    Then,
        \begin{align*}
            \omega_f(g) = (\pi_\varrho(g)v|v) = \int_{H_m} g(x) (\varrho(x)v|v)\, \mathrm{d}\lambda_{H_{m}}(x) = \int_{H_m} fg\, \mathrm{d}\lambda_{H_{m}}
        \end{align*}
    for every $g \in \mathrm{C}(H)$. This shows existence of the desired state $\omega_f$. Uniqueness is clear as $\mathrm{C}(H)$ is dense  $\mathrm{C}^*(H)$. Hence (\ref{posdef1}) is proved.\medskip\\
    For (\ref{posdef2}) we observe that the defined map is clearly injective. It follows from Lemma \ref{uniformstate} that it is a homeomorphism onto its range.\medskip\\
    Finally, part (\ref{posdef3}) is an easy consequence of (\ref{posdef2}) and Proposition \ref{spectrum}.
\end{proof}

\subsection{Topology of the dual}\label{dualtop}
The spectrum of a C*-algebra can be equipped with a topology in a natural way (see, e.g., \cite[Chapter 3]{Dixm1977}). In particular, we obtain a topology on the dual $\hat{H}$ of an open compact group bundle $p \colon H \rightarrow M$ over a compact space $M$. Combining Proposition \ref{posdef} with \cite[Theorem 3.4.10]{Dixm1977} yields the following description of the closure with respect to this topology (cf. \cite[Chapter 18]{Dixm1977} for the case of locally compact groups). Here, we say that $f \in \mathrm{PPos}_1(H)$ is associated with an element $[\pi] \in \hat{H}$ (which is an equivalence class of representations) if it is associated with some (and then every) of its representatives. 
\begin{proposition}\label{uniformapprox}
	Let $p \colon H \to M$ be an open compact group bundle over a compact space $M$. For a subset $C \subset \hat{H}$ and $[\pi] \in \hat{H}$ the following assertions are equivalent.
	\begin{enumerate}[(a)]
		\item $[\pi] \in \overline{C}$.
		\item One $f \in \mathrm{PPos}_1(H)$ associated with $[\pi]$ is the limit of a net $(f_\alpha)_{\alpha \in A}$ in $\mathrm{PPos}_1(H)$ associated with elements of $C$.
		\item Every $f \in \mathrm{PPos}_1(H)$ associated with $[\pi]$ is the limit of a net $(f_\alpha)_{\alpha \in A}$ in $\mathrm{PPos}_1(H)$ associated with elements of $C$.
	\end{enumerate}
\end{proposition}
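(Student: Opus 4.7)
The plan is to combine the homeomorphism from Proposition \ref{posdef}(iii) with the general C*-algebraic characterisation of closure in the spectrum. In the setting of an arbitrary C*-algebra $\mathcal{A}$, Dixmier's \cite[Theorem 3.4.10]{Dixm1977} asserts that for $S \subset \hat{\mathcal{A}}$ and $[\pi] \in \hat{\mathcal{A}}$, one has $[\pi] \in \overline{S}$ if and only if some (equivalently every) pure state associated with $[\pi]$ is a weak*-limit of pure states associated with elements of $S$. Applied to $\mathcal{A} = \mathrm{C}^*(H)$, whose spectrum is $\hat{H}$ by definition, this already produces the analogue of the statement at the level of pure states; what remains is to transport it back to normalised positive definite functions.

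The implication (c) $\Rightarrow$ (b) is trivial once it is known that such $f$ exist, which follows from Proposition \ref{spectrum}(ii) by choosing a representative $\varrho$ of $[\pi]$ on a fibre $H_m$ together with a unit vector. To translate the equivalences (a) $\Leftrightarrow$ (b) and (a) $\Leftrightarrow$ (c), I would pair every $f \in \mathrm{PPos}_1(H)$ with the pure state $\omega_f \in \mathrm{PS}(\mathrm{C}^*(H))$ of Proposition \ref{posdef}(i). By the very construction of $\omega_f$ in that proposition combined with Proposition \ref{spectrum}(i), $f$ is associated with a unitary representation $\varrho$ of $H_{s(f)}$ precisely when $\omega_f$ is associated with $\pi_\varrho$; hence $f$ is associated with $[\pi] \in \hat{H}$ if and only if $\omega_f$ is associated with $[\pi]$ in the sense of pure states of $\mathrm{C}^*(H)$.

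Now Proposition \ref{posdef}(iii) asserts that $f \mapsto \omega_f$ is a homeomorphism from $\mathrm{PPos}_1(H)$ onto $\mathrm{PS}(\mathrm{C}^*(H))$, so convergence of a net $(f_\alpha)_{\alpha \in A}$ in $\mathrm{PPos}_1(H)$ to $f$ is equivalent to weak*-convergence of $(\omega_{f_\alpha})_{\alpha \in A}$ to $\omega_f$. Combining this equivalence with Dixmier's theorem applied to $S = C$ yields the equivalence of (a), (b) and (c). The only step that requires genuine care, and on which the whole reduction hinges, is the correct matching of the notion of being \emph{associated with $[\pi]$} on both sides of the homeomorphism; this is precisely what Proposition \ref{spectrum}(i) together with Proposition \ref{posdef}(i) provides, so no additional work beyond assembling these pieces should be required.
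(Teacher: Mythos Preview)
Your proposal is correct and follows precisely the approach the paper takes: the paper simply states that the result is obtained by combining Proposition~\ref{posdef} with \cite[Theorem~3.4.10]{Dixm1977}, and your write-up spells out exactly this combination, including the check that the notion of being associated with $[\pi]$ is preserved under the homeomorphism of Proposition~\ref{posdef}(iii).
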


With the help of Proposition \ref{uniformapprox}, we obtain the following important topological properties of the dual of an open compact group bundle.

\begin{proposition}\label{proptop}
    For an open compact group bundle $p \colon H \rightarrow M$ over a compact space $M$ the following assertions hold. 
        \begin{enumerate}[(i)]
            \item The canonical map $\mathrm{PPos}_1(H) \rightarrow \hat{H}$ is continuous and open.\label{proptop1}
            \item The canonical map $\hat{p} \colon \hat{H} \to  M$ 
	is continuous and open.\label{proptop2}
	        \item $\hat{H}$ is a locally compact (Hausdorff) space.\label{proptop3}
	        \item The map
		        \begin{align*}
			        \dim\colon \hat{H} \to \N, \quad [\pi] \to \mathrm{dim}(\pi)
		        \end{align*}
		          is continuous.\label{proptop4}
        \end{enumerate}
\end{proposition}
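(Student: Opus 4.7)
The plan hinges on Proposition~\ref{posdef}, which identifies $\mathrm{PPos}_1(H)$ with the pure state space $\mathrm{PS}(\mathrm{C}^*(H))$, together with Proposition~\ref{uniformapprox}. For part~(\ref{proptop1}), the canonical map $\mathrm{PPos}_1(H) \to \hat{H}$ corresponds, under the homeomorphism of Proposition~\ref{posdef}(\ref{posdef3}), to the canonical map $\mathrm{PS}(\mathrm{C}^*(H)) \to \hat{H}$ sending a pure state to the equivalence class of its GNS representation. This latter map is continuous and open for the Jacobson topology on the spectrum of any C*-algebra; see \cite[Section 3.4]{Dixm1977}.

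For part~(\ref{proptop2}), I would exploit the factorization $s = \hat{p} \circ c$, where $c \colon \mathrm{PPos}_1(H) \to \hat{H}$ is the continuous open surjection from (\ref{proptop1}) and $s$ is the restriction of the source map on $\mathrm{C}_p(H)$, hence continuous. Continuity of $\hat{p}$ is then immediate: for open $U \subset M$, $\hat{p}^{-1}(U) = c(s^{-1}(U))$ by surjectivity of $c$, and this is open since $c$ is open. Openness of $\hat{p}$ reduces via the same factorization and the net characterization of openness (Remark~\ref{openchar}) to the following approximation claim: for every $f \in \mathrm{PPos}_1(H_m)$ and every net $m_\alpha \to m$ in $M$, there exist a subnet $(m_\beta)$ and $f_\beta \in \mathrm{PPos}_1(H_{m_\beta})$ with $f_\beta \to f$ in $\mathrm{C}_p(H)$. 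This approximation statement is the main technical obstacle. To establish it, I would write $f(x) = (\varrho(x)v|v)$ for an irreducible unitary representation $\varrho$ of the compact group $H_m$ and a unit vector $v$, extend the matrix coefficient to $\tilde{f} \in \mathrm{C}(H)$ using continuous sections of the Banach bundle $\mathrm{C}_p(H) \to M$ (Example~\ref{extensionbundle}), and decompose the restrictions $\tilde{f}|_{H_{m_\alpha}}$ via Peter--Weyl on each fiber group $H_{m_\alpha}$; a careful analysis of these decompositions, leveraging openness of $p$ and continuity of the Haar system (Proposition~\ref{Renault}), then produces the required pure approximants.

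For part~(\ref{proptop3}), two points $[\pi_1] \neq [\pi_2]$ of $\hat{H}$ in distinct fibers separate via $\hat{p}$ into the Hausdorff space $M$, while two points in the same fiber $\hat{H}_m$ correspond to inequivalent summands of $\mathrm{C}^*(H_m) \cong \bigoplus_{[\pi] \in \hat{H}_m} \mathscr{L}(E_\pi)$ and thus lift to disjoint Jacobson-open subsets of $\hat{H}$ via extensions of separating elements of $\mathrm{C}(H_m)$ to $\mathrm{C}(H)$. For local compactness, $\mathrm{PPos}_1(H)$ is locally compact as a closed subset of the locally compact space $\mathrm{C}_p(H)$ (Example~\ref{extensionbundle}), and compact neighborhoods there push forward to compact neighborhoods in $\hat{H}$ along the continuous open surjection $c$.

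For part~(\ref{proptop4}), Schur orthogonality gives $\int_{H_m} |f|^2 \, \mathrm{d}\lambda_{H_m} = 1/\dim(\pi_f)$ for each $f \in \mathrm{PPos}_1(H_m)$ associated with $\pi_f$. By Lemma~\ref{convlemma} and Proposition~\ref{Renault}, the map $f \mapsto \int_{H_{s(f)}} |f|^2 \, \mathrm{d}\lambda_{H_{s(f)}}$ is continuous on $\mathrm{PPos}_1(H)$, whence so is $f \mapsto \dim(\pi_f)$; being constant on the fibers of the continuous open surjection $c$, it descends to a continuous map $\dim \colon \hat{H} \to \N$. The main obstacle is clearly the openness of $\hat{p}$ in part~(\ref{proptop2}).
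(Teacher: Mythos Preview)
Your treatment of parts (\ref{proptop1}) and (\ref{proptop4}) matches the paper's approach. Two points deserve comment.

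\textbf{Part (\ref{proptop2}), openness of $\hat{p}$.} Your reduction to the approximation claim via Remark~\ref{openchar} is correct in principle, but the sketch you offer for producing pure approximants $f_\beta \in \mathrm{PPos}_1(H_{m_\beta})$ is where all the difficulty lies, and you do not actually carry it out; Peter--Weyl on nearby fibers gives you a decomposition of $\tilde{f}|_{H_{m_\alpha}}$ into many irreducible pieces, and isolating a single one that converges back to $f$ is not automatic. The paper takes a different and cleaner route: it uses that the sets $V_f = \{[\pi] \in \hat{H} : \|\pi(f)\| > 1\}$ for $f \in \mathrm{C}(H)$ form a base of the topology on $\hat{H}$ (Dixmier), and shows $\hat{p}(V_f)$ is open by invoking the preceding Lemma~\ref{lowersemi}, which establishes lower semicontinuity of $m \mapsto \|f|_{H_m}\|_{\mathrm{C}^*(H_m)}$. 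That lemma (credited to an idea of Renault) is the key input you are missing; it avoids any explicit construction of approximating positive definite functions.

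\textbf{Part (\ref{proptop3}), local compactness.} Your argument contains a genuine error: the Banach bundle $\mathrm{C}_p(H)$ is \emph{not} locally compact in general, since its fibers $\mathrm{C}(H_m)$ are infinite-dimensional whenever $H_m$ is infinite. Hence you cannot conclude that $\mathrm{PPos}_1(H)$ is locally compact as a closed subset of a locally compact space. The paper instead appeals to \cite[Corollary 3.3.8]{Dixm1977}, a general fact about spectra of C*-algebras, once the Hausdorff property is established. Your Hausdorff argument is fine and essentially the same as the paper's: separate points in distinct fibers via $\hat{p}$, and points in the same fiber $\hat{H}_m$ using that the dual of a compact group carries the discrete topology.
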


The proof of openness in (\ref{proptop2}) is based on an idea communicated to the authors by Jean Renault and uses the following observation (see also \cite{Rena2021} for a different proof in the case of bundles of locally compact abelian groups).

\begin{lemma}\label{lowersemi}
	Let $p \colon H \rightarrow M$ be an open compact group bundle over a compact space $M$ and $f \in \mathrm{C}(H)$. Then the mapping 
            	\begin{align*}
	            	M \to [0, \infty), \quad m \mapsto \|f|_{H_m}\|_{\mathrm{C}^*(H_m)}
	            \end{align*}
        	is lower semicontinuous.
\end{lemma}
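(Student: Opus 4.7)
The plan is to exploit that every fiber $H_m$ is a compact group and hence amenable, so that the full $\mathrm{C}^*$-norm coincides with the operator norm of the left regular representation on $\mathrm{L}^2(H_m,\lambda_{H_m})$. Concretely, for $f \in \mathrm{C}(H)$ and $m \in M$,
\begin{equation*}
\|f|_{H_m}\|_{\mathrm{C}^*(H_m)} = \sup\left\{\,\frac{\|f|_{H_m} * g\|_{\mathrm{L}^2(H_m)}}{\|g\|_{\mathrm{L}^2(H_m)}} \;:\; g \in \mathrm{C}(H_m),\; g \neq 0\,\right\},
\end{equation*}
see, e.g., \cite[Section 7.1]{folland-harmonic-analysis}. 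Because $H_m$ is a closed subset of the (compact and hence normal) space $H$, Tietze's theorem lets us realize every such $g$ as $\tilde{g}|_{H_m}$ for some $\tilde{g} \in \mathrm{C}(H)$. Hence the supremum above may be taken over restrictions of functions in $\mathrm{C}(H)$.

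Next I would establish the key technical fact that, for any fixed $\tilde{g} \in \mathrm{C}(H)$, the fibrewise convolution
\begin{equation*}
(f * \tilde{g})(y) \coloneqq \int_{H_{p(y)}} f(x)\,\tilde{g}(x^{-1}y)\,\mathrm{d}\lambda_{H_{p(y)}}(x), \quad y \in H,
\end{equation*}
defines an element of $\mathrm{C}(H)$. Given a convergent net $y_\alpha \to y$ in $H$, openness of $p$ together with Proposition \ref{descriptiontop} shows that $z \mapsto f(y_\alpha z)\tilde{g}(z^{-1})$ on $H_{p(y_\alpha)}$ converges to $z \mapsto f(yz)\tilde{g}(z^{-1})$ on $H_{p(y)}$ in $\mathrm{C}_p(H)$; Proposition \ref{Renault} gives $\lambda_{H_{p(y_\alpha)}} \to \lambda_{H_{p(y)}}$ in $\mathrm{P}_p(H)$; and Lemma \ref{convlemma} then yields $(f * \tilde{g})(y_\alpha) \to (f * \tilde{g})(y)$. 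This is precisely the strategy already used in the proof of Lemma \ref{uniformstate}.

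With $f * \tilde{g} \in \mathrm{C}(H)$ in hand, another application of Lemma \ref{convlemma} and Proposition \ref{Renault} shows that
\begin{equation*}
m \mapsto \|f|_{H_m} * \tilde{g}|_{H_m}\|_{\mathrm{L}^2(H_m)}^{2} = \int_{H_m}|f*\tilde{g}|^{2}\,\mathrm{d}\lambda_{H_m}, \quad m \mapsto \|\tilde{g}|_{H_m}\|_{\mathrm{L}^2(H_m)}^{2} = \int_{H_m}|\tilde{g}|^{2}\,\mathrm{d}\lambda_{H_m}
\end{equation*}
are continuous on $M$. Thus for a net $m_\beta \to m_0$ and any $\tilde{g}\in \mathrm{C}(H)$ with $\tilde{g}|_{H_{m_0}}\neq 0$, the inequality $\|f|_{H_{m_\beta}}\|_{\mathrm{C}^*(H_{m_\beta})} \geq \|f|_{H_{m_\beta}}*\tilde{g}|_{H_{m_\beta}}\|_{\mathrm{L}^2}/\|\tilde{g}|_{H_{m_\beta}}\|_{\mathrm{L}^2}$ (valid once the denominator is positive, which holds eventually by continuity) gives
\begin{equation*}
\liminf_\beta \|f|_{H_{m_\beta}}\|_{\mathrm{C}^*(H_{m_\beta})} \geq \frac{\|f|_{H_{m_0}}*\tilde{g}|_{H_{m_0}}\|_{\mathrm{L}^2(H_{m_0})}}{\|\tilde{g}|_{H_{m_0}}\|_{\mathrm{L}^2(H_{m_0})}}.
\end{equation*}
Taking the supremum over $\tilde{g}$ and invoking the displayed identity together with Tietze yields $\liminf_\beta \|f|_{H_{m_\beta}}\|_{\mathrm{C}^*(H_{m_\beta})} \geq \|f|_{H_{m_0}}\|_{\mathrm{C}^*(H_{m_0})}$, which is the desired lower semicontinuity.

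The main obstacle I expect is the continuity of the fibrewise convolution $f * \tilde{g}$ on $H$: this is the point at which openness of $p$ and the bundle structure of the Haar measures genuinely enter. Once that step is secured, amenability of compact groups and the continuous Haar bundle reduce the statement to a standard ``supremum of continuous functions'' argument.
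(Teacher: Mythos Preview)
Your proposal is correct and follows essentially the same route as the paper: both use amenability of the compact fibers to identify $\|f|_{H_m}\|_{\mathrm{C}^*(H_m)}$ with the reduced (left-regular) norm, invoke Tietze to replace fiber functions by restrictions of global ones, and then use continuity of the Haar system (Proposition~\ref{Renault}) together with Lemma~\ref{convlemma} to exhibit the norm as a supremum of continuous quantities. The only cosmetic difference is that the paper writes the reduced norm via matrix coefficients $\langle \pi(f)g,h\rangle$ with two test functions (yielding a direct double supremum of continuous functions), whereas you use the single-test-function form $\|f*g\|_{\mathrm{L}^2}/\|g\|_{\mathrm{L}^2}$ and a liminf argument.
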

\begin{proof}
    Since compact groups are amenable, the full group C*-norm agrees with the reduced group C*-norm for $H_m$ (this is a special case of \cite[Theorem 7.13]{Will2007}), thus,
	\begin{align*}
    \|f|_{H_m}\|_{\mathrm{C}^*(H_m)} 
    = \sup_{\substack{g \in \mathrm{C}(H_m)\\ \|g\|_{\mathrm{L}^2(H_m)}\leq 1}} \sup_{\substack{h \in \mathrm{C}(H_m)\\ \|h\|_{\mathrm{L}^2(H_m)}\leq 1}} \left| \int_{H_m}\int_{H_m} g(x)f(x^{-1}y)h(y)\,\mathrm{d}\lambda_{H_m}(x)\,\mathrm{d}\lambda_{H_m}(y)\right|     
	\end{align*}
	for every $m \in M$.  Now if $g \in \mathrm{C}(H_m) \setminus \{0\}$ with $\|g\|_{\mathrm{L}^2(H_m)} \leq 1$ we find a function $h \in \mathrm{C}(H)$ with $h|_{H_m} = g$ 
	    \begin{align*}
	     \|g\|_2 \coloneqq \sup_{m' \in M}\|g|_{H_{m'}}\|_{\mathrm{L}^2(H_{m'})}\leq 1.
	    \end{align*}
	   This implies
	\begin{align*}
    \|f|_{H_m}\|_{\mathrm{C}^*(H_m)} 
    = \sup_{\substack{g \in \mathrm{C}(H)\\ \|g\|_2 \leq 1}} \sup_{\substack{h \in \mathrm{C}(H)\\ \|h\|_2 \leq 1}} \left| \int_{H_m}\int_{H_m} g(x)f(x^{-1}y)h(y)\,\mathrm{d}\lambda_m(x)\,\mathrm{d}\lambda_m(y)\right|     
	\end{align*}
	for every $m \in M$. Since the pointwise supremum of continuous functions is lower semicontinuous, it suffices to show that
	\begin{align*}
		M \to \C, \quad m \mapsto \int_{H_m}\int_{H_m} g(x)f(x^{-1}y)h(y)\,\mathrm{d}\lambda_m(x)\,\mathrm{d}\lambda_m(y)
	\end{align*}
	is continuous for all $g,h \in \mathrm{C}(H)$. But this readily follows from the continuity of the Haar system (see Proposition \ref{Renault}) and Corollary \ref{convlemma}.
\end{proof}

\begin{proof}[Proof of Proposition \ref{proptop}.]
    Part (\ref{proptop1}) is a consequence of \cite[Theorem 3.4.11]{Dixm1977} by using the identification of $\mathrm{PPos}_1(H)$ with $\mathrm{PS}(\mathrm{C}^*(H))$ established in Proposition \ref{posdef}.\medskip\\
    We proceed with (\ref{proptop2}). Continuity of $\hat{p}$ is an easy consequence of Proposition \ref{uniformapprox}. To show that $\hat{p}$ is open, it suffices to show that the set $\hat{p}(V_f)$ is open for every $f \in \mathrm{C}(H)$, where
		\begin{align*}
			V_f \coloneqq \{[\pi] \in \hat{H}\mid \|\pi(f)\| > 1\}
		\end{align*}
	as these sets form a base of the topology of $\hat{H}$ by  \cite[Lemma 3.3.3]{Dixm1977}.
	 Let $f \in \mathrm{C}(H)$ and pick $m \in p(V_f)$. Choose $[\pi]\in V_f$ with $\hat{p}([\pi]) = m$, i.e., $\|\pi(f)\| > 1$. Then also $\|f|_{H_m}\|_{\mathrm{C}^*(H_m)} > 1$ and by Lemma \ref{lowersemi} there is a neighborhood $O$ of $m$ such that $\|f|_{H_{m'}}\| > 1$ for every $m' \in O$. By \cite[Proposition 2.7.1]{Dixm1977} we therefore find for every $m' \in O$ some $[\pi] \in \hat{H}_{m'}$ with $\|\pi(f)\| > 1$. Hence $O \subset p(V_f)$ and this shows that $p(V_f)$ is open.\medskip\\
	 We next prove (\ref{proptop3}). To see that $\hat{H}$ is a Hausdorff space, pick $[\pi_1], [\pi_2] \in \hat{H}$ with $[\pi_1] \neq [\pi_2]$. If $\hat{p}([\pi_1]) \neq \hat{p}([\pi_2])$ we can use (\ref{proptop2}) to separate $[\pi_1]$ and $[\pi_2]$ by open neighborhoods. We may therefore assume that $m \coloneqq \hat{p}([\pi_1]) = \hat{p}([\pi_2])$. But the subspace topology of $\hat{H}_m \subset \hat{H}$ agrees with the topology of the dual $\hat{H}_m$ of the compact group $H_m$ (use, e.g., Proposition \ref{uniformapprox}) and the latter is discrete by \cite[Proposition 7.4]{folland-harmonic-analysis}. Thus, $[\pi_1]$ and $[\pi_2]$ can also be separated in this case. We now obtain from \cite[Corollary 3.3.8]{Dixm1977} that $\hat{H}$ is locally compact.\medskip\\
	 Finally, we show part (\ref{proptop4}). If $m \in M$, $[\pi] \in \hat{H}_m$ and $f$ is a normalized positive definite function associated with $[\pi]$, then
		\begin{align*}
			\int_{H_m} |f|^2\, \mathrm{d}\lambda_{H_m} = \frac{1}{\dim(\pi)}
		\end{align*}
	by Schur's orthogonality relations (see \cite[Result 5.8]{folland-harmonic-analysis}). Continuity of the dimension map is thus a consequence of Proposition \ref{uniformapprox}.
\end{proof}

\subsection{Continuous sections of the dual}\label{contsectdual}
After establishing the basic topological properties of the dual of an open compact group bundle, we are now ready to prove the following powerful selection theorem. Recall here the definition of local continuous sections from Subsection \ref{basicconc} above.
\begin{theorem}\label{section}
	Let $p \colon H \to M$ be an open compact group bundle over a compact space $M$. For every $[\pi] \in \hat{H}$ there 
	is a local continuous section $\hat{\sigma}$ of the bundle $\hat{p} \colon \hat{H} \rightarrow M$ with $\hat{\sigma}(\hat{p}([\pi])) = [\pi]$.
\end{theorem}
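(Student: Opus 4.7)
The plan is to pass to the space $\mathrm{PPos}_1(H)$ of pure normalized positive definite functions, construct a local continuous section of the source map $s\colon \mathrm{PPos}_1(H) \to M$ through a chosen $f_0 \in \mathrm{PPos}_1(H_{m_0})$ associated with $[\pi_0]$, and then compose with the continuous open map $q\colon \mathrm{PPos}_1(H) \to \hat{H}$ from Proposition \ref{proptop} (i) to obtain $\hat{\sigma}$. Concretely, set $m_0 \coloneqq \hat{p}([\pi_0])$ and $d \coloneqq \dim[\pi_0]$, pick a unit vector $v$ in the representation space of a representative $\pi_0$, and take $f_0(x) = (\pi_0(x)v|v)$.

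Next, I would extend $f_0$ to some $\tilde{f} \in \mathrm{C}(H)$ by Tietze's theorem (valid since $H_{m_0}$ is a closed subset of the compact Hausdorff space $H$) and define, for each $m \in M$,
\begin{align*}
g_m(x) \coloneqq d \int_{H_m} \tilde{f}(xy)\, \overline{\tilde{f}(y)}\, \mathrm{d}\lambda_{H_m}(y), \qquad x \in H_m.
\end{align*}
This is a continuous positive definite function on $H_m$ (it is a matrix coefficient of the left-regular representation on $\mathrm{L}^2(H_m)$ applied to $\tilde{f}|_{H_m}$ paired with itself). By continuity of the Haar system (Proposition \ref{Renault}) together with Lemma \ref{convlemma}, the assignment $m \mapsto g_m$ is a continuous section of the Banach bundle $\mathrm{C}_p(H) \to M$. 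A short computation using Schur's orthogonality relations applied to the irreducible representation $\pi_0$ of dimension $d$ yields $g_{m_0} = f_0$; in particular $g_{m_0}(1_{H_{m_0}}) = 1$, so normalizing produces a continuous section $h_m \coloneqq g_m / g_m(1_{H_m})$ in $\mathrm{Pos}_1(H_m)$ defined on an open neighborhood $O_1$ of $m_0$ and satisfying $h_{m_0} = f_0$.

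The expected main obstacle is to extract from $h_m$ a \emph{pure} positive definite function $\sigma(m) \in \mathrm{PPos}_1(H_m)$ depending continuously on $m$ and with $\sigma(m_0) = f_0$, since $h_m$ is in general not pure for $m \neq m_0$. My strategy is to translate the problem into the group bundle $\mathrm{C}^*$-algebra $\mathrm{C}^*(H)$ of Subsection \ref{staralg}: by Proposition \ref{posdef} and the Peter--Weyl decomposition $\mathrm{C}^*(H_{m_0}) \cong \bigoplus_{[\pi] \in \hat{H}_{m_0}} \mathscr{L}(E_\pi)$, the pure function $f_0$ corresponds to a minimal central projection $e_0 \in \mathrm{C}^*(H_{m_0})$ cutting out the $[\pi_0]$-summand. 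A standard functional-calculus argument should lift $e_0$ to a continuous family of minimal central projections $e_m \in \mathrm{C}^*(H_m)$ near $m_0$: extend $e_0$ to a section of the $\mathrm{C}^*$-bundle via a continuous section of $\mathrm{C}_p(H)$, observe that the section is an approximate projection near $m_0$, and apply the continuous functional calculus to correct it to an honest projection. The continuity of the dimension map (Proposition \ref{proptop} (iv)) together with the discreteness of each fiber $\hat{H}_m$ (used in the proof of Proposition \ref{proptop} (iii)) should ensure that $e_m$ has constant rank and corresponds to a single irreducible representation of dimension $d$. Recovering $\sigma(m)$ as the matrix coefficient of this irreducible representation with a cyclic unit vector produced from $h_m$ completes the construction, and composing with $q$ delivers the required $\hat{\sigma}$ with $\hat{\sigma}(m_0) = [\pi_0]$.
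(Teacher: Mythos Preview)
Your route is genuinely different from the paper's and, as written, has a real gap in the projection-lifting step. The paper's proof is almost immediate once one has Proposition \ref{proptop}: one shows (Lemma \ref{neighborhood}, via Schur orthogonality on $\mathrm{PPos}_1(H)$) that every $[\pi]$ has an open neighborhood $O \subset \hat{H}$ meeting each fiber $\hat{H}_m$ in at most one point. Since $\hat{p}$ is continuous and open, $\hat{p}|_O \colon O \to \hat{p}(O)$ is then a homeomorphism onto an open subset of $M$, and its inverse is the desired local section. No construction of sections into $\mathrm{PPos}_1(H)$, no functional calculus, no lifting of projections is needed.

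In your approach the problematic step is ``lift $e_0$ to a continuous family of \emph{minimal central} projections $e_m$''. The standard functional-calculus perturbation argument produces \emph{projections}, not central ones: if you merely extend $e_0 = d\,\chi_{\pi_0}$ by Tietze to some $\tilde{e} \in \mathrm{C}(H)$ and correct $\tilde{e}|_{H_m}$ to a projection in $\mathrm{C}^*(H_m)$, there is no reason the result commutes with $\mathrm{C}^*(H_m)$, hence no reason it cuts out a single class in $\hat{H}_m$. To stay in the centers you would first have to average over conjugation to force $\tilde{e}|_{H_m}$ to be a class function, then argue that the fiber $\mathrm{C}^*$-norms vary continuously on such sections (Lemma \ref{lowersemi} gives only lower semicontinuity), and only then apply functional calculus. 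Also, your identification ``the pure function $f_0$ corresponds to a minimal central projection'' is imprecise: $f_0$ corresponds to a pure state, i.e.\ a rank-one projection inside the $[\pi_0]$-block, whereas the minimal central projection is $d\,\chi_{\pi_0}$; you need the latter, not the former. Finally, recovering a continuous section of $\mathrm{PPos}_1(H)$ from the family $(e_m)$ and the non-pure $(h_m)$ is another step you leave to a phrase. These gaps can probably be filled, but the paper's local-injectivity argument avoids all of this machinery.
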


For the proof of Theorem \ref{section} we need the following lemma.

\begin{lemma}\label{neighborhood}
     Let $p \colon H \to M$ be an open compact group bundle over a compact space $M$. Then each $[\pi] \in \hat{H}$ has a neighborhood $O$ such 
	that $O \cap \hat{H}_m$ has at most one element for each $m \in M$.
\end{lemma}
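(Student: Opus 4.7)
The strategy is to derive the conclusion from Schur's orthogonality of matrix coefficients via the description of the topology on $\hat{H}$ in terms of positive definite functions. Pick any normalized pure positive definite function $f_0 \in \mathrm{PPos}_1(H)$ associated with $[\pi]$, so $s(f_0) = m_0 \coloneqq \hat{p}([\pi])$ and, by Schur's relations, $\lambda_{H_{m_0}}(|f_0|^2) = 1/\dim(\pi) > 0$. (In any case $f_0(1_{H_{m_0}}) = 1$, so strict positivity is clear from continuity.) I will argue by contradiction: assume that every open neighborhood of $[\pi]$ contains two distinct elements lying over a common point of $M$.

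Since the canonical map $q \colon \mathrm{PPos}_1(H) \to \hat{H}$ is continuous and open (Proposition \ref{proptop}\,(\ref{proptop1})), the images $q(W)$, as $W$ runs through a neighborhood base of $f_0$ in $\mathrm{PPos}_1(H)$, form a collection of open neighborhoods of $[\pi]$. Under the standing assumption, each $q(W)$ contains distinct $[\pi_W^1], [\pi_W^2]$ with $\hat{p}([\pi_W^1]) = \hat{p}([\pi_W^2])$. Lift these to $g_W^1, g_W^2 \in W$; then $s(g_W^1) = s(g_W^2)$, the representations $[\pi_W^1]$ and $[\pi_W^2]$ are \emph{inequivalent} irreducible representations of the compact group $H_{s(g_W^1)}$, and both nets $(g_W^i)_W$ converge to $f_0$ in $\mathrm{PPos}_1(H)$ as $W$ shrinks. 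Classical Schur orthogonality of matrix coefficients on the compact group $H_{s(g_W^1)}$ therefore yields
\[
    \int_{H_{s(g_W^1)}} g_W^1\, \overline{g_W^2}\, \mathrm{d}\lambda_{H_{s(g_W^1)}} = 0 \qquad \text{for every } W.
\]

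It remains to pass to the limit. Pointwise multiplication $\mathrm{C}_p(H) \times_{s,s} \mathrm{C}_p(H) \to \mathrm{C}_p(H)$ and complex conjugation are continuous with respect to the compact-open topology (a direct check using the characterization of convergence in Proposition \ref{descriptiontop}), so $g_W^1\, \overline{g_W^2} \to |f_0|^2$ in $\mathrm{C}_p(H)$. Combining this with the continuity of the Haar section $m \mapsto \lambda_{H_m}$ from Proposition \ref{Renault} and the pairing continuity of Lemma \ref{convlemma}, the left-hand side above converges to $\lambda_{H_{m_0}}(|f_0|^2) > 0$, contradicting that each term is zero. Hence some neighborhood $O$ of $[\pi]$ has the required separation property. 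The only delicate point is the continuity of multiplication/conjugation in $\mathrm{C}_p(H)$ along fiber-matched nets, and this is precisely where the openness of $p$ (ensuring Hausdorffness of $\mathrm{C}_p(H)$, cf.\ Remark \ref{openchar}) and the continuity of the Haar system enter decisively.
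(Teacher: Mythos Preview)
Your proof is correct and follows essentially the same approach as the paper: both arguments hinge on Schur's orthogonality relations (giving $\lambda_{H_{m_0}}(|f_0|^2)>0$ for the chosen pure positive definite function and $\int f_1\overline{f_2}\,\mathrm{d}\lambda_{H_m}=0$ for matrix coefficients of inequivalent irreducibles), the openness of the canonical map $\mathrm{PPos}_1(H)\to\hat{H}$, and the continuity of the pairing from Lemma~\ref{convlemma}. The only cosmetic difference is that the paper constructs the neighborhood directly---it takes the open set $W=\{(f_1,f_2):\lambda_{s(f_1)}(f_1\overline{f_2})\neq 0\}$ in the fiber product, finds $V$ with $V\times_M V\subset W$, and pushes $V$ down to $\hat H$---whereas you run the contrapositive as a limiting contradiction; these are logically equivalent repackagings of the same idea.
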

\begin{proof}
	Let $[\pi] \in \hat{H}$ and $f \in \mathrm{PPos}_1(H)$ associated with $[\pi]$. By Schur's orthogonality relations (see \cite[Result 5.8]{folland-harmonic-analysis}) and Corollary \ref{convlemma} the set
		\begin{align*}
			W \coloneqq \{(f_1,f_2) \in \mathrm{PPos}_1(H) \times_M \mathrm{PPos}_1(H)\mid \lambda_{s(f_1)}(f_1 \overline{f_2}) \neq 0\} 
		\end{align*}
	is an open neighborhood of $(f,f)$ in $\mathrm{PPos}_1(H) \times_M \mathrm{PPos}_1(H)$. We therefore find a neighborhood $V$ of $f$ in $\mathrm{PPos}_1(H)$ such 
	that $V \times V \cap  (\mathrm{PPos}_1(H) \times_M \mathrm{PPos}_1(H)) \subset W$.
	
	We claim that the image $O$ of $V$ with respect to the canonical surjection $\mathrm{PPos}_1(H) \rightarrow \hat{H}$ has the desired property. Note first that by Proposition \ref{proptop} (\ref{proptop1}) the set $O$ is actually a neighborhood of $[\pi]$. Now take  $[\pi_1], [\pi_2] \in O$ with $m \coloneqq p([\pi_1]) = p([\pi_2])$. We then find normalized positive definite functions $f_i$ associated 
	with $[\pi_i]$ such that $f_i \in V$ for $i = 1,2$, hence $(f_1,f_2) \in W$. Consequently,
	\begin{align*}
		\int_{H_m} f_1 \overline{f_2}\, \mathrm{d}\lambda_{H_m} \neq 0,
	\end{align*}
	and again by Schur's orthogonality relations (see \cite[Result 5.8]{folland-harmonic-analysis}) we obtain $[\pi_1] = [\pi_2]$.
\end{proof}

\begin{proof}[Proof of Theorem \ref{section}]
	Let $[\pi] \in \hat{H}$ and apply Lemma \ref{neighborhood} to find an open neighborhood $O$ of $[\pi]$ such that
		\begin{align*}
			\hat{p}|_O \colon O \rightarrow \hat{p}(O), \quad [\varrho] \mapsto \hat{p}([\varrho])
		\end{align*}
	is bijective. By Proposition \ref{proptop} (ii) its inverse is a local continuous section $\hat{\sigma}$ of $\hat{p}$ with $\hat{\sigma}(\hat{p}([\pi])) = [\pi]$.
\end{proof}
The following example shows that one cannot expect to obtain global sections in Theorem \ref{section}.
\begin{example}
    Consider the compact space $M = [0,1]$,  the cyclic group of two elements $H = \{-1,1\}$, and the open compact group bundle from Example \ref{examplescompgroup} (ii) $p \colon M \times H/_{\sim} \rightarrow M$  obtained from the trivial bundle $M \times H$ by identifying the two points over $1 \in M$. Then for every $m \in [0,1)$ the element of the dual defined by the character 
        \begin{align*}
            \mathrm{id}_m \colon \{m\} \times \{-1,1\} \rightarrow \C, \quad (m,x) \mapsto x
        \end{align*}
    is not contained in the image of a global section  of the dual.
\end{example}

\subsection{Peter--Weyl theory for compact group bundles}\label{secpw}

Recall that for a compact group $H$ and an element of the dual we can consider the space of associated representative functions (cf. \cite[Section 5.2]{folland-harmonic-analysis}, \cite[Section 27]{hewittross2}): For an irreducible unitary representation $\pi \colon H \rightarrow \mathscr{L}(E)$ and an orthonormal basis $e_1, \dots, e_{\dim{\pi}}$ of $E$ we obtain a $\dim(\pi) \times \dim(\pi)$-matrix $(\pi^{ij})_{i,j}$ of continuous functions on $H$ defined by  
	\begin{align*}
		\pi^{ij} \colon H \rightarrow \C, \quad x \mapsto (\pi(x)e_j|e_i)
	\end{align*}
for $i,j \in \{1, \dots, \dim(\pi)\}$. We call such a matrix a \emph{matrix representation of $[\pi]$}. Note that this notion does not depend on the concrete representative of the equivalence class.
Moreover, 
    \begin{align*}
       \mathrm{R}_{[\pi]} \coloneqq \mathrm{lin} \{\pi^{ij} \mid i,j \in \{1, \dots, \dim(\pi)\}\} \subset \mathrm{C}(H)
    \end{align*}
is the \emph{space of representative functions associated with $[\pi]$}. This is independent of the concrete choice of a matrix representation of $[\pi]$. By the Peter--Weyl theorem \cite[Theorem 5.12]{folland-harmonic-analysis} we obtain the following.
    \begin{theorem}\label{pworg}
        Let $H$ be a compact group. Then the union $\bigcup_{[\pi] \in \hat{H}}  \mathrm{R}_{[\sigma]}$ of the finite-dimensional subspaces $\mathrm{R}_{[\pi]}$ for $[\pi] \in \hat{H}$ is total in $\mathrm{C}(H)$. 
    \end{theorem}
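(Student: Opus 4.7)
The plan is to apply the complex Stone--Weierstrass theorem to the linear span $\mathcal{R} \coloneqq \lin \bigcup_{[\pi] \in \hat{H}} \mathrm{R}_{[\pi]}$ inside $\mathrm{C}(H)$; this requires verifying that $\mathcal{R}$ is a unital self-adjoint subalgebra separating points of the compact Hausdorff group $H$. Unitality is immediate from the trivial representation. For closure under multiplication, given irreducibles $\pi$ on $E$ and $\varrho$ on $F$, the tensor product $\pi \otimes \varrho$ is a finite-dimensional unitary representation whose matrix coefficients in the tensor basis $\{e_i \otimes f_k\}$ are precisely the products $\pi^{ij}\varrho^{kl}$; by complete reducibility of finite-dimensional unitary representations, $\pi \otimes \varrho$ splits into an orthogonal direct sum of irreducibles, so these products are finite linear combinations of matrix coefficients of irreducibles and hence lie in $\mathcal{R}$. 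Closure under complex conjugation follows from $\overline{\pi^{ij}(x)} = (\pi(x^{-1})e_i \mid e_j)$, which is a matrix coefficient of the contragredient representation $\bar{\pi}$, again irreducible.

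The core of the argument is point separation. Given $x_0 \neq y_0$ in $H$, setting $z_0 \coloneqq x_0 y_0^{-1} \neq e$, it suffices to produce $[\pi] \in \hat{H}$ with $\pi(z_0) \neq \mathrm{Id}$, for then $\pi(x_0) \neq \pi(y_0)$ and some matrix entry $\pi^{ij}$ separates the two points. I would work with the left regular representation $\lambda$ on $\mathrm{L}^2(H, \lambda_H)$, which is faithful: picking $\phi \in \mathrm{C}(H)$ supported in an open neighborhood $U$ of $e$ with $z_0 U \cap U = \varnothing$ gives $\lambda(z_0)\phi \perp \phi \neq 0$. For each self-adjoint $f \in \mathrm{C}(H)$ (meaning $f(y^{-1}) = \overline{f(y)}$), the right-convolution operator $T_f \psi \coloneqq \psi * f$ is Hilbert--Schmidt (continuous kernel on the compact space $H \times H$), self-adjoint, and intertwines $\lambda$. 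The spectral theorem then decomposes the range of $T_f$ into an orthogonal direct sum of finite-dimensional $\lambda$-invariant eigenspaces. Letting $f$ range over an approximate identity of symmetric positive continuous functions concentrated near $e$, a uniform-continuity estimate yields $T_f \psi \to \psi$ in $\mathrm{L}^2$-norm for every $\psi$, so the span of all such finite-dimensional $\lambda$-invariant subspaces is dense in $\mathrm{L}^2(H)$. If every irreducible subrepresentation of $\lambda$ mapped $z_0$ to the identity, density would force $\lambda(z_0) = \mathrm{Id}$, contradicting faithfulness; hence some irreducible $[\pi] \in \hat{H}$ satisfies $\pi(z_0) \neq \mathrm{Id}$.

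With both requirements established, the complex Stone--Weierstrass theorem on the compact Hausdorff space $H$ yields density of $\mathcal{R}$ in $\mathrm{C}(H)$, which is the claimed totality. The main obstacle I expect is the point-separation step: the algebraic closure properties are entirely formal, whereas passing from the abstract non-triviality of $\lambda(z_0)$ to an \emph{irreducible finite-dimensional} quotient that still distinguishes $z_0$ from $e$ rests on the spectral theorem for Hilbert--Schmidt convolution operators together with an approximate-identity density argument, both of which depend essentially on the compactness of $H$ (so that $T_f$ is actually compact and the approximate identity lives in $\mathrm{L}^2$). Once this is secured, the remainder of the argument is routine.
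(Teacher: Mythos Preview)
Your proof is correct and follows the standard Stone--Weierstrass route to the Peter--Weyl theorem. However, the paper does not prove this statement at all: it simply records it as a known result, citing \cite[Theorem 5.12]{folland-harmonic-analysis}. So there is no ``paper's own proof'' to compare against---the theorem is invoked as a black box, serving only as the classical baseline that the paper then relativizes to compact group bundles in Theorems~\ref{pw1} and~\ref{pw}.

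That said, your sketch is essentially the proof one finds in Folland (or in most modern treatments): the algebra properties of $\mathcal{R}$ are formal, and the substantive step---producing a finite-dimensional irreducible that separates a given $z_0 \neq e$---goes through compactness of convolution operators on $\mathrm{L}^2(H)$ and an approximate identity. One minor remark: your eigenspaces for $T_f$ are $\lambda$-invariant and finite-dimensional but not necessarily irreducible, so strictly speaking you decompose them further into irreducibles before concluding; you implicitly use this when you say ``if every irreducible subrepresentation of $\lambda$ mapped $z_0$ to the identity.'' This is harmless since complete reducibility of finite-dimensional unitary representations is elementary and independent of Peter--Weyl.
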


We prove a version of Theorem \ref{pworg} for open compact group bundles. If $p \colon H \rightarrow M$ is an open compact group bundle over a compact space $M$, and  $\hat{\sigma} \colon O \rightarrow \hat{H}$ is a local continuous section we write
        \begin{align*}
             \mathrm{R}_{\hat{\sigma}} \coloneqq \bigcup_{m \in O}  \mathrm{R}_{\hat{\sigma}(m)}. 
        \end{align*}

\begin{theorem}\label{loctrivrep}
    Let $p \colon H \rightarrow M$ be an open compact group bundle over a compact space $M$ and $\hat{\sigma} \colon O \rightarrow \hat{H}$ a local continuous section of the dual. Then $s|_{\mathrm{R}_{\hat{\sigma}}} \colon  \mathrm{R}_{\hat{\sigma}} \rightarrow O, \, f \mapsto s(f)$ is a locally trivial vector bundle. 
\end{theorem}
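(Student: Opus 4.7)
The approach is to realize $\mathrm{R}_{\hat\sigma}$ as a finite-dimensional continuous Banach subbundle (Definition~\ref{subbundledef}) of the Banach bundle $s\colon \mathrm{C}_p(H)|_O \to O$ from Example~\ref{extensionbundle}, with locally constant fiber dimension; Proposition~\ref{locconst} then yields the desired local triviality, and forgetting the Banach structure gives a locally trivial vector bundle. The fiber dimension is automatically locally constant: Proposition~\ref{proptop}(\ref{proptop4}) combined with continuity of $\hat\sigma$ shows that $d(m) \coloneqq \dim\hat\sigma(m)$ is a continuous function $O \to \mathbb{N}$, hence locally constant, and so is $\dim\mathrm{R}_{\hat\sigma(m)} = d(m)^2$. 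I may therefore work on a component of $O$ on which $d \equiv d$ is constant.

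The main technical obstacle is to prove that the character section
\[
\chi\colon O \to \mathrm{C}_p(H), \qquad m \mapsto \chi_{\hat\sigma(m)},
\]
(where $\chi_{[\pi]}$ is the character of any representative of $[\pi]$) is continuous. I plan to use the Schur orthogonality identity
\[
\chi_\pi(x) = d \int_{H_m} f(yxy^{-1}) \, d\lambda_{H_m}(y),
\]
valid for every normalized pure positive definite function $f$ associated with an irreducible representation $\pi$ of $H_m$, with the right-hand side independent of $f$. The associated map
\[
F \colon \mathrm{PPos}_1(H) \times_M H \to \C, \qquad (f,x) \mapsto d \int_{H_{s(f)}} f(yxy^{-1}) \, d\lambda_{H_{s(f)}}(y)
\]
is continuous by Proposition~\ref{descriptiontop} applied to the integrand (using joint continuity of conjugation on $H$), together with Lemma~\ref{convlemma} and the continuity of the Haar system (Proposition~\ref{Renault}). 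To transfer continuity from $F$ to $\chi$ in the sense of Proposition~\ref{descriptiontop}, I argue by contradiction. Assume $|\chi(m_\alpha)(x_\alpha) - \chi(m_0)(x_0)| \geq \varepsilon$ along a subnet, where $m_\alpha \to m_0$ in $O$ and $x_\alpha \to x_0$ in $H$ with $p(x_\alpha) = m_\alpha$. Fix any normalized pure positive definite function $f_0$ associated with $\hat\sigma(m_0)$. Openness of $\mathrm{PPos}_1(H) \to \hat H$ (Proposition~\ref{proptop}(\ref{proptop1})) combined with continuity of $\hat\sigma$ ensures that each neighborhood $V$ of $f_0$ has open image in $\hat H$ eventually containing $\hat\sigma(m_\alpha)$, so one can select some $g \in V$ with $\hat p(g) = \hat\sigma(m_\alpha)$. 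A standard product-net construction, indexed by $\alpha$ together with a neighborhood basis of $f_0$, then produces a refined net $g_\iota$ with $g_\iota \to f_0$, $\hat p(g_\iota) = \hat\sigma(m_{\alpha(\iota)})$, and $x_{\alpha(\iota)} \to x_0$. Continuity of $F$ now forces
\[
\chi(m_{\alpha(\iota)})(x_{\alpha(\iota)}) = F(g_\iota, x_{\alpha(\iota)}) \to F(f_0, x_0) = \chi(m_0)(x_0),
\]
contradicting the assumed gap.

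With $\chi$ continuous, fiberwise Peter--Weyl (Theorem~\ref{pworg}) shows that the convolution operator $P_m \colon \mathrm{C}(H_m) \to \mathrm{R}_{\hat\sigma(m)}$, $g \mapsto d\,\overline{\chi(m)} \ast g$ is the $L^2$-orthogonal projection onto $\mathrm{R}_{\hat\sigma(m)}$ and fixes each element of $\mathrm{R}_{\hat\sigma(m)}$. Continuity of $\chi$, Lemma~\ref{convlemma}, and Proposition~\ref{Renault} then imply that, for every continuous section $g \in \Gamma(\mathrm{C}_p(H))$, the section $m \mapsto P_m g(m)$ is continuous and takes values in $\mathrm{R}_{\hat\sigma}$. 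Fix $m_0 \in O$ and a basis $v_1, \ldots, v_{d^2}$ of $\mathrm{R}_{\hat\sigma(m_0)}$ given by matrix coefficients of a representative of $\hat\sigma(m_0)$. By standard Banach bundle theory, each $v_i$ extends to a continuous global section $g_i$ of $\mathrm{C}_p(H)$; setting $\tau_i(m) \coloneqq P_m g_i(m)$ yields continuous sections of $\mathrm{R}_{\hat\sigma}$ with $\tau_i(m_0) = v_i$. Continuity of the Gram determinant of $\tau_1(m), \ldots, \tau_{d^2}(m)$ with respect to the fiberwise $L^2$ inner product (again using Lemma~\ref{convlemma}) preserves linear independence on a neighborhood $O'$ of $m_0$, and the dimension count $\dim\mathrm{R}_{\hat\sigma(m)} = d^2$ forces the $\tau_i(m)$ to form a basis of $\mathrm{R}_{\hat\sigma(m)}$ for every $m \in O'$. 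This exhibits $\mathrm{R}_{\hat\sigma}|_{O'} \to O'$ as a locally trivial continuous Banach subbundle of $\mathrm{C}_p(H)|_O$, and Proposition~\ref{locconst} completes the proof.
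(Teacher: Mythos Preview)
Your proof is correct and takes a genuinely different route from the paper's. The paper first establishes Proposition~\ref{convergencematrix}, which characterizes convergence in $\hat{H}^d$ via convergence of full matrix representations (relying on \cite[Theorem 3.5.8]{Dixm1977}), and uses this in Lemma~\ref{subbundleopen} to show directly that the multivalued map $m \mapsto \mathrm{R}_{\hat\sigma(m)}$ is lower hemicontinuous, hence $s|_{\mathrm{R}_{\hat\sigma}}$ is open; local triviality then follows from Proposition~\ref{locconst}. You instead bypass the matrix-coefficient machinery entirely: you prove continuity of the character section $\chi$ via the averaging formula $\chi_\pi(x) = d\int f(yxy^{-1})\,d\lambda$ and the openness of $\mathrm{PPos}_1(H) \to \hat H$, then use convolution with $d\,\overline{\chi}$ to build continuous fiberwise projections onto $\mathrm{R}_{\hat\sigma(m)}$ and hence local continuous frames, obtaining local triviality directly. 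Your argument is more self-contained for this theorem and closer in spirit to classical Peter--Weyl; the paper's approach has the advantage that Proposition~\ref{convergencematrix} is reused verbatim in the homogeneous-space setting (Lemma~\ref{subbundleopen2}), where no single projection formula is available and one genuinely needs control of all matrix coefficients. Two minor points: your notation ``$\hat p(g)$'' for the image of $g \in \mathrm{PPos}_1(H)$ in $\hat H$ clashes with the paper's $\hat p \colon \hat H \to M$; and once you have local frames, local triviality is immediate and the final appeal to Proposition~\ref{locconst} is superfluous.
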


The proof of Theorem \ref{loctrivrep} requires some preparations. The first is the following additional description of the topology on the dual based on 
\cite[Proposition 18.1.9]{Dixm1977}.

\begin{proposition}\label{convergencematrix}
	Let $p \colon H \rightarrow M$ be an open compact group bundle over a compact space $M$ and consider the clopen subset $\hat{H}^d \coloneqq \{[\pi] \in \hat{H} \mid \dim(\pi) =d\} \subset \hat{H}$ for $d \in \N$.
	For $C \subset \hat{H}^d$ and $[\pi] \in \hat{H}^d$ the following are equivalent. 
	\begin{enumerate}[(a)] 
		\item $[\pi] \in \overline{C}$.
		\item For every matrix representation $(\pi^{ij})_{i,j}$  of $[\pi]$ there  is a 
		net of matrix representations $((\pi_\alpha^{ij})_{i,j})_{\alpha \in A}$ of 
		elements of $C$ such that $\lim_\alpha \pi_\alpha^{ij} = \pi^{ij}$ in $\mathrm{C}_p(H)$ for all $i,j \in \{1,...,d\}$.
	\end{enumerate}
\end{proposition}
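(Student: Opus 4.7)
The plan is as follows. The implication (b) $\Rightarrow$ (a) follows immediately from Proposition \ref{uniformapprox}: for any matrix representation $(\pi^{ij})_{i,j}$ of $[\pi]$ coming from an orthonormal basis $e_1, \ldots, e_d$ of the representation space of $\pi$, the diagonal entry $\pi^{11}(x) = (\pi(x) e_1 \mid e_1)$ is the pure normalized positive definite function associated with $[\pi]$ and the unit vector $e_1$, and likewise $\pi_\alpha^{11}$ is associated with $[\pi_\alpha] \in C$. Entry-wise convergence $\pi_\alpha^{11} \to \pi^{11}$ in $\mathrm{C}_p(H)$ is then precisely the hypothesis of Proposition \ref{uniformapprox} yielding $[\pi] \in \overline{C}$.

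For the converse (a) $\Rightarrow$ (b), the plan is to reduce to the classical $\mathrm{C}^*$-algebraic analogue \cite[Proposition 18.1.9]{Dixm1977} applied to the group bundle $\mathrm{C}^*$-algebra $\mathrm{C}^*(H)$ and then translate weak$^*$ convergence of matrix-coefficient functionals on $\mathrm{C}^*(H)$ into convergence of the associated matrix-coefficient functions in $\mathrm{C}_p(H)$ via polarization. Since $\hat{H}^d$ is clopen in $\hat{H}$ by Proposition \ref{proptop}(\ref{proptop4}), we may replace $C$ by $C \cap \hat{H}^d$ and assume $C \subset \hat{H}^d$. The correspondence between $\hat{H}^d$ and $d$-dimensional irreducible representations of $\mathrm{C}^*(H)$ from Proposition \ref{spectrum}, together with the identification of pure normalized positive definite functions with pure states of $\mathrm{C}^*(H)$ from Proposition \ref{posdef}, then allows us to invoke Dixmier's result to obtain a net $([\pi_\alpha])$ in $C$ and orthonormal bases $(e_i^\alpha)_{i=1}^d$ of the representation spaces of the $\pi_\alpha$ such that the matrix-coefficient functionals $\nu_\alpha^{ij}(f) \coloneqq (\pi_\alpha(f) e_j^\alpha \mid e_i^\alpha)$ converge weakly$^*$ to $\nu^{ij}(f) \coloneqq (\pi(f) e_j \mid e_i)$ on $\mathrm{C}^*(H)$ for all $i,j \in \{1,\ldots,d\}$.

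The final step is to translate this to $\mathrm{C}_p(H)$. By Proposition \ref{spectrum}(\ref{spectrum1}) the restriction of $\nu_\alpha^{ij}$ to $\mathrm{C}(H)$ is integration against the matrix coefficient function $\pi_\alpha^{ij}(x) = (\varrho_{\pi_\alpha}(x) e_j^\alpha \mid e_i^\alpha)$ of the fiber-group representation $\varrho_{\pi_\alpha}$ (and similarly for $\pi^{ij}$). For $i = j$ these are pure normalized positive definite functions and Lemma \ref{uniformstate} yields $\pi_\alpha^{ii} \to \pi^{ii}$ in $\mathrm{C}_p(H)$ directly. For $i \neq j$ the polarization identity exhibits $\pi_\alpha^{ij}$ as a fixed complex linear combination $\pi_\alpha^{ij} = \sum_{k=0}^{3} c_k \, g_{\alpha,ij}^{k}$ of the four pure normalized positive definite functions $g_{\alpha,ij}^{k}(x) = (\varrho_{\pi_\alpha}(x) u_{\alpha,ij}^{k} \mid u_{\alpha,ij}^{k})$ associated with the unit vectors $u_{\alpha,ij}^{k} = (e_j^\alpha + \mathrm{i}^k e_i^\alpha)/\sqrt{2}$, where the coefficients $c_k$ depend only on $k$. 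The corresponding pure state functionals are the same combinations of $\nu_\alpha^{ii}, \nu_\alpha^{jj}, \nu_\alpha^{ij}, \nu_\alpha^{ji}$ and thus converge weakly$^*$, so Lemma \ref{uniformstate} gives $g_{\alpha,ij}^{k} \to g_{ij}^{k}$ in $\mathrm{C}_p(H)$ for every $k$. Passing the polarization identity to the limit---fiberwise linear combinations being continuous in the Banach bundle $\mathrm{C}_p(H)$ by Example \ref{extensionbundle}---yields $\pi_\alpha^{ij} \to \pi^{ij}$ in $\mathrm{C}_p(H)$. The main technical hurdle is to carry out the reduction to \cite[Proposition 18.1.9]{Dixm1977} in this bundled context so that a single net $([\pi_\alpha])$ in $C$ and a single choice of orthonormal bases accommodate all polarized pure normalized positive definite functions simultaneously.
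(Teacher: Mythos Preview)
Your argument is correct, and for the off-diagonal entries it takes a genuinely different route from the paper's own proof. Both proofs begin identically: the implication (b)$\Rightarrow$(a) via Proposition~\ref{uniformapprox}, and for (a)$\Rightarrow$(b) both invoke a Dixmier result to produce a net of matrix representations whose associated coefficient functionals $\nu_\alpha^{ij}(f)=\int_{H_{m_\alpha}} f\,\pi_\alpha^{ij}\,\mathrm{d}\lambda_{H_{m_\alpha}}$ converge weak$^*$ to $\nu^{ij}$ for all $i,j$ (a minor point: the relevant $\mathrm{C}^*$-algebraic statement is \cite[Theorem~3.5.8]{Dixm1977}, not Proposition~18.1.9, which is the group version). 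The divergence is in upgrading weak$^*$ convergence to convergence in $\mathrm{C}_p(H)$ for the off-diagonal entries. The paper checks condition~(b)(ii) of Proposition~\ref{descriptiontop} directly: given $x_\beta\to x$ it shows $P_\beta(x_\beta)v\to P(x)v$ by reducing to vectors of the form $v=\int_{H_m} f(y)P(y)w\,\mathrm{d}\lambda_{H_m}(y)$ (which span $\C^d$ by irreducibility) and then manipulating the integrals via left translation. Your polarization argument instead writes $\pi_\alpha^{ij}$ as a fixed linear combination of four normalized positive definite functions $g_{\alpha,ij}^k$ associated with $\pi_\alpha$, observes that the corresponding states are the same linear combination of the $\nu_\alpha^{\bullet\bullet}$ and hence converge weak$^*$, and then invokes Lemma~\ref{uniformstate} for each $g_{\alpha,ij}^k$ before reassembling via the continuity of addition and scalar multiplication in the Banach bundle $\mathrm{C}_p(H)$. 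This is sound and arguably cleaner, since it reuses Lemma~\ref{uniformstate} uniformly rather than reproving a variant of it for vector-valued coefficients; the paper's approach, on the other hand, is self-contained at that point and avoids the small bookkeeping of tracking four auxiliary nets per entry.
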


\begin{proof}
    Observe that the implication \enquote{(b) $\Rightarrow$ (a)} is a consequence of Proposition \ref{uniformapprox} as the diagonal entries of a matrix representation are normalized positive definite functions.
    
    Now assume that (a) holds and let $P \coloneqq (\pi^{ij})_{i,j}$ be a matrix representation associated with $[\pi]$. 
	By \cite[Theorem 3.5.8]{Dixm1977} and Proposition \ref{spectrum} we find a net of matrix 
	representations $(P_\alpha)_{\alpha \in A} \coloneqq ((\pi_\alpha^{ij})_{i,j})_{\alpha \in A}$ associated with elements of $C$ such that
	\begin{align}\label{convequ}
		\lim_\alpha \int_{H_{\hat{p}([\pi_\alpha])}} f \pi_\alpha^{ij} \, \mathrm{d}\lambda_{H_{\hat{p}([\pi_\alpha])}}
		= \int_{H_{\hat{p}([\pi])}} f \pi^{ij} \, \mathrm{d}\lambda_{\hat{p}([\pi])}
	\end{align}
	for every $f \in \mathrm{C}(H)$ and all $i,j \in \{1, \dots d\}$. We set $m_\alpha \coloneqq \hat{p}([\pi_\alpha])$ for every $\alpha \in A$ and $m \coloneqq \hat{p}([\pi])$. 
	As explained above, we already know from Proposition \ref{uniformapprox} that $\lim_{\alpha} \pi_\alpha^{ii} = \pi^{ii}$ in $\mathrm{C}_p(H)$ for every $i \in \{1, \dots, d\}$ and thus, in particular, $\lim_{\alpha} m_\alpha = m$ in $M$.
	
	To check the second condition of Proposition \ref{descriptiontop} for the off-diagonal entries consider a subnet $(P_\beta)_{\beta \in B}$ of $(P_\alpha)_{\alpha \in A}$ 
	and a net $(x_\beta)_{\beta \in B}$ in $H$ with $\lim_\beta x_\beta = x \in H$. We show that 
	$\lim_{\beta} P_\beta(x_\beta)v = P(x)v$ for every $v \in \C^d$ which implies that $\lim_\beta \pi_\beta^{ij} (x_\beta) = \pi^{ij}(x)$ for all $i,j \in \{1, \dots, d\}$. It suffices to consider vectors $v = \int_{H_{m}} f(y) P(y) w\, \mathrm{d}\lambda_{H_m}(y)$ for some 
	$f\in \mathrm{C}(H)$ and $w \in \C^d$, since, by irreducibility of $\pi$, the whole space $\C^d$ is spanned by such vectors. We define $g_\beta \in \mathrm{C}(H_{m_\beta})$ by $g_\beta(y) \coloneqq f(x_\beta^{-1}(y))$ for every $y \in H_{m_\beta}$ and $g \in \mathrm{C}_p(H_m)$ by $g(y) \coloneqq f(x^{-1}y)$ for all $y \in H_m$. Then $\lim_\beta g_\beta = g$ in $\mathrm{C}_p(H)$. Noting that all matrix coefficients of unitary representations are uniformly bounded by $1$, we can apply Proposition \ref{weakstaconvmeas} componentwise in $\C^d$ to obtain
	    \begin{align*}
	         \lim_{\beta} \int_{H_{m_\beta}} g_\beta(y) P_\beta(y)w\,\mathrm{d}\lambda_{H_{m_\beta}}(y) = \int_{H_{m}} g(y) P(y)w\,\mathrm{d}\lambda_{H_{m}}(y).
	    \end{align*}
	But this means
	    \begin{align*}
	        \lim_{\beta} P_{\beta}(x_\beta)\left(\int_{H_{m_\beta}} f(y) P_{\beta}(y)w\,\mathrm{d}\lambda_{H_{m_\beta}}(y)\right) = P(x)v. 
	    \end{align*}
	On the other hand, (\ref{convequ}) also yields
	    \begin{align*}
	        \lim_{\beta} \int_{H_{m_\beta}} f(y) P_{\beta}(y)w\,\mathrm{d}\lambda_{H_{m_\beta}}(y) = \int_{H_{m}} f(y) P(y)w\,\mathrm{d}\lambda_{H_{m}}(y) = v,
	    \end{align*}
	and hence,
	   \begin{align*}
	        \lim_{\beta} \left\|P_{\beta}(x_\beta)v	- P_\beta(x_\beta)\left(\int_{H_{m_\beta}} f(y) P_{\beta}(y)w\,\mathrm{d}\lambda_{H_{m_\beta}}(y)\right)\right\| = 0.
	   \end{align*}
	We conclude that $\lim_{\beta} P(x_\beta)v = P(x)v$ as desired.
\end{proof}

We shall prove the following lemma with the help of Proposition \ref{convergencematrix}. Recall the notion of continuous Banach bundles and their local subbundles from Subsection \ref{bbundl} above.

\begin{lemma}\label{subbundleopen}
    Let $p \colon H \rightarrow M$ be an open compact group bundle over a compact space $M$ and $\hat{\sigma} \colon O \rightarrow \hat{H}$ a local continuous section of the dual. Then $s|_{ \mathrm{R}_{\hat{\sigma}}}\colon  \mathrm{R}_{\hat{\sigma}} \rightarrow O$ is a local continuous Banach subbundle of the continuous Banach bundle $s\colon \mathrm{C}_p(H) \rightarrow M$.
\end{lemma}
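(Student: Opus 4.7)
The plan is to verify the two conditions of Definition \ref{subbundledef}: fiberwise closed linearity of $\mathrm{R}_{\hat{\sigma}}$ in $\mathrm{C}_p(H)$, and openness of the restriction $s|_{\mathrm{R}_{\hat{\sigma}}} \colon \mathrm{R}_{\hat{\sigma}} \rightarrow O$. The first is immediate: by Schur orthogonality (as already used in the proof of Proposition \ref{proptop}(\ref{proptop4})), the space $\mathrm{R}_{\hat{\sigma}(m)}$ is a finite-dimensional subspace of $\mathrm{C}(H_m)$ of dimension $\dim(\hat{\sigma}(m))^2$, hence closed in the fiber $(\mathrm{C}_p(H))_m = \mathrm{C}(H_m)$. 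Combining continuity of the dimension map on $\hat{H}$ (Proposition \ref{proptop}(\ref{proptop4})) with continuity of $\hat{\sigma}$, the function $m' \mapsto \dim(\hat{\sigma}(m'))$ is locally constant on $O$; since the subbundle property is local, I shrink $O$ and assume $\dim(\hat{\sigma}(m')) = d$ throughout $O$.

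For openness I argue by contradiction. Suppose the restriction fails to be open at some $m \in O$: there exist $f_0 \in \mathrm{R}_{\hat{\sigma}(m)}$, an open neighborhood $U \subset \mathrm{C}_p(H)$ of $f_0$, and a net $(m_\alpha)_{\alpha \in A}$ in $O$ with $m_\alpha \to m$ yet $U \cap \mathrm{R}_{\hat{\sigma}(m_\alpha)} = \emptyset$ for every $\alpha$. By continuity of $\hat{\sigma}$, $\hat{\sigma}(m_\alpha) \to \hat{\sigma}(m)$ in $\hat{H}^d$, so $\hat{\sigma}(m)$ lies in the closure of $C \coloneqq \{\hat{\sigma}(m_\alpha) \mid \alpha \in A\} \subset \hat{H}^d$. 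Fix any matrix representation $(P^{ij})_{i,j}$ of $\hat{\sigma}(m)$ and write $f_0 = \sum_{i,j=1}^d c_{ij} P^{ij}$.

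Applying Proposition \ref{convergencematrix} to $C$ and $[\pi] = \hat{\sigma}(m)$ produces a net $((P_\gamma^{ij})_{i,j})_{\gamma \in \Gamma}$ of matrix representations of elements of $C$ such that $P_\gamma^{ij} \to P^{ij}$ in $\mathrm{C}_p(H)$ for every $i, j$. Setting $f_\gamma \coloneqq \sum_{i,j} c_{ij} P_\gamma^{ij}$, continuity of the vector-space operations of the Banach bundle $\mathrm{C}_p(H)$ (Example \ref{extensionbundle}) gives $f_\gamma \to f_0$. Each $f_\gamma$ lies in $\mathrm{R}_{\hat{\sigma}(m_{\alpha(\gamma)})}$ for the index $\alpha(\gamma) \in A$ with $P_\gamma$ associated to $\hat{\sigma}(m_{\alpha(\gamma)})$, so eventually $f_\gamma \in U \cap \mathrm{R}_{\hat{\sigma}(m_{\alpha(\gamma)})}$, contradicting $U \cap \mathrm{R}_{\hat{\sigma}(m_\alpha)} = \emptyset$ for all $\alpha$. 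The main conceptual step here is Proposition \ref{convergencematrix}, which lifts a matrix representation of the limit point to matrix representations of approximants in the topology of $\mathrm{C}_p(H)$; proceeding by contradiction rather than by the subnet formulation of openness in Remark \ref{openchar} neatly sidesteps the fact that the lifted net in Proposition \ref{convergencematrix} is only indexed by a set $\Gamma$, not by a formal subnet of $A$.
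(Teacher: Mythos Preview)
Your proof is correct and follows essentially the same approach as the paper: both arguments reduce openness of $s|_{\mathrm{R}_{\hat{\sigma}}}$ to Proposition \ref{convergencematrix}, which lifts a matrix representation at the limit point to nearby matrix representations in $\mathrm{C}_p(H)$. The paper packages this via the lower hemicontinuity characterization of openness (showing $\bigcup_{m \in \overline{Z}} \mathrm{R}_{\hat{\sigma}(m)} \subset \overline{\bigcup_{m \in Z} \mathrm{R}_{\hat{\sigma}(m)}}$ for every $Z \subset O$), whereas you reach the same conclusion by a direct contradiction; the content is identical.
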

\begin{proof}
    We only have to show that $s|_{\mathrm{R}_{\hat{\sigma}}}\colon  \mathrm{R}_{\hat{\sigma}} \rightarrow O$ is open. By \cite[Theorem 17.7]{AliBor} this is the case if only if the multivalued map
        \begin{align*}
            O \rightarrow \mathcal{P}(\mathrm{C}_p(H)), \quad m \mapsto  \mathrm{R}_{\hat{\sigma}(m)}
        \end{align*}
    to the power set $\mathcal{P}(\mathrm{C}_p(H))$ of $\mathrm{C}_p(H)$ is lower hemicontinuous which is equivalent to
        \begin{align*}
            \bigcup_{m \in \overline{Z}}  \mathrm{R}_{\hat{\sigma}(m)} \subset \overline{\bigcup_{m \in Z}  \mathrm{R}_{\hat{\sigma}(m)} }
        \end{align*}
    for every subset $Z \subset O$. So take $Z \subset O$ and $m \in \overline{Z}$. We choose a matrix representation $(\pi^{ij})_{i,j}$ of $[\pi] \coloneqq \hat{\sigma}(m)$. Since $\hat{\sigma}(\overline{Z}) \subset  \overline{\hat{\sigma}(Z)}$, we can apply Proposition \ref{convergencematrix} to find a net $((\pi_\alpha^{ij})_{i,j})_{\alpha \in A}$ of matrix elements associated with elements of $\hat{\sigma}(Z)\cap \hat{H}^{\dim(\pi)}$ such that $\lim_\alpha \pi_{\alpha}^{ij} = \pi^{ij}$ in $\mathrm{C}_p(H)$ for all $i,j \in \{1,\dots, \mathrm{dim}(\pi)\}$. But this already implies that each element of $\mathrm{R}_{[\pi]}$ is a limit of elements in the union $\bigcup_{m' \in Z}  \mathrm{R}_{\hat{\sigma}(m')}$ as desired.
\end{proof}

We finally prove Theorem \ref{loctrivrep}.
\begin{proof}[Proof of Theorem \ref{loctrivrep}]
    The dimension of the fibers of the Banach bundle $s|_{ \mathrm{R}_{\hat{\sigma}}}$ is locally constant by Proposition \ref{proptop} (iv). Hence, the Banach bundle is locally trivial by Proposition \ref{locconst}.
\end{proof}

Using Theorem \ref{section} and the classical Peter--Weyl result (Theorem \ref{pworg}), we obtain the following Peter--Weyl-type theorem for open compact group bundles. Recall the notion of fiberwise dense and total subsets from Subsection \ref{bbundl}.
\begin{theorem}\label{pw1}
    Let $p \colon H \rightarrow M$ be an open compact group bundle over a compact space $M$. Then the union $\bigcup_{\hat{\sigma} \in \Gamma_{\mathrm{loc}}(\hat{H})}  \mathrm{R}_{\hat{\sigma}}$ defined by the locally trivial vector bundles  $s|_{ \mathrm{R}_{\hat{\sigma}}} \colon  \mathrm{R}_{\hat{\sigma}} \rightarrow O$ for local continuous sections $\hat{\sigma} \colon  O \rightarrow \hat{H}$ is fiberwise total in $\mathrm{C}_p(H)$.
\end{theorem}
We state another version of Theorem \ref{pw1}. For an open compact group bundle $p \colon H \rightarrow M$ and a local continuous section $\hat{\sigma} \colon O \rightarrow \hat{H}$ of the dual, we say that $f \in \mathrm{C}(H)$ is a \emph{relative representative function} with respect to $\hat{\sigma}$ if 
    \begin{enumerate}[(i)]
        \item $f|_{H_m} \in \mathrm{R}_{\hat{\sigma}(m)}$ for every $m \in O$, and
        \item $f|_{H_m} = 0$ for every $m \in M\setminus O$.
    \end{enumerate} 
The space of these relative representative functions is denoted by $\Gamma_{\hat{\sigma}} \subset \mathrm{C}(H)$.
    \begin{theorem}\label{pw}
        Let $p \colon H \rightarrow M$ be an open compact group bundle over a compact space $M$. Then the union $\bigcup_{\sigma \in \Gamma_{\mathrm{loc}}(\hat{H})} \Gamma_{\hat{\sigma}}$ is total in $\mathrm{C}(H)$.
    \end{theorem}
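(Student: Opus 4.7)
The strategy is to deduce Theorem \ref{pw} from the Stone--Weierstrass-type Proposition \ref{swbundles}, using the correspondence $\mathrm{C}(H) \cong \Gamma(\mathrm{C}_p(H))$ from Example \ref{extensionbundle2}. Under this identification $\bigcup_{\hat\sigma} \Gamma_{\hat\sigma}$ becomes a subset of $\Gamma(\mathrm{C}_p(H))$, and its $\mathrm{C}(M)$-linear span $\Gamma'$ is in fact a $\mathrm{C}(M)$-submodule: if $f \in \mathrm{C}(M)$ and $g \in \Gamma_{\hat\sigma}$ with $\hat\sigma\colon O \to \hat H$, then $((f \circ p)g)|_{H_m} = f(m) \cdot g|_{H_m}$ lies in $R_{\hat\sigma(m)}$ (a linear space) for $m \in O$ and vanishes for $m \in M \setminus O$, so $(f \circ p)g \in \Gamma_{\hat\sigma}$ again. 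By Proposition \ref{swbundles} it suffices to prove that $\Gamma'$ is fiberwise dense, i.e., that for every $m_0 \in M$ the set $\{g|_{H_{m_0}} \mid g \in \bigcup_{\hat\sigma} \Gamma_{\hat\sigma}\}$ has dense linear span in $\mathrm{C}(H_{m_0})$.

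Fix $m_0 \in M$. By the classical Peter--Weyl theorem (Theorem \ref{pworg}) the subspace $\bigcup_{[\pi] \in \hat{H}_{m_0}} R_{[\pi]}$ is total in $\mathrm{C}(H_{m_0})$, so it is enough to show that every $h \in R_{[\pi]}$ with $[\pi] \in \hat{H}_{m_0}$ is of the form $\tilde h|_{H_{m_0}}$ for some $\tilde h \in \Gamma_{\hat\sigma}$ and some local continuous section $\hat\sigma$. By Theorem \ref{section} choose a local continuous section $\hat\sigma \colon O \to \hat{H}$ with $\hat\sigma(m_0) = [\pi]$. By Theorem \ref{loctrivrep} the bundle $s|_{R_{\hat\sigma}} \colon R_{\hat\sigma} \to O$ is locally trivial, so on some open neighborhood $U \subseteq O$ of $m_0$ there is a local continuous section $\tau\colon U \to R_{\hat\sigma}$ (in $\mathrm{C}_p(H)$) with $\tau(m_0) = h$. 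Using compact Hausdorffness of $M$, pick $\varphi \in \mathrm{C}(M)$ with $\varphi(m_0)=1$, $0 \leq \varphi \leq 1$, and $\mathrm{supp}(\varphi) \subseteq U$, and define
\begin{equation*}
    \tilde h(x) \coloneqq \begin{cases} \varphi(p(x))\cdot \tau(p(x))(x), & x \in p^{-1}(U),\\ 0, & x \notin p^{-1}(U). \end{cases}
\end{equation*}

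Continuity of $\tilde h$ is the main technical point. On the open set $p^{-1}(U)$, if $x_\alpha \to x$ then $p(x_\alpha) \to p(x)$ in $U$, so $\tau(p(x_\alpha)) \to \tau(p(x))$ in $\mathrm{C}_p(H)$ by continuity of $\tau$, and Proposition \ref{descriptiontop} yields $\tau(p(x_\alpha))(x_\alpha) \to \tau(p(x))(x)$; combined with continuity of $\varphi \circ p$ this gives $\tilde h(x_\alpha) \to \tilde h(x)$. On the open set $H \setminus p^{-1}(\mathrm{supp}(\varphi))$ the function $\tilde h$ vanishes identically (since $\mathrm{supp}(\varphi) \subset U$ means $\tilde h$ is the product of $\varphi \circ p$ and something bounded wherever both are defined), so continuity there is trivial. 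These two open sets cover $H$, settling continuity. By construction $\tilde h|_{H_m}$ lies in $R_{\hat\sigma(m)}$ for $m \in O$ (being either a scalar multiple of $\tau(m)$ on $U$ or zero on $O \setminus U$) and vanishes for $m \in M \setminus O \subseteq M \setminus \mathrm{supp}(\varphi)$; hence $\tilde h \in \Gamma_{\hat\sigma}$ and $\tilde h|_{H_{m_0}} = \varphi(m_0)\tau(m_0) = h$.

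This establishes the desired fiberwise density, and Proposition \ref{swbundles} then yields density of $\Gamma'$ in $\Gamma(\mathrm{C}_p(H)) \cong \mathrm{C}(H)$, i.e., the totality of $\bigcup_{\hat\sigma} \Gamma_{\hat\sigma}$ in $\mathrm{C}(H)$. The hardest step is the continuity verification for the glued function $\tilde h$; the remainder is a direct combination of previously developed tools (Peter--Weyl on single fibers, the existence of continuous sections into $\hat H$, and the local triviality of $R_{\hat\sigma}$).
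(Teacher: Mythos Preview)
Your proof is correct and follows essentially the same strategy as the paper: identify $\mathrm{C}(H)$ with $\Gamma(\mathrm{C}_p(H))$, check that the relevant $\mathrm{C}(M)$-submodule is fiberwise dense via the classical Peter--Weyl theorem together with Theorem \ref{section}, and conclude by Proposition \ref{swbundles}. The only difference is in how fiberwise density is established: the paper invokes Theorem \ref{pw1} and the general fact that every point of a continuous Banach bundle lies in the image of a global section (applied to the trivially extended subbundle $E_{\hat\sigma}$ of Remark \ref{remsub}), whereas you construct the extending function $\tilde h$ by hand using a bump function and a local section of the locally trivial bundle $R_{\hat\sigma}$. Your version is slightly more self-contained (it does not appeal to the Banach bundle section existence theorem or the trivial extension device), at the cost of the explicit continuity verification; the paper's version is shorter but leans on more of the Banach bundle machinery already quoted.
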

    \begin{proof}
        Let $\hat{\sigma} \colon O \rightarrow \hat{H}$ be a local continuous section of the dual. The bundle $s|_{\mathrm{R}_{\hat{\sigma}}} \colon \mathrm{R}_{\hat{\sigma}} \rightarrow O$ can be extended trivially to a continuous Banach subbundle $s|_{E_{\hat{\sigma}}} \colon E_{\hat{\sigma}} \rightarrow M$ of $s \colon \mathrm{C}_p(H) \rightarrow M$ (see Lemma \ref{remsub}) and the canonical isometric isomorphism $\mathrm{C}(H) \rightarrow \Gamma(\mathrm{C}_p(H))$ of $\mathrm{C}(M)$-modules from Example \ref{extensionbundle2} then maps $\Gamma_{\hat{\sigma}}$ to the $\mathrm{C}(M)$-submodule 
            \begin{align*}
                \Lambda_{\hat{\sigma}} \coloneqq \{\tau \in \Gamma(\mathrm{C}_p(H))\mid \tau(m) \in (E_{\hat{\sigma}})_m \textrm{ for every } m \in M\} \subset \Gamma(\mathrm{C}_q(H)).
            \end{align*}
        Using that every element of a Banach bundle is in the image of a continuous section, we conclude that $\mathrm{R}_{\hat{\sigma}} \subset  E_{\hat{\sigma}} = \{\tau(m) \mid m \in M, \tau \in \Lambda_{\hat{\sigma}}\}$.
        Hence, if $\Lambda$ is the sum of the modules $\Lambda_{\hat{\sigma}}$ for local continuous sections $\sigma \colon O \rightarrow \hat{H}$, then $\{\tau(m)\mid m \in M, \tau \in \Lambda\}$ contains the union $\bigcup_{\hat{\sigma} \in \Gamma_{\mathrm{loc}}(\hat{H})} \mathrm{R}_{\hat{\sigma}}$ and thus is is fiberwise dense in $\mathrm{C}_p(H)$ by Theorem \ref{pw1}. By the Stone-Weiterstrass theorem for bundles (Theorem \ref{swbundles}), we conclude that $\Lambda$ is dense in $\Gamma(\mathrm{C}_p(H))$. The sum of the modules $\Gamma_{\hat{\sigma}}$ for $\hat{\sigma} \in \Gamma_{\mathrm{loc}}(\hat{H})$ is therefore dense in $\mathrm{C}(H)$ as desired.
    \end{proof}
\subsection{Peter--Weyl theory for homogeneous space bundles}\label{secpw2}
\label{representativequotient}

We now turn to representative functions on homogeneous space bundles. For a compact group $H$, a closed subgroup $U \subset H$ and $[\pi] \in \hat{H}$ we write
    \begin{align*}
        \mathrm{R}_{[\pi],U}\coloneqq \{f \in \mathrm{R}_{[\pi]}\mid f(xy) = f(x) \textrm{ for all } x \in H, y \in U\} \subset \mathrm{C}(H),
    \end{align*}
for the corresponding \emph{space of representative functions}, cf. \cite[Subsection 28.72]{hewittross2} (see also \cite{Fara2017}).
Each $f \in \mathrm{R}_{[\pi],U}$ defines a continuous function $f' \in \mathrm{C}(H/U)$ and we set
    \begin{align*}
        \mathrm{R}_{[\pi],U}' \coloneqq \{f' \mid f \in \mathrm{R}_{[\pi],U}\} \subset \mathrm{C}(H/U).
    \end{align*}
We then obtain the following Peter--Weyl type theorem for homogeneous spaces
\begin{theorem}
    Let $H$ be a compact group and $U \subset H$ a closed subgroup. Then the union $\bigcup_{[\pi] \in \hat{H}} \mathrm{R}_{[\pi],U}'$ of the finite-dimensional subspaces $\mathrm{R}_{[\pi],U}'$ for $[\pi] \in \hat{H}$ is total in $\mathrm{C}(H/U)$.
\end{theorem}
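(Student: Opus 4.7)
My plan is to reduce the statement to the classical Peter--Weyl theorem (Theorem \ref{pworg}) via the quotient map $q \colon H \to H/U$ together with an averaging operator over $U$.

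First, I would note that the pullback map
\[
    q^* \colon \mathrm{C}(H/U) \rightarrow \mathrm{C}(H), \quad h \mapsto h \circ q
\]
is an isometric embedding with image exactly the subspace $\mathrm{C}(H)^U$ of right-$U$-invariant continuous functions on $H$, i.e., those $g \in \mathrm{C}(H)$ with $g(xu) = g(x)$ for all $x \in H$ and $u \in U$. The map sending $f \in \mathrm{R}_{[\pi],U}$ to the corresponding $f' \in \mathrm{R}_{[\pi],U}'$ is the inverse of $q^*$ restricted to $\mathrm{R}_{[\pi],U}$. Hence it suffices to prove that $\bigcup_{[\pi] \in \hat H} \mathrm{R}_{[\pi],U}$ is total in $\mathrm{C}(H)^U$.

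Next, I would introduce the averaging operator
\[
    E_U \colon \mathrm{C}(H) \rightarrow \mathrm{C}(H), \quad (E_U g)(x) \coloneqq \int_U g(xu) \, \mathrm{d}\lambda_U(u),
\]
which is a contractive linear projection onto $\mathrm{C}(H)^U$. Given an irreducible unitary representation $\pi \colon H \to \mathscr{L}(E)$, a typical element of $\mathrm{R}_{[\pi]}$ has the form $x \mapsto (\pi(x)v|w)$ for $v,w \in E$, and one computes
\[
    (E_U[(\pi(\cdot)v|w)])(x) = (\pi(x) P_U v|w), \qquad P_U \coloneqq \int_U \pi(u)\, \mathrm{d}\lambda_U(u).
\]
Since $P_U$ is the orthogonal projection onto the $\pi(U)$-fixed vectors, the resulting function is again a matrix coefficient of $\pi$, hence lies in $\mathrm{R}_{[\pi]}$, and it is right-$U$-invariant because $\pi(u')P_U = P_U$ for all $u' \in U$. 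Thus $E_U$ maps $\mathrm{R}_{[\pi]}$ into $\mathrm{R}_{[\pi],U}$.

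Finally, given any $g \in \mathrm{C}(H)^U$ and $\varepsilon > 0$, the classical Peter--Weyl theorem (Theorem \ref{pworg}) produces $f \in \lin \bigcup_{[\pi] \in \hat H} \mathrm{R}_{[\pi]}$ with $\|f - g\|_\infty < \varepsilon$. Applying $E_U$, using $E_U g = g$, the contractivity of $E_U$, and the observation above, we obtain
\[
    \|E_U f - g\|_\infty = \|E_U(f-g)\|_\infty \leq \|f-g\|_\infty < \varepsilon,
\]
while $E_U f \in \lin \bigcup_{[\pi] \in \hat H} \mathrm{R}_{[\pi],U}$. This proves density in $\mathrm{C}(H)^U$, and transporting back via $q^*$ yields totality of $\bigcup_{[\pi] \in \hat H} \mathrm{R}_{[\pi],U}'$ in $\mathrm{C}(H/U)$. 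There is no real obstacle here — the only nontrivial point is the verification that $E_U$ preserves each space $\mathrm{R}_{[\pi]}$, which is a direct consequence of the reproducing formula for matrix coefficients together with left-invariance of the argument $x$ under the integral.
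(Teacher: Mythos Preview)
Your argument is correct. The paper does not give its own proof of this statement but simply cites \cite[Subsection 30.60 (a)]{hewittross2}, with a footnote remarking that by orthogonality of the spaces $\mathrm{R}_{[\pi]}$ a function in $\lin\{\mathrm{R}_{[\pi]}\mid [\pi]\in\hat H\}$ is right-$U$-invariant if and only if it lies in $\lin\{\mathrm{R}_{[\pi],U}\mid [\pi]\in\hat H\}$. Your averaging-operator argument is precisely the standard mechanism behind that remark: the projection $E_U$ onto $\mathrm{C}(H)^U$ preserves each $\mathrm{R}_{[\pi]}$ and hence sends representative functions to right-$U$-invariant representative functions, so density in $\mathrm{C}(H)^U$ follows from the classical Peter--Weyl theorem by one application of $E_U$. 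In short, you have written out what the paper leaves to the reference.
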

\begin{proof}
    Recall that the spaces $\mathrm{R}_{[\pi]}$ for $[\pi] \in \hat{H}$ are pairwise orthogonal subspaces of $\mathrm{L}^2(H)$ and invariant with respect to multiplication from the right. This implies that a function $f \in \lin \bigcup \{\mathrm{R}_{[\pi]}\mid [\pi] \in \hat{H}\}$ satisfies $f(xu) = f(x)$ for all $x \in H$ and $u \in U$ if and only if $f \in \lin \bigcup\{\mathrm{R}_{[\pi],U}\mid [\pi] \in \hat{H}\}$. The result therefore follows from \cite[Subsection 30.60 (a)]{hewittross2}.
\end{proof}
In analogy to the previous section we write, for an open compact group bundle $p \colon H \rightarrow M$, a closed subgroup bundle $p|_U \colon U \rightarrow M$ and a local continuous section $\hat{\sigma} \colon O \rightarrow \hat{H}$ of the dual,
    \begin{align*}
        \mathrm{R}_{\hat{\sigma},U} &\coloneqq \bigcup_{m \in O} \mathrm{R}_{\hat{\sigma}(m),U_m} \subset \mathrm{C}_p(H), \textrm{ and }\\
        \mathrm{R}_{\hat{\sigma},U}' &\coloneqq \bigcup_{m \in O} \mathrm{R}_{\hat{\sigma}(m),U_m}' \subset \mathrm{C}_{p_{/U}}(H/U).
    \end{align*}

The following result shows that these constructions yield locally trivial bundles if and only if the closed subgroup bundle is open. (In the following we will often speak of \enquote{open closed subgroup bundles} $p|_U \colon U \rightarrow M$ of open compact group bundles $p \colon H \rightarrow M$. Note that the term \enquote{open} refers to the map $p|_U$ being open, whereas \enquote{closed} refers to $U \subset H$ being closed in $H$.)

\begin{theorem}\label{loctrivrep2}
    Let $p \colon H \rightarrow M$ be an open compact group bundle over a compact space $M$. For a closed subgroup bundle $p|_{U}\colon U \rightarrow M$ the following assertions are equivalent.
        \begin{enumerate}[(a)]
            \item The map $p|_U \colon U \rightarrow M$ is open.\label{loctrivrep2a}
            \item $s|_{\mathrm{R}_{\hat{\sigma},U}} \colon \mathrm{R}_{\hat{\sigma},U} \rightarrow O$ is a locally trivial vector bundle for every local continuous section $\hat{\sigma} \colon O \rightarrow \hat{H}$.\label{loctrivrep2b}
            \item $s|_{\mathrm{R}_{\hat{\sigma},U}'} \colon \mathrm{R}_{\hat{\sigma},U}' \rightarrow O$ is a locally trivial vector bundle for every local continuous section $\hat{\sigma} \colon O \rightarrow \hat{H}$.\label{loctrivrep2c}
        \end{enumerate}
\end{theorem}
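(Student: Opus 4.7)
The plan is to establish the two equivalences $(a) \Leftrightarrow (b)$ and $(a) \Leftrightarrow (c)$ separately, each driven by the $U_m$-averaging operator $P_m f(x) \coloneqq \int_{U_m} f(xu)\,\mathrm{d}\lambda_{U_m}(u)$, which is the $L^2$-orthogonal projection onto the $U_m$-invariants in $\mathrm{C}(H_m)$ and therefore restricts to a projection $\mathrm{R}_{\hat\sigma(m)} \to \mathrm{R}_{\hat\sigma(m), U_m}$.

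For $(a) \Rightarrow (b)$, Proposition \ref{Renault} applied to $p|_U$ yields continuity of $m \mapsto \lambda_{U_m}$; combined with Lemma \ref{convlemma} and the continuity of multiplication on $H$, this upgrades $P$ to a continuous bundle endomorphism of $\mathrm{C}_p(H)$. In particular, $\mathrm{R}_{\hat\sigma, U} = \{f \in \mathrm{R}_{\hat\sigma} : Pf = f\}$ is closed in the locally trivial bundle $\mathrm{R}_{\hat\sigma}$. For the openness of $s|_{\mathrm{R}_{\hat\sigma, U}}$ required by Corollary \ref{charloctriv}, I would use the approximation characterization (Remark \ref{openchar}): given $m_\alpha \to m$ in $O$ and $f \in \mathrm{R}_{\hat\sigma(m), U_m}$, openness of $s|_{\mathrm{R}_{\hat\sigma}}$ (Theorem \ref{loctrivrep}) yields a subnet and $g_\beta \in \mathrm{R}_{\hat\sigma(m_\beta)}$ with $g_\beta \to f$, and then $f_\beta \coloneqq P_{m_\beta} g_\beta \in \mathrm{R}_{\hat\sigma(m_\beta), U_{m_\beta}}$ satisfies $f_\beta \to Pf = f$ by continuity of $P$. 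Corollary \ref{charloctriv} then gives local triviality.

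Conversely, for $(b) \Rightarrow (a)$, I would first observe that Lemma \ref{convlemma} and openness of $p$ make the $L^2$ pairing $\langle f, g\rangle_m \coloneqq \int_{H_m} f \bar g\,\mathrm{d}\lambda_{H_m}$ a continuous inner product on $\mathrm{R}_{\hat\sigma}$. Under (b), local Gram--Schmidt on trivializations of $\mathrm{R}_{\hat\sigma, U}$ produces continuous orthonormal frames, so the orthogonal projection onto this subbundle---which must coincide with $P$---is a continuous bundle map. Consequently, for any relative representative function $f \in \Gamma_{\hat\sigma}$ the section $m \mapsto P_m(f|_{H_m})$ of $\mathrm{R}_{\hat\sigma, U}$ is continuous, and evaluating at the continuous unit section $m \mapsto 1_{H_m}$ yields continuity of $m \mapsto \int_{U_m} f\,\mathrm{d}\lambda_{U_m}$. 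Uniform approximation via Peter--Weyl (Theorem \ref{pw}) extends this to all $g \in \mathrm{C}(H)$, so $m \mapsto \lambda_{U_m}$ is continuous and Proposition \ref{Renault} delivers (a).

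The equivalence $(a) \Leftrightarrow (c)$ will rest on the pullback $\Psi \colon \mathrm{C}_{p_{/U}}(H/U) \to \mathrm{C}_p(H)$, $h \mapsto h \circ q$, which is always a continuous fiberwise isometry restricting to a bijection $\mathrm{R}_{\hat\sigma, U}' \to \mathrm{R}_{\hat\sigma, U}$ (continuity follows directly from Proposition \ref{descriptiontop}). For $(a) \Rightarrow (c)$, I would first check that $p|_U$ open forces $q$ open---by deducing openness of the multiplication $H \times_M U \to H$, $(x,u) \mapsto xu$, via the approximation characterization and lifting the $u$-factor through openness of $p|_U$---after which $\Psi^{-1}$ is continuous and (b) transfers to (c). For $(c) \Rightarrow (a)$, I would adapt the projection argument in the quotient: $p_{/U}$ is always open, the pushforwards $\mu_m \coloneqq (q_m)_*\lambda_{H_m}$ vary continuously, and local orthonormal frames $\tilde\tau_i$ of $\mathrm{R}_{\hat\sigma, U}'$ (whose pullbacks $\Psi(\tilde\tau_i)$ are $\lambda_{H_m}$-orthonormal by the change-of-variable $(q_m)_*\lambda_{H_m} = \mu_m$) give a continuous representation $\tilde P_m f = \sum_i \langle f|_{H_m}, \Psi(\tilde\tau_i)(m)\rangle_{\lambda_{H_m}}\, \tilde\tau_i(m)$ of the $L^2$-descent of $P_m f$ to $\mathrm{C}(H_m/U_m)$; evaluating at $q_m(1_{H_m})$ then yields continuity of $m \mapsto \int_{U_m} f\,\mathrm{d}\lambda_{U_m}$, and Peter--Weyl concludes. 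The hardest step is this last one: one needs to pass from local triviality of a bundle living on $H/U$ back to an integration statement on $U_m \subset H_m$ without a priori access to continuity of $\Psi^{-1}$, and the resolution is precisely that $\Psi$ itself is always continuous and intertwines the two natural $L^2$ structures via the change-of-variable identity.
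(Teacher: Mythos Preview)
Your proposal is correct, and the argument for $(a)\Leftrightarrow(b)$ is very close to the paper's: the paper also reduces $(b)\Rightarrow(a)$ to continuity of $m\mapsto\lambda_{U_m}(f)$ for $f\in\Gamma_{\hat\sigma}$ via the same orthogonal-projection/orthonormal-frame computation, and for $(a)\Rightarrow(b)$ it likewise invokes Corollary~\ref{charloctriv}. One small structural difference: the paper proves openness of $s|_{\mathrm{R}_{\hat\sigma,U}}$ \emph{without} assuming (a) (its Lemma~\ref{subbundleopen2}, via a matrix-coefficient averaging trick), and then only needs (a) for closedness; your route uses the continuous bundle projection $P$ (which already requires (a)) for both halves. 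Either works.

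The real divergence is in how you handle (c). The paper dispatches $(b)\Leftrightarrow(c)$ in one line by observing that your $\Psi\colon \mathrm{R}_{\hat\sigma,U}'\to\mathrm{R}_{\hat\sigma,U}$, $f\mapsto f\circ q$, is \emph{always} a homeomorphism, independently of (a). You explicitly say you lack ``a priori access to continuity of $\Psi^{-1}$'' and therefore run a separate $(a)\Leftrightarrow(c)$ argument, including the detour through openness of $q$ and a second projection computation on $H/U$. That detour is valid, but unnecessary: continuity of $\Psi^{-1}$ follows directly from compactness of $H$. Given $g_\alpha\to g$ in $\mathrm{R}_{\hat\sigma,U}$ and a net $y_\beta\to y$ in $H/U$ as in Proposition~\ref{descriptiontop}, choose arbitrary lifts $x_\beta\in H$ of $y_\beta$; by compactness a subnet $x_\gamma$ converges in $H$, necessarily to a lift $\tilde x$ of $y$ (since $q$ is continuous), so $g_\gamma'(y_\gamma)=g_\gamma(x_\gamma)\to g(\tilde x)=g'(y)$. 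Since every subnet of $(g_\beta'(y_\beta))_\beta$ has a further subnet with limit $g'(y)$, the whole net converges. This makes $(b)\Leftrightarrow(c)$ immediate and renders your parallel treatment of (c) redundant.
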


For the proof of Theorem \ref{loctrivrep2} we again need some preliminary results. The first is the following generalization of Lemma \ref{subbundleopen}. 

\begin{lemma}\label{subbundleopen2}
    Consider an open compact group bundle $p \colon H \rightarrow M$ over a compact space $M$, a closed subgroup bundle $p|_U \colon U \rightarrow M$ and a local continuous section of the dual $\hat{\sigma} \colon O\rightarrow \hat{H}$. Then $s|_{\mathrm{R}_{\hat{\sigma},U}} \colon \mathrm{R}_{\hat{\sigma},U} \rightarrow O$ is a local continuous Banach subbundle of the continuous Banach bundle $s \colon \mathrm{C}_p(H) \rightarrow M$.
\end{lemma}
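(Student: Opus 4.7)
The plan is to verify the two requirements of Definition~\ref{subbundledef}: each fiber of $\mathrm{R}_{\hat{\sigma}, U}$ must be a closed linear subspace of the corresponding fiber of $\mathrm{C}_p(H)$, and the restricted projection $s|_{\mathrm{R}_{\hat{\sigma}, U}}$ must be open over $O$. The first requirement is automatic because each fiber $\mathrm{R}_{\hat{\sigma}(m), U_m}$ sits inside the finite-dimensional space $\mathrm{R}_{\hat{\sigma}(m)}$ (Theorem~\ref{loctrivrep}). The bulk of the work concerns openness of $s|_{\mathrm{R}_{\hat{\sigma}, U}}$, which I would verify via the subnet characterization of openness from Remark~\ref{openchar}.

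Given $f \in \mathrm{R}_{\hat{\sigma}(m_0), U_{m_0}}$ and a net $m_\alpha \to m_0$ in $O$, I aim to produce a subnet $m_\beta$ and elements $f_\beta \in \mathrm{R}_{\hat{\sigma}(m_\beta), U_{m_\beta}}$ with $f_\beta \to f$ in $\mathrm{C}_p(H)$. First, Lemma~\ref{subbundleopen} supplies (after passing to a subnet, again written $m_\beta$) elements $g_\beta \in \mathrm{R}_{\hat{\sigma}(m_\beta)}$ with $g_\beta \to f$. I then turn these into $U_{m_\beta}$-invariant functions by averaging over the compact group $U_{m_\beta}$ against its normalized Haar measure,
\[
    f_\beta(x) \coloneqq \int_{U_{m_\beta}} g_\beta(xy)\,\mathrm{d}\lambda_{U_{m_\beta}}(y), \qquad x \in H_{m_\beta}.
\]
Since $\mathrm{R}_{\hat{\sigma}(m_\beta)}$ is finite-dimensional and invariant under right translation by $H_{m_\beta}$, the function $f_\beta$ lies in $\mathrm{R}_{\hat{\sigma}(m_\beta)}$, and the averaging construction makes it $U_{m_\beta}$-invariant, so $f_\beta \in \mathrm{R}_{\hat{\sigma}(m_\beta), U_{m_\beta}}$.

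The main obstacle is proving $f_\beta \to f$ in $\mathrm{C}_p(H)$: since no continuous Haar system is available on $U$ without an openness hypothesis on $p|_U$ (cf.\ Proposition~\ref{Renault}), the measures $\lambda_{U_{m_\beta}}$ need not converge to $\lambda_{U_{m_0}}$. I circumvent this via weak$^*$ compactness of $\mathrm{P}(H)$: pass to a further subnet along which $\lambda_{U_{m_\beta}} \to \mu$ for some probability measure $\mu$. Because $U \subset H$ is closed and $p_*\lambda_{U_{m_\beta}} = \delta_{m_\beta} \to \delta_{m_0}$, the limit $\mu$ satisfies $\mathrm{supp}\,\mu \subset U \cap H_{m_0} = U_{m_0}$; in particular $\lambda_{U_{m_\beta}} \to \mu$ in $\mathrm{P}_p(U)$.

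Finally, I verify $f_\beta \to f$ via Proposition~\ref{descriptiontop}. Take any subnet $f_\gamma$ and any net $x_\gamma \in H_{m_\gamma}$ with $x_\gamma \to x \in H_{m_0}$, and set $h_\gamma(y) \coloneqq g_\gamma(x_\gamma y)$ on $U_{m_\gamma}$ together with $h(y) \coloneqq f(xy)$ on $U_{m_0}$. Continuity of multiplication combined with $g_\gamma \to f$ yields $h_\gamma \to h$ in $\mathrm{C}_p(U)$ (again by Proposition~\ref{descriptiontop}), and Lemma~\ref{convlemma} delivers
\[
    f_\gamma(x_\gamma) = \lambda_{U_{m_\gamma}}(h_\gamma) \longrightarrow \mu(h) = \int_{U_{m_0}} f(xy)\,\mathrm{d}\mu(y) = f(x),
\]
the final equality using the $U_{m_0}$-invariance of $f$ together with $\mathrm{supp}\,\mu \subset U_{m_0}$. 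The conceptual crux is that the limit measure $\mu$ need not coincide with $\lambda_{U_{m_0}}$; mere support in $U_{m_0}$ suffices, because $f$ is already constant along every $U_{m_0}$-orbit.
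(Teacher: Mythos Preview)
Your proof is correct and follows the same overall strategy as the paper: approximate $f$ by elements $g_\beta \in \mathrm{R}_{\hat{\sigma}(m_\beta)}$ using openness of $s|_{\mathrm{R}_{\hat{\sigma}}}$, then average over $U_{m_\beta}$ against Haar measure to land in $\mathrm{R}_{\hat{\sigma}(m_\beta),U_{m_\beta}}$. The difference lies in how the convergence of the averages is established without a continuous Haar system on $U$. The paper chooses an adapted matrix representation $(\pi^{ij})$ of $\hat{\sigma}(m_0)$ with $\pi^{kj}|_{U_{m_0}} \equiv \delta_{kj}$ for $j \leq d$, so that the integrands $\pi_\beta^{kj}$ converge uniformly on $U_{m_\beta}$ to the constant $\delta_{kj}$ and the integrals follow regardless of the measures. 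You instead extract a weak$^*$ limit $\mu$ of $\lambda_{U_{m_\beta}}$ along a subnet, observe that $\mu$ must be supported in $U_{m_0}$ by closedness of $U$, and use that $f$ is constant on $U_{m_0}$-cosets to conclude $\mu(h) = f(x)$. Both arguments exploit the same underlying fact (constancy of the limit function on $U_{m_0}$), but your packaging via weak$^*$ compactness avoids the matrix-representation machinery of Proposition~\ref{convergencematrix} and \cite[Subsection 28.72]{hewittross2}, at the cost of one additional subnet passage.
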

\begin{proof}
    As in the proof of Lemma \ref{subbundleopen} it suffices to show that for every subset $Z \subset O$ the inclusion
        \begin{align*} 
            \bigcup_{m \in \overline{Z}} \mathrm{R}_{\hat{\sigma}(m),U_{m}} \subset \overline{\bigcup_{m \in Z} \mathrm{R}_{\hat{\sigma}(m),U_m}}
        \end{align*}
    holds in $\mathrm{C}_p(H)$. So pick $Z\subset O$ and $m \in \overline{Z}$. We choose a matrix representation $(\pi^{ij})_{i,j}$ of $[\pi] \coloneqq \hat{\sigma}(m)$ such that
        \begin{align*}  
            \mathrm{R}_{[\pi],U} = \mathrm{lin} \{\pi^{ij}\mid 1 \leq i \leq \dim(\pi), 1 \leq j \leq d\}
        \end{align*}
    for some $d \in \{1, \dots, \dim(\pi)\}$ (see  \cite[Subsection 28.72]{hewittross2} (b)).  Since $\hat{\sigma}( \overline{Z}) \subset  \overline{\hat{\sigma}(Z)}\cap \hat{H}^{\dim(\pi)}$, we can apply Proposition \ref{convergencematrix} to find a net $((\pi_\alpha^{ij})_{i,j})_{\alpha \in A}$ of matrix representations associated with elements of $\hat{\sigma}(Z)$ such that $\lim_\alpha \pi_{\alpha}^{ij} = \pi^{ij}$ in $\mathrm{C}_p(H)$ for all $i, j \in \{1,\dots, \mathrm{dim}(\pi)\}$. We write $(m_\alpha)_{\alpha \in A}$ for the net of corresponding base points in $Z$ (which then converges to $m$).
    
    We do not know whether the elements $\pi_\alpha^{ij}$ for $\alpha \in A$, $i \in \{1,\dots, \dim(\pi)\}$ and $j \in \{1, \dots, d\}$ are contained in $\mathrm{R}_{\hat{\sigma},U}$. However, as in \cite{Fara2017} we can project them to $\mathrm{R}_{\hat{\sigma},U}$ and define the functions $\tau_\alpha^{ij} \in \mathrm{R}_{\hat{\sigma},U}$ by
        \begin{align*}
            \tau_\alpha^{ij}(x) \coloneqq  \int_{U_{m_\alpha}} \pi_\alpha^{ij}(xy)\, \mathrm{d}\lambda_{U_{m_\alpha}}(y) = \sum_{k=1}^{\dim(\pi)} \int_{U_{m_\alpha}} \pi_{\alpha}^{kj}(y)\, \mathrm{d}\lambda_{U_{m_\alpha}}(y) \cdot \pi_\alpha^{ik}(x) 
        \end{align*}
    for $x \in H_{m_\alpha}$,  $\alpha \in A$ and $i,j \in \{1, \dots, \dim(\pi)\}$. We claim that
        \begin{align*}
            \lim_{\alpha} \tau_\alpha^{ij} = \pi^{ij} \textrm{ in } \mathrm{C}_p(H) \textrm{ for all } i \in \{1, \dots, \dim(\pi)\} \textrm{ and } j \in \{1, \dots, d\}.
        \end{align*}
    Take a subnet $((\tau_\beta^{ij})_{i,j})_{\beta \in B}$ of $((\tau_\alpha^{ij})_{i,j})_{\alpha \in A}$ and a net $(x_\beta)_{\beta \in B}$ in $H$ with $p(x_\beta) = m_\beta$ for every $\beta \in B$ converging to some point $x \in H_m$. Since $\pi^{kj}(y) = \delta_{kj}$ for all $y \in U_m$ and $\lim_{\beta} \pi_{\beta}^{kj} = \pi^{kj}$, we readily obtain $\lim_{\beta} \sup_{y \in U_{m_\beta}} |\pi_\beta^{kj}(y) - \delta_{kj}| = 0$ for all $k\in \{1, \dots, \dim(\pi)\}$ and $j \in \{1, \dots, d\}$. But 
        \begin{align*}
            \left|\int_{U_{m_\beta}} \pi^{kj}_\beta(y) \, \mathrm{d}\lambda_{U_{m_\beta}}(y) - \delta_{kj}\right| = \left|\int_{U_{m_\beta}} (\pi^{kj}_\beta(y) - \delta_{kj}) \, \mathrm{d}\lambda_{U_{m_\beta}}(y) \right| \leq \sup_{y \in U_{m_\beta}} |\pi_\beta^{kj}(y) - \delta_{kj}|
        \end{align*}
    for every $\beta \in B$ and therefore also $\lim_\beta \int_{U_{m_\beta}} \pi^{kj}_\beta(y) \, \mathrm{d}\lambda_{U_{m_\beta}}(y) = \delta_{kj}$ for every $k \in \{1, \dots, \dim(\pi)\}$ and $j \in \{1, \dots, d\}$. This yields
        \begin{align*}
            \lim_{\beta} \tau^{ij}_{\beta}(x_\beta) = \lim_{\beta} \sum_{k=1}^{\dim(\pi)} \int_{U_{m_\beta}} \pi_{\beta}^{kj}(y)\, \mathrm{d}\lambda_{U_{m_\beta}}(y) \cdot \pi_\alpha^{ik}(x_\beta) = \sum_{k=1}^{\dim(\pi)} \delta_{kj} \pi^{ik}(x)    = \pi^{ij}(x)
        \end{align*}
    for all $i \in \{1, \dots, \dim(\pi)\}$ and $j \in \{1, \dots, d\}$. We conclude that $\lim_\beta \tau^{ij}_\beta = \tau^{ij}$ for all $i \in \{1, \dots, \dim(\pi)\}$ and $j \in \{1, \dots, d\}$. By linearity this already implies that each element of $\mathrm{R}_{[\pi],U_m}$ is a limit of elements in 
        \begin{align*}
            \bigcup_{m' \in Z} \mathrm{R}_{\hat{\sigma}(m'),U_{m'},}
        \end{align*}
    which finishes the proof.
\end{proof}

We now prove Theorem \ref{loctrivrep2}.
    \begin{proof}[Proof of Theorem \ref{loctrivrep2}.]
        We first assume that (\ref{loctrivrep2a}) holds and show (\ref{loctrivrep2b}).
        In view of Lemma \ref{subbundleopen2} and Corollary \ref{charloctriv} as well as Theorem \ref{loctrivrep} it suffices to show that $\mathrm{R}_{\hat{\sigma},U}$ is closed in $\mathrm{R}_{\hat{\sigma}}$. Let $(f_\alpha)_{\alpha \in A}$ be a net in $\mathrm{R}_{\hat{\sigma},U}$ converging to some $f \in \mathrm{R}_{\hat{\sigma}}$. Write $m_\alpha \coloneqq s(f_\alpha)$ for $\alpha \in A$ and $m \coloneqq s(f)$. Now take $u \in U_m$ and $x \in H_m$. Since $p$ and $p|_U$ are both open, we may assume, by passing to a subnet, that there are $u_\alpha \in U_{m_\alpha}$ and $x_\alpha \in H_{m_\alpha}$ for $\alpha \in A$ such that $\lim_\alpha u_\alpha = u$ and  $\lim_\alpha x_\alpha = x$. But then
            \begin{align*}
                f(xu) = \lim_{\alpha} f_\alpha(x_\alpha u_\alpha) = \lim_\alpha f_\alpha(x_\alpha) = f(x),
            \end{align*}
        hence $f \in \mathrm{R}_{\hat{\sigma},U}$ as desired.\medskip\\
    Assume now conversely that (\ref{loctrivrep2b}) holds. To show (\ref{loctrivrep2a}) it suffices by Proposition \ref{Renault} to check that the Haar measures $\lambda_{U_m}$ of $U_m$ for $m \in M$ define a relative measure for $p|_U \colon U \rightarrow M$, i.e., that the map
        \begin{align*}
            M \rightarrow \C, \quad m \mapsto \lambda_{U_m}(f)
        \end{align*}
    is continuous for all $f \in \mathrm{C}(H)$. By Theorem \ref{pw} it is enough to show this for functions $f \in \Gamma_{\hat{\sigma}}$, where $\hat{\sigma} \colon O \rightarrow \hat{H}$ is a local continuous section of the dual. We note that for every $m \in M$ the map $P_m \colon \mathrm{R}_{\hat{\sigma}(m)} \rightarrow  \mathrm{R}_{\hat{\sigma}(m)}$ given by
        \begin{align*}
            P_mg(x) \coloneqq \int_{U_m} g(xu)\, \mathrm{d}\lambda_{U_m}(u)
        \end{align*}
    for $g \in \mathrm{R}_{\sigma(m)}$ and $x \in H_m$ is precisely the orthogonal projection onto $\mathrm{R}_{\hat{\sigma}(m),U_m}$ with respect to the inner product of  $\mathrm{L}^2(H_{m})$ (see also \cite{Fara2017}). Now let $(m_\alpha)_{\alpha \in A}$ be a net in $M$ converging to some $m \in M$. If $m \in M \setminus O$, then $\lim_\alpha \sup_{x \in H_{m_\alpha}} |f(x)| = 0$ since $f|_{H_m} = 0$, and thus $\lim_\alpha \lambda_{H_{m_\alpha}}(f) = 0 = \lambda_{H_m}(f)$.
    We may therefore assume that $m \in O$. Since the Haar system of $p \colon H \rightarrow M$ is continuous (see again Proposition \ref{Renault}) and $s|_{\mathrm{R}_{\hat{\sigma},U}}$ is locally trivial, we find local continuous sections sections $\tau_1, \dots, \tau_d \colon V \rightarrow \mathrm{R}_{\hat{\sigma},U}$ on an open neighborhood $V \subset O$ of $m$ such that $\{\tau_k(m') \mid k \in \{1, \dots, d\}\}$ is an orthonormal basis of $\mathrm{R}_{\hat{\sigma}(m'),U_{m'}}$ with respect to the inner product of $\mathrm{L}^2(H_{m'})$ for every $m' \in V$. Thus,
        \begin{align*}
            P_{m'}g = \sum_{k=1}^d (g|\tau_k(m'))_{\mathrm{L}^2(H_{m'})} \tau_k(m')
        \end{align*}
    for every $g \in \mathrm{R}_{\hat{\sigma}(m'),U_{m'}}$ and $m' \in V$. This shows
        \begin{align*}
            \lambda_{H_m}(f) &=  P_mf|_{H_m}(1_{H_m}) = \sum_{k=1}^d (f|_{H_m}|\tau_k(m))_{\mathrm{L}^2(H_m)} \tau_k(m)(1_{H_m}) \\
            &= \lim_\alpha \sum_{k=1}^d (f|_{H_{m_\alpha}}|\tau_k(m_\alpha))_{\mathrm{L}^2(H_{m_\alpha})} \tau_k(m_\alpha)(1_{H_{m_\alpha}}) = \lim_{\alpha} P_{m_\alpha}f|_{H_{m_\alpha}}(1_{H_{m_\alpha}}) \\
            &= \lim_{\alpha} \lambda_{H_{m_\alpha}}(f),
        \end{align*}
    as desired.
    
    Finally, for the equivalence of (\ref{loctrivrep2b}) and (\ref{loctrivrep2c}) observe that for a local continuous section $\hat{\sigma}\colon O \rightarrow \hat{H}$ of the dual and the quotient map $q \colon H \rightarrow H/U$ the map
        \begin{align*}
            \Phi \colon \mathrm{R}_{\hat{\sigma},U}' \rightarrow \mathrm{R}_{\hat{\sigma},U}, \quad f \mapsto f \circ q
        \end{align*}
    is a homeomorphism with $\Phi \circ s|_{\mathrm{R}_{\hat{\sigma},U}'} = s|_{\mathrm{R}_{\hat{\sigma},U}}$ and a vector space isomorphism on each fiber. Hence $s|_{\mathrm{R}_{\hat{\sigma},U}'}\colon \mathrm{R}_{\hat{\sigma},U}' \rightarrow O$ is a locally trivial vector bundle if and only if $s|_{\mathrm{R}_{\hat{\sigma},U}}\colon \mathrm{R}_{\hat{\sigma},U} \rightarrow O$ is a locally trivial vector bundle.
\end{proof}
We have the following generalization of Theorem \ref{pw1}.
\begin{theorem}\label{pw2}
    Let $p \colon H \rightarrow M$ be an open compact group bundle over a compact space $M$ and $p|_U \colon U \rightarrow M$ an open closed subgroup bundle. Then the union $\bigcup_{\hat{\sigma} \in \Gamma_{\mathrm{loc}}(\hat{H})} \mathrm{R}_{\hat{\sigma},U}'$ defined by the locally trivial vector bundles  $s|_{\mathrm{R}_{\hat{\sigma},U}'} \colon \mathrm{R}_{\hat{\sigma},U}' \rightarrow O$ for local continuous sections $\hat{\sigma} \colon  O \rightarrow \hat{H}$ is fiberwise total in $\mathrm{C}_{p_{/U}}(H/U)$.
\end{theorem}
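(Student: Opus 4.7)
The plan is to reduce Theorem \ref{pw2} fiberwise to the classical Peter--Weyl theorem for homogeneous spaces, using Theorem \ref{section} to guarantee that every irreducible representation appearing in a fiber is in fact hit by some local continuous section of $\hat{p}$.

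First I would dispose of the structural part of the statement. Since $p|_U \colon U \to M$ is open by hypothesis, Theorem \ref{loctrivrep2} (equivalence of (a) and (c)) applies directly and yields that $s|_{\mathrm{R}_{\hat{\sigma},U}'} \colon \mathrm{R}_{\hat{\sigma},U}' \to O$ is a locally trivial vector bundle for every local continuous section $\hat{\sigma} \colon O \to \hat{H}$. So only the fiberwise totality of the union remains to be verified.

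To check this, I would fix $m \in M$ and identify the fiber of $\bigcup_{\hat{\sigma} \in \Gamma_{\mathrm{loc}}(\hat{H})} \mathrm{R}_{\hat{\sigma},U}'$ over $m$ explicitly. By definition this fiber is
\[
 \bigcup \bigl\{\mathrm{R}_{\hat{\sigma}(m),U_m}' \,\big|\, \hat{\sigma} \in \Gamma_{\mathrm{loc}}(\hat{H}) \text{ defined on a neighborhood of } m\bigr\}.
\]
By Theorem \ref{section}, every $[\pi] \in \hat{H}_m$ is of the form $\hat{\sigma}(m)$ for some local continuous section $\hat{\sigma}$ of $\hat{p}$ defined on an open neighborhood of $m$. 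Hence the above fiber coincides with the full union $\bigcup_{[\pi] \in \hat{H}_m} \mathrm{R}_{[\pi],U_m}'$, which is total in $\mathrm{C}(H_m/U_m)$ by the classical Peter--Weyl theorem for the homogeneous space $H_m/U_m$ recalled at the beginning of this subsection.

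Because the heavy lifting is already encapsulated in Theorems \ref{section} and \ref{loctrivrep2}, there is no genuine obstacle: the argument is essentially bookkeeping that combines openness of $p|_U$ (yielding local triviality of each $\mathrm{R}_{\hat{\sigma},U}'$) with the existence of enough local sections of the dual (ensuring that the fiberwise union exhausts all representation-theoretic data at each point). The only subtle aspect worth flagging is making sure that the openness hypothesis on $p|_U$ is used in the correct place, namely to invoke Theorem \ref{loctrivrep2}; without it, the structural part of the conclusion would fail even though the fiberwise totality would still hold.
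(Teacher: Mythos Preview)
Your proposal is correct and follows essentially the same approach as the paper, which states Theorem \ref{pw2} as a direct generalization of Theorem \ref{pw1} without spelling out a separate proof: one combines Theorem \ref{section} (to guarantee that every $[\pi] \in \hat{H}_m$ arises as $\hat{\sigma}(m)$ for some local section) with the classical Peter--Weyl theorem for homogeneous spaces to obtain fiberwise totality, and invokes Theorem \ref{loctrivrep2} for the local triviality assertion.
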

Similar to the previous subsection, we reformulate Theorem \ref{pw2}. For an open compact group bundle $p \colon H \rightarrow M$ over a compact space $M$, a closed subgroup bundle $p|_U \colon U \rightarrow M$ and a local continuous section $\hat{\sigma} \colon O \rightarrow \hat{H}$ of the dual, we call $f \in \mathrm{C}(H/U)$ a \emph{relative representative function} with respect to $\hat{\sigma}$ if
\begin{enumerate}[(i)]
    \item $f|_{H_m/U_m} \in \mathrm{R}_{\hat{\sigma}(m),U_m}'$ for every $m \in O$, and
    \item $f|_{H_m/U_m} = 0$ for every $m \in M \setminus O$.
\end{enumerate}
We write $\Gamma_{\hat{\sigma},U}'$ for the space of all such relative representative functions with respect to $\hat{\sigma}$. The same arguments as in the proof of Theorem \ref{pw} then yield the following.

\begin{theorem}\label{pw3}
    Let $p \colon H \rightarrow M$ be an open compact group bundle over a compact space $M$ and $p|_U \colon U \rightarrow M$ an open closed subgroup bundle. Then the union $\bigcup_{\hat{\sigma} \in \Gamma_{\mathrm{loc}}(\hat{H})} \Gamma_{\hat{\sigma},U}'$ is total in $\mathrm{C}(H/U)$.
\end{theorem}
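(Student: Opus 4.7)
The plan is to follow the template of the proof of Theorem~\ref{pw} line by line, replacing $\mathrm{C}_p(H)$ by $\mathrm{C}_{p_{/U}}(H/U)$ and $\mathrm{R}_{\hat{\sigma}}$ by $\mathrm{R}_{\hat{\sigma},U}'$. First, since $p\colon H \to M$ is open, the induced homogeneous bundle $p_{/U}\colon H/U \to M$ is open as well (see the remark in Subsection~\ref{compgroupbundles}), so $s \colon \mathrm{C}_{p_{/U}}(H/U) \to M$ is a continuous Banach bundle by Example~\ref{extensionbundle}, and the canonical $\mathrm{C}(M)$-linear isometric isomorphism
\begin{align*}
    \mathrm{C}(H/U) \to \Gamma(\mathrm{C}_{p_{/U}}(H/U)), \quad f \mapsto [m \mapsto f|_{H_m/U_m}],
\end{align*}
from Example~\ref{extensionbundle2} is available.

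For each local continuous section $\hat{\sigma} \colon O \to \hat{H}$ of the dual, Theorem~\ref{loctrivrep2} (applicable because $p|_U$ is open) shows that $s|_{\mathrm{R}_{\hat{\sigma},U}'} \colon \mathrm{R}_{\hat{\sigma},U}' \to O$ is a locally trivial vector bundle. Following Remark~\ref{remsub} I would extend it trivially over $M \setminus O$ (adjoining the zero subspace in each fiber over $M \setminus O$) to obtain a continuous Banach subbundle $s|_{E_{\hat{\sigma},U}'} \colon E_{\hat{\sigma},U}' \to M$ of $s \colon \mathrm{C}_{p_{/U}}(H/U) \to M$. Under the isomorphism above, the space $\Gamma_{\hat{\sigma},U}'$ corresponds exactly to the $\mathrm{C}(M)$-submodule
\begin{align*}
    \Lambda_{\hat{\sigma},U}' \coloneqq \{\tau \in \Gamma(\mathrm{C}_{p_{/U}}(H/U)) \mid \tau(m) \in (E_{\hat{\sigma},U}')_m \text{ for every } m \in M\}
\end{align*}
directly from the two defining conditions (i) and (ii) of a relative representative function, together with the fact that the trivial extension contributes the zero subspace over $M \setminus O$.

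Let $\Lambda$ denote the sum of the submodules $\Lambda_{\hat{\sigma},U}'$ as $\hat{\sigma}$ ranges over $\Gamma_{\mathrm{loc}}(\hat{H})$. Since every element of a continuous Banach bundle lies in the image of some global continuous section, the set $\{\tau(m) \mid m \in M,\ \tau \in \Lambda\}$ contains $\bigcup_{\hat{\sigma}} \mathrm{R}_{\hat{\sigma},U}'$, which is fiberwise total in $\mathrm{C}_{p_{/U}}(H/U)$ by Theorem~\ref{pw2}; as $\Lambda$ is a $\mathrm{C}(M)$-submodule, each $\{\tau(m) \mid \tau \in \Lambda\}$ is linear, so fiberwise totality upgrades to fiberwise density. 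The Stone--Weierstrass theorem for Banach bundles (Proposition~\ref{swbundles}) then yields that $\Lambda$ is dense in $\Gamma(\mathrm{C}_{p_{/U}}(H/U))$, and transporting back through the isometric isomorphism shows that $\sum_{\hat{\sigma}} \Gamma_{\hat{\sigma},U}'$ is dense in $\mathrm{C}(H/U)$, as claimed.

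There is no genuine obstacle here beyond making sure that the bundle $p_{/U}$ inherits the openness needed to invoke Example~\ref{extensionbundle} and that $\mathrm{R}_{\hat{\sigma},U}'$ defines a locally trivial subbundle -- both already established earlier. The argument is then formally identical to that of Theorem~\ref{pw}.
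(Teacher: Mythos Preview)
Your proof is correct and follows exactly the same approach as the paper, which simply states that ``the same arguments as in the proof of Theorem~\ref{pw} then yield'' Theorem~\ref{pw3}. You have carried out precisely those substitutions (replacing $\mathrm{C}_p(H)$ by $\mathrm{C}_{p_{/U}}(H/U)$, $\mathrm{R}_{\hat{\sigma}}$ by $\mathrm{R}_{\hat{\sigma},U}'$, and invoking Theorem~\ref{pw2} and Theorem~\ref{loctrivrep2} in place of Theorem~\ref{pw1} and Theorem~\ref{loctrivrep}), with the correct observation that $p_{/U}$ is open.
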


\section{Structured extensions over Stonean spaces}\label{sectop}
In this section, we prove a representation theorem for certain structured extensions of topological dynamical systems. Throughout the whole section, $G$ is a fixed group equipped with the discrete topology.
\subsection{Topological dynamical systems and their extensions}\label{sectopdyn}
We start by recalling some key concepts from topological dynamics while at the same time fixing our notation. We refer to \cite{Ausl1988} for a general introduction to topological dynamical systems (see also \cite{Bron1979} and \cite{deVr1993}).\medskip\\
    \textbf{Topological dynamical systems.} A \emph{topological dynamical system} $(K;\varphi)$ is a pair of a compact space $K$ and a group homomorphism
		\begin{align*}
			\varphi \colon G \rightarrow \mathrm{Homeo}(K), \quad t \mapsto \varphi_t.
		\end{align*}
	from $G$ to the group of homeomorphisms $\mathrm{Homeo}(K)$ on $K$. A \emph{morphism} $q \colon (K;\varphi) \rightarrow (L;\psi)$ between topological dynamical systems is a continuous map $q \colon K \rightarrow L$ such that the diagram 
	\[
		\xymatrix{
			K \ar[r]^{\varphi_t}  \ar[d]^q &K  \ar[d]^q\\
			L \ar[r]^{\psi_t} & L
		}
	\]	
	commutes for every $t \in G$. If $q$ is surjective, then the morphism is called an \emph{extension}, and in this case, $(L;\psi)$ is a \emph{factor} of $(K;\varphi)$. We say that $q$ is an \emph{isomorphism} if $q$ is bijective (hence a homeomorphism).
	
	We  also want to compare extensions of dynamical systems over a given factor: If $q_1 \colon (K_1;\varphi_1) \rightarrow (L;\psi)$ and $q_2 \colon (K_2;\varphi_2) \rightarrow (L;\psi)$ are extensions between topological dynamical systems, then a \emph{morphism} $\Phi \colon q_1 \rightarrow q_2$  between these extensions is a morphism $\Phi \colon (K_1;\varphi_1) \rightarrow (K_2;\varphi_2)$ of topological dynamical systems such that the diagram
	\[
		\xymatrix{
			(K_1;\varphi_1) \ar[rr]^{\Phi}  \ar[rd]_{q_1} & &(K_2;\varphi_2) \ar[ld]^{q_2}\\
		    & (L;\psi) &
		}
	\]	
	commutes. It is an \emph{isomorphism of extensions} if the underlying morphism $\Phi \colon (K_1;\varphi_1) \rightarrow (K_2;\varphi_2)$ is an isomorphism of topological dynamical systems.\medskip\\
	\textbf{Koopman representations.} Every topological dynamical system $(K;\varphi)$ defines a \emph{Koopman representation}
		\begin{align*}
			T_\varphi \colon G \rightarrow \mathscr{L}(\mathrm{C}(K)), \quad t \mapsto (T_{\varphi})_t
		\end{align*}
	on the Banach space $\mathrm{C}(K)$ by setting $(T_{\varphi})_tf \coloneqq f \circ \varphi_{t^{-1}}$ for $f \in \mathrm{C}(K)$ and $t \in G$ (cf. \cite[Subsection 4.3]{EFHN} and \cite[Section 2]{jt-foundational}). Likewise, every extension $q \colon (K;\varphi) \rightarrow (L;\psi)$ between topological dynamical systems induces an embedding
		\begin{align*}
			J_q \colon \mathrm{C}(L) \rightarrow \mathrm{C}(K),\quad f \mapsto f \circ q
		\end{align*}
	which intertwines the corresponding Koopman representations. Recall that $J_q$ induces the structure of a $\mathrm{C}(L)$-module on $\mathrm{C}(K)$: For $f\in \mathrm{C}(L)$ and $g \in \mathrm{C}(K)$ we define the product $f \cdot g \coloneqq (J_qf) g$.
	
	The pair $(\mathrm{C}(K);T_\varphi)$ induced by a topological dynamical system $(K;\varphi)$ is the prototypical example of a unital commutative $\mathrm{C}^*$-algebra and a representation of $G$ as automorphisms of this C*-algebra. By Gelfand--Naimark representation theory (see, e.g., \cite[Theorem 4.13 and Theorem 4.23]{EFHN}), for every pair $(\mathcal{A};T)$ of a unital commutative $\mathrm{C}^*$-algebra $\mathcal{A}$ and a representation $T \colon G \rightarrow \mathrm{Aut}(\mathcal{A})$ of $G$ there is a topological dynamical system $(K;\varphi)$ such that $(\mathcal{A};T)$ is isomorphic to $(\mathrm{C}(K);T_\varphi)$ (that is, there is an isomorphism of $\mathrm{C}^*$-algebras $I \colon \mathcal{A} \rightarrow \mathrm{C}(K)$ intertwining the representations). Moreover, such a system $(K;\varphi)$ is uniquely determined up to an isomorphism and can be built canonically: Take $K$ to be the Gelfand space consisting of all unital *-homomorphisms $\chi \colon \mathcal{A} \rightarrow \C$ equipped with the weak* topology and let $\varphi_t$ be the restriction of the adjoint operator $T_{t^{-1}}' \in \mathscr{L}(\mathcal{A}')$ to $K$ for $t \in G$. We call $(K;\varphi)$ the \emph{Gelfand system} associated with $(\mathcal{A};T)$ and the isomorphism $I \colon (\mathcal{A};T) \rightarrow (\mathrm{C}(K);T_\varphi)$ given by $I f(\chi) = \chi(f)$ for $f \in \mathcal{A}$ and $\chi \in K$ is called the \emph{Gelfand isomorphism}.\medskip\\
	\textbf{Topological ergodicity.} The usual concept of irreducibility in topological dynamics is minimality (see, e.g., \cite[Chapter 1]{Ausl1988}). However, here we will be concerned with the more general concept of \emph{topologically ergodic} system and extension. Call a factor $(L;\mathrm{id})$ of $(K;\varphi)$ \emph{trivial}, if $\mathrm{id}$ is the trivial action of $G$ on $L$. There is always a largest trivial factor $(M;\mathrm{id})$ of $(K;\varphi)$ with respect to taking extensions, i.e., if $q \colon (K;\varphi) \rightarrow (M;\mathrm{id})$ is the corresponding extension and $p \colon (K;\varphi) \rightarrow (L;\mathrm{id})$ is an extension to another trivial factor $(L;\mathrm{id})$, then there is a surjective morphism of extensions $\Phi\colon q \rightarrow p$. Such a largest trivial factor is unique up to an isomorphism and can be constructed concretely by taking the trivial action on the Gelfand space $K_{\fix}$ of the \emph{fixed space}
	    \begin{align*}
	        \fix(T_\varphi) \coloneqq \{f \in \mathrm{C}(K)\mid (T_\varphi)_tf = f \textrm{ for every } t \in G\}
	   \end{align*}
	  which is a invariant unital C*-subalgebra of $\mathrm{C}(K)$, and the factor map $q_{\fix} \colon K \rightarrow K_{\fix}$ induced by the inclusion $\fix(T_\varphi)\hookrightarrow \mathrm{C}(K)$ (cf. \cite[Subsection 3.3]{EdKr2021}). We call $K_{\fix}$ the \emph{fixed factor} of $(K;\varphi)$.
	  
	If $K_{\fix}$ is a singleton (meaning that $\fix(T_\varphi)$ only contains the constant functions), then we say that the system $(K;\varphi)$ is \emph{topologically ergodic}. An extension $q\colon (K;\varphi) \rightarrow (L;\psi)$ is \emph{topologically ergodic} if the induced map between the fixed factors is a homeomorphism or, equivalently, $J_q(\fix(T_\psi)) = \fix(T_\varphi)$. 

    The notion of topological ergodicity used here is inspired by the notion of ergodicity in ergodic theory, see also \cite[Remark 3.22]{EdKr2021}. We warn the reader that some authors in topological dynamics use a different concept of topological ergodicity which is closely related to topological transitivity of a system.

\subsection{Relative skew-products}\label{relskew}
Important examples of extensions are given by so-called skew-products.
\begin{example}\label{skewprod}
 For $G = \Z$ consider the $\Z$-action $(L;\psi)$ on the torus $L = \T \coloneqq \{x \in \C\mid |x| = 1\}$ given by $\psi_1\colon \T \rightarrow \T, \, x \mapsto ax$ for some fixed $a \in \T$. By equipping $K \coloneqq L \times \T = \T^2$ with with the action $\varphi$ induced by  $\varphi_1\colon \T^2 \rightarrow \T^2, \, (x,y) \mapsto (ax,xy)$ we obtain the so-called \emph{skew-torus} $(\T^2;\varphi)$, explicitly given by 
    \begin{align*}
         \varphi\colon \Z \times \T^2 \rightarrow \T^2, \quad (n,x,y) \mapsto (a^nx,a^{\frac{n(n-1)}{2}}x^n y).
    \end{align*}
 Projecting onto the first component yields an extension $q \colon (\T^2;\varphi) \rightarrow (\T;\psi)$ of $G$-dynamical systems.
 
    More generally and for arbitrary (discrete) $G$, let $(L;\psi)$ be any topological dynamical system, $H$ a compact group, $U$ a closed subgroup of $H$ and and $c \colon G \times L \rightarrow H$ a \emph{cocycle}, i.e., $c$ is continuous and satisfies the condition
        \begin{align*}
            c(t_1t_2,l) = c(t_1,t_2l) c(t_2,l) \textrm{ for all } t_1,t_2 \in G \textrm{ and } l \in L.
        \end{align*}
    We then obtain a topological dynamical system $(L \times H/U; \varphi_c)$,  called a \emph{skew-product system}, by setting $(\varphi_c)_t(l,xU) \coloneqq (\psi_t(l),c(t,l)xU)$ for $(l,xU) \in L \times H/U$ and $t \in G$. By projecting onto the first component we obtain an extension $q_c \colon (L \times H/U;\varphi_c) \rightarrow (L;\psi)$ which we call a \emph{skew-product extension} (cf. \cite[Subsection 2.2.2]{EFHN}). 
 \end{example}

With the concept of compact group bundle from the previous section, we generalize the construction of skew-products (cf. \cite[Definitions 3.3 and 3.4]{austin-fund}).

    \begin{definition}\label{relativecoc}
		Let $(L;\psi)$ be a topological dynamical system and $p\colon H \rightarrow L_{\mathrm{fix}}$ a compact group bundle over its fixed factor. A continuous map $c \colon G \times L \rightarrow H$ is a \emph{relative cocycle over $(L;\psi)$} if 
		    \begin{enumerate}[(i)]
		        \item $p(c(t,l)) = q_{\mathrm{fix}}(l)$ for every $t \in G$ and $l \in L$, and
		        \item $c(t_1t_2,l) = c(t_1,t_2l) c(t_2,l)$ for all $t_1, t_2 \in G$ and $l \in L$.
            \end{enumerate}
        Given such a relative cocycle $c$ and a closed subgroup bundle $p|_U \colon U \rightarrow M$ of $p$, we  call the dynamical system $(L \times_{\mathrm{\fix}} H/U;\varphi_c)$ with
			\begin{align*}
				\varphi_c\colon G \times (L \times_{L_{\fix}} H/U) \rightarrow  L \times_{L_{\fix}} H/U, \quad (t,(l,xU)) \mapsto (\psi_t(l), c(t,l)xU)
			\end{align*}
		the induced \emph{relative skew-product system}, and
			\begin{align*}
				q_c\colon (L \times_{L_{\fix}} H/U;\varphi_c) \rightarrow (L;\psi), \quad (l,xU) \mapsto l
			\end{align*}
    	the corresponding \emph{relative skew-product extension} of $(L;\psi)$.
	\end{definition}
    Notice that we indeed obtain topological dynamical systems in this way (use Lemma \ref{remarkshomspace}).
    
	If the system $(L;\psi)$ in Definition \ref{relativecoc} is topologically ergodic, these concepts reduce to the ones from Example \ref{skewprod}. We give another example.
	\begin{example}
	    Let $p \colon H \rightarrow M$ be a compact group bundle over a compact space $M$. Considering the trivial dynamical system $(M;\mathrm{id})$ on $M$, we can identify $M_{\fix}$ with $M$, and a relative cocycle is then a continuous map $c \colon G \times M \rightarrow H$ such that
	        \begin{enumerate}[(i)]
	            \item $p(c(t,m)) = m$ for every $t \in G$ and $m \in M$, and
	            \item the restrictions  $c|_{G \times \{m\}} \colon G \times \{m\} \cong G\rightarrow H_m$ are group homomorphisms for every $m \in M$.
	        \end{enumerate}
	    Thus, loosely speaking, $c$ consists of a family of group homomorphisms $c_m \colon G \rightarrow H_m$ to the fiber groups $H_m$ which depend continuously on the base point $m \in M$. For a closed subgroup bundle $p|_{U} \colon U \rightarrow M$, the corresponding relative skew-product system is given by fiberwise rotations via 
	        \begin{align*}
	            \varphi_c \colon G \times H/U \rightarrow H/U, \quad (t,xU) \mapsto c(t,l)xU.
	        \end{align*}
	    For $G= \Z$ this special case has already been considered by the first author (cf. the notion of \enquote{group rotation bundles} from \cite[Definition 2.18]{edeko}).
	\end{example}

\subsection{Relative discrete spectrum over Stonean spaces}\label{secStone}
To prove our version of the Mackey--Zimmer representation theorem, we first study \enquote{structured} extensions $q \colon (K;\varphi) \rightarrow (L;\psi)$ of topological dynamical systems. We focus on the case that $L$ is a Stonean space, i.e., an extremally disconnected compact space. From a functional analytical perspective this means, that the corresponding C*-algebra $\mathrm{C}(L)$ is Dedekind complete, i.e., every subset of real-valued functions bounded from above has a supremum in $\mathrm{C}(L)$. Thus, $\mathrm{C}(L)$ is a so-called \emph{Stone algebra} (see \cite[Subsection 1.3]{EHK2021}). The following basic observation will be important later.

\begin{lemma}\label{lemstonean}
    Let $(L;\psi)$ be a topological dynamical system on a Stonean space $L$. Then $L_{\fix}$ is also Stonean and the map $q_{\fix} \colon L \rightarrow L_{\fix}$ is open.
\end{lemma}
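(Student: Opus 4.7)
My plan is to establish the two assertions—$L_{\fix}$ being Stonean and $q_{\fix}$ being open—in turn, both leveraging the Dedekind completeness of $\mathrm{C}(L)$ guaranteed by $L$ being Stonean. For the first assertion, I observe that each Koopman operator $(T_\psi)_t$ is a unital $^*$-automorphism of $\mathrm{C}(L)$, hence a lattice automorphism preserving arbitrary suprema. Consequently, given a bounded family $\{f_\alpha\} \subset \fix(T_\psi)$, the supremum $f \coloneqq \sup_\alpha f_\alpha$ formed in $\mathrm{C}(L)$ satisfies $(T_\psi)_t f = \sup_\alpha (T_\psi)_t f_\alpha = \sup_\alpha f_\alpha = f$ for every $t \in G$, so $f \in \fix(T_\psi)$ and is the supremum there as well. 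This makes $\fix(T_\psi) \cong \mathrm{C}(L_{\fix})$ Dedekind complete, i.e., $L_{\fix}$ Stonean.

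For openness of $q_{\fix}$, since $L$ carries a basis of clopen sets it suffices to show that $q_{\fix}(U)$ is open in $L_{\fix}$ for every clopen $U \subset L$. My plan is to form the supremum $h \coloneqq \sup_{t \in G}(T_\psi)_t \mathbbm{1}_U$ in $\mathrm{C}(L)$. Each $(T_\psi)_t \mathbbm{1}_U = \mathbbm{1}_{\psi_{t^{-1}}(U)}$ is the characteristic function of a clopen set, and a brief verification shows that the least continuous upper bound of such characteristic functions is $\mathbbm{1}_W$, where $W \coloneqq \overline{\bigcup_{t \in G}\psi_{t^{-1}}(U)}$ is the clopen $G$-invariant hull of $U$ (clopen since $L$ is Stonean). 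By the first assertion, $h$ is invariant, so $W = q_{\fix}^{-1}(\tilde W)$ for some clopen $\tilde W \subset L_{\fix}$. I will then aim to prove $q_{\fix}(U) = \tilde W$, which makes $q_{\fix}(U)$ clopen, hence open.

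The inclusion $q_{\fix}(U) \subset \tilde W$ is immediate from $U \subset W$. For the reverse, I would argue by contradiction: assuming $\tilde W \setminus q_{\fix}(U)$ were non-empty, the Stonean property of $L_{\fix}$ yields a non-empty clopen $\tilde V \subset \tilde W \setminus q_{\fix}(U)$. Set $Y \coloneqq q_{\fix}^{-1}(\tilde V) \subset W$; this is non-empty and open, and since the fibers of $q_{\fix}$ are $G$-invariant (being defined by invariance of continuous functions) the set $Y$ is $G$-invariant as well. The defining property of $\tilde V$ gives $Y \cap U = \emptyset$, and $G$-invariance of $Y$ promotes this to $Y \cap \bigcup_{t \in G}\psi_{t^{-1}}(U) = \emptyset$. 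But $\bigcup_{t \in G}\psi_{t^{-1}}(U)$ is dense in $W$ by construction, contradicting $Y$ being non-empty open in $W$. I expect this final density step to be the main obstacle to articulate cleanly, since it is what bridges the Boolean-algebra identity $h = \mathbbm{1}_W$ to the geometric statement $q_{\fix}(U) = \tilde W$.
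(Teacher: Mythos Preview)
Your argument is correct. The first half (that $\fix(T_\psi)$ is Dedekind complete because each $(T_\psi)_t$ preserves suprema) is essentially verbatim the paper's reasoning.

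For the second half the paper takes a shorter functional-analytic route: the same observation you make---that the supremum in $\mathrm{C}(L)$ of a bounded family from $\fix(T_\psi)$ again lies in $\fix(T_\psi)$---says precisely that the inclusion $\fix(T_\psi)\hookrightarrow\mathrm{C}(L)$ is order continuous, and the paper then cites the standard characterization (Schaefer, \emph{Topological Vector Spaces}, Prop.~III.9.3, or Nagel, Satz~2.4) that an embedding $\mathrm{C}(M)\hookrightarrow\mathrm{C}(K)$ of Stone algebras is order continuous if and only if the dual map $K\to M$ is open. Your argument instead unpacks this characterization by hand in the present setting: working with a clopen basic open set $U$, you form the invariant clopen hull $W=\overline{\bigcup_t\psi_{t^{-1}}(U)}$, identify it with $q_{\fix}^{-1}(\tilde W)$, and then rule out $\tilde W\setminus q_{\fix}(U)\neq\emptyset$ by a density contradiction. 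This is more elementary and entirely self-contained, at the cost of a few extra lines; the paper's version is a two-line citation. Two minor cosmetic points: when you write ``by the first assertion, $h$ is invariant'' you are really using that $(T_\psi)_s$ permutes the family $\{(T_\psi)_t\mathbbm{1}_U\}_t$ (not that the family lies in $\fix(T_\psi)$), and your final worry about the density step is unfounded---$\bigcup_t\psi_{t^{-1}}(U)$ is dense in $W$ simply because $W$ is defined as its closure.
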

\begin{proof}
    If $F \subset \fix(T_\psi)$ is a subset of real-valued functions which is bounded from above and $g \coloneqq \sup_{f \in F} f$ its supremum in $\mathrm{C}(L)$, then also $g \in \fix(T_\psi)$. This shows that $\fix(T_\varphi)$ is also Dedekind complete, hence $L_{\fix}$ is Stonean, and that the embedding $\fix(T_\psi) \hookrightarrow \mathrm{C}(L)$ is order continuous, hence $q_{\fix}$ is open (see \cite[Proposition III.9.3]{Schaefer1970} or \cite[Satz 2.4]{Nage1972}).
\end{proof}

The following definition of \enquote{structured extensions} is motivated by the article \cite{EdKr2021}, in which an operator-theoretic characterization of open so-called pseudoisometric extensions between topologically ergodic systems is shown (see \cite[Theorem 7.2]{EdKr2021}). 
	\begin{definition}\label{reldiscspdef}
		We say that an open extension $q \colon (K;\varphi) \rightarrow (L;\psi)$ of topological dynamical systems over a Stonean space $L$ has \emph{relative discrete spectrum} if $\mathrm{C}(K)$ is generated as a C*-algebra by its finitely generated, projective, closed, $T_\varphi$-invariant $\mathrm{C}(L)$-submodules.
	\end{definition}
    \begin{remarks}\label{remdiscspec}
        \begin{enumerate}[(i)]
            \item By the Stone--Weierstrass theorem, the condition in Definition \ref{reldiscspdef} is equivalent to saying that the finitely-generated, closed, projective, invariant $\mathrm{C}(L)$-submodules separate the points of the space $K$.
            \item If $(L;\psi)$ is the trivial action of $G$ on a point, then Definition \ref{reldiscspdef} reduces to the usual notion of discrete spectrum for the group representation $T_\varphi$ as discussed in \cite{kreidler-hermle} (see also \cite{NaWo1972}).
	       \item While Definition \ref{reldiscspdef} can be written down for arbitrary compact $L$, it is probably not \enquote{the correct one} in the situation of a general extension $q \colon (K;\varphi) \rightarrow (L;\psi)$, see also Remarks \ref{remarkgrpext} and \ref{gap} below. We forgo a systematic study of structured extensions between arbitrary topological dynamical systems here as the case of Stonean base spaces is sufficient to cover the measure-theoretic applications of Section \ref{measure}.
        \end{enumerate}
	\end{remarks}
	We use the representation theory of compact group bundles developed in Section \ref{seccompgrp} to show the following result on relative skew-product extensions.
	\begin{theorem}\label{reldiscsp}
	    Consider
	        \begin{enumerate}[(i)]
	            \item a topological dynamical system $(L;\psi)$ on a Stonean space $L$,
	            \item an open compact group bundle $p \colon H \rightarrow L_{\fix}$,
	            \item an open closed subgroup bundle $p|_U \colon U \rightarrow L_{\fix}$, and 
	            \item a relative cocycle $c \colon G \times L \rightarrow H$.
	        \end{enumerate}
	   Then $q_c \colon (L \times_{L_{\fix}} H/U;\varphi_c) \rightarrow (L;\psi)$ is an open extension such that the union of all finitely generated, projective, closed, $T_{\varphi_c}$-invariant $\mathrm{C}(L)$-submodules is total in $\mathrm{C}(L \times_{L_{\fix}} H/U)$. In particular, $q_c$ has relative discrete spectrum.
	\end{theorem}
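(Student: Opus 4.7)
The plan is to produce the required finitely generated, projective, closed, invariant $\mathrm{C}(L)$-submodules using the Peter--Weyl-type Theorem~\ref{pw2} and Theorem~\ref{section}, identify them as sections of locally trivial vector bundles via Proposition~\ref{SerreSwan}, and then apply the Stone--Weierstrass theorem for Banach bundles (Proposition~\ref{swbundles}) to obtain totality. Openness of $q_c$ I would handle at the outset: since $L$ is Stonean, Lemma~\ref{lemstonean} gives $q_{\fix} \colon L \to L_{\fix}$ open, and openness of $p$ makes $p_{/U} \colon H/U \to L_{\fix}$ open. A direct computation on basic opens then shows that the projection $q_c$ from the fiber product is open (the image of $(U_1 \times U_2) \cap (L \times_{L_{\fix}} H/U)$ equals $U_1 \cap q_{\fix}^{-1}(p_{/U}(U_2))$).

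For the module construction, for each local continuous section $\hat\sigma \colon O \to \hat H$ I would introduce the subset $M_{\hat\sigma} \subset \mathrm{C}(L \times_{L_{\fix}} H/U)$ consisting of those $F$ whose restriction to $\{l\} \times H_{q_{\fix}(l)}/U_{q_{\fix}(l)}$ lies in $\mathrm{R}_{\hat\sigma(q_{\fix}(l)), U_{q_{\fix}(l)}}'$ whenever $q_{\fix}(l) \in O$, and vanishes otherwise. It is immediate that $M_{\hat\sigma}$ is a closed $\mathrm{C}(L)$-submodule. Invariance under $T_{\varphi_c}$ follows from two observations: $q_{\fix}(\psi_{t^{-1}}(l)) = q_{\fix}(l)$, so the constraining subspace on the fiber over $\psi_{t^{-1}}(l)$ coincides with that over $l$; and $c(t^{-1},l)$ lies in $H_{q_{\fix}(l)}$, while left translation by an element of $H_m$ preserves matrix coefficients of any fixed irreducible representation, hence preserves $\mathrm{R}_{\hat\sigma(q_{\fix}(l)), U_{q_{\fix}(l)}}'$. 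To obtain finite generation and projectivity, I would use Example~\ref{extensionbundle2} to identify $\mathrm{C}(L \times_{L_{\fix}} H/U)$ with $\Gamma(\mathrm{C}_{q_c}(L \times_{L_{\fix}} H/U))$, and note that this bundle on $L$ is naturally isomorphic to the pullback $q_{\fix}^* \mathrm{C}_{p_{/U}}(H/U)$ (both have fiber $\mathrm{C}(H_{q_{\fix}(l)}/U_{q_{\fix}(l)})$ over $l$, with matching topologies via Proposition~\ref{descriptiontop}). Under this identification $M_{\hat\sigma}$ corresponds to sections of the pullback of the subbundle $E_{\hat\sigma,U}' \subset \mathrm{C}_{p_{/U}}(H/U)$ obtained by trivially extending $\mathrm{R}_{\hat\sigma,U}'$ to all of $L_{\fix}$ (cf.\ Remark~\ref{remsub}). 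Since $p|_U$ is open, Theorem~\ref{loctrivrep2} gives local triviality of $E_{\hat\sigma,U}'$, and pullbacks of locally trivial vector bundles are locally trivial, so Proposition~\ref{SerreSwan} delivers the desired finite generation and projectivity of $M_{\hat\sigma}$.

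For the totality claim I would pass to the $\mathrm{C}(L)$-submodule $\sum_{\hat\sigma} M_{\hat\sigma}$ and show its fibrewise evaluation is dense. At every $l \in L$, Theorem~\ref{section} yields a local continuous section of the dual passing through any $[\pi] \in \hat H_{q_{\fix}(l)}$, so the fiber of the sum contains $\mathrm{R}_{[\pi], U_{q_{\fix}(l)}}'$ for every $[\pi] \in \hat H_{q_{\fix}(l)}$; by the classical Peter--Weyl theorem for homogeneous spaces this is total in $\mathrm{C}(H_{q_{\fix}(l)}/U_{q_{\fix}(l)})$. Stone--Weierstrass for Banach bundles (Proposition~\ref{swbundles}), transferred back through Example~\ref{extensionbundle2}, then gives that the linear span of $\bigcup_{\hat\sigma} M_{\hat\sigma}$ is dense in $\mathrm{C}(L \times_{L_{\fix}} H/U)$, i.e.\ the union is total. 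Relative discrete spectrum in the sense of Definition~\ref{reldiscspdef} follows immediately since a total set generates the full C*-algebra. The main technical obstacle in this plan is the natural bundle identification $\mathrm{C}_{q_c}(L \times_{L_{\fix}} H/U) \cong q_{\fix}^* \mathrm{C}_{p_{/U}}(H/U)$ together with the corresponding identification of $M_{\hat\sigma}$ as sections of the pullback locally trivial subbundle—without this bridge one cannot transfer the local Peter--Weyl information from $L_{\fix}$ to $L$ and invoke Serre--Swan.
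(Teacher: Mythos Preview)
Your approach is essentially the same as the paper's: build $\mathrm{C}(L)$-submodules from the representative-function bundles $\mathrm{R}_{\hat\sigma,U}'$, identify them as sections of a pulled-back locally trivial bundle so that Serre--Swan (Proposition~\ref{SerreSwan}) gives finite generation and projectivity, and conclude totality via the bundle Stone--Weierstrass theorem. The paper packages the pullback step as a separate lemma (Lemma~\ref{lemmareldiscsp}), but the content is the same as what you sketch.

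There is, however, a genuine gap. You assert that the trivially extended bundle $E_{\hat\sigma,U}' \to L_{\fix}$ is locally trivial by invoking Theorem~\ref{loctrivrep2}. That theorem only gives local triviality of $\mathrm{R}_{\hat\sigma,U}' \to O$; the trivial extension by zero fibers outside $O$ (Remark~\ref{remsub}) is a continuous Banach subbundle, but it is \emph{not} locally trivial at points of $\overline{O}\setminus O$, since the fiber dimension jumps there (cf.\ Proposition~\ref{locconst}). Consequently its pullback to $L$ need not be locally trivial, and Serre--Swan does not apply. The paper fixes exactly this point: since $L$ is Stonean, Lemma~\ref{lemstonean} makes $L_{\fix}$ Stonean, so one may first shrink $O$ to a \emph{clopen} subset of $L_{\fix}$ (Theorem~\ref{section} still produces enough such sections to hit every $[\pi]$). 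With $O$ clopen the trivially extended bundle is locally trivial everywhere, and your argument goes through. This is precisely where the Stonean hypothesis on $L$ enters the proof; as it stands, your argument never uses it beyond openness of $q_{\fix}$.
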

	We first prove the following auxiliary result based on the famous Serre-Swan duality between locally trivial vector bundles and finitely generated, projective modules (see \cite{Swan1962}, and also Proposition \ref{SerreSwan} above).
	\begin{lemma}\label{lemmareldiscsp}
	    Let $p_1 \colon L \rightarrow M$ and $p_2 \colon \Omega \rightarrow M$ be continuous surjections between compact spaces and assume that $p_2$ is open. If $E\subset \mathrm{C}_{p_2}(\Omega)$ is such that the restriction $s|_E \colon E \rightarrow M$ is a locally trivial vector bundle, then 
	        \begin{align*}
	            \Gamma_E \coloneqq \{f \in \mathrm{C}(L \times_M \Omega)\mid f_{l} \in E_{p_1(l)} \textrm{ for every } l \in L\},
	       \end{align*}
	  where $f_{l}(x) \coloneqq f(l,x)$ for $x \in \Omega_{p_1(l)}$, $l \in L$ and $f \in \mathrm{C}(L \times_M \Omega)$, is a finitely generated, projective, and closed $\mathrm{C}(L)$-submodule of $\mathrm{C}(L \times_M \Omega)$ with $E_{p_1(l)} = \{f_l \mid f \in \Gamma_E\}$ for every $l \in L$.
	\end{lemma}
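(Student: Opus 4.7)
The plan is to identify $\Gamma_E$ with the space of continuous sections of the pullback of $s|_E$ along $p_1$, and then apply the Serre--Swan correspondence of Proposition \ref{SerreSwan}. First, using Proposition \ref{existinnerprod} I equip the locally trivial vector bundle $s|_E \colon E \to M$ with a continuous inner product, turning it into a locally trivial continuous Hilbert (and in particular Banach) bundle. Form the pullback $\pi \colon L \times_M E \to L$, $(l,e) \mapsto l$. Given $l \in L$ and a trivialization $s^{-1}(C) \cong C \times \C^d$ over a compact neighborhood $C$ of $p_1(l)$ in $M$, the preimage $p_1^{-1}(C)$ is a compact neighborhood of $l$ in $L$ over which $\pi$ admits the induced trivialization $\pi^{-1}(p_1^{-1}(C)) \cong p_1^{-1}(C) \times \C^d$. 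Hence $\pi$ is a locally trivial continuous Banach bundle over the compact space $L$.

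Next, I would set up a $\mathrm{C}(L)$-linear bijection $\Phi \colon \Gamma(L \times_M E) \to \Gamma_E$ as follows. Write any section $\tau$ as $\tau(l) = (l, \tau'(l))$ with $\tau'(l) \in E_{p_1(l)} \subset \mathrm{C}(\Omega_{p_1(l)})$, and set $\Phi(\tau)(l,x) \coloneqq \tau'(l)(x)$ for $(l,x) \in L \times_M \Omega$. For continuity of $\Phi(\tau)$ consider $(l_\alpha, x_\alpha) \to (l,x)$ in $L \times_M \Omega$; then $\tau'(l_\alpha) \to \tau'(l)$ in $\mathrm{C}_{p_2}(\Omega)$ by continuity of $\tau$, and $p_2(x_\alpha) = p_1(l_\alpha) = s(\tau'(l_\alpha))$, so Proposition \ref{descriptiontop} yields $\tau'(l_\alpha)(x_\alpha) \to \tau'(l)(x)$. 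Since $\Phi(\tau)_l = \tau'(l) \in E_{p_1(l)}$, indeed $\Phi(\tau) \in \Gamma_E$. For the inverse, define $\Psi(f)(l) \coloneqq (l, f_l)$ for $f \in \Gamma_E$; to check continuity of $l \mapsto f_l$ into $\mathrm{C}_{p_2}(\Omega)$, apply the other direction of Proposition \ref{descriptiontop}. For a net $l_\beta \to l$ in $L$ and a net $x_\beta \to x \in \Omega$ with $p_2(x_\beta) = p_1(l_\beta)$, the pair $(l_\beta, x_\beta)$ lies in $L \times_M \Omega$ and converges there to $(l,x)$, so joint continuity of $f$ gives $f_{l_\beta}(x_\beta) = f(l_\beta, x_\beta) \to f(l,x) = f_l(x)$. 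The maps $\Phi$ and $\Psi$ are clearly mutual inverses, and both commute with scalar multiplication by $\mathrm{C}(L)$.

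With this identification in hand, Proposition \ref{SerreSwan} applied to $\pi$ shows that $\Gamma(L \times_M E)$, and hence $\Gamma_E$, is a finitely generated and projective $\mathrm{C}(L)$-module. Closedness of $\Gamma_E$ in $\mathrm{C}(L \times_M \Omega)$ is immediate: uniform convergence $f_n \to f$ implies $(f_n)_l \to f_l$ uniformly for every $l$, and $E_{p_1(l)}$ is finite-dimensional (hence closed) in $\mathrm{C}_{p_2}(\Omega)_{p_1(l)}$. For the fiber identity $\{f_l \mid f \in \Gamma_E\} = E_{p_1(l)}$, recall from Subsection \ref{bbundl} that every vector in a continuous Banach bundle is the value of a global continuous section; given $v \in E_{p_1(l)} = (L \times_M E)_l$, pick such a $\tau \in \Gamma(L \times_M E)$ with $\tau(l) = (l,v)$, and then $\Phi(\tau)_l = v$.

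The main delicate point is the continuity of both halves of the bijection, which goes through the somewhat subtle description of the compact--open topology on $\mathrm{C}_{p_2}(\Omega)$ in Proposition \ref{descriptiontop}; once these verifications are in place, the rest of the proof is formal Serre--Swan duality combined with the standard section-existence theorem for continuous Banach bundles.
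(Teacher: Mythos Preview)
Your proof is correct and follows essentially the same strategy as the paper: form the pullback $L\times_M E \to L$, identify $\Gamma_E$ with its sections, and invoke Serre--Swan (Proposition~\ref{SerreSwan}). The only cosmetic difference is that the paper routes the identification through the embedding $L\times_M E \hookrightarrow \mathrm{C}_{\mathrm{pr}_1}(L\times_M\Omega)$ and the canonical isomorphism of Example~\ref{extensionbundle2}, whereas you construct the bijection $\Gamma(L\times_M E)\cong\Gamma_E$ directly and verify continuity via Proposition~\ref{descriptiontop}; the use of Proposition~\ref{existinnerprod} is unnecessary but harmless.
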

	\begin{proof}
	    For each $l \in L$ the map $\mathrm{C}(L \times_M \Omega) \rightarrow \mathrm{C}(\Omega_{p_1(l)}), \, f \mapsto f_l$ is continuous and $E_{p_1(l)} \subseteq \mathrm{C}(\Omega_{p_1(l)})$ is closed (since it is finite-dimensional). We conclude that $\Gamma_E$ is closed as an intersection of closed sets. We now apply the Serre-Swan duality theorem to see that it is also finitely generated and projective. We observe that the pullback bundle given by $L \times_{p_1,s} E \rightarrow L, \, (l,v) \mapsto l$ is a locally trivial vector bundle over $L$ (see, e.g., \cite[Chapter 3, Proposition 3.1]{Huse1994}). With the projection map $\mathrm{pr}_1 \colon L \times_M \Omega \rightarrow L$ onto the first component, which is is continuous and open, consider
            \begin{align*}
                \Phi \colon L \times_{p_1,s} E  \rightarrow \mathrm{C}_{\mathrm{pr}_1}(L \times_{M} \Omega), \quad (l,f) \mapsto f^l,
            \end{align*}
       where $f^l(l,x) = f(x)$ for $f \in E$ and $(l,x) \in L \times_{M} \Omega$ with $s(f) = p_1(l) = p_2(x)$. Then $\Phi$ is a homeomorphism onto its range.
        Since $\Phi$ is also fiberwise linear, we conclude that $F \coloneqq \Phi(L \times_{p_1,s} E)$
        is also a locally trivial vector bundle over $L$. By the Serre-Swan duality theorem (see \cite{Swan1962}), the space of its continuous sections $\Gamma(F)$ is a projective and finitely generated $\mathrm{C}(L)$-module.
        
        We show that the canonical isomorphism $\mathrm{C}(L \times_{M} \Omega) \rightarrow \Gamma(\mathrm{C}_{\mathrm{pr}_1}(L \times_M \Omega))$ from Example \ref{extensionbundle2} maps $\Gamma_E$ to $\Gamma(F)$ showing that also $\Gamma_E$ is a finitely generated and projective $\mathrm{C}(L)$-module. In fact, if $f \in \Gamma_E$ and $l \in L$, then the corresponding section $\tau \colon L \rightarrow L \times_M \Omega$ given by $\tau(l)(l,x) = f(l,x)$ for $(l,x) \in L\times_M \Omega$ satisfies $\tau(l) = (f_l)^l \in F$ for every $l \in L$. Conversely, assume that for $f \in \mathrm{C}(\Omega \times_L M)$ the corresponding section $\tau \colon L \rightarrow L \times_M \Omega$ belongs to $\Gamma(F)$ and take $l \in L$. Then there is $g \in E_{p_1(l)}$ with $\tau(l) = g^l$ and therefore $f_l (x) = f(l,x) = \tau(l)(l,x) = g(x)$ for all $x \in \Omega_{p_1(l)}$, hence $f_l = g \in E_{p_1(l)}$. This shows $f \in \Gamma_E$ as desired.
	       
	    Finally, we take $l \in L$ and show that $E_{p_1(l)} = \{f_l \mid f \in \Gamma_E\}$ for every $l \in L$. If $g \in E_{p_1(l)}$, then $g^l \in F$ and we therefore find $\tau \in \Gamma(F)$ with $\tau(l) = g^l$. But then the corresponding function $f \in \Gamma_E$ satisfies $f_l(x) = \tau(l)(l,x) = g(x)$ for all $x \in \Omega_l$, i.e., $g = f_l$. Thus, $E_{p_1(l)} = \{f_l \mid f \in \Gamma_E\}$ for every $l \in L$ as claimed.
	\end{proof}
    We now prove Theorem \ref{reldiscsp}.
    \begin{proof}[Proof of Theorem \ref{reldiscsp}.]
        We first check that $q_c$ is open. By assumption $p$ is open, and hence also $p_{/U} \colon H/U \rightarrow L_{\fix}$ is open (see Lemma \ref{remarkshomspace} (iv)). Thus, if $(l,xU) \in L \times_{L_{\fix}} H/U$ and $(l_{\alpha})_{\alpha \in A}$ is a net in $L$ converging to $l$, we may assume, by passing to a subnet, that there is a net $(x_{\alpha}U)_{\alpha \in A}$ in $H/U$ converging to $xU$ with $p_{/U}(x_{\alpha}U) = q_{\fix}(l_{\alpha})$ for each $\alpha \in A$. But then the net $((l_{\alpha},x_{\alpha}U))_{\alpha \in A}$ in $L \times_{L_{\fix}} H/U$ converges to $(l,xU)$ and satisfies $q_c((l_{\alpha},x_{\alpha}U)) = l_{\alpha}$ for each $\alpha \in A$. This shows that $q_c$ is open.
        
        Now let $\Gamma$ be the linear span of all finitely generated, projective, closed, $T_{\varphi_c}$-invariant $\mathrm{C}(L)$-submodules of $\mathrm{C}(L \times_{L_{\fix}} H/U)$. In view of the canonical isomorphism $\mathrm{C}(L \times_{L_{\fix}} H/U) \rightarrow \Gamma(\mathrm{C}_q(L \times_{L_{\fix}} H/U))$ and the Stone-Weierstrass theorem for Banach bundles (Proposition \ref{swbundles}) it suffices to show that $\{f|_{\{l\} \times H_{q_{\fix}(l)}/U_{q_{\fix(l)}}}\mid f \in \Gamma\}$ is dense in $\mathrm{C}(\{l\} \times H_{q_{\fix}(l)}/U_{q_{\fix(l)}})$ for every $l \in L$. Equivalently and with the notation of Lemma \ref{lemmareldiscsp}, we have to show that $\Gamma_l \coloneqq \{f_l\mid f \in \Gamma\}$ is dense in $\mathrm{C}(H_{q_{\fix}(l)}/U_{q_{\fix}(l)})$ for each $l \in L$.
        
        By Theorem \ref{pw2} it is enough to prove the inclusion $\mathrm{R}'_{\hat{\sigma}(m),U_{m}} \subset \Gamma_l$ for every $l \in L$ and each local continuous section $\hat{\sigma} \colon O \rightarrow \hat{H}$ of the dual defined on an open neighborhood $O$ of $m \coloneqq q_{\fix}(l)$. Since $L_{\fix}$ is Stonean by Lemma \ref{lemstonean}, we may assume (by restricting the section) that $O \subset L_{\fix}$ is clopen. Then the locally trivial vector bundle $s|_{\mathrm{R}'_{\hat{\sigma},U}} \colon \mathrm{R}'_{\hat{\sigma},U} \rightarrow O$ can be extended trivially to a continuous Banach subbundle $s|_E \colon E \rightarrow M$ of $s \colon \mathrm{C}_{p_{/U}}(H/U) \rightarrow L_{\mathrm{fix}}$ (see Lemma \ref{remsub}). But since $O$ is clopen, the extended bundle is still locally trivial and hence $\Gamma_{E} \subset \mathrm{C}(L \times_{L_{\fix}} H/U)$ is a finitely generated, projective, and closed $\mathrm{C}(L)$-submodule of $\mathrm{C}(L \times_{L_{\fix}} H/U)$ with $\mathrm{R}'_{\hat{\sigma}(m),U_{m}} = E_m = \{f_l \mid f \in \Gamma_E\}$ by Lemma \ref{lemmareldiscsp}. It is easy to see that $\Gamma_E$ is also $T_{\varphi_{c}}$-invariant since $\mathrm{R}'_{\hat{\sigma}(m'),U_{m'}}$ is invariant with respect to the left regular representation of $H_{m'}$ on $\mathrm{C}(H_{m'}/U_{m'})$ for every $m' \in O$. Thus, $\mathrm{R}'_{\hat{\sigma}(m),U_{m}} \subset \Gamma_l$ as desired.
    \end{proof}
   
    \begin{remark}\label{remarkgrpext}
        We note that for the proof of Theorem \ref{reldiscsp} it is actually enough to assume that $L_{\fix}$ is totally disconnected.
    \end{remark}
    
    Our goal is now to prove a converse result of Theorem \ref{reldiscsp} which represents a given extension with relative discrete spectrum over a Stonean space as a relative skew-product system. To do so, we first discuss some further concepts.

\subsection{Uniform enveloping semigroupoids}\label{uniformenv}
    
    As a key tool to study structured extensions in topological dynamics we recall the concept of uniform enveloping semigroupoid introduced in \cite{EdKr2021}. Semigroupoids generalize semigroups in that the multiplication is only partially defined (see \cite[Definition 1.1]{EdKr2021}). 
    \begin{definition}
        A \emph{semigroupoid} consists of a set $\euS$, a subset $\euS^{(2)} \subset \euS \times \euS$ and a map $\cdot \colon \euS^{(2)} \rightarrow \euS$ which is associative in the sense that if $(\vartheta_1,\vartheta_2), (\vartheta_2, \vartheta_3) \in \euS^{(2)}$, then also $(\vartheta_1 \cdot \vartheta_2,\vartheta_3)$, $(\vartheta_1, \vartheta_2 \cdot \vartheta_3) \in \euS^{(2)}$ and $(\vartheta_1 \cdot \vartheta_2) \cdot \vartheta_3 = \vartheta_1 \cdot (\vartheta_2 \cdot \vartheta_3)$.
        
        If $\euS$ is equipped with a topology such that the map $\cdot \colon \euS^{(2)} \rightarrow \euS$ is continuous, then the semigroupoid is called a \emph{topological semigroupoid}.
    \end{definition}
    We refer to \cite[Subsection 2.1.1]{2013MitreaMonniaux} for an introduction as well as basic properties and simple examples of semigroupoids. Here, we only need the following observation (cf. \cite[Remark 2.2 and Section 3]{EdKr2021}): If $q\colon K \rightarrow L$ is an open continuous surjection between compact spaces, then the space of continuous fiber maps $\mathrm{C}_q^q(K,K)$ equipped with the compact-open topology, the set of composable pairs
        \begin{align*}
            \mathrm{C}_q^q(K,K)^{(2)} \coloneqq \{(\vartheta_1,\vartheta_2) \in  \mathrm{C}_q^q(K,K)^2 \mid r(\vartheta_2) = s(\vartheta_1)\}
        \end{align*}
    as well as the multiplication 
        \begin{align*}
            \cdot \,\colon \mathrm{C}_q^q(K,K)^{(2)} \rightarrow \mathrm{C}_q^q(K,K), \quad (\vartheta_1, \vartheta_2) \mapsto \vartheta_1 \circ \vartheta_2
        \end{align*}
    is a Hausdorff topological semigroupoid.
    
    Based on the classical enveloping semigroup introduced by Ellis (cf. \cite{Ellis1960}), we define the uniform enveloping semigroupoid of an extension in the following way (see \cite[Definition 3.3]{EdKr2021}).
	\begin{definition}\label{defuniform}
		For an open extension $q \colon (K;\varphi) \rightarrow (L;\psi)$ consider the set
			\begin{align*}
				\euS(q) \coloneqq \{\varphi_t|_{K_l} \colon K_l \rightarrow K_{\psi_t(l)}\mid t \in G, l \in L\} \subset \mathrm{C}_q^q(K,K).
			\end{align*}
		The smallest closed subsemigroupoid of $\mathrm{C}_q^q(K,K)$ containing $\euS(q)$ is called the \emph{uniform enveloping semigroupoid} and denoted by $\euE_{\mathrm{u}}(q)$.
	\end{definition}
	\begin{remark}
	    It should be pointed out that in Definition \ref{defuniform} $\euE_{\mathrm{u}}(q)$ is not always given by the closure of $\euS(q)$ as this might not be a subsemigroupoid (see \cite[Remark 3.9 and Example 3.10]{EdKr2021}).
	\end{remark}
	In the situation of extensions with relative discrete spectrum, the next result shows that the uniform enveloping semigroupoid is a actually a compact \emph{groupoid} (ee \cite[Definition 1.1]{EdKr2021} or \cite[Subsection 2.1.2]{2013MitreaMonniaux} for the definition of groupoids). It is a consequence of an Arzelà-Ascoli type theorem (see the implication \enquote{(d) $\Rightarrow$ (a)} in \cite[Theorem 4.14]{EdKr2021} which actually does not rely on topological ergodicity of $(L;\psi)$).
		\begin{proposition}
			Assume that $q \colon (K;\varphi) \rightarrow (L;\psi)$ is an open extension such that $\mathrm{C}(K)$ is generated as a C*-algebra by its finitely generated, projective, closed, $T_{\varphi}$-invariant $\mathrm{C}(L)$-submodules. Then $\euE_{\mathrm{u}}(q)$ is a compact groupoid, i.e., 
			    \begin{enumerate}[(i)]
			        \item $\euE_{\mathrm{u}}(q)$ is a compact space,
			        \item every $\vartheta \in \euE_{\mathrm{u}}(q)$ is bijective with inverse $\vartheta^{-1} \in \euE_{\mathrm{u}}(q)$, and
			        \item the map
			            \begin{align*}
			                \euE_{\mathrm{u}}(q) \rightarrow \euE_{\mathrm{u}}(q), \quad \vartheta \mapsto \vartheta^{-1}
			            \end{align*}
			           is continuous.
			    \end{enumerate}
		\end{proposition}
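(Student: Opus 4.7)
The plan is to invoke the Arzelà--Ascoli-type argument of Theorem 4.14 in EdKr2021 (the implication (d) $\Rightarrow$ (a)), feeding it the structural input provided by our hypothesis. Under the hypothesis, $\mathrm{C}(K)$ has a generating family $(\mathcal{M}_i)_{i \in I}$ of finitely generated, projective, closed, $T_\varphi$-invariant $\mathrm{C}(L)$-submodules. By Proposition \ref{SerreSwan}, applied inside the continuous Banach bundle $s\colon \mathrm{C}_q(K) \to L$ of Example \ref{extensionbundle}, each $\mathcal{M}_i$ corresponds to a $T_\varphi$-invariant locally trivial vector subbundle $F_i \subset \mathrm{C}_q(K)$. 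Using Proposition \ref{existinnerprod}, I would equip each $F_i$ with a continuous inner product, turning it into a locally trivial continuous Hilbert bundle whose (finite) fiber dimension is locally constant by Proposition \ref{locconst}.

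Next, I would verify compactness of $\euE_{\mathrm{u}}(q)$ by showing that $\euS(q)$ is relatively compact in $\mathrm{C}_q^q(K,K)$. Since the Koopman operators are isometries for the sup-norm and the sup- and Hilbert-norms on $F_i$ are equivalent in a locally uniform way (via local triviality), the orbit of every element of $F_i$ under $T_\varphi$ is norm-bounded; after local trivialization it sits inside a bounded, hence relatively compact, subset of $\mathbb{C}^{d_i}$. Now given a net $(\varphi_{t_\alpha}|_{K_{l_\alpha}})_\alpha$ in $\euS(q)$, I would first pass to a subnet so that $(l_\alpha)$ converges in $L$; then, via a diagonal argument using the generation property combined with the Stone--Weierstrass theorem for Banach bundles (Proposition \ref{swbundles}) and the fiberwise relative compactness just established, I would extract a further subnet satisfying condition (b) of Proposition \ref{descriptiontop}. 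Since $q$ is open, $\mathrm{C}_q^q(K,K)$ is Hausdorff by Remark \ref{openchar}, so $\overline{\euS(q)}$ is a compact Hausdorff subsemigroupoid and consequently $\euE_{\mathrm{u}}(q) = \overline{\euS(q)}$.

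Finally, for the groupoid structure, each element of $\euS(q)$ is a fiber homeomorphism with inverse $(\varphi_t|_{K_l})^{-1} = \varphi_{t^{-1}}|_{K_{\psi_t(l)}} \in \euS(q)$, so $\euS(q)$ is closed under inversion. A direct application of Proposition \ref{descriptiontop} shows that $\vartheta \mapsto \vartheta^{-1}$ is continuous on $\euS(q)$; on the compact Hausdorff set $\overline{\euS(q)}$ a standard continuous-extension argument, combined with the fiberwise relative compactness that puts all limits inside sets of homeomorphisms, shows that every limit of fiber bijections is again a fiber bijection whose inverse is the limit of the inverses. Hence $\euE_{\mathrm{u}}(q)$ is closed under continuous inversion. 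The principal obstacle is the Arzelà--Ascoli extraction itself: carefully weaving together the fiberwise compactness inside the finite-dimensional spaces $(F_i)_l$, the density of sections of $F_i$ afforded by Proposition \ref{swbundles}, and the somewhat delicate subnet characterization of convergence in $\mathrm{C}_q^q(K,K)$ from Proposition \ref{descriptiontop}.
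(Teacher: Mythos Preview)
Your approach is exactly the one the paper invokes: the proposition is stated in the paper with a direct citation to the implication ``(d) $\Rightarrow$ (a)'' of \cite[Theorem 4.14]{EdKr2021}, and your sketch is a faithful outline of how that Arzel\`a--Ascoli-type argument runs (translating the module hypothesis into locally trivial invariant subbundles via Serre--Swan, then extracting convergent subnets fiberwise in finite-dimensional pieces).

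There is, however, one genuine soft spot. You assert that compactness of $\overline{\euS(q)}$ in the Hausdorff space $\mathrm{C}_q^q(K,K)$ already makes $\overline{\euS(q)}$ a subsemigroupoid, hence equal to $\euE_{\mathrm{u}}(q)$. The paper explicitly warns (in the remark following Definition~\ref{defuniform}, citing \cite[Remark 3.9 and Example 3.10]{EdKr2021}) that the closure of $\euS(q)$ need \emph{not} be a subsemigroupoid in general: given composable limits $\vartheta,\eta\in\overline{\euS(q)}$, approximating nets $\vartheta_\alpha,\eta_\beta\in\euS(q)$ are typically not composable with one another, so continuity of composition on $\mathrm{C}_q^q(K,K)^{(2)}$ does not by itself yield $\vartheta\circ\eta\in\overline{\euS(q)}$. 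Under the present hypothesis this \emph{does} hold, but it requires the equicontinuity coming from the invariant finite-rank subbundles: one uses that the family $\{\varphi_t|_{K_l}:t\in G\}$ is equicontinuous uniformly in $l$ (precisely what your Arzel\`a--Ascoli extraction establishes) to show that if $\eta_\beta\to\eta$ with $r(\eta_\beta)\to s(\vartheta)$, then one can choose elements of $\euS(q)$ with source $r(\eta_\beta)$ converging to $\vartheta$. This step is part of the content of \cite[Theorem 4.14]{EdKr2021} and should not be treated as automatic.
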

	
	We compute the uniform enveloping semigroupoid explicitly in an example (see \cite[Example 3.7]{EdKr2021}).
	    \begin{example}\label{skewtorus2}
	        Let $G = \Z$ and $q \colon (\T^2;\varphi) \rightarrow (\T;\psi)$ be the skew-torus extension from Example \ref{skewprod} defined by an element $a \in \T$ which is not a root of unity. Then the uniform enveloping semigroupoid $\euE_{\mathrm{u}}(q)$ consists of all \enquote{rotations between fibers}
	        \begin{align*}
	            \vartheta_{x_1,x_2,b} \colon \{x_1\} \times \T \rightarrow \{x_2\} \times \T,\quad (x_1,y) \mapsto (x_2,by),
	        \end{align*}
	       where $x_1,x_2,b \in \T$. In fact, it is easy to check that 
            \begin{align*}
                \euE \coloneqq \{ \vartheta_{x_1,x_2,b}\mid x_1,x_2,b \in \T\} \subseteq \mathrm{C}_q^q(\T^2,\T^2)
            \end{align*}
        is a closed subsemigroupoid of $\mathrm{C}_q^q(\T^2,\T^2)$ containing $\euS(q)$, and thus $\euE_{\mathrm{u}}(q) \subseteq \euE$. On the other hand, for $x_1,x_2, b \in \T$ we find by minimality of the skew-torus (see, e.g., \cite[Corollaries 10.9 and 10.18]{EFHN2015}) a sequence $(n_{k})_{k \in \N}$ in $\Z$ with 
            \begin{align*}
                \lim_{k \rightarrow \infty} (a^{n_k}x_1,a^{\frac{n_k(n_k-1)}{2}}x_1^{n_k}) = \lim_{k \rightarrow \infty} \varphi_{n_k}(x_1,1) = (x_2,b).
            \end{align*}
        But this implies $\vartheta_{x_1,x_2,b} = \lim_{k \rightarrow \infty} \varphi_{n_k}|_{\{x_1\} \times \T}$ in $\mathrm{C}_q^q(\T^2,\T^2)$, and hence $\vartheta_{x_1,x_2,b} \in \euE_{\mathrm{u}}(q)$.
	   \end{example}
	Thus, in Example \ref{skewtorus2} the uniform enveloping semigroupoid is in one-to-one correspondence with the set $\T \times \T \times \T$. This is reflected by an irreducibility condition of the corresponding cocycle
	     \begin{align*}
	        c \colon \Z \times \T \rightarrow H, \quad (k,x) \mapsto a^{\frac{k(k-1)}{2}}x^k.
	    \end{align*}
	We make this precise in the following definition. Note here that if $p\colon L \rightarrow M$ is a continuous surjection between compact spaces and $H$ is a compact group bundle over $M$, the compact space $(L \times_M L) \times_M H$ canonically becomes a compact topological semigroupoid by setting 
	    \begin{align*}
            ((L \times_M L) \times_M H)^{(2)} \coloneqq \{((l,l_2,x),(l_1,l,y))\mid l,l_1,l_2 \in L, x,y \in H\}
        \end{align*}
    and $(l,l_2,x) \cdot (l_1,l,y) \coloneqq (l_1,l_2,xy)$ for $((l,l_2,x),(l_1,l,y)) \in  ((L \times_{M} L) \times_{M} H)^{(2)}$. It is even a compact groupoid with inverses $(l_1,l_2,x)^{-1} \coloneqq (l_2,l_1,x^{-1})$ for $(l_1,l_2,x) \in (L \times_{M} L) \times_{M} H$.

\begin{definition}
    Let $(L;\psi)$ be a topological dynamical system and $p \colon H \rightarrow L_{\fix}$ a compact group bundle over its fixed factor $L_{\mathrm{fix}}$. A relative cocycle $c \colon G \times T \rightarrow H$ is \emph{ergodic} if the smallest closed subsemigroupoid of $(L \times_{L_{\fix}} L) \times_{L_{\fix}} H$ containing
        \begin{align*}
            \{(l,\psi_t(l),c(t,l))\mid l \in L, t \in G\}
        \end{align*}
    is already the whole space $(L \times_{L_{\fix}} L) \times_{L_{\fix}} H$.
\end{definition}

For certain relative skew-product extensions induced by ergodic relative cocycles we now obtain a description of the uniform enveloping semigroupoid which is analoguous to the one of Example \ref{skewtorus2}. Call a closed subgroup bundle $p|_U \colon U \rightarrow M$ of a compact group bundle $p\colon H \rightarrow M$ \emph{core-free} if $U_m$ is a core-free subgroup of $H_m$ for $m \in M$, i.e., the only normal subgroup of $H_m$ contained in $U_m$ is the trivial one. In the case of an abelian group $H_m$ this means that $U_m$ is itself trivial. Moreover, define homomorphisms and isomorphisms of topological semigroupoids and groupoids  in the obvious way, see \cite[Definition 1.7]{EdKr2021}.

For relative skew-product extensions we obtain the following consequence.
\begin{proposition}\label{uniformcomp}
    Let 
        \begin{enumerate}[(i)]
            \item $(L;\psi)$ be a topological dynamical system,
            \item $p \colon H \rightarrow L_{\fix}$ an open compact group bundle,
            \item $p|_U\colon U \rightarrow L_{\fix}$ a closed subgroup bundle, and
            \item $c \colon G \times L \rightarrow H$ a relative cocycle.
        \end{enumerate}
    Then
        \begin{align*}
            \Phi \colon  (L \times_{L_{\fix}} L) \times_{L_{\fix}} H \rightarrow \mathrm{C}_{q_c}^{q_c}(L \times_{L_{\fix}} H/U,L \times_{L_{\fix}} H/U), \quad (l_1,l_2,x) \mapsto \vartheta_{l_1,l_2,x}
        \end{align*}
    with $\vartheta_{l_1,l_2,x}(l_1,yU) \coloneqq (l_2,xyU)$ for $l_1,l_2 \in L$ and $x,y \in H$ is a homomorphism of topological semigroupoids with  $\euE_{\mathrm{u}}(q_c) \subseteq \mathrm{ran}(\Phi)$. Moreover, the following assertions hold.
        \begin{enumerate}[(1)]
            \item If $p|_U\colon U \rightarrow L_{\fix}$ is core-free, then $\Phi$ is injective.
            \item If $c$ is ergodic, then $\euE_{\mathrm{u}}(q_c) = \mathrm{ran}(\Phi)$.
        \end{enumerate}
    In particular, if $p|_U$ is core-free and $c$ is ergodic, then $\Phi$ defines an isomorphism between the compact groupoids $(L \times_{L_{\fix}} L) \times_{L_{\fix}} H$ and $\euE_{\mathrm{u}}(q_c)$.
\end{proposition}
\begin{proof}
    The map
        \begin{align*}
            \Phi \colon  (L \times_{L_{\fix}} L) \times_{L_{\fix}} H \rightarrow \mathrm{C}_{q_c}^{q_c}(L \times_{L_{\fix}} H/U,L \times_{L_{\fix}} H/U), \quad (l_1,l_2,x) \mapsto \vartheta_{l_1,l_2,x}
        \end{align*}
    is a homomorphism of semigroupoids. Continuity readily follows from Lemma \ref{remarkshomspace} and the characterization of convergence in $\mathrm{C}_{q_c}^{q_c}(L \times_{L_{\fix}} H/U,L \times_{L_{\fix}} H/U)$ of Proposition \ref{descriptiontop}. The image of $\Phi$ is then a compact, hence closed subsemigroupoid of $\mathrm{C}_{q_c}^{q_c}(L \times_{L_{\fix}} H/U,L \times_{L_{\fix}} H/U)$ containing $\euS(q_c)$ and therefore $\euE_{\mathrm{u}}(q_c) \subseteq \mathrm{ran}(\Phi)$ by the definition of the uniform enveloping semigroupoid.
    
    For (1) assume that $p|_U$ is core-free.  Pick $(l_1,l_2,x), (l_1,l_2,y) \in (L \times_{L_{\fix}} L) \times_{L_{\fix}} H$ with $\vartheta_{l_1,l_2,x} = \vartheta_{l_1,l_2,y}$. Then $(y^{-1}x)zU_{q_{\fix}(l_1)} = zU_{q_{\fix}(l_1)}$ for every $z \in H_{q_{\fix}(l_1)}$. But since $U_{q_{\fix}(l_1)}$ is a core-free subgroup of $H_{q_{\fix}(l_1)}$, the group $H_{q_{\fix}(l_1)}$ acts effectively on the quotient $H_{q_{\fix}(l_1)}/U_{q_{\fix}(l_1)}$. This shows $x = y$ and hence $\Phi$ is injective.  
    
    To show (2) we assume that $c$ is ergodic. Then the set $\euS \coloneqq \{(l_1,l_2,x)\mid \vartheta_{l_1,l_2,x} \in \euE_{\mathrm{u}}(q_c)\}$ is a compact, hence closed subsemigroupoid of $(L \times_{L_{\fix}} L) \times_{L_{\fix}} H$
    containing the set $\{(l,\psi_t(l),c(t,l))\mid l \in L, t \in G\}$. Since $c$ is ergodic, we obtain $\euS = (L \times_{L_{\fix}} L) \times_{L_{\fix}} H$. 
  
    Finally, if $p|_U$ is core-free and $c$ is ergodic, then $\Phi  \colon  (L \times_{L_{\fix}} L) \times_{L_{\fix}} H \rightarrow \euE_{\mathrm{u}}(q_c)$ is a bijective continuous map between compact spaces, and thus a homeomorphism. This implies that is an isomorphism of compact groupoids in that case.
\end{proof}
	
	In Example \ref{skewtorus2} the uniform enveloping (semi)groupoid acts transitively on $K = \T^2$, i.e., every point of $K$ can be reached from any other point in $K$ via a map in $\euE_{\mathrm{u}}(q)$. The following result gives a general description of the orbits of the action of the uniform enveloping semigroupoid, provided that it is a compact groupoid. A proof can be found in \cite[Lemma 5.10 (ii)]{EdKr2021} (the cited result covers the case of pseudoisometric extensions, but its proof only uses the fact that the uniform enveloping semigroupoid is a compact groupoid).
	\begin{proposition}\label{charergodicity}
	    	Take an open extension $q \colon (K;\varphi) \rightarrow (L;\psi)$ of topological dynamical systems and write $q_{\fix} \colon K \rightarrow K_{\fix}$ for the factor map to the fixed factor $K_{\fix}$ of $(K;\varphi)$. If $\euE_{\mathrm{u}}(q)$ is a compact groupoid, then
	    	    \begin{align*}
	    	        q_{\fix}^{-1}(q_{\fix}(x)) = \{\vartheta(x)\mid \vartheta \in \euE_{\mathrm{u}}(q) \textrm{ with } s(\vartheta) = q(x)\}
	    	    \end{align*}
	        for every $x \in K$.
	\end{proposition}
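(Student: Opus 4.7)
The plan is to prove the two inclusions separately. For the forward inclusion $\{\vartheta(x) \mid \vartheta \in \euE_{\mathrm{u}}(q),\, s(\vartheta) = q(x)\} \subset q_{\fix}^{-1}(q_{\fix}(x))$, I would consider
\[
    \euS' \coloneqq \{\vartheta \in \euE_{\mathrm{u}}(q) \mid f(\vartheta(y)) = f(y) \text{ for all } f \in \fix(T_\varphi) \text{ and } y \in K_{s(\vartheta)}\}.
\]
Using the description of convergence in the compact-open topology from Proposition~\ref{descriptiontop}, $\euS'$ is closed in $\euE_{\mathrm{u}}(q)$; it is clearly a subsemigroupoid and contains every $\varphi_t|_{K_l}$ by the very definition of $\fix(T_\varphi)$. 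Hence $\euS' = \euE_{\mathrm{u}}(q)$ by the minimality in the definition of the uniform enveloping semigroupoid, and since $\fix(T_\varphi)$ separates the points of $K_{\fix}$, this yields the inclusion.

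For the reverse inclusion, I would introduce the relation $x \sim y$ defined by the existence of some $\vartheta \in \euE_{\mathrm{u}}(q)$ with $s(\vartheta) = q(x)$ and $\vartheta(x) = y$. The hypothesis that $\euE_{\mathrm{u}}(q)$ is a compact groupoid ensures that $\sim$ is an equivalence relation on $K$: reflexivity uses the identities $\varphi_e|_{K_{q(x)}} = \mathrm{id}_{K_{q(x)}}$, symmetry uses the existence of groupoid inverses, and transitivity uses composability. Moreover, the graph of $\sim$ is the image of the compact set $\{(\vartheta, x) \in \euE_{\mathrm{u}}(q) \times K \mid s(\vartheta) = q(x)\}$ under the continuous evaluation $(\vartheta, x) \mapsto (x, \vartheta(x))$, hence closed, so the quotient $K/{\sim}$ is compact Hausdorff.

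I would then verify that $(K/{\sim}; \mathrm{id})$ is a trivial topological dynamical factor of $(K; \varphi)$. Each $\varphi_t$ descends continuously to the quotient: if $\vartheta \in \euE_{\mathrm{u}}(q)$ witnesses $x \sim y$, then the composition $\varphi_t|_{K_{r(\vartheta)}} \circ \vartheta \circ \varphi_{t^{-1}}|_{K_{\psi_t(q(x))}}$ lies in $\euE_{\mathrm{u}}(q)$ and sends $\varphi_t(x)$ to $\varphi_t(y)$. The induced action on $K/{\sim}$ is trivial because $\varphi_t|_{K_{q(x)}} \in \euE_{\mathrm{u}}(q)$ directly witnesses $x \sim \varphi_t(x)$. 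By the universal property defining $K_{\fix}$ as the largest trivial factor, the quotient map $K \to K/{\sim}$ factors through $q_{\fix}$, so $q_{\fix}(x) = q_{\fix}(y)$ forces $x \sim y$, completing the argument. The main obstacle is the verification that $\sim$ is symmetric, which is precisely where the groupoid hypothesis on $\euE_{\mathrm{u}}(q)$ is indispensable; all other steps would carry through for any closed subsemigroupoid of $\mathrm{C}_q^q(K,K)$ containing $\euS(q)$.
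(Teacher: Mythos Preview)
Your argument is correct. The paper does not prove this proposition itself but merely cites \cite[Lemma~5.10~(ii)]{EdKr2021}, noting that the proof there only uses that $\euE_{\mathrm{u}}(q)$ is a compact groupoid; your write-up supplies exactly such a self-contained proof, and the two inclusions are handled cleanly. The forward inclusion via the closed subsemigroupoid $\euS'$ is the standard minimality argument, and the reverse inclusion via the quotient $K/{\sim}$ and the universal property of $K_{\fix}$ is sound: closedness of the graph follows from continuity of evaluation (Proposition~\ref{descriptiontop}) together with compactness of $\euE_{\mathrm{u}}(q)$, and Hausdorffness of the quotient then follows as in \cite[Section~I.10.4, Proposition~8]{Bour1995}.

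One small correction to your closing remark: besides symmetry of $\sim$, the argument also uses \emph{compactness} of $\euE_{\mathrm{u}}(q)$ (to conclude the graph of $\sim$ is closed), so it is not quite true that ``all other steps would carry through for any closed subsemigroupoid''; both parts of the compact groupoid hypothesis are used.
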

	We only need the following consequences for topologically ergodic extensions.
	\begin{corollary}\label{charergodicity2}
	    Let $q \colon (K;\varphi) \rightarrow (L;\psi)$ be an open topologically ergodic extension of topological dynamical systems such that $\euE_{\mathrm{u}}(q)$ is a compact groupoid.
	        \begin{enumerate}[(i)]
	            \item For $l_1, l_2 \in L$ the following assertions are equivalent.\label{charergodicity21}
	        \begin{enumerate}[(a)]
	            \item $q_{\fix}(l_1) = q_{\fix}(l_2)$.\label{charergodicity21a}
	            \item There is $\vartheta \in \euE_{\mathrm{u}}(q)$ with $s(\vartheta) = l_1$ and $r(\vartheta) = l_2$.\label{charergodicity21b}
	       \end{enumerate}
	        \item For $x,y \in K$ with $q(x) = q(y)$ there is $\vartheta \in \euE_{\mathrm{u}}(q)$ with $s(\vartheta) = r(\vartheta) = q(x) = q(y)$ and $\vartheta(x) = y$.\label{charergodicity22}
	    \end{enumerate}
	\end{corollary}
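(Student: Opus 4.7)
The strategy is to reduce both parts to Proposition \ref{charergodicity} by exploiting that $q$ is topologically ergodic. The key preliminary observation is that topological ergodicity, $J_q(\fix(T_\psi)) = \fix(T_\varphi)$, together with Gelfand--Naimark duality, yields a canonical identification $K_{\fix} \cong L_{\fix}$ such that the factor map $q_{\fix}^K \colon K \to K_{\fix}$ factors through $q$ as $q_{\fix}^K = q_{\fix}^L \circ q$. Under this identification the symbol $q_{\fix}$ in the corollary statement can be read on either $K$ or $L$ consistently. Once this is in place, everything else is a direct unpacking.

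For part (\ref{charergodicity21}), direction (\ref{charergodicity21b}) $\Rightarrow$ (\ref{charergodicity21a}): given $\vartheta \in \euE_{\mathrm{u}}(q)$ with $s(\vartheta) = l_1$ and $r(\vartheta) = l_2$, I would pick any $x \in K_{l_1}$ using surjectivity of $q$; then $\vartheta(x) \in K_{l_2}$ and Proposition \ref{charergodicity} gives $q_{\fix}^K(\vartheta(x)) = q_{\fix}^K(x)$, so the factorization yields $q_{\fix}^L(l_2) = q_{\fix}^L(q(\vartheta(x))) = q_{\fix}^L(q(x)) = q_{\fix}^L(l_1)$. Conversely, for (\ref{charergodicity21a}) $\Rightarrow$ (\ref{charergodicity21b}), I would pick $x \in K_{l_1}$ and $y \in K_{l_2}$, observe that the factorization forces $q_{\fix}^K(x) = q_{\fix}^L(l_1) = q_{\fix}^L(l_2) = q_{\fix}^K(y)$, and then invoke Proposition \ref{charergodicity} to produce $\vartheta \in \euE_{\mathrm{u}}(q)$ with $s(\vartheta) = q(x) = l_1$ and $\vartheta(x) = y$; since $\vartheta \colon K_{s(\vartheta)} \to K_{r(\vartheta)}$, one has $r(\vartheta) = q(\vartheta(x)) = q(y) = l_2$.

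Part (\ref{charergodicity22}) follows from exactly the same idea: if $x, y \in K$ satisfy $q(x) = q(y) = l$, the factorization $q_{\fix}^K = q_{\fix}^L \circ q$ gives $q_{\fix}^K(x) = q_{\fix}^L(l) = q_{\fix}^K(y)$, so Proposition \ref{charergodicity} supplies $\vartheta \in \euE_{\mathrm{u}}(q)$ with $s(\vartheta) = l$ and $\vartheta(x) = y$; automatically $r(\vartheta) = q(\vartheta(x)) = q(y) = l$.

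There is no real obstacle here, and I do not expect any step to be hard once the identification $q_{\fix}^K = q_{\fix}^L \circ q$ is justified cleanly. The only subtlety worth flagging explicitly is notational: the corollary writes $q_{\fix}$ for two distinct maps (the factor maps of $K$ and of $L$ onto their respective fixed factors), and the proof essentially consists in observing that under topological ergodicity these two maps agree after composition with $q$, so that Proposition \ref{charergodicity} applied at the level of $K$ translates into a statement at the level of $L$.
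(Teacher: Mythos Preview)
Your proposal is correct and follows essentially the same route as the paper: both arguments reduce everything to Proposition~\ref{charergodicity} via the identification $K_{\fix}\cong L_{\fix}$ coming from topological ergodicity, pick preimages in $K$ over the given points of $L$, and read off the conclusions. Your version is in fact slightly cleaner in that you explicitly distinguish the two maps $q_{\fix}^K$ and $q_{\fix}^L$, whereas the paper silently uses the symbol $q_{\fix}$ for both.
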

	\begin{proof}  
	    For part (\ref{charergodicity21}) take $l_1,l_2 \in L$. If (a)  holds, take $x_i \in K$ with $q(x_i) = l_i$ for $i =1,2$. Then also $q_{\fix}(x_1) = q_{\fix}(x_2)$ by topological ergodicty of $q$ and by Proposition \ref{charergodicity} we find $\vartheta \in \euE_{\mathrm{u}}(q)$ with $s(\vartheta) = q(x_1) = l_1$ and $\vartheta(x_1) = x_2$, hence $r(\vartheta) = q(x_2) = l_2$. Conversely, if (b) holds, we pick any $x \in K_{l_1}$ and an element $\vartheta \in \euE_{\mathrm{u}}(q)$ as in (b). Then $q_{\fix}(x) = q_{\fix}(\vartheta(x))$ by Proposition \ref{charergodicity} and thus also 
	        \begin{align*}
	            q_{\fix}(l_1) = q_{\fix}(q(x)) = q_{\fix}(q(\vartheta(x))) = q_{\fix}(r(\vartheta)) = q_{\fix}(l_2).
	       \end{align*}
	   
	   For (ii) observe that if $x,y \in K$ with $q(x) = q(y)$, then also $q_{\fix}(x) = q_{\fix}(y)$ by topological ergodicity of $q$. Hence, Proposition \ref{charergodicity} yields the claim.
	\end{proof}

\subsection{Relatively invariant measures}\label{RelInvMeas}
We have already discussed relative measures in Subsection \ref{secrelmeas}. We now recall the dynamical version of this concept (cf. \cite{Glas1975}, \cite[Subsection V.3.19]{deVr1993} and \cite[Section 5]{EdKr2021}).

	\begin{definition}
	    Let $q \colon (K;\varphi) \rightarrow (L;\psi)$ be an extension of topological dynamical systems. A relative measure $\nu \in \mathrm{RM}(q)$ is a \emph{relatively invariant measure}\footnote{In our terminology, \enquote{equivariant relative measure} would be more fitting, but the term \enquote{relatively invariant measure} is standard in topological dynamics.} for $q$ if it is equivariant, i.e., if $(\varphi_t)_*\nu(l) = \nu(\psi_t(l))$ for every $l \in L$ and every $t \in G$.
	    We denote the \emph{space of all relatively invariant measures} of $q$ by $\mathrm{RIM}(q)$.
	\end{definition}
	\begin{remark}
        For open pseudoisometric extensions between topologically ergodic systems there is always a unique relatively invariant measure and this is fully supported (see \cite[Theorem 7.3]{EdKr2021}). We will obtain the same statement for open topologically ergodic extensions with relative discrete spectrum over Stonean spaces in Corollary \ref{col-uniquerel} below.
    \end{remark}

    The following is a dynamical version of Proposition \ref{disintegration}. Here, for a topological dynamical system $(K;\varphi)$, we denote by $\mathrm{P}^\varphi(K) \subset \mathrm{P}(K)$ the closed subspace of measures $\mu \in \mathrm{P}(K)$ which are invariant with respect to $\varphi$, i.e., which satisfy $(\varphi_t)_*\mu = \mu$ for every $t \in G$.
    	\begin{proposition}\label{dynamicaldisint}
	        Let $q \colon (K;\varphi) \rightarrow (L;\psi)$ be an extension of topological dynamical systems and $\mu_L \in \mathrm{P}^\psi(L)$ a fully supported invariant measure. The canonical map
				\begin{align*}
					\int_L \mathrm{d}\mu_L \colon \mathrm{RIM}(q) \rightarrow \{\mu_K \in \mathrm{P}^{\varphi}(K)\mid q_*\mu_K = \mu_L\}, \quad \nu \mapsto \int_L \nu\,\mathrm{d}\mu_L
				\end{align*}
			with $(\int_L \nu\,\mathrm{d}\mu_L)(f) \coloneqq \int_L  \nu_l(f) \, \mathrm{d}\mu_L(l)$ for $f \in \mathrm{C}(K)$ and $\nu \in \mathrm{RIM}(q)$ is
			\begin{enumerate}[(i)]
				\item injective, and
				\item bijective if $\mu_L$ is normal.
			\end{enumerate}
	\end{proposition}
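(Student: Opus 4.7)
The plan is to bootstrap from the already-proved non-dynamical disintegration result (Proposition \ref{disintegration}), adding the equivariance compatibility as an extra check on top of the bijection established there. The map $\int_L \mathrm{d}\mu_L$ factors through the larger map $\mathrm{RM}(q) \to \{\mu_K \in \mathrm{P}(K) \mid q_*\mu_K = \mu_L\}$ from Proposition \ref{disintegration}, so the task reduces to (a) verifying that the image of a relatively invariant measure is actually $\varphi$-invariant, and (b) verifying that a relative measure whose integral is $\varphi$-invariant is itself relatively invariant.

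First I would verify well-definedness: for $\nu \in \mathrm{RIM}(q)$ and $t \in G$, compute
\begin{align*}
\left(\int_L \nu \,\mathrm{d}\mu_L\right)(f \circ \varphi_t) &= \int_L \nu_l(f \circ \varphi_t)\,\mathrm{d}\mu_L(l) = \int_L ((\varphi_t)_*\nu_l)(f)\,\mathrm{d}\mu_L(l) \\
&= \int_L \nu_{\psi_t(l)}(f)\,\mathrm{d}\mu_L(l) = \int_L \nu_l(f)\,\mathrm{d}\mu_L(l),
\end{align*}
using relative invariance of $\nu$ in the third step and $\psi$-invariance of $\mu_L$ in the fourth. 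Injectivity (i) is then immediate: if two relatively invariant measures $\nu, \varrho \in \mathrm{RIM}(q)$ yield the same integrated measure, then viewed merely as elements of $\mathrm{RM}(q)$ they already coincide by Proposition \ref{disintegration}(\ref{disintegration1}).

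For surjectivity (ii) in the normal case, start with $\mu_K \in \mathrm{P}^\varphi(K)$ satisfying $q_*\mu_K = \mu_L$ and use Proposition \ref{disintegration}(\ref{disintegration2}) to obtain a (unique) $\nu \in \mathrm{RM}(q)$ with $\mu_K = \int_L \nu\,\mathrm{d}\mu_L$. Fix $t \in G$ and define $\nu^{(t)}_l \coloneqq (\varphi_{t^{-1}})_* \nu_{\psi_t(l)}$; a direct computation using the equivariance $q \circ \varphi_{t^{-1}} = \psi_{t^{-1}} \circ q$ shows $q_* \nu^{(t)}_l = \delta_l$, and continuity of $\nu^{(t)}$ follows from continuity of $\nu$, $\psi_t$, and the push-forward, so $\nu^{(t)} \in \mathrm{RM}(q)$. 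Using $\psi$-invariance of $\mu_L$ and $\varphi$-invariance of $\mu_K$, one sees that $\int_L \nu^{(t)}\,\mathrm{d}\mu_L = \mu_K = \int_L \nu\,\mathrm{d}\mu_L$, whence $\nu^{(t)} = \nu$ by Proposition \ref{disintegration}(\ref{disintegration1}); rewriting this equality gives $(\varphi_t)_*\nu_l = \nu_{\psi_t(l)}$ for every $l \in L$, so $\nu \in \mathrm{RIM}(q)$.

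The proof is essentially bookkeeping, and I do not anticipate a serious obstacle since all the heavy lifting—existence of the disintegration in the Stonean/normal setting—was done in Proposition \ref{disintegration}. The only point requiring mild care is to parse the equivariance condition correctly so that $\nu^{(t)}$ really lands in $\mathrm{RM}(q)$ (i.e.\ $q_*\nu^{(t)}_l = \delta_l$ rather than $\delta_{\psi_t(l)}$), which is why one conjugates by $\varphi_{t^{-1}}$ at the point $\psi_t(l)$ instead of simply pushing forward at $l$.
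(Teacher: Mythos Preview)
Your proposal is correct and essentially identical to the paper's proof: both deduce well-definedness and injectivity directly from Proposition~\ref{disintegration}, and for surjectivity both define the auxiliary relative measure $l \mapsto (\varphi_{t^{-1}})_*\nu_{\psi_t(l)}$ and invoke the injectivity of Proposition~\ref{disintegration}(\ref{disintegration1}) to conclude it coincides with $\nu$.
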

	\begin{proof}
	    It is clear that the map is well-defined, i.e., $\int_L \nu\,\mathrm{d}\mu_L \in \mathrm{P}^{\varphi}(K)$ for every $\nu \in \mathrm{RIM}(q)$. Moreover, it is injective by Proposition \ref{disintegration} (\ref{disintegration1}). Now, if $\mu_L$ is normal and $\mu_K \in \mathrm{P}^{\varphi}(K)$,  we find $\nu \in \mathrm{RM}(q)$ with $\int_L \nu\,\mathrm{d}\mu_L = \mu_K$ by Proposition \ref{disintegration} (\ref{disintegration2}). For $t \in G$ we observe that
	        \begin{align*}
	            \varrho \colon L \rightarrow \mathrm{P}_q(K), \quad l \mapsto (\varphi_{-t})_*\nu_{\psi_t(l)}
	        \end{align*}
	   is also a relative measure with
	   \begin{align*}
	        \left(\int_L \varrho\,\mathrm{d}\mu_L\right)(f) &= \int_L\nu_{\psi_t(l)}(f \circ \varphi_{-t}) \,\mathrm{d}\mu_L (l) = \int_L\nu_{l}(f \circ \varphi_{-t}) \,\mathrm{d}\mu_L (l) \\
	        &= \mu_K((T_\varphi)_t f) = \mu_K(f)
	   \end{align*}
	   for every $f \in \mathrm{C}(K)$. Again by Proposition \ref{disintegration} (\ref{disintegration1}) we obtain $\mu = \tau$ which shows $\mu \in \mathrm{RIM}(q)$.
	\end{proof}
 We now discuss the existence and uniqueness of relatively invariant measures for relative-skew products. Here and in the following we write $\lambda_{H/U}$ for the normalized Haar measure of the homogoneous space $H/U$ of a compact group $H$ with respect to a closed subgroup $U$.

\begin{theorem}\label{uniquerel}
    Let 
        \begin{enumerate}[(i)]
            \item $(L;\psi)$ be a topological dynamical system,
            \item $p \colon H \rightarrow L_{\fix}$ an open compact group bundle,
            \item $p|_U \colon U \rightarrow L_{\fix}$ a closed subgroup bundle of $p$, and
            \item $c\colon G \times L \rightarrow H$ a relative cocycle.
        \end{enumerate}
        Then
            \begin{align*}
                \delta_L \otimes \lambda_{H/U}\colon L \rightarrow \mathrm{P}_{q_c}(L \times_{L_{\fix}} H/U), \quad l \mapsto \delta_l \otimes \lambda_{H_{q_{\fix}(l)}/U_{q_{\fix}(l)}}
            \end{align*}
        with 
            \begin{align*}
                (\delta_l \otimes \lambda_{H_{q_{\fix}(l)}/U_{q_{\fix}(l)}})(f) = \int_{H_{q_{\fix}(l)}/U_{q_{\fix}(l)}} f(l,xU_{q_{\fix}(l)}) \, \mathrm{d}\lambda_{H_{q_{\fix}(l)}/U_{q_{\fix}(l)}}(xU_{q_{\fix}(l)})
            \end{align*}
        for each $f \in \mathrm{C}(L \times_{L_{\fix}} H/U)$ and $l \in L$ is a relatively invariant measure of $q_c$ with full support. If $p|_U$ is core-free and $c$ is ergodic, then $\delta_L \otimes \lambda_{H/U}$ is the only relatively invariant measure of $q_c$.
\end{theorem}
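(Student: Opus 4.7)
For existence, I would verify that the candidate section
$\mu_L\colon L \to \mathrm{P}_{q_c}(L \times_{L_{\fix}} H/U)$, $l \mapsto \delta_l \otimes \lambda_{H_{q_\fix(l)}/U_{q_\fix(l)}}$, is a fully supported, equivariant continuous section of $q_{c*}$. Continuity reduces, via the isometric isomorphism of Example \ref{extensionbundle2} and Lemma \ref{convlemma}, to continuity of the map $m \mapsto \lambda_{H_m/U_m}$ as a section of $p_{/U*}\colon \mathrm{P}_{p_{/U}}(H/U) \to L_{\fix}$; this in turn follows from continuity of $m \mapsto \lambda_{H_m}$ (Proposition \ref{Renault}, applicable because $p$ is open) composed with the weak${}^*$-continuous push-forward along the continuous fiberwise quotient $H \to H/U$. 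Full support of $\mu_L$ is immediate from full support of each Haar measure on the compact homogeneous space $H_m/U_m$. Equivariance is a direct calculation: since $q_\fix \circ \psi_t = q_\fix$ and $c(t,l) \in H_{q_\fix(l)}$,
\[
(\varphi_c)_{t*}\!\left(\delta_l \otimes \lambda_{H_{q_\fix(l)}/U_{q_\fix(l)}}\right) = \delta_{\psi_t(l)} \otimes \bigl(c(t,l)_*\lambda_{H_{q_\fix(l)}/U_{q_\fix(l)}}\bigr) = \delta_{\psi_t(l)} \otimes \lambda_{H_{q_\fix(\psi_t(l))}/U_{q_\fix(\psi_t(l))}},
\]
using left-invariance of the Haar measure on the compact homogeneous space.

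For uniqueness under the additional core-free and ergodicity hypotheses, my strategy is to extend equivariance from the acting group $G$ to the entire uniform enveloping semigroupoid and then exploit the explicit description of $\euE_{\mathrm{u}}(q_c)$ supplied by Proposition \ref{uniformcomp}. Let $\nu \in \mathrm{RIM}(q_c)$. A joint-continuity argument---a relative version of Lemma \ref{convlemma} for the action $(\vartheta, \mu) \mapsto \vartheta_*\mu$ of $\mathrm{C}_{q_c}^{q_c}(\cdot,\cdot)$ on $\mathrm{P}_{q_c}(\cdot)$---shows that
\[
\euT \coloneqq \{\vartheta \in \euE_{\mathrm{u}}(q_c) \mid \vartheta_*\nu_{s(\vartheta)} = \nu_{r(\vartheta)}\}
\]
is a closed subsemigroupoid of $\euE_{\mathrm{u}}(q_c)$. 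It contains $\euS(q_c)$ by the relative invariance of $\nu$, so by the very definition of $\euE_{\mathrm{u}}(q_c)$ (the smallest closed subsemigroupoid containing $\euS(q_c)$) we conclude $\euT = \euE_{\mathrm{u}}(q_c)$.

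Now Proposition \ref{uniformcomp} identifies $\euE_{\mathrm{u}}(q_c)$ with $(L \times_{L_\fix} L) \times_{L_\fix} H$ via $(l_1, l_2, x) \mapsto \vartheta_{l_1,l_2,x}$. In particular, for every $l \in L$ and every $x \in H_{q_\fix(l)}$ the self-map $\vartheta_{l,l,x}(l, yU) = (l, xyU)$ lies in $\euE_{\mathrm{u}}(q_c)$, and the preceding step gives $\vartheta_{l,l,x*}\nu_l = \nu_l$. Identifying $\nu_l$ with a probability measure on the compact homogeneous space $H_{q_\fix(l)}/U_{q_\fix(l)}$, this asserts invariance of $\nu_l$ under the full left action of the compact group $H_{q_\fix(l)}$. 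Uniqueness of the normalized left-invariant probability measure on a compact homogeneous space therefore forces $\nu_l = \delta_l \otimes \lambda_{H_{q_\fix(l)}/U_{q_\fix(l)}}$ for every $l \in L$, yielding $\nu = \delta_L \otimes \lambda_{H/U}$.

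The main obstacle is the joint-continuity statement used to propagate equivariance of $\nu$ from $\euS(q_c)$ to all of $\euE_{\mathrm{u}}(q_c)$: while morally a straightforward relative version of Lemma \ref{convlemma}, it needs careful verification within the bundle apparatus of Section \ref{topb1}, since the semigroupoid generated by $\euS(q_c)$ need not be the full uniform enveloping semigroupoid and so one genuinely has to close up under the topology.
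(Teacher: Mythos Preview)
Your proposal is correct and follows essentially the same route as the paper. For existence, the paper reduces continuity via the Stone--Weierstrass theorem to elementary tensors $g\otimes h$ and then invokes continuity of the Haar system (Proposition \ref{Renault}); your reduction via the push-forward along $H\to H/U$ is equivalent. For uniqueness, the paper forms exactly your set $\euT$ (called $\euS$ there), invokes Lemma \ref{convlemma} to see it is closed, and then uses Proposition \ref{uniformcomp} just as you do to obtain left-invariance of each $\nu_l$ under the full fiber group and conclude by uniqueness of Haar measure; the joint-continuity obstacle you flag is handled in the paper by Lemma \ref{convlemma} together with a reference to \cite[Lemma 5.13]{EdKr2021}.
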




\begin{proof}
     To see that $\delta_L \otimes \lambda_{H/U}$ is continuous, it suffices, by the Stone-Weierstrass theorem, to prove continuity of the functions
        \begin{align*}
            L \rightarrow \C, \quad l \mapsto (\delta_l \otimes \lambda_{H_{q_{\fix}(l)}/U_{q_{\fix}(l)}})(g \otimes h)
        \end{align*}
    with $g \in \mathrm{C}(L)$ and $h \in \mathrm{C}(H/U)$, where $(g \otimes h)(l,xU) \coloneqq g(l) \cdot h(xU)$ for all $(l,xU) \in L \times_{L_{\fix}} H/U$. But this is a direct consequence of the continuity of the Haar system (see Proposition \ref{Renault}). It is clear that $\delta_L \otimes \lambda_{H/U}$ is equivariant, hence a relatively invariant measure, and that it has full support. 

    Now assume that $p|_U$ is core-free and $c$ is ergodic, and take any relatively invariant measure $\nu \in \mathrm{RIM}(q_c)$. The set
        \begin{align*}
            \euS \coloneqq \{\vartheta \in \euE_{\mathrm{u}}(q)\mid \vartheta_*\nu_{s(\vartheta)} = \nu_{r(\vartheta)}\}
        \end{align*}
    is a closed subsemigroupoid of $\euE_{\mathrm{u}}(q)$ containing $\euS(q)$ by Corollary \ref{convlemma}, hence $\euS = \euE_{\mathrm{u}}(q)$ (see also \cite[Lemma 5.13]{EdKr2021}). With the notation of Proposition \ref{uniformcomp}, we obtain in particular for each $l \in L$ that $(\vartheta_{l,l,x})_{*}\nu_{l} = \nu_l$ in $\mathrm{P}(\{l\} \times H_{q_{\fix}(l)}/U_{q_{\fix}(l)})$ for every $x \in H_{q_{\fix}(l)}$ which implies $\nu_l = \delta_l \otimes \lambda_{H_m/U_m}$ by uniqueness of the Haar measure.
\end{proof}

\subsection{A topological Mackey--Zimmer type representation theorem}\label{topmz}
We are now in the position to prove a Mackey--Zimmer type representation result for topologically ergodic, open extensions with relative discrete spectrum over a Stonean space.

\begin{theorem}\label{reptheorem1}
    Let $q \colon (K;\varphi) \rightarrow (L;\psi)$ be an open, topologically ergodic extension with relative discrete spectrum over a Stonean space $L$. Then there is 
        \begin{enumerate}[(i)]
            \item an open compact group bundle $p \colon H \rightarrow L_{\mathrm{fix}}$,
            \item a core-free closed subgroup bundle $p|_U \colon U \rightarrow L_{\fix}$, and
            \item an ergodic relative cocycle $c \colon G \times L \rightarrow H$,
        \end{enumerate}
        such that $q$ is isomorphic to the induced relative skew-product extension $q_c \colon (L \times_{\fix} H/U;\varphi_c) \rightarrow (L;\psi)$.
\end{theorem}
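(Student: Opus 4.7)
The plan is to reconstruct $(H, U, c)$ from the uniform enveloping semigroupoid $\euE_{\mathrm{u}}(q)$, exploiting the fact that every continuous surjection onto the Stonean spaces $L_{\fix}$ or $L$ admits a continuous section by Gleason's theorem. Since $q$ has relative discrete spectrum, $\euE_{\mathrm{u}}(q)$ is a compact groupoid, and topological ergodicity together with Corollary~\ref{charergodicity2}(i) identifies its orbits on $L$ with the fibers of $q_{\fix}$. Using that $L_{\fix}$ is Stonean by Lemma~\ref{lemstonean}, three successive applications of Gleason's theorem furnish continuous sections
\[
\sigma \colon L_{\fix} \to L, \qquad \tau \colon L_{\fix} \to K, \qquad \alpha \colon L \to \euE_{\mathrm{u}}(q)
\]
satisfying $q_{\fix} \circ \sigma = \mathrm{id}_{L_{\fix}}$, $q \circ \tau = \sigma$, and $s(\alpha(l)) = \sigma(q_{\fix}(l))$, $r(\alpha(l)) = l$. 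For the third section, apply Gleason to the continuous surjection $r \colon \{\vartheta \in \euE_{\mathrm{u}}(q) \colon s(\vartheta) = \sigma(q_{\fix}(r(\vartheta)))\} \to L$, which is surjective by Corollary~\ref{charergodicity2}(i).

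With these selections in hand, set
\[
H \coloneqq \{\vartheta \in \euE_{\mathrm{u}}(q) \colon s(\vartheta) = r(\vartheta) \in \sigma(L_{\fix})\}, \quad p(\vartheta) \coloneqq q_{\fix}(s(\vartheta)),
\]
so that $H_m$ is the isotropy of $\euE_{\mathrm{u}}(q)$ at $\sigma(m)$, together with the closed subgroup bundle
\[
U \coloneqq \{\vartheta \in H \colon \vartheta(\tau(p(\vartheta))) = \tau(p(\vartheta))\}.
\]
Corollary~\ref{charergodicity2}(ii) ensures that $H_m$ acts transitively and, since $\euE_{\mathrm{u}}(q) \subset \mathrm{C}_q^q(K,K)$, effectively on $K_{\sigma(m)}$, so $U_m$ is core-free in $H_m$. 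Define the cocycle
\[
c \colon G \times L \to H, \qquad c(t,l) \coloneqq \alpha(\psi_t(l))^{-1} \circ \varphi_t|_{K_l} \circ \alpha(l),
\]
which takes values in $H_{q_{\fix}(l)}$ because $q_{\fix} \circ \psi_t = q_{\fix}$; the cocycle identity and continuity of $c$ are direct.

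The candidate isomorphism
\[
\Phi \colon L \times_{L_{\fix}} H/U \to K, \qquad \Phi(l, xU_{q_{\fix}(l)}) \coloneqq \alpha(l)\bigl(x(\tau(q_{\fix}(l)))\bigr),
\]
is well-defined on cosets and bijective by Corollary~\ref{charergodicity2}(ii), and being a continuous bijection between compact Hausdorff spaces, it is a homeomorphism. A short computation using the definition of $c$ yields $\Phi \circ (\varphi_c)_t = \varphi_t \circ \Phi$ and $q \circ \Phi = q_c$. Ergodicity of $c$ is then checked by noting that the homomorphism
\[
\Psi \colon (L \times_{L_{\fix}} L) \times_{L_{\fix}} H \to \euE_{\mathrm{u}}(q), \quad (l_1, l_2, x) \mapsto \alpha(l_2) \circ x \circ \alpha(l_1)^{-1},
\]
is a continuous bijection of compact groupoids sending $(l, \psi_t(l), c(t,l))$ to $\varphi_t|_{K_l}$; since $\{\varphi_t|_{K_l}\}$ generates $\euE_{\mathrm{u}}(q)$ as a closed subsemigroupoid by definition, the triples $(l, \psi_t(l), c(t,l))$ likewise generate $(L \times_{L_{\fix}} L) \times_{L_{\fix}} H$.

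The main technical obstacle is openness of $p \colon H \to L_{\fix}$. By Renault's criterion (Proposition~\ref{Renault}) this is equivalent to continuity of the Haar bundle $m \mapsto \lambda_{H_m}$, and in turn to the approximation property of Remark~\ref{openchar}: for a net $m_\beta \to m$ and $x \in H_m$, one must produce, along a subnet, $x_\beta \in H_{m_\beta}$ with $x_\beta \to x$. I expect to extract this from the openness of $q$ and $q_{\fix}$, the continuity of the selection $\sigma$, and the density of $\euS(q)$ in $\euE_{\mathrm{u}}(q)$, by first approximating $x$ by elements $\varphi_{t_\gamma}|_{K_{l_\gamma}}$ and then conjugating by $\alpha$ to stay inside the isotropy bundle above $\sigma(L_{\fix})$.
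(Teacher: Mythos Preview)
Your construction of $H$, $U$, $c$, and $\Phi$ is essentially identical to the paper's (with the roles of the symbols $\sigma$ and $\tau$ swapped), and your ergodicity argument via the groupoid isomorphism $\Psi$ is a clean repackaging of the paper's computation with the set $\euT$. So far, so good.

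The genuine gap is the openness of $p \colon H \to L_{\fix}$. Your sketch (``approximate $x$ by $\varphi_{t_\gamma}|_{K_{l_\gamma}}$, then conjugate by $\alpha$'') does not close: first, $\euS(q)$ need not be dense in $\euE_{\mathrm{u}}(q)$ (cf.\ the remark following Definition~\ref{defuniform}); second, and more seriously, even granting such an approximation, conjugating by $\alpha$ gives elements whose source and range lie in $\sigma(L_{\fix})$, but there is no mechanism forcing them to lie in the \emph{specific} fibers $H_{m_\beta}$ along a prescribed net $m_\beta \to m$. The approximation property of Remark~\ref{openchar} demands exactly this, and nothing in your sketch supplies it.

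The paper avoids this obstacle entirely by proving openness \emph{after} establishing ergodicity of $c$, via Lemma~\ref{continuousHaar}: one applies Gleason's theorem a fourth time to the compact set of fiber measures on $H$ that are invariant under all $c(t,l)$, obtaining a continuous section $m \mapsto \nu_m$; ergodicity of $c$ then forces $\nu_m$ to be left-$H_m$-invariant, hence equal to $\lambda_{H_m}$, and Proposition~\ref{Renault} gives openness. This is an indirect but robust argument, and it makes essential use of the cocycle's ergodicity---which is why the paper establishes ergodicity first. Your ordering (openness before ergodicity) is therefore not just incomplete but likely unworkable without a substantially different idea.
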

In view of Theorem \ref{uniquerel}, we obtain the following consequence of Theorem \ref{reptheorem1}.

\begin{corollary}\label{col-uniquerel}
    Every open topologically ergodic extension $q \colon (K;\varphi) \rightarrow (L;\psi)$ with relative discrete spectrum over a Stonean space $L$ has a unique relatively invariant measure and this is fully supported.
\end{corollary}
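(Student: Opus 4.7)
The corollary is an immediate synthesis of the two preceding results, Theorem~\ref{reptheorem1} and Theorem~\ref{uniquerel}: the structural representation theorem reduces the question to a relative skew-product, where uniqueness and full support of the relatively invariant measure have already been established. My approach is therefore to pull back the relative Haar measure of a relative skew-product along the representing isomorphism of extensions, taking care that the sharp form of Theorem~\ref{reptheorem1} (core-freeness of $U$ and ergodicity of $c$) is used, since this is exactly what guarantees uniqueness in Theorem~\ref{uniquerel}.

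First, apply Theorem~\ref{reptheorem1} to obtain an isomorphism of extensions $\Phi\colon q_c\to q$, where $q_c\colon (L\times_{L_{\fix}}H/U;\varphi_c)\to (L;\psi)$ is a relative skew-product induced by an open compact group bundle $p\colon H\to L_{\fix}$, a \emph{core-free} closed subgroup bundle $p|_U\colon U\to L_{\fix}$, and an \emph{ergodic} relative cocycle $c\colon G\times L\to H$. Second, apply Theorem~\ref{uniquerel} to conclude that $q_c$ admits $\nu:=\delta_L\otimes\lambda_{H/U}$ as its unique relatively invariant measure and that $\nu$ has full support. Third, verify that the isomorphism of extensions induces a bijection between relatively invariant measures that preserves full support: since $\Phi$ is a fiberwise homeomorphism over $L$ intertwining the $G$-actions, the assignment $l\mapsto\Phi_{*}\nu_l$ is a fully supported relatively invariant measure of $q$; conversely, any relatively invariant measure of $q$ pulls back along $\Phi^{-1}$ to a relatively invariant measure of $q_c$, which by uniqueness must coincide with $\nu$. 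Continuity of the assignment $l\mapsto\Phi_{*}\nu_l$ into $\mathrm{P}_q(K)$ is immediate from the continuity of $\nu$ and of the fiberwise-continuous map induced by $\Phi$, and full support transfers because $\Phi$ is a homeomorphism.

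The main obstacle is entirely located upstream, in the proof of Theorem~\ref{reptheorem1}. The nontrivial point is not the deduction of uniqueness from Theorem~\ref{uniquerel}, but rather ensuring that the representing relative skew-product comes with a core-free subgroup bundle and an ergodic cocycle: without these two properties, Theorem~\ref{uniquerel} only furnishes existence of a fully supported relatively invariant measure (the relative Haar measure), and the uniqueness half of the corollary would fail. Once Theorem~\ref{reptheorem1} is in hand in its sharp form, the present deduction is a routine bookkeeping exercise.
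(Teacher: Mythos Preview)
Your proposal is correct and follows essentially the same approach as the paper: the corollary is stated as an immediate consequence of Theorem~\ref{reptheorem1} (yielding a skew-product representation with core-free $U$ and ergodic $c$) combined with Theorem~\ref{uniquerel} (which under these hypotheses gives uniqueness and full support of the relatively invariant measure). Your added verification that an isomorphism of extensions induces a bijection on relatively invariant measures preserving full support is the only step the paper leaves implicit, and it is indeed routine.
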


If we assume that the systems are topologically ergodic (instead of the extension), then the representation result simplifies considerably.
\begin{corollary}\label{corold}
    Let $q \colon (K;\varphi) \rightarrow (L;\psi)$ be an open extension with relative discrete spectrum between topologically ergodic systems with $L$ Stonean. Then there is 
        \begin{enumerate}[(i)]
            \item a compact group $H$,
            \item a core-free closed subgroup $U \subset H$, and
            \item an ergodic cocycle $c \colon G \times L \rightarrow H$,
        \end{enumerate}
        such that $q$ is isomorphic to the induced skew-product extension $q_c \colon (L \times H/U;\varphi_c) \rightarrow (L;\psi)$.
\end{corollary}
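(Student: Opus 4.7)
The plan is to derive this corollary as an immediate specialization of Theorem \ref{reptheorem1} to the case where the base $L_{\fix}$ is a single point. First I would verify that the extension $q$ satisfies the hypotheses of Theorem \ref{reptheorem1}: openness and relative discrete spectrum are assumed, and topological ergodicity of the extension follows because both $(K;\varphi)$ and $(L;\psi)$ are topologically ergodic, so $\fix(T_\psi) = \C \cdot \mathbbm{1}_L$ and $\fix(T_\varphi) = \C \cdot \mathbbm{1}_K$, and since the unital $*$-homomorphism $J_q$ sends $\mathbbm{1}_L$ to $\mathbbm{1}_K$, we have $J_q(\fix(T_\psi)) = \fix(T_\varphi)$.

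Applying Theorem \ref{reptheorem1} then yields an open compact group bundle $p \colon H \rightarrow L_{\fix}$, a core-free closed subgroup bundle $p|_U \colon U \rightarrow L_{\fix}$, an ergodic relative cocycle $c \colon G \times L \rightarrow H$, and an isomorphism of extensions between $q$ and $q_c \colon (L \times_{L_{\fix}} H/U;\varphi_c) \rightarrow (L;\psi)$. The decisive observation is that topological ergodicity of $(L;\psi)$ forces $L_{\fix}$ to be a singleton $\{\ast\}$. Under this identification, each bundle-theoretic datum collapses to its classical analogue: the compact group bundle $p \colon H \rightarrow \{\ast\}$ is simply a compact group $H$; the core-free subgroup bundle $p|_U$ is a core-free closed subgroup $U \subset H$; and the fibration condition $p(c(t,l)) = q_{\fix}(l)$ in Definition \ref{relativecoc} is vacuous, so that $c \colon G \times L \rightarrow H$ satisfies nothing more than the ordinary cocycle identity, i.e., it is a cocycle in the sense of Example \ref{skewprod}.

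Under the same identification, the fiber product $L \times_{L_{\fix}} H/U$ becomes the Cartesian product $L \times H/U$, and the formula $(\varphi_c)_t(l,xU) = (\psi_t(l),c(t,l)xU)$ coincides with the dynamics of the classical skew-product, so $q_c$ is precisely a skew-product extension in the sense of Example \ref{skewprod}. Similarly, $(L \times_{L_{\fix}} L) \times_{L_{\fix}} H$ becomes $L \times L \times H$, so ergodicity of $c$ as a relative cocycle reduces to the natural ergodicity condition for an ordinary cocycle. The isomorphism of extensions furnished by Theorem \ref{reptheorem1} then directly produces the asserted representation. There is no genuine obstacle at this stage; all substantive work has been absorbed into Theorem \ref{reptheorem1}, and the corollary is obtained purely by substituting $L_{\fix} = \{\ast\}$ and reading off the resulting simplifications.
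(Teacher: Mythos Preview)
Your proposal is correct and matches the paper's intended approach: the corollary is positioned immediately after Theorem \ref{reptheorem1} as the specialization to the case where $(L;\psi)$ is topologically ergodic, hence $L_{\fix}$ is a singleton, and all the bundle data collapse exactly as you describe. The paper does also give, a bit further on, a direct sketch of the corollary using the uniform enveloping semigroupoid and Gleason's theorem explicitly, but this is presented as a warm-up for the proof of Theorem \ref{reptheorem1} rather than as the official argument for the corollary.
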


An important ingredient of the proof of Theorem \ref{reptheorem1} is the following classical result of Gleason (see \cite{gleason}).
\begin{theorem}\label{gleason}
    Every continuous surjection from a compact space to a Stonean space has a continuous section.
\end{theorem}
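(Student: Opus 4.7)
The plan is to carry out Gleason's classical projectivity argument: use Zorn's lemma to extract a minimal closed subset $F \subseteq X$ still surjecting onto $Y$, and then exploit the extremal disconnectedness of $Y$ (equivalently, that the closure of every open set is clopen) to force $q|_F$ to be injective. Once injectivity is in hand, $q|_F \colon F \to Y$ becomes a continuous bijection between a compact and a Hausdorff space, hence a homeomorphism, and its inverse $s \coloneqq (q|_F)^{-1}$ is the required continuous section of $q$. For the Zorn step I would order the closed subsets $F \subset X$ with $q(F) = Y$ by reverse inclusion. Any descending chain $(F_\alpha)$ has the intersection $F_\ast = \bigcap_\alpha F_\alpha$ as an upper bound: for each $y \in Y$ the family $(F_\alpha \cap q^{-1}(y))_\alpha$ consists of compact sets with the finite intersection property, so $F_\ast \cap q^{-1}(y) \neq \emptyset$ and $q(F_\ast) = Y$.

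To prove $q|_F$ is injective, I would argue by contradiction: suppose there are distinct $x_1, x_2 \in F$ with $q(x_1) = q(x_2) = y_0$. Using regularity of compact Hausdorff spaces, choose open $U_1, U_2 \subseteq X$ with $x_i \in U_i$ and $\overline{U_1} \cap \overline{U_2} = \emptyset$, and set $V_i \coloneqq Y \setminus q(F \setminus U_i)$. Each $V_i$ is non-empty open by minimality of $F$, since $F \setminus U_i \subsetneq F$ is a proper closed subset. The sets $V_1, V_2$ are disjoint, because $(F \setminus U_1) \cup (F \setminus U_2) = F \setminus (U_1 \cap U_2) = F$ gives $q(F \setminus U_1) \cup q(F \setminus U_2) = Y$. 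Extremal disconnectedness makes $\overline{V_1}, \overline{V_2}$ disjoint clopen subsets of $Y$, and a compactness argument on convergent nets $y_\alpha \to y$ with $y_\alpha \in V_i$ and preimages $x_\alpha \in F \cap U_i$ shows that every $y \in \overline{V_i}$ still has some preimage in $F \cap \overline{U_i}$. Decomposing $Y$ into its three clopen pieces $\overline{V_1}$, $\overline{V_2}$, and $Y \setminus (\overline{V_1} \cup \overline{V_2})$, the closed subset
\[
    F' \coloneqq \bigcup_{i=1}^{2}\bigl(F \cap q^{-1}(\overline{V_i}) \cap \overline{U_i}\bigr) \cup \bigl(F \cap q^{-1}(Y \setminus (\overline{V_1} \cup \overline{V_2}))\bigr)
\]
then satisfies $q(F') = Y$ by the limiting argument just indicated. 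If $y_0 \in \overline{V_1}$, the separation $x_2 \notin \overline{U_1}$ together with $y_0 \notin \overline{V_2}$ excludes $x_2$ from every piece of $F'$, so $F' \subsetneq F$ contradicts minimality; the case $y_0 \in \overline{V_2}$ is symmetric.

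The main obstacle will be the residual case $y_0 \in Y \setminus (\overline{V_1} \cup \overline{V_2})$, in which both $x_i$ lie in the third piece of $F'$ and $F'$ need not be a proper subset of $F$. I plan to resolve this by using that a compact Hausdorff extremally disconnected space is zero-dimensional (the closure of any sufficiently small regular open neighborhood is clopen), so $y_0$ admits a clopen neighborhood $N$ disjoint from $\overline{V_1} \cup \overline{V_2}$. The restriction $F \cap q^{-1}(N) \to N$ onto the Stonean space $N$ is itself a minimal closed surjection, since any strictly smaller replacement $F_N' \subsetneq F \cap q^{-1}(N)$ with $q(F_N') = N$ would, combined with $F \cap q^{-1}(Y \setminus N)$, produce a strictly smaller closed subset of $F$ still surjecting onto $Y$, contradicting minimality. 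Iterating the three-piece construction inside $N$ either lands in one of the two successful cases or calls for a further clopen restriction around $y_0$, and arguing termination of this descent is the delicate heart of the proof, handled classically either by a transfinite-descent argument exploiting the complete Boolean algebra of clopens of $Y$, or by a careful simultaneous choice of $U_1, U_2$ and $N$ that forces $y_0$ into one of the clopen closures from the outset.
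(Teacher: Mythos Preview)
The paper does not actually prove this theorem; it merely states it and cites Gleason's original article \cite{gleason}. So there is no ``paper's own proof'' to compare against, and your task is really to supply the classical argument.

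Your overall strategy is correct and is exactly Gleason's: Zorn gives a minimal closed $F \subseteq X$ with $q(F)=Y$ (an \emph{irreducible} surjection), and extremal disconnectedness of $Y$ is then used to show $q|_F$ is injective, hence a homeomorphism. The Zorn step and the construction of the disjoint nonempty open sets $V_i = Y \setminus q(F\setminus U_i)$ with disjoint clopen closures are fine.

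The genuine gap is your ``residual case'' $y_0 \notin \overline{V_1}\cup\overline{V_2}$. Your proposed transfinite iteration is not how the argument is completed, and as written there is no termination argument. In fact the residual case never occurs, and one sees this by using minimality once more, but more sharply. Here is the missing step. For any open $U \subseteq F$ set $V \coloneqq Y\setminus q(F\setminus U)$; then $q^{-1}(V)\cap F \subseteq U$, so $V \subseteq q(U) \subseteq q(\overline{U})$ and hence $\overline{V}\subseteq q(\overline{U})$. Now consider the closed set
\[
F' \;=\; (F\setminus U)\;\cup\;\bigl(\overline{U}\cap q^{-1}(\overline{V})\bigr).
\]
Every $y\in Y\setminus\overline{V}\subseteq q(F\setminus U)$ has a preimage in $F\setminus U$, and every $y\in\overline{V}\subseteq q(\overline{U})$ has a preimage in $\overline{U}\cap q^{-1}(\overline{V})$; thus $q(F')=Y$, and minimality forces $F'=F$. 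In particular $U\subseteq F'$, and since $U\cap(F\setminus U)=\emptyset$ this gives $U\subseteq q^{-1}(\overline{V})$, i.e.\ $q(U)\subseteq\overline{V}$. Applying this to $U_1$ and $U_2$ yields $y_0=q(x_1)\in q(U_1)\subseteq\overline{V_1}$ and $y_0=q(x_2)\in q(U_2)\subseteq\overline{V_2}$, contradicting $\overline{V_1}\cap\overline{V_2}=\emptyset$. So the residual case is impossible and no iteration is needed.
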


As a first step towards the proof of Theorem \ref{reptheorem1}, we prove that under certain assumptions the existence of an ergodic relative cocycle already forces the corresponding compact group bundle to be open.
	
\begin{lemma}\label{continuousHaar}
    Consider a topological dynamical system $(L;\psi)$ and a compact group bundle $p \colon H \rightarrow L_{\fix}$. Assume that 
        \begin{enumerate}[(i)]
            \item there is an ergodic relative cocycle $c \colon G \times L \rightarrow H$,\label{continuousHaar1}
            \item $q_{\fix} \colon L \rightarrow L_{\fix}$ is open, and\label{continuousHaar2}
            \item $L_{\fix}$ is Stonean.\label{continuousHaar3}
        \end{enumerate}
    Then $p$ is open.
\end{lemma}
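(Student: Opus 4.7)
The plan is to apply Proposition \ref{Renault}, which reduces the openness of $p$ to the continuity of the Haar-measure map $m \mapsto \lambda_{H_m}$. By Remark \ref{openchar}, this is equivalent to the approximation property: for every net $m_\alpha \to m$ in $L_{\fix}$ and every $x \in H_m$, we must produce a subnet $(m_\beta)$ and lifts $x_\beta \in H_{m_\beta}$ with $x_\beta \to x$ in $H$.

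First, since $L_{\fix}$ is Stonean, Theorem \ref{gleason} yields a continuous section $\tau \colon L_{\fix} \to L$ of $q_{\fix}$, and we set $l \coloneqq \tau(m)$. Next, the cocycle identity $c(t_2 t_1, l') = c(t_2, \psi_{t_1}(l')) c(t_1, l')$ makes the set $A = \{(l', \psi_t(l'), c(t, l')) \mid l' \in L,\ t \in G\}$ automatically a subsemigroupoid of $(L \times_{L_{\fix}} L) \times_{L_{\fix}} H$: the composition of two such triples is again of the same form. Consequently, ergodicity of $c$ is precisely the statement $\overline{A} = (L \times_{L_{\fix}} L) \times_{L_{\fix}} H$, and applied at the point $(l, l, x)$ it yields a net $(l^\gamma, t_\gamma)$ in $L \times G$ with $l^\gamma \to l$, $\psi_{t_\gamma}(l^\gamma) \to l$, and $c(t_\gamma, l^\gamma) \to x$ in $H$.

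Given an open neighborhood $V$ of $x$ in $H$, the convergence $c(t_\gamma, l^\gamma) \to x$ ensures $c(t_\gamma, l^\gamma) \in V$ for large $\gamma$; by continuity of $c(t_\gamma, \cdot) \colon L \to H$, there is then an open neighborhood $O_\gamma$ of $l^\gamma$ in $L$ with $c(t_\gamma, O_\gamma) \subseteq V$. Openness of $q_{\fix}$ makes $q_{\fix}(O_\gamma)$ an open neighborhood of $q_{\fix}(l^\gamma)$ in $L_{\fix}$, and for every $m' \in q_{\fix}(O_\gamma)$ we may pick $l' \in O_\gamma$ with $q_{\fix}(l') = m'$ to obtain a lift $c(t_\gamma, l') \in V \cap H_{m'}$. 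This shows $q_{\fix}(O_\gamma) \subseteq p(V)$, so $p(V)$ contains an open neighborhood of every $q_{\fix}(l^\gamma)$.

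The main obstacle is to reconcile the two nets $q_{\fix}(l^\gamma) \to m$ produced by ergodicity and the given arbitrary net $m_\alpha \to m$: the open sets $q_{\fix}(O_\gamma)$ are centered at $q_{\fix}(l^\gamma)$ rather than at $m$, and it is not automatic that terms of $(m_\alpha)$ eventually enter them. Resolving this requires a diagonal argument in which we shrink $V$ around $x$ and select $\gamma$ and $\alpha(\gamma)$ jointly so that $m_{\alpha(\gamma)} \in q_{\fix}(O_\gamma)$; here the Stonean structure of $L_{\fix}$ enters essentially, via the section $\tau$ from Gleason and the fact that open sets admit clopen refinements, to transport the approximating base points $l^\gamma$ into the fibers $q_{\fix}^{-1}(m_{\alpha(\gamma)})$ while keeping the cocycle values inside $V$. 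Once the diagonal extraction is carried out, the lifts $x_{\alpha(\gamma)} \coloneqq c(t_\gamma, l'_\gamma) \in H_{m_{\alpha(\gamma)}}$ satisfy $x_{\alpha(\gamma)} \to x$, the approximation property of Remark \ref{openchar} is verified, and Proposition \ref{Renault} then delivers the continuity of $m \mapsto \lambda_{H_m}$, i.e.\ the openness of $p$.
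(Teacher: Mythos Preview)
Your proposal has a genuine gap at the very step you flag as the ``main obstacle.'' You correctly extract from ergodicity a net $(l^\gamma, t_\gamma)$ with $c(t_\gamma, l^\gamma) \to x$ and $l^\gamma \to l$, and you correctly observe that $p(V)$ contains open neighborhoods $q_{\fix}(O_\gamma)$ of the points $q_{\fix}(l^\gamma)$. But these open sets are centered at $q_{\fix}(l^\gamma)$, not at $m$, and the family $\{c(t_\gamma, \cdot)\}_\gamma$ is not equicontinuous, so the $O_\gamma$ may shrink faster than $q_{\fix}(l^\gamma)$ approaches $m$. Your ``diagonal argument'' is only a hope: you invoke the Gleason section $\tau$ and clopen refinements, but never explain how they would force the arbitrary net $(m_\alpha)$ to meet the sets $q_{\fix}(O_\gamma)$. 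I do not see how to complete this without an additional idea; in particular, $\tau$ is introduced and then never used.

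The paper's argument avoids this difficulty entirely by working with measures instead of points. One considers the closed set $C \subset \mathrm{P}_p(H)$ of fiber measures invariant under all $c(t,l)$ with $q_{\fix}(l) = p_*(\mu)$; closedness uses openness of $q_{\fix}$ and Lemma~\ref{convlemma}. Each Haar measure $\lambda_{H_m}$ lies in $C$, so $p_*|_C \colon C \to L_{\fix}$ is a continuous surjection from a compact space onto a Stonean space, and Gleason's theorem yields a continuous section $\nu \colon L_{\fix} \to C$. Ergodicity of $c$ (via the closed subsemigroupoid $\{(l_1,l_2,x) : x_*\nu_{q_{\fix}(l_1)} = \nu_{q_{\fix}(l_1)}\}$) then forces $\nu_m$ to be $H_m$-invariant, hence $\nu_m = \lambda_{H_m}$, and Proposition~\ref{Renault} gives openness of $p$. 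The point is that Gleason's theorem is applied to the surjection $p_*|_C$ rather than to $q_{\fix}$, and this is what makes the Stonean hypothesis do real work.
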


\begin{proof}
    We prove that the Haar measures $\lambda_{H_m}$ for $m \in L_{\fix}$ define a relative measure for $p$. By Proposition \ref{Renault} this is equivalent to $p$ being open.
    
    For a fiber measure $\mu \in \mathrm{P}_p(H)$ and $x \in H_{p_*(\mu)}$ we write $x_*\mu$ for the push-forward of $\mu$ with respect to the left rotation $H_{p_*(\mu)} \rightarrow H_{p_*(\mu)}, \, y \mapsto xy$. We first show that 
        \begin{align*}
            C \coloneqq \{\mu \in \mathrm{P}_p(H)\mid c(t,l)_*\mu = \mu \textrm{ for all } (t,l) \in G \times L \textrm{ with } q_{\fix}(l) = p_*(\mu)\}.
        \end{align*}
   is a closed subset of $\mathrm{P}_p(H)$. So pick a net $(\mu_{\alpha})_{\alpha \in A}$ in $C$ converging to $\mu \in \mathrm{P}_p(H)$. We abbreviate $m_\alpha \coloneqq p_*(\mu_\alpha) \in L_{\fix}$ for $\alpha \in A$ and $m \coloneqq p_*(\mu) \in L_{\fix}$. Let $t \in G$ and $l \in L$ with $q_{\fix}(l) = m$. We have to show that $(c(t,l)_*\mu)(f) = \mu(f)$ for every $f \in \mathrm{C}(H)$.
   
   Since $q_{\fix}$ is open,  we may assume, by passing to a subnet, that there is a net $(l_\alpha)_{\alpha \in A}$ in $L$ with $q_{\fix}(l_\alpha) = m_\alpha$ for every $\alpha \in A$ and $\lim_{\alpha} l_\alpha = l$. Then $\lim_\alpha c(t,l_\alpha) = c(t,l)$ since $c$ is continuous. Now if $f \in \mathrm{C}(H)$,  consider the net $(g_\alpha)_{\alpha \in A}$ in $\mathrm{C}_p(H)$ defined by $g_\alpha(x) \coloneqq f(c(t,l_\alpha)x)$ for $x \in H_{m_\alpha}$ and $\alpha \in A$. Then $\lim_\alpha g_\alpha = g$ in $\mathrm{C}_p(H)$, where $g \in \mathrm{C}(H_{m})$ is given by $g(x) \coloneqq f(c(t,l)x)$ for $x\in H_m$. By Corollary \ref{convlemma}, 
        \begin{align*}
             (c(t,l)_*\mu)(f) &= \mu(g) = \lim_\alpha \mu_\alpha(g_\alpha) = \lim_{\alpha}(c(t_\alpha,l_\alpha)_*\mu_\alpha)(f) = \lim_{\alpha} \mu_\alpha(f)
             = \mu(f).
        \end{align*}
    
    We conclude that $C$ is indeed closed in $\mathrm{P}_p(H)$ and hence compact. The Haar measure $\lambda_{H_m}$ of $H_m$ is contained in $C$ for every $m \in L_{\fix}$. Hence the restriction $p_*|_{C} \colon C \rightarrow L_{\fix}$ is a continuous surjection. By Theorem \ref{gleason} we find a continuous section $\nu \colon L_{\fix} \rightarrow C, \, m \mapsto \nu_m$ of $p_*|_{C}$. In particular, $\nu$ is a relative measure. Applying Corollary \ref{convlemma} once again, the set
        \begin{align*}
            \euS \coloneqq \{(l_1,l_2,x) \in (L \times_{L_{\fix}} L) \times_{L_{\fix}} H\mid x_*\nu_{q_{\fix}(l_1)} = \nu_{q_{\fix}(l_1)}\}
        \end{align*}
    is a closed subsemigroupoid of $(L \times_{L_{\fix}} L) \times_{L_{\fix}} H$. Since $c$ is ergodic, we obtain $\euS = (L \times_{L_{\fix}} L) \times_{L_{\fix}} H$. But this shows $x_*\nu_m =\nu_m$ for every $x \in H_m$ and $m \in L_{\fix}$. Thus, $\nu_m = \lambda_{H_m}$ for every $m \in L_{\fix}$.
\end{proof}
Note that by Lemma \ref{lemstonean} assumptions (\ref{continuousHaar2}) and (\ref{continuousHaar3}) of Lemma \ref{continuousHaar} are automatically satisfied if $L$ itself is a Stonean space.\medskip\\
Before showing the general version of Theorem \ref{reptheorem1}, let us briefly sketch the proof in the case of topologically ergodic systems considered in Corollary \ref{corold}. So assume that $q \colon (K;\varphi) \rightarrow (L;\psi)$ is an open extension with relative discrete spectrum between topologically ergodic dynamical systems and assume that $L$ is a Stonean space. Then, if we choose any point $x_0 \in K$ and write $l_0 \coloneqq q(x)$, the compact group
    \begin{align*}
        H \coloneqq \{\vartheta\in \euE_{\mathrm{u}}(q)\mid s(\vartheta) = r(\vartheta) = l_0\}
    \end{align*}
acts transitively on the fiber $K_l$ by Corollary \ref{charergodicity2} (\ref{charergodicity22}), and hence gives rise to a homeomorphism $H/U \rightarrow K_{l_0}$, where $U$ is the stabilizer group at $x_0$ (which is core-free since the action is effective).

The uniform enveloping semigroupoid $\euE_{\mathrm{u}}(q)$ also allows us to \enquote{switch between fibers}: By  Corollary \ref{charergodicity2} (\ref{charergodicity21}) we find for every point $l \in L$ some $\vartheta \in \euE_{\mathrm{u}}(q)$ with $s(\vartheta) = l_0$ and $r(\vartheta) = l$, i.e., the restriction of the range map $r \colon \euE_{\mathrm{u}}(q) \rightarrow M$ to the closed subspace
    \begin{align*}
        C \coloneqq \{\vartheta \in \euE_{\mathrm{u}}(q)\mid s(\vartheta) =l_0\}
    \end{align*}
is surjective and therefore admits a continuous section $\varrho \colon L \rightarrow C$ by Theorem \ref{gleason}. We then obtain a homeomorphism
    \begin{align*}
        \Phi \colon L \times H/U \rightarrow K, \quad (l,\vartheta U) \mapsto \varrho(l)(\vartheta(x_0)).
    \end{align*}
By setting $c(t,l) \coloneqq \varrho(\psi_t(l))^{-1} \circ \varphi_t \circ \varrho(l) \in H$ for $(t,l) \in G \times L$ we obtain a cocycle $c \colon G \times L \rightarrow H$ and one can check that $\Phi$ then defines an isomorphism between the induced skew-product extension $q_c$ and $q$ and that $c$ is ergodic.\medskip\\
In the general situation of a topologically ergodic extension, we have to perform the constructions relative to the fixed factor. This will now be done in detail.   

\begin{proof}[Proof of Theorem \ref{reptheorem1}.]
    Using Theorem \ref{gleason} and Lemma \ref{lemstonean} we find continuous sections $\sigma \colon L \rightarrow K$ and $\tau \colon L_{\mathrm{fix}} \rightarrow L$ for $q \colon K \rightarrow L$ and $q_{\fix} \colon L \rightarrow L_{\fix}$, respectively. The set
		\begin{align*}
			C \coloneqq \{\vartheta \in \euE_{\mathrm{u}}(q)\mid s(\vartheta) = \tau(q_{\mathrm{fix}}(r(\vartheta)))\} \subset  \euE_{\mathrm{u}}(q)
		\end{align*}
	is closed in $\euE_{\mathrm{u}}(q)$, hence defines a compact subspace. Consider the restricted range map $r|_C \colon C \rightarrow L, \,  \vartheta \mapsto r(\vartheta)$
	and claim that this is surjective. By Corollary \ref{charergodicity2} (\ref{charergodicity21}), for every point $l \in L$, we find some $\vartheta \in \euE_{\mathrm{u}}(q)$ with $s(\vartheta) = \tau(q_{\fix}(l))$ and $r(\vartheta) = l$ such that $q_{\fix}(l) = q_{\fix}(\tau(q_{\fix}(l)))$. Hence, $\vartheta \in C$ with $r(\vartheta) = l$.
    Applying Theorem \ref{gleason} once more, we choose a continuous section $\varrho \colon L \rightarrow M$ for $r|_M \colon M \rightarrow L$.
	
	With the continuous sections $\sigma$, $\tau$ and $\varrho$ at hand, we now construct the desired objects.
	The closed subspace
		\begin{align*}
    		H \coloneqq \{\vartheta \in \euE_{\mathrm{u}}(q)\mid s(\vartheta) = r(\vartheta) \in \tau(L_{\fix})\}
		\end{align*}
	of $\euE_{\mathrm{u}}(q)$ defines a compact group bundle
	    \begin{align*}
	        p \coloneqq q_{\fix} \circ s|_H = q_{\fix} \circ r|_H \colon H \rightarrow L_{\mathrm{fix}}
	    \end{align*}
	over $L_{\fix}$. Moreover,
	    \begin{align*}
			c \colon G \times L \rightarrow H, \quad (t,l) \mapsto \varrho(\psi_t(l))^{-1} \circ \varphi_t \circ \varrho(l)
		\end{align*}
	is a relative cocycle over $(L;\psi)$. Now consider the continuous map
		\begin{align*}
			\Phi \colon L \times_{L_{\mathrm{fix}}} H \rightarrow K, \quad (l,\vartheta) \mapsto (\varrho(l) \circ \vartheta)(\sigma(\tau(q_{\mathrm{fix}}(l))).
		\end{align*}
		
	 Then $\Phi$ defines an morphism of topological dynamical systems as
	    \begin{align*}
	        \Phi(\psi_t(l),c(t,l)\vartheta) &= (\varrho(\psi_t(l)) \circ \varrho(\psi_t(l))^{-1} \circ \varphi_t \circ \varrho(l)\circ  \vartheta)(\sigma(\tau(q_{\mathrm{fix}}(\psi_t(l)))))\\
	        &= (\varphi_t \circ \varrho(l)\circ  \vartheta)(\sigma(\tau(q_{\mathrm{fix}}(l)))) = \varphi_t(\Phi(l,\vartheta))
	    \end{align*}
	for all $(l,\vartheta) \in L \times_{L_{\fix}} H$ and $t \in G$. 
	    
	To show that $\Phi$ is surjective, observe first that $H_m$ acts transitively on the fiber $K_{\tau(m)}$ by Corollary \ref{charergodicity2} (\ref{charergodicity22}) for every $m \in M$. Now take $x \in K$ and set $l \coloneqq q(x)$ and $m \coloneqq q_{\fix}(l)$. Then $\varrho(l)^{-1}(x), \sigma(\tau(m)) \in K_{\tau(m)}$ and we therefore find $\vartheta \in H_m$ with $\vartheta(\sigma(\tau(m))) = \varrho(l)^{-1}(x)$. This means
	    \begin{align*}
	        \Phi(l,\vartheta) = (\varrho(l) \circ \vartheta)(\sigma(\tau(m))) = x.
	    \end{align*}
	Hence, $\Phi$ is a surjection.
	    
    Let $U_m \coloneqq \{\vartheta \in H_m \mid \vartheta(\sigma(\tau(m))) = \sigma(\tau(m))\}$  be the stabilizer group of the group action $H_m \times K_{\tau(m)} \rightarrow K_{\tau(m)}, \, (\vartheta, x) \mapsto \vartheta(x)$ at $\sigma(\tau(m))$ for $m \in L_{\mathrm{fix}}$. Since the action is effective and transitive, the subgroup $U_m$ is core-free in $H_m$ for every $m \in L_{\fix}$. We conclude that the union $U \coloneqq \bigcup_{m \in L_{\fix}} U_m$ defines a core-free closed subgroup bundle $p|_U\colon U \rightarrow M$ of $p$.
        Moreover, for $(l_1,\vartheta_1)$, $(l_2,\vartheta_2) \in L \times_{L_{\fix}} H$ we have $\Phi(l_1,\vartheta_1) = \Phi(l_2,\vartheta_2)$ if and only if $l_1 = l_2$ and $\vartheta_2^{-1} \circ \vartheta_1 \in U_{q_{\fix}(l_1)} = U_{q_{\fix}(l_2)}$. Thus, $\Phi$ induces an isomorphism $(L \times_{L_{\fix}} H/U;\varphi_c) \rightarrow (K;\varphi)$ which then also defines an isomorphism between the extensions $q_c$ and $q$.
    	
    	To finish the proof we show that the relative cocycle $c$ is ergodic which, by Lemma \ref{lemstonean} and Proposition \ref{continuousHaar}, also implies that $p$ is open. Let $\euS$ be the smallest closed subsemigroupoid of $(L \times_{L_{\fix}} L) \times_{L_{\fix}} H$ containing the set $\{(l,\psi_t(l),c(t,l))\mid l \in L, t \in G\}$.
        Then
            \begin{align*}
                \euT \coloneqq \{\vartheta \in \euE_{\mathrm{u}}(q)\mid (s(\vartheta),r(\vartheta),\varrho(r(\vartheta))^{-1} \circ \vartheta \circ \varrho(s(\vartheta)) \in \euS\}
            \end{align*}
        contains $\euS(q)$ by definition of the relative cocycle $c$. Moreover, $\euT$ is a closed subsemigroupoid of $\euE_{\mathrm{u}}(q)$ by definition of $\euS$. We conclude that $\euT = \euE_{\mathrm{u}}(q)$. Now pick $(l_1,l_2,\gamma) \in (L \times_{L_{\fix}} L) \times_{L_{\fix}} H$. Then $\vartheta \coloneqq \varrho(l_1) \circ \gamma \circ \varrho(l_2)^{-1} \in \euE_{\mathrm{u}}(q) = \EuScript{T}$ and hence 
            \begin{align*}
                (l_1,l_2,\gamma) = (s(\vartheta),r(\vartheta),\varrho(r(\vartheta))^{-1} \circ \vartheta \circ \varrho(s(\vartheta)) \in \euS.
            \end{align*}
        Thus $\euS = (L \times_{L_{\fix}} L) \times_{L_{\fix}} H$ showing that $c$ is ergodic.
\end{proof}
\begin{remark}\label{gap}
    The main results of this section are not entirely satisfying: By Theorem \ref{reptheorem1}, every open topologically ergodic extension $q \colon (K;\varphi) \rightarrow (L;\psi)$ with relative discrete spectrum over a Stonean space as a skew-product extension induced by an open compact group bundle $p \colon H \rightarrow L_{\fix}$ and a closed subgroup bundle $p|_{U} \colon U \rightarrow L_{\fix}$. On the other hand, in Theorem \ref{reldiscsp}  we have only shown that relative skew-product extensions associated to \emph{open} closed subgroup bundles $p|_{U} \colon U \rightarrow L_{\fix}$ have relative discrete spectrum. So it is a natural question whether this openness assumption in Theorem \ref{reldiscsp} can be dropped. In fact, there are simple examples showing that openness of $p|_{U} \colon U \rightarrow L_{\fix}$ is not necessary for the skew-product extension to have relative discrete spectrum. However, in the measure-theoretic applications of the next section, this issue does not occur as the closed subgroup bundles in the representation theorem will automatically be open (see Theorem \ref{mz} below).
\end{remark}

\section{The Mackey--Zimmer representation theorem for ergodic extensions}\label{measure}
In this final section, we use the topological results of Section \ref{sectop} to show our version of the Mackey--Zimmer theorem. Again, $G$ will be a fixed group equipped with the discrete topology.

\subsection{Measure-preserving systems and their extensions.}\label{measpres}
We start by recalling some beasic concepts from ergodic theory. Classically, an \emph{invertible measure-preserving point-transformation} is a pair $(\uX;\varphi)$ consisting of a measurable and measure-preserving mapping $\varphi \colon X \rightarrow X$ on a probability space $\uX = (X,\Sigma_{\uX},\mu_{\uX})$ which is \emph{essentially invertible}, i.e., there is a measurable map $\psi \colon X \rightarrow X$ such that the identity $\psi \circ \varphi = \mathrm{id}_X = \varphi \circ \psi$ holds almost everywhere. We refer to \cite{glasner2015ergodic}, \cite{Einsiedler2011}, and \cite{EFHN} for a general introduction to these systems.

Every such transformation gives rise to a \emph{Koopman operator} $T_\varphi \in \mathscr{L}(\mathrm{L}^2(\uX))$ via $T_\varphi f \coloneqq f \circ \varphi$ for $f \in \mathrm{L}^2(\uX)$. The operator $T = T_\varphi$ is a Markov lattice isomorphism, i.e., $T$ is a unitary operator with $|Tf| = T|f|$ for every $f \in \mathrm{L}^2(\uX)$ and $T\mathbbm{1} = \mathbbm{1}$. Conversely, if $\uX$ is a standard probability space, then every Markov lattice isomorphism $T \in \mathscr{L}(\mathrm{L}^2(\uX))$ is in fact a Koopman operator of an invertible measure-preserving point-transformation, by a result going back to von Neumann (see \cite[Proposition 7.19 and Theorem 7.20]{EFHN}, see also \cite[Proposition 3.2]{jt19}). For a general probability space $\uX$, a Markov lattice isomorphism $T \in \mathscr{L}(\mathrm{L}^2(\uX))$ is still induced by a unique transformation of the probability algebra associated to $\uX$ (see \cite[Theorem 12.10]{EFHN}). Studying transformations of probability algebras (see, e.g., \cite{jamneshan2019fz,jt19,jt20}) or Markov lattice isomorphisms (see \cite[Section 12.3]{EFHN}, \cite{EHK2021}) yield two (equivalent) ways to approach ergodic theory on general probability spaces.\medskip\\
\textbf{Operator theoretic measure-preserving systems.} Here we take the operator theoretic path of \cite[Section 12.3]{EFHN} and call, for a fixed group $G$, a pair $(\uX;T)$ a \emph{measure-preserving system} (with respect to $G$) if $\uX$ is a probability space and $T \colon G \rightarrow \mathscr{L}(\mathrm{L}^2(\uX)), \, t \mapsto T_t$ is a representation of $G$ as Markov lattice isomorphisms on $\mathrm{L}^2(\uX)$. It is \emph{ergodic} if the fixed space
		\begin{align*}
			\fix(T) \coloneqq \{f \in \mathrm{L}^2(\uX)\mid T_t f = f \textrm{ for every } t \in G\}
		\end{align*}
	is one-dimensional.\medskip\\
An \emph{extension} $J \colon (\uY;S) \rightarrow (\uX;T)$ between measure-preserving systems is a \emph{Markov embedding} $J \in \mathscr{L}(\mathrm{L}^2(\uY),\mathrm{L}^2(\uX))$, i.e., $J$ is a linear isometry with $|Jf| = J|f|$ for every $f \in \mathrm{L}^2(\uY)$, and $J\mathbbm{1} = \mathbbm{1}$, such that the diagram 
    \[
		\xymatrix{
			\mathrm{L}^2(\uX) \ar[r]^{T_t}   &\mathrm{L}^2(\uX)\\
			\mathrm{L}^2(\uY) \ar[u]^{J}  \ar[r]^{S_t} & \mathrm{L}^2(\uY) \ar[u]^{J}
		}
	\]	
commutes for every $t \in G$. It is \emph{relatively ergodic} if $J(\fix(S)) = \fix(T)$. We say that $J$ is an \emph{isomorphism}, if $J$ is bijective (in which case also $J^{-1}$ defines an extension).\medskip\\
    We will need the concept of morphism between extensions in a more flexible version than discussed in the topological case (cf. Subsection \ref{sectopdyn}). Given extensions $J_i \colon (\uY_i;S_i) \rightarrow (\uX_i;T_i)$ between measure-preserving systems for $i=1,2$, a \emph{morphism} $(I^{\flat};I^{\sharp}) \colon J_1 \rightarrow J_2$ consists of two extensions $I^{\flat} \colon (\uY_1;S_1) \rightarrow (\uY_2;S_2)$ and $I^{\sharp} \colon (\uX_1;T_1) \rightarrow (\uX_2;T_2)$ such that the diagram 
    \[
		\xymatrix{
			(\uX_1;T_1) \ar[r]^{I^\sharp}   & (\uX_2;T_2)\\
			(\uY_1;S_1) \ar[u]^{J_1}  \ar[r]^{I^{\flat}} & (\uY_2;S_2) \ar[u]^{J_2}
		}
	\]	
    commutes. It is an \emph{isomorphism} if both $I^{\flat}$ and $I^{\sharp}$ are isomorphisms of measure-preserving systems.\medskip\\
    \textbf{Conditional $\mathrm{L}^2$-spaces.} For every extension $J \colon (\uY;S) \rightarrow (\uX;T)$ the adjoint $J^* \in \mathscr{L}(\mathrm{L}^2(\uX),\mathrm{L}^2(\uY))$ extends uniquely to a bounded linear operator $\mathbb{E}_{\uY}\colon \mathrm{L}^1(\uX) \rightarrow \mathrm{L}^1(\uY)$ which is called the corresponding \emph{conditional expectation operator}, see \cite[Chapter 13]{EFHN}. This allows to define the \emph{conditional $\mathrm{L}^2$-space}
	\begin{align*}
			\mathrm{L}^2(\uX|\uY) \coloneqq \{f \in \mathrm{L}^2(\uX) \mid \mathbb{E}_{\uY}|f|^2 \in \mathrm{L}^\infty(\uY)\} 
	\end{align*}
    with respect to $J$ (see \cite[Section 2.13]{Tao2009}, \cite[Section 7.2]{EHK2021} and \cite[Section 3]{jamneshan2019fz}). The space $\mathrm{L}^2(\uX|\uY)$ equipped with the multiplication
    \begin{align*}
        \mathrm{L}^\infty(\uY) \times 	\mathrm{L}^2(\uX|\uY)  \rightarrow 	\mathrm{L}^2(\uX|\uY), \quad (f,g) \mapsto Jf \cdot g 
    \end{align*}
    and the \enquote{vector-valued inner product}
    	\begin{align*}
	 		(\cdot \mid \cdot)_{\uY} \colon \mathrm{L}^2(\uX|\uY) \times \mathrm{L}^2(\uX|\uY) \rightarrow \mathrm{L}^\infty(\uY), \quad (f,g) \mapsto \mathbb{E}_{\uY}(f \overline{g}).
	 	\end{align*}
    is a so called \emph{Kaplansky--Hilbert module} over the Stone algebra $\mathrm{L}^\infty(\uY)$. This allows to effectively apply operator theory on Kaplansky--Hilbert modules (which generalizes results of operators on ordinary Hilbert spaces) to ergodic theory. For a systematic introduction to Kaplansky--Hilbert modules we refer to \cite[Section 2]{EHK2021}. \medskip\\
    \textbf{Topological dynamics versus ergodic theory.} We finally review the close connection between topological dynamics and ergodic theory via topological models (see \cite[Section 12.3]{EFHN} and \cite[Section 7]{jt-foundational}). Recall that for a given topological dynamical system $(K;\varphi)$, the space of all $\varphi$-invariant regular Borel probability measures on $K$ is denoted by $\mathrm{P}^\varphi(K)$. Every $\mu_K \in \mathrm{P}^\varphi(K)$ gives rise to a measure-preserving system through $(T_\varphi)_t f \coloneqq f \circ \varphi_{t^{-1}}$ for $f \in \mathrm{L}^2(K,\mu_K)$ and $t \in G$ which we write as $(K,\mu_K;T_\varphi)$. If $q \colon (K;\varphi) \rightarrow (L;\psi)$ is an extension of topological dynamical systems and $\mu_K \in \mathrm{P}^\varphi(K)$, we denote the induced extension $(L,q_*\mu_K;T_\psi) \rightarrow (K,\mu_K;T_\varphi)$ of measure-preserving systems by $J_q$ (i.e., $J_qf \coloneqq f \circ q$ for every $f \in \mathrm{L}^2(L,q_*\mu_K)$).
    
    A \emph{topological model} $(K;\varphi;\mu;J)$ for a measure-preserving system $(\uX;T)$ then consists of a topological dynamical system $(K;\varphi)$, a fully supported measure $\mu_K \in \mathrm{P}^{\varphi}(K)$ and an isomorphism $I \colon (K,\mu_K;T_\varphi) \rightarrow (\uX;T)$.
	
	Given such a model $(K;\varphi;\mu_K;V)$, the image $\mathcal{A} = I(\mathrm{C}(K)) \subset \mathrm{L}^\infty(\uX)$ is
	    \begin{enumerate}[(i)]
	        \item a unital C*-subalgebra of $\mathrm{L}^\infty(\uX)$,
	        \item invariant with respect to $T$, and
	        \item dense in $\mathrm{L}^2(\uX)$.
	    \end{enumerate}
	   Conversely, given any $\mathcal{A} \subset \mathrm{L}^\infty(\uX)$ satisfying conditions (i) -- (iii) one can employ the Gelfand--Naimark theory (cf. Subsection \ref{sectopdyn}) to construct a topological model $(K;\varphi;\mu;I)$ for $(\uX;T)$ with $I(\mathrm{C}(K)) = \mathcal{A}$ in a canonical way: There is a unique invariant fully supported measure $\mu_K \in \mathrm{P}^\varphi(K)$ of the Gelfand system $(K;\varphi)$ of $(\mathcal{A};T)$ such that the Gelfand isomorphism $I \colon (\mathcal{A};T) \rightarrow (\mathrm{C}(K);T_\varphi)$ extends to an isomorphism $I \colon (\uX;T) \rightarrow (K,\mu_K;T_\varphi)$ of measure-preserving systems (see \cite[Section 12.3]{EFHN}, \cite[Section 7]{jt-foundational}, or \cite[Section 2.2]{jt20} for the details).
	   
	   Thus, constructing topological models for a given system $(\uX;T)$ comes down to finding invariant, $\mathrm{L}^2$-dense unital $\mathrm{C}^*$-subalgebras $\mathcal{A} \subset \mathrm{L}^\infty(\uX)$. In particular, we can always construct the \emph{Stone model}, also called the \emph{canonical model}, $(K_{\mathrm{st}};\varphi_{\mathrm{st}};\mu_{K_{\mathrm{st}}};I_{\mathrm{st}})$ which corresponds to the algebra $\mathcal{A} = \mathrm{L}^\infty(\uX)$ and yields a Stonean space $K_{\mathrm{st}}$ and a normal measure $\mu_{K_{\mathrm{st}}}$ (see \cite[Subsection 12.4]{EFHN}, \cite[Section 7]{jt-foundational}, and \cite[Section 2.2]{jt20}).

    One can apply this procedure also to an extension $J \colon (\uY;S) \rightarrow (\uX;T)$ of measure-preserving dynamical systems. A tuple $(q;\mu;I^{\flat};I^{\sharp})$ is a \emph{topological model for $J$} (cf. \cite[Definition 3.3]{EdKr2022}, \cite[Section 7]{jt-foundational}, \cite[Section 2.2]{jt20}) if 
        \begin{enumerate}[(i)]
            \item $q \colon (K;\varphi) \rightarrow (L;\psi)$ is an extension of topological dynamical systems,
            \item $\mu \in \mathrm{P}^\varphi(K)$ is a fully supported invariant measure, and
            \item $(I^{\flat};I^{\sharp}) \colon J_q \rightarrow J$ is an isomorphism from $J_q \colon (L,q_*\mu_K;T_\psi) \rightarrow (K,\mu_K;T_\varphi)$ to $J$.
        \end{enumerate}
    In this case, the corresponding unital C*-subalgebras $\mathcal{B} \coloneqq I^{\flat}(\mathrm{C}(L)) \subset \mathrm{L}^\infty(\uY)$ and $\mathcal{A} \coloneqq I^{\sharp}(\mathrm{C}(K)) \subset \mathrm{L}^\infty(\uX)$ satisfy $J(\mathcal{B}) \subset \mathcal{A}$.
    Conversely, if $\mathcal{B}$ is an $\mathrm{L}^2$-dense invariant unital C*-subalgebra of  $\mathrm{L}^\infty(\uY)$ and $\mathcal{A}$ is an $\mathrm{L}^2$-dense invariant unital C*-subalgebra of $\mathrm{L}^\infty(\uX)$ satisfying $J(\mathcal{B}) \subset \mathcal{A}$, then we can find a topological model $(q;\mu;I^{\flat};I^{\sharp})$ for $J$ with $I^{\flat}(\mathrm{C}(L)) = \mathcal{B}$ and $I^{\sharp}(\mathrm{C}(K)) = \mathcal{A}$ (see \cite[Section 3]{EdKr2022} or \cite[Sections 2 and 7]{jt-foundational}). 

 \subsection{Extensions with relative discrete spectrum}\label{measdiscr}
     There are several ways to define compact extensions in ergodic theory (see, e.g., \cite[Theorem 4.1]{jamneshan2019fz}). 
     The following, based on the classical notion of  systems with discrete spectrum, is easy to state and implicitly used in \cite{ellis}. Recall here that for an extension $J \colon (\uY;S) \rightarrow (\uX;T)$ the space $\mathrm{L}^\infty(\uX)$ is canonically a module over $\mathrm{L}^\infty(\uY)$ via the multiplication $f \cdot g \coloneqq (Jf)g$ for $f \in \mathrm{L}^\infty(\uY)$ and $g \in \mathrm{L}^2(\uX)$.
    \begin{definition}\label{defreldiscmeas}
	    An extension $J \colon (\uY;S) \rightarrow (\uX;T)$ \emph{has relative discrete spectrum} if
		\begin{align*}
			\bigcup \{\Gamma \subset \mathrm{L}^\infty(\uX)\mid \Gamma \textrm{ finitely generated,  $T$-invariant } \mathrm{L}^\infty(\uY) \textrm{-submodule of } \mathrm{L}^\infty(\uX)\}
		\end{align*}
	    is dense in $\mathrm{L}^2(\uX)$.
    \end{definition}
    Given a relative skew-product extension (see Section \ref{relskew}) over a topological dynamical system $(L;\psi)$ and a fully supported invariant measure $\mu_L \in \mathrm{P}^\psi(L)$, we obtain a fully supported invariant measure on the relative skew-product by integrating the canonical relatively invariant measure from Theorem \ref{uniquerel} with respect to $\mu_L$. We obtain the following result for the induced extension of measure-preserving systems.
    \begin{theorem}\label{skewdiscr}
	    Consider
	        \begin{enumerate}[(i)]
	            \item a topological dynamical system $(L;\psi)$,
	            \item an open compact group bundle $p \colon H \rightarrow L_{\fix}$,
	            \item an open closed subgroup bundle $p|_U \colon U \rightarrow L_{\fix}$, 
	            \item a relative cocycle $c \colon G \times L \rightarrow H$, and
	            \item a fully supported invariant measure $\mu_L \in \mathrm{P}^\psi(L)$.
	        \end{enumerate}
	    Then $J_{q_c} \colon (L,\mu_L;T_\psi) \rightarrow \left(L \times_{L_{\fix}} H/U, \int_L \delta_L \otimes \lambda_{H/U} \, \mathrm{d}\mu_L;T_{\varphi_c}\right)$ is an extension with relative discrete spectrum.
	\end{theorem}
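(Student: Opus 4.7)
The plan is to transfer the topological statement in Theorem \ref{reldiscsp} to the measure-theoretic setting via the canonical map $\mathrm{C}(L \times_{L_{\fix}} H/U) \to \mathrm{L}^\infty(L \times_{L_{\fix}} H/U, \mu)$, where $\mu \coloneqq \int_L \delta_L \otimes \lambda_{H/U} \, \mathrm{d}\mu_L$. First, I would verify that $\mu$ is a fully supported $\varphi_c$-invariant Borel probability measure on $L \times_{L_{\fix}} H/U$ with $(q_c)_*\mu = \mu_L$: invariance follows from the equivariance of the relative measure $\delta_L \otimes \lambda_{H/U}$ established in Theorem \ref{uniquerel} together with the invariance of $\mu_L$, full support follows from the full support of $\mu_L$ and of each $\delta_l \otimes \lambda_{H_{q_{\fix}(l)}/U_{q_{\fix}(l)}}$, and the push-forward identity is direct from the definition of $\mu$.

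Next, I invoke Theorem \ref{reldiscsp} which gives that the union $\Gamma_{\mathrm{top}}$ of all finitely generated, projective, closed, $T_{\varphi_c}$-invariant $\mathrm{C}(L)$-submodules of $\mathrm{C}(L \times_{L_{\fix}} H/U)$ is total in $\mathrm{C}(L \times_{L_{\fix}} H/U)$ with respect to the uniform norm. For any such topological submodule $\Gamma$ with generators $f_1,\dots, f_n \in \mathrm{C}(L \times_{L_{\fix}} H/U)$, let $\tilde{\Gamma}$ denote the $\mathrm{L}^\infty(L,\mu_L)$-submodule of $\mathrm{L}^\infty(L \times_{L_{\fix}} H/U, \mu)$ generated by the images of $f_1,\dots, f_n$. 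Then $\tilde{\Gamma}$ is manifestly finitely generated.

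For $T_{\varphi_c}$-invariance of $\tilde{\Gamma}$, I use that $T_{\varphi_c}$-invariance of $\Gamma$ as a $\mathrm{C}(L)$-module yields, for each $t \in G$ and each $i$, an expression $(T_{\varphi_c})_t f_i = \sum_j c_{ij}^{(t)} f_j$ with $c_{ij}^{(t)} \in \mathrm{C}(L)$. Viewing these identities modulo $\mu$ and noting that $\mathrm{C}(L) \hookrightarrow \mathrm{L}^\infty(L,\mu_L)$ and that the Koopman operators on $\mathrm{C}$ agree with those on $\mathrm{L}^\infty$, the same relations hold in $\mathrm{L}^\infty(L \times_{L_{\fix}} H/U, \mu)$, so $\tilde{\Gamma}$ is closed under the Koopman representation of $G$.

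Finally, to obtain density in $\mathrm{L}^2$, I combine two facts: the union of the images of the topological submodules $\Gamma$ in $\mathrm{L}^\infty(L \times_{L_{\fix}} H/U, \mu)$ is uniformly dense in the image of $\mathrm{C}(L \times_{L_{\fix}} H/U)$ by Theorem \ref{reldiscsp}, and $\mathrm{C}(L \times_{L_{\fix}} H/U)$ is dense in $\mathrm{L}^2(L \times_{L_{\fix}} H/U, \mu)$ since $\mu$ is a regular Borel probability measure on a compact space. Since uniform convergence implies $\mathrm{L}^2$-convergence with respect to a probability measure, the union $\bigcup_\Gamma \tilde{\Gamma}$ is $\mathrm{L}^2$-dense, completing the verification of Definition \ref{defreldiscmeas}. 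No step presents a genuine obstacle: the argument is essentially a transfer principle, and the only mild care is in checking that the module-theoretic data (finite generation, invariance) survive under the embedding into $\mathrm{L}^\infty$, which they do by the observations above.
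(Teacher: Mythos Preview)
There is a genuine gap: you invoke Theorem \ref{reldiscsp} directly, but that theorem is stated and proved only under the hypothesis that $L$ is a Stonean space (see item (i) there and Remark \ref{remarkgrpext}, which weakens it only to $L_{\fix}$ totally disconnected). Theorem \ref{skewdiscr} places no such assumption on $(L;\psi)$. The Stonean hypothesis is not incidental in the proof of Theorem \ref{reldiscsp}: it is used to restrict local sections of the dual $\hat{\sigma}\colon O\to\hat H$ to \emph{clopen} $O$, so that the trivially extended bundle of representative functions remains locally trivial and Lemma \ref{lemmareldiscsp} then produces a finitely generated \emph{projective} $\mathrm{C}(L)$-submodule. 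Over a general base, local sections need not be defined on clopen sets, so the totality statement you quote is not available, and your transfer argument to $\mathrm{L}^\infty$ does not get off the ground.

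The paper handles exactly this point: it observes that the Stonean case follows from Theorem \ref{reldiscsp}, and then reduces the general case to the Stonean one by passing to the Stone model $(L_{\mathrm{st}};\psi_{\mathrm{st}};\mu_{L_{\mathrm{st}}})$ of $(L,\mu_L;T_\psi)$. Concretely, one pulls back the bundles $H$, $U$ along the canonical map $(L_{\mathrm{st}})_{\fix}\to L_{\fix}$, lifts the cocycle $c$ to a relative cocycle $c_{\mathrm{st}}$ over $L_{\mathrm{st}}$, and checks that the induced measure-preserving extensions $J_{q_c}$ and $J_{q_{c_{\mathrm{st}}}}$ are isomorphic. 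Since $L_{\mathrm{st}}$ is Stonean, Theorem \ref{reldiscsp} (combined with your transfer argument, which is fine in that situation) gives relative discrete spectrum for $J_{q_{c_{\mathrm{st}}}}$, hence for $J_{q_c}$. If you add this reduction step, your proof lines up with the paper's.
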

	In the case of a Stonean space $L$, Theorem \ref{skewdiscr} is a direct consequence of \ref{reldiscsp}. The general result is implied by the following  observation allowing to lift relative skew-products to the canonical model (cf. Subsection \ref{measpres}).
	\begin{proposition}
	    Consider
	        \begin{enumerate}[(i)]
	            \item a topological dynamical system $(L;\psi)$,
	            \item an open compact group bundle $p \colon H \rightarrow L_{\fix}$,
	            \item an open closed subgroup bundle $p|_U \colon U \rightarrow L_{\fix}$, 
	            \item a relative cocycle $c \colon G \times L \rightarrow H$, and
	            \item a fully supported invariant measure $\mu_L \in \mathrm{P}^\psi(L)$.
	        \end{enumerate}
	   Then there is 
	        \begin{enumerate}[(1)]
	            \item an open compact group bundle $p_{\mathrm{st}} \colon H_{\mathrm{st}} \rightarrow (L_{\mathrm{st}})_{\fix}$,
	            \item an open closed subgroup bundle $(p_{\mathrm{st}})|_{U_{\mathrm{st}}} \colon U_{\mathrm{st}} \rightarrow L_{\fix}$, and
	            \item a relative cocycle $c_{\mathrm{st}} \colon G \times L_{\mathrm{st}} \rightarrow H_{\mathrm{st}}$,
	        \end{enumerate}
	   such that the extensions $J_{q_c}$ and $J_{q_{c_{\mathrm{st}}}}$ are isomorphic.  
	\end{proposition}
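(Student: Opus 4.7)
The plan is to lift all the given data from $L$ to the canonical (Stone) model $L_{\mathrm{st}}$ of the measure-preserving system $(L, \mu_L; T_\psi)$, exploiting that the canonical factor map $L_{\mathrm{st}} \to L$ induces an isometric isomorphism on $\mathrm{L}^2$-spaces. Concretely, let $(L_{\mathrm{st}}; \psi_{\mathrm{st}}; \mu_{L_{\mathrm{st}}}; I_{\mathrm{st}})$ be the Stone model. Since $\mu_L$ is fully supported, the unital inclusion $\mathrm{C}(L) \hookrightarrow \mathrm{L}^\infty(L, \mu_L) \cong \mathrm{C}(L_{\mathrm{st}})$ yields, via Gelfand duality and the equivariance of $I_{\mathrm{st}}$, a $G$-equivariant continuous surjection $\pi \colon L_{\mathrm{st}} \to L$ with $\pi_* \mu_{L_{\mathrm{st}}} = \mu_L$. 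By Lemma \ref{lemstonean}, $L_{\mathrm{st}}$ is Stonean and $q_{\fix}^{\mathrm{st}}$ is open; by the universal property of the fixed factor, $\pi$ descends to a continuous surjection $\pi_{\fix} \colon (L_{\mathrm{st}})_{\fix} \to L_{\fix}$ with $\pi_{\fix} \circ q_{\fix}^{\mathrm{st}} = q_{\fix} \circ \pi$. I then set $H_{\mathrm{st}} \coloneqq H \times_{L_{\fix}} (L_{\mathrm{st}})_{\fix}$ and $U_{\mathrm{st}} \coloneqq U \times_{L_{\fix}} (L_{\mathrm{st}})_{\fix}$, both open bundles (since openness is preserved under pullback, using Remark \ref{openchar}), and define the relative cocycle $c_{\mathrm{st}}(t, \tilde{l}) \coloneqq (c(t, \pi(\tilde{l})), q_{\fix}^{\mathrm{st}}(\tilde{l}))$; the relative cocycle identity for $c_{\mathrm{st}}$ follows from that of $c$ combined with the equivariance $\pi \circ \psi_{\mathrm{st}, t} = \psi_t \circ \pi$.

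The fiberwise identification $H_{\mathrm{st}, q_{\fix}^{\mathrm{st}}(\tilde{l})} = H_{q_{\fix}(\pi(\tilde{l}))}$ produces a continuous $G$-equivariant surjection
\[
    \tilde{\pi} \colon K_{\mathrm{st}} \coloneqq L_{\mathrm{st}} \times_{(L_{\mathrm{st}})_{\fix}} H_{\mathrm{st}}/U_{\mathrm{st}} \longrightarrow K \coloneqq L \times_{L_{\fix}} H/U, \qquad (\tilde{l}, xU) \longmapsto (\pi(\tilde{l}), xU),
\]
intertwining $q_{c_{\mathrm{st}}}$ and $q_c$ via $\pi$. Setting $\mu \coloneqq \int_L \delta_L \otimes \lambda_{H/U} \, \mathrm{d}\mu_L$ and $\mu_{\mathrm{st}} \coloneqq \int_{L_{\mathrm{st}}} \delta_{L_{\mathrm{st}}} \otimes \lambda_{H_{\mathrm{st}}/U_{\mathrm{st}}} \, \mathrm{d}\mu_{L_{\mathrm{st}}}$, a direct Fubini-type computation using the canonical identification of Haar measures across the pullback and the identity $\pi_*\mu_{L_{\mathrm{st}}} = \mu_L$ yields $\tilde{\pi}_*\mu_{\mathrm{st}} = \mu$. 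This gives a Markov embedding $J_{\tilde{\pi}} \colon \mathrm{L}^2(K, \mu) \to \mathrm{L}^2(K_{\mathrm{st}}, \mu_{\mathrm{st}})$ intertwining the Koopman representations (by equivariance of $\tilde{\pi}$), and similarly $J_\pi \colon \mathrm{L}^2(L, \mu_L) \to \mathrm{L}^2(L_{\mathrm{st}}, \mu_{L_{\mathrm{st}}})$ is a Markov embedding which is even an isomorphism by construction of the Stone model. The entire proof then reduces to showing that $J_{\tilde{\pi}}$ is also surjective, for then the pair $(J_\pi, J_{\tilde{\pi}})$ is the desired isomorphism $J_{q_c} \cong J_{q_{c_{\mathrm{st}}}}$.

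Surjectivity of $J_{\tilde{\pi}}$ is the main obstacle. I plan to establish it in two steps. First, density of $\mathrm{C}(L)$ in $\mathrm{L}^2(L, \mu_L) \cong \mathrm{L}^2(L_{\mathrm{st}}, \mu_{L_{\mathrm{st}}})$ together with $\tilde{\pi}_* \mu_{\mathrm{st}} = \mu$ and the fiberwise probability mass normalization shows that $\tilde{\pi}^* q_c^* \mathrm{C}(L) = p_1^*\pi^* \mathrm{C}(L)$ is $\mathrm{L}^2(\mu_{\mathrm{st}})$-dense in $p_1^* \mathrm{C}(L_{\mathrm{st}})$, where $p_1 \colon K_{\mathrm{st}} \to L_{\mathrm{st}}$ is the first projection; uniform boundedness then makes the $\mathrm{L}^2$-closure of $\tilde{\pi}^* \mathrm{C}(K)$ into a $p_1^*\mathrm{C}(L_{\mathrm{st}})$-module. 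Second, since $H_{\mathrm{st}} = H \times_{L_{\fix}} (L_{\mathrm{st}})_{\fix}$, local continuous sections of $\hat{H}$ furnished by Theorem \ref{section} pull back to local continuous sections of $\hat{H}_{\mathrm{st}}$, and the induced relative representative functions on $H_{\mathrm{st}}/U_{\mathrm{st}}$ are precisely $\tilde{\pi}$-pullbacks of relative representative functions on $H/U$. The argument used in the proof of Theorem \ref{reldiscsp}, combining Lemma \ref{lemmareldiscsp}, Theorem \ref{pw3}, and the Stone--Weierstrass theorem for Banach bundles (Proposition \ref{swbundles}), then shows that the $\mathrm{C}(L_{\mathrm{st}})$-algebra generated by $\tilde{\pi}^*\mathrm{C}(K)$ is uniformly dense in $\mathrm{C}(K_{\mathrm{st}})$. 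Combining the two steps gives the $\mathrm{L}^2$-density of $\tilde{\pi}^* \mathrm{C}(K)$ in $\mathrm{L}^2(K_{\mathrm{st}}, \mu_{\mathrm{st}})$, hence the surjectivity of $J_{\tilde{\pi}}$, completing the proof.
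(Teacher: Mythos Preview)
Your construction is correct and matches the paper's approach exactly: pull back $H$ and $U$ along $\pi_{\fix}\colon (L_{\mathrm{st}})_{\fix}\to L_{\fix}$, define $c_{\mathrm{st}}$ by composing with $\pi$, build the equivariant map $\tilde{\pi}$ (called $\Phi$ in the paper), verify $\tilde{\pi}_*\mu_{\mathrm{st}}=\mu$ fiberwise, and conclude by showing $J_{\tilde{\pi}}$ is surjective.

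The only real difference is in the surjectivity argument, where you work harder than necessary. The paper bypasses the Peter--Weyl machinery entirely with a two-line Stone--Weierstrass observation: $\mathrm{C}(K_{\mathrm{st}})$ is generated as a C*-algebra by the functions $(\tilde{l},(m',xU))\mapsto f'(\tilde{l})$ for $f'\in\mathrm{C}(L_{\mathrm{st}})$ and $(\tilde{l},(m',xU))\mapsto g'(xU)$ for $g'\in\mathrm{C}(H/U)$. The second type is already $\tilde{\pi}^*h$ for $h(l,xU)=g'(xU)\in\mathrm{C}(K)$; the first type equals $J_{\tilde{\pi}}J_{q_c}f$ for the $f\in\mathrm{L}^\infty(L,\mu_L)$ with $J_\pi f=f'$. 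Since $J_{\tilde{\pi}}|_{\mathrm{L}^\infty}$ is a unital *-homomorphism, its range contains the generated C*-algebra, hence all of $\mathrm{C}(K_{\mathrm{st}})$. Your route via Theorem~\ref{section}, Theorem~\ref{pw3}, and Lemma~\ref{lemmareldiscsp} reaches the same conclusion (your Step~2 amounts to showing that $\tilde{\pi}^*\mathrm{C}(K)$ separates points within each $q_{c_{\mathrm{st}}}$-fiber), but the representative-function apparatus is not needed here because the fibers of $K_{\mathrm{st}}$ over $L_{\mathrm{st}}$ are literally the same homogeneous spaces $H_m/U_m$ as the fibers of $K$ over $L$.
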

	\begin{proof}
	    We abbreviate $M \coloneqq L_{\fix}$ and $M_{\mathrm{st}} \coloneqq (L_{\mathrm{st}})_{\fix}$. Moreover, we write $q_{\mathrm{st}}^L\colon (L_{\mathrm{st}};\psi_{\mathrm{st}}) \rightarrow (L;\psi)$ for the canonical extension of topological dynamical systems induced by the embedding $\mathrm{C}(L) \hookrightarrow \mathrm{L}^\infty(L,\mu_L)$ (see Sections  \ref{sectopdyn} and \ref{measpres}). Similarly, we denote the canonical quotient map $M_{\mathrm{st}} \rightarrow M$ by $q_{\mathrm{st}}^{M}$.
	    
	    The pullback bundle $p_{\mathrm{st}} \colon H_{\mathrm{st}} \coloneqq M_{\mathrm{st}} \times_{M} H \rightarrow M_{\mathrm{st}}$ is an open compact group bundle and $U_{\mathrm{st}} \coloneqq M_{\mathrm{st}} \times_{M} U\subset H$ gives rise to an open closed subgroup bundle $p|_U\colon U \rightarrow M_{\mathrm{st}}$. Moreover, the induced homogeneous space bundle $p_{/U_{\mathrm{st}}} \colon H_{\mathrm{st}}/U_{\mathrm{st}} \rightarrow M_{\mathrm{st}}$ can and will be identified with the pullback bundle $M_{\mathrm{st}} \times_M H/U \rightarrow M_{\mathrm{st}}$. We obtain a relative cocycle $c_{\mathrm{st}} \colon G \times L_{\mathrm{st}} \rightarrow H_{\mathrm{st}}$ by setting $c_{\mathrm{st}}(t,l') \coloneqq (q_{\fix}(l'), c(t,q_{\mathrm{st}}(l')))$ for $(t,l') \in G \times L_{\mathrm{st}}$. Now consider the continuous map
	        \begin{align*}
	            \Phi \colon L_{\mathrm{st}} \times_{M_{\mathrm{st}}} H_{\mathrm{st}}/U_{\mathrm{st}} \rightarrow L \times_{M} H/U, \quad (l', (m',xU)) \mapsto (q_{\mathrm{st}}(l'), xU).
	        \end{align*}
	   Then $\Phi$ defines an extension between the corresponding topological relative skew-product systems.
	   One can readily check that 
	    \begin{align*}
	        \Phi_*(\delta_{l'} \times \lambda_{H_{q_{\fix}(l')}/U_{q_{\fix}(l')}}) = \delta_{q_{\mathrm{st}}(l')} \times \lambda_{H_{q_{\fix}(q_{\mathrm{st}}(l'))}/U_{q_{\fix}(q_{\mathrm{st}}(l'))}}
	   \end{align*}
	   for every $l' \in L_{\mathrm{st}}$. This implies
	    \begin{align*}
	        \Phi_*\left(\int_{L_{\mathrm{st}}} \delta_{L_{\mathrm{st}}} \otimes \lambda_{H_{\mathrm{st}}/U_{\mathrm{st}}}\, \mathrm{d}\mu_{L_{\mathrm{st}}}\right) &= \int_{L_{\mathrm{st}}} \Phi_*(\delta_{l'} \times \lambda_{H_{q_{\fix}(l')}/U_{q_{\fix}(l')}})\, \mathrm{d}\mu_{L_{\mathrm{st}}}(l')\\
	        &= \int_{L_{\mathrm{st}}} \delta_{q_{\mathrm{st}}(l')} \times \lambda_{H_{q_{\fix}(q_{\mathrm{st}}(l'))}/U_{q_{\fix}(q_{\mathrm{st}}(l'))}}\, \mathrm{d}\mu_{L_{\mathrm{st}}}(l')\\
	        &=  \int_{L} \delta_{l} \times \lambda_{H_{q_{\fix}(l)}/U_{q_{\fix}(l)}}\, \mathrm{d}\mu_{L}(l) = \int_L \delta_L \otimes \lambda_{H/U}\, \mathrm{d}\mu_L.
	    \end{align*}
	  Consequently, $\Phi$ is measure-preserving and induces an extension $J_\Phi \colon (L \times_{M} H/U, \int_L \delta_L \otimes \lambda_{H/U}\, \mathrm{d}\mu_L; T_{\varphi_c}) \rightarrow (L_{\mathrm{st}} \times_{M_{\mathrm{st}}} H_{\mathrm{st}}/U_{\mathrm{st}}, \int_{L_{\mathrm{st}}} \delta_{L_{\mathrm{st}}} \otimes \lambda_{H_{\mathrm{st}}/U_{\mathrm{st}}}\, \mathrm{d}\mu_{L_{\mathrm{st}}}; T_{\varphi_{c_{\mathrm{st}}}})$. Moreover, $J_{\Phi}J_{q_c} = J_{q_{c_{\mathrm{st}}}} J_{q_{\mathrm{st}}}$ and thus the pair $(J_{q_{\mathrm{st}}}; J_{\Phi})$ gives rise to a morphism from the extension $J_{q_c}$ to the extension $J_{q_{c_{\mathrm{st}}}}$. We show that $J_{\Phi} \colon \mathrm{L}^2(L \times_M H/U) \rightarrow \mathrm{L}^2(L_{\mathrm{st}} \times_{M_{\mathrm{st}}} H_{\mathrm{st}}/U_{\mathrm{st}})$
	  has dense range, hence is surjective, and thus $(J_{q_{\mathrm{st}}}; J_{\Phi})$ is an isomorphism of extensions. 
	  By the Stone-Weiterstrass theorem, the functions 
	    \begin{align*}
	        f' \otimes (\mathbbm{1} \otimes \mathbbm{1}) &\colon L_{\mathrm{st}} \times_{M_{\mathrm{st}}} H_{\mathrm{st}}/U_{\mathrm{st}} \rightarrow \C, \quad (l',(m',xU)) \mapsto f'(l'), \textrm{ and }\\
	        \mathbbm{1} \otimes (\mathbbm{1} \otimes g') &\colon L_{\mathrm{st}} \times_{M_{\mathrm{st}}} H_{\mathrm{st}}/U_{\mathrm{st}} \rightarrow \C, \quad (l',(m',xU)) \mapsto g'(xU)
	    \end{align*}
	   for $f' \in \mathrm{C}(L_{\mathrm{st}})$ and $g' \in \mathrm{C}(H/U)$ generate $\mathrm{C}(L_{\mathrm{st}} \times_{M_{\mathrm{st}}} H_{\mathrm{st}}/U_{\mathrm{st}})$ as a C*-algebra. It therefore suffices to show that these are contained in the image of the restriction $       (J_{\Phi})|_{\mathrm{L}^\infty} \colon  \mathrm{L}^\infty(L \times_M H/U) \rightarrow \mathrm{L}^\infty(L_{\mathrm{st}} \times_{M_{\mathrm{st}}} H_{\mathrm{st}}/U_{\mathrm{st}})$ which is a unital *-homomorphism. So take $f' \in \mathrm{C}(L_{\mathrm{st}})$. Then, by definition of the Stone model, we find $f \in \mathrm{L}^\infty(L,\mu_L)$ with $J_{q_{\mathrm{st}}}f = f'$ and thus 
	    \begin{align*}
	        f' \otimes (\mathbbm{1} \otimes \mathbbm{1}) = J_{q_{c_{\mathrm{st}}}}f' = J_{q_{c_{\mathrm{st}}}}J_{q_{\mathrm{st}}}f = J_{\Phi}J_{q_c}f \in J_{\Phi}(\mathrm{L}^\infty(L \times_M H/U)).
	    \end{align*}
	  On the other hand, if $g' \in \mathrm{C}(H/U)$, then the continuous function
	    \begin{align*}
	       h \colon L \times_M H/U \rightarrow \C, \quad (l, xU) \mapsto g'(xU)
	    \end{align*}
	   satisfies $J_{\Phi}h = \mathbbm{1} \otimes (\mathbbm{1} \otimes g')$. This yields the claim.
	\end{proof}
    Applying the theory of Kaplansky--Hilbert modules to $\mathrm{L}^2(\uX|\uY)$ (cf. Subsection \ref{measpres} above), we can improve the properties of the submodules considered in Definition \ref{defreldiscmeas}. 
    \begin{definition}
            Let $J \colon (\uY;S) \rightarrow (\uX;T)$ be an extension of measure-preserving dynamical systems. An $\mathrm{L}^\infty(\uY)$-submodule $\Gamma$ of $\mathrm{L}^2(\uX|\uY)$ is a \emph{finitely generated KH-submodule} if it has a finite \emph{suborthonormal basis}, i.e., there are $e_1, \dots, e_m \in \Gamma$ generating $\Gamma$ such that 
        \begin{enumerate}[(i)]
            \item $\mathbb{E}_{\uY}(e_i\overline{e_j}) = 0$ for $i,j \in \{1, \dots, m\}$ with $i \neq j$, and
            \item $\mathbb{E}_{\uY}|e_i|^2$ is given by a characteristic function for $i \in \{1, \dots, m\}$.
        \end{enumerate}
    \end{definition}
    We then obtain the following characterization of extensions with relative discrete spectrum  from \cite[Subsection 2.5 and Proposition 8.5]{EHK2021} (cf. \cite[Theorem 4.1]{jamneshan2019fz}).
    \begin{proposition}\label{chardiscrspectrum3}
         For an extension $J \colon (\uY;S) \rightarrow (\uX;T)$ of measure-preserving dynamical systems the following assertions are equivalent.
            \begin{enumerate}[(a)]
                \item $J$ has relative discrete spectrum.
                \item The set
                    \begin{align*}
			            \bigcup \{\Gamma \subset \mathrm{L}^2(\uX|\uY)\mid \Gamma \textrm{ $T$-invariant, finitely generated } \textrm{KH-submodule of } \mathrm{L}^2(\uX|\uY)\}
			        \end{align*}
			        is dense in $\mathrm{L}^2(\uX)$.
            \end{enumerate}
    \end{proposition}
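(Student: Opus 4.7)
The natural framework is the Kaplansky--Hilbert (KH) module theory for $\mathrm{L}^2(\uX|\uY)$ over the Stone algebra $\mathrm{L}^\infty(\uY)$, together with its topological realization via the canonical model of $J$ from Subsection \ref{measpres}. The starting observation is that $\mathrm{L}^\infty(\uX)$ embeds contractively into $\mathrm{L}^2(\uX|\uY)$ through the chain $\mathrm{L}^\infty(\uX) \subset \mathrm{L}^2(\uX|\uY) \subset \mathrm{L}^2(\uX)$, since $\mathbb{E}_{\uY}|f|^2 \in \mathrm{L}^\infty(\uY)$ whenever $f \in \mathrm{L}^\infty(\uX)$. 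Thus both conditions (a) and (b) describe density in $\mathrm{L}^2(\uX)$ of unions of $T$-invariant submodules, and the task is to bridge between the algebraic finite generation over $\mathrm{L}^\infty(\uY)$ within $\mathrm{L}^\infty(\uX)$ and the analytic notion of finite KH-generation in $\mathrm{L}^2(\uX|\uY)$. A useful auxiliary fact is that $T_t$ is a KH-isometry of $\mathrm{L}^2(\uX|\uY)$ intertwining $\mathbb{E}_{\uY}$ with $S_t$, so KH-closures of $T$-invariant sets remain $T$-invariant.

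For (a) $\Rightarrow$ (b): Given a $T$-invariant finitely generated $\mathrm{L}^\infty(\uY)$-submodule $\Gamma \subset \mathrm{L}^\infty(\uX)$ with generators $f_1, \dots, f_n$, view these generators as elements of $\mathrm{L}^2(\uX|\uY)$ and apply the KH-Gram--Schmidt procedure from EHK2021, Subsection 2.5. This yields a suborthonormal family $e_1, \dots, e_m$ whose $\mathrm{L}^\infty(\uY)$-span $\tilde{\Gamma}$ contains each $f_j$ and hence all of $\Gamma$. Since $T$ preserves the KH-structure and carries $\Gamma$ into itself, the KH-closure $\tilde{\Gamma}$ is $T$-invariant. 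Hence the union in (b) contains the union in (a), and density transfers.

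For (b) $\Rightarrow$ (a): Pass to the canonical topological model of $J$, yielding an open extension $q \colon (K;\varphi) \to (L;\psi)$ between Stonean spaces with $\mathrm{L}^\infty(\uY) \cong \mathrm{C}(L)$ and $\mathrm{L}^\infty(\uX) \cong \mathrm{C}(K)$. Under the KH-bundle representation (EHK2021, Proposition 8.5), a $T$-invariant finitely generated KH-submodule $\tilde{\Gamma} \subset \mathrm{L}^2(\uX|\uY)$ with suborthonormal basis $e_1, \dots, e_m$ is realized as the space of continuous sections of a locally trivial Banach subbundle $F$ of the continuous Banach bundle $\mathrm{C}_q(K) \to L$ from Example \ref{extensionbundle}; each characteristic function $\mathbb{E}_{\uY}|e_i|^2$ localizes $e_i$ to a clopen subset of $L$ on which it defines a genuine continuous fiber map. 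By Proposition \ref{SerreSwan}, the section space $\Gamma(F)$ is a finitely generated projective $\mathrm{C}(L)$-module, and because $F$ has locally constant finite-dimensional fibers over a compact base (both guaranteed by the suborthonormality and finite generation), every continuous section of $F$ is bounded. Via the isomorphism of Example \ref{extensionbundle2}, $\tilde{\Gamma}$ therefore sits inside $\mathrm{C}(K) \cong \mathrm{L}^\infty(\uX)$ as a finitely generated, $T$-invariant $\mathrm{L}^\infty(\uY)$-submodule, yielding (a).

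The main obstacle is making the KH-to-bundle correspondence in the (b) $\Rightarrow$ (a) step rigorous: a priori, elements of a finitely generated KH-submodule of $\mathrm{L}^2(\uX|\uY)$ need not be pointwise bounded as functions on $\uX$, so one must exploit both the Dedekind-completeness of $\mathrm{C}(L) \cong \mathrm{L}^\infty(\uY)$ (i.e., the Stonean structure of $L$) and the openness of $q$ (Lemma \ref{lemstonean}) to promote KH-boundedness of a suborthonormal family into genuine boundedness of the corresponding continuous sections. Once this bundle realization, which is the content of EHK2021, Proposition 8.5, is in hand, no analytic truncation or cutoff of individual elements is needed, and the equivalence is almost formal.
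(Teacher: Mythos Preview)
Your direction (a) $\Rightarrow$ (b) is essentially correct, though the $T$-invariance of $\tilde{\Gamma}$ deserves one more sentence: the cleanest route is to take $\tilde{\Gamma}$ to be the KH-closure of $\Gamma$ in $\mathrm{L}^2(\uX|\uY)$, observe that this closure is $T$-invariant because each $T_t$ is a KH-module isomorphism over $S_t$, and then invoke KH--Gram--Schmidt to see that the closure of a finitely generated module admits a finite suborthonormal basis.

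The direction (b) $\Rightarrow$ (a), however, has a genuine gap. You assert that a $T$-invariant finitely generated KH-submodule $\tilde{\Gamma} \subset \mathrm{L}^2(\uX|\uY)$ is realized as the section space of a locally trivial subbundle of $\mathrm{C}_q(K) \to L$ and therefore lies inside $\mathrm{C}(K) \cong \mathrm{L}^\infty(\uX)$. This conflates two different bundles: the KH-module $\mathrm{L}^2(\uX|\uY)$ is modeled by a Hilbert bundle with fibers $\mathrm{L}^2(K_l)$, not by $\mathrm{C}_q(K)$, whose fibers are $\mathrm{C}(K_l)$. Suborthonormality only says $\mathbb{E}_{\uY}|e_i|^2 \le \mathbbm{1}$, i.e.\ it bounds the fiberwise $\mathrm{L}^2$-norm, not the sup norm. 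A concrete counterexample with trivial $G$: take $\uX = \uY \times [0,1]$, a disjoint sequence $(A_n)_n$ in $\uY$ of positive measure, and $e(y,t) = \sum_n 2^{n/2}\,\mathbbm{1}_{A_n}(y)\,\mathbbm{1}_{[0,2^{-n}]}(t)$; then $\mathbb{E}_{\uY}|e|^2 = \mathbbm{1}_{\bigcup_n A_n}$ is a characteristic function, so $\mathrm{L}^\infty(\uY)\cdot e$ is a (trivially $T$-invariant) finitely generated KH-submodule, yet $e \notin \mathrm{L}^\infty(\uX)$. The paper's own Lemma \ref{submodules} makes the same point: the inclusion $\tilde{\Gamma} \subset \mathrm{L}^\infty(\uX)$ holds only under full ergodicity of $(\uX;T)$; in general one must approximate. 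The Stonean structure of $L$ and the openness of $q$ that you invoke concern the base space and cannot force unbounded fiber functions to become bounded.

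The remedy is exactly the truncation you dismiss in your final paragraph. By \cite[Lemma 8.3]{EHK2021} the function $e \coloneqq \bigl(\sum_i |e_i|^2\bigr)^{1/2}$ lies in $\fix(T)$; set $p_n \coloneqq \mathbbm{1}_{\{e \le n\}} \in \fix(T) \cap \mathrm{L}^\infty(\uX)$. Then $\Gamma_n \coloneqq p_n\tilde{\Gamma}$ is a finitely generated $\mathrm{L}^\infty(\uY)$-submodule of $\mathrm{L}^\infty(\uX)$ (since $|p_n e_i| \le n$), it is $T$-invariant (since $T_t(p_n e_i) = p_n\,T_t e_i \in p_n\tilde{\Gamma}$), and $\bigcup_n \Gamma_n$ is $\mathrm{L}^2$-dense in $\tilde{\Gamma}$ by dominated convergence. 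This is the mechanism behind the cited \cite[Subsection 2.5 and Proposition 8.5]{EHK2021} and behind Lemma \ref{submodules} (ii).
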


\subsection{Natural models for structured extensions}\label{canmod}
 We now want to prove a partial converse of Theorem \ref{skewdiscr}, i.e., represent extensions of measure-preserving systems with relative discrete spectrum as skew-product extensions. The strategy is to build a suitable topological model and then apply Theorem \ref{reptheorem1}. For the first part we need the following lemma, the first part of which is \cite[Remark 5.16]{ellis} (see also \cite[Proposition 2.8]{EdKr2022}).

\begin{lemma}\label{submodules}
	Let $J \colon (\uY;S) \rightarrow (\uX;T)$ be an extension of measure-preserving systems.
		\begin{enumerate}[(i)]
			\item If $(\uX;T)$ is ergodic, then every $T$-invariant, finitely generated KH-submodule $\Gamma$ of $\mathrm{L}^2(\uX|\uY)$ is contained in $\mathrm{L}^\infty(\uX)$.
			\item If $J$ is relatively ergodic and $\Gamma \subset \mathrm{L}^2(\uX|\uY)$ is a $T$-invariant, finitely generated KH-submodule of $\mathrm{L}^2(\uX|\uY)$, then there is a sequence $(\Gamma_n)_{n \in \N}$ of $T$-invariant, finitely generated KH-submodules with $\Gamma_n \subset \Gamma \cap \mathrm{L}^\infty(\uX)$ for every $n \in \N$ such that
				\begin{align*}
					\Gamma = \overline{\bigcup_{n \in \N} \Gamma_n} \textrm{ in } \mathrm{L}^2(\uX).
				\end{align*}
		\end{enumerate}
\end{lemma}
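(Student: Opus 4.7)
The plan for both parts hinges on a single observation: for any finite suborthonormal basis $e_1,\ldots,e_m$ of $\Gamma$ with $\mathbb{E}_\uY |e_i|^2 = \mathbbm{1}_{A_i}$, the non-negative function $F \coloneqq \sum_{i=1}^m |e_i|^2 \in \mathrm{L}^1(\uX)$ is $T$-invariant, and moreover $\mathbb{E}_\uY F = \sum_{i=1}^m \mathbbm{1}_{A_i} \leq m$. Once this is in hand, the ergodicity (or relative ergodicity) hypothesis forces $F$ to be essentially bounded by $m$, which immediately controls each $|e_i|$ and hence all of $\Gamma$.

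The first step is to establish the $T$-invariance of $F$. Since $T_t$ comes from a measure-algebra automorphism, it is multiplicative on positive functions and a lattice homomorphism, giving $T_t|e_i|^2 = |T_t e_i|^2$, hence $T_tF = \sum_i|T_t e_i|^2$. Using $\mathbb{E}_\uY T_t = S_t \mathbb{E}_\uY$ one checks that $(T_t e_i)_i$ is again a suborthonormal basis of $T_t\Gamma = \Gamma$ (with shifted support indicators $S_t\mathbbm{1}_{A_i}$). It therefore suffices to prove that the sum $\sum_i |e_i|^2$ is independent of the choice of suborthonormal basis. For a second such basis $(e_j')_j$, the projection formula $P_\Gamma f = \sum_i \mathbb{E}_\uY(f\bar e_i)e_i$ yields $e_j' = \sum_i a_{ji}e_i$ with $a_{ji} = \mathbb{E}_\uY(e_j'\bar e_i)$, and a direct computation gives
\[
  \sum_j a_{ji}\,\overline{a_{jk}} \;=\; \mathbb{E}_\uY(P_\Gamma e_k \cdot \bar e_i) \;=\; \mathbb{E}_\uY(e_k\bar e_i) \;=\; \delta_{ki}\,\mathbbm{1}_{A_i}.
\]
Combining with the fact that $|e_i|^2$ vanishes off the $J$-pullback of $A_i$ (since $\mathbb{E}_\uY |e_i|^2 = \mathbbm{1}_{A_i}$ and $|e_i|^2 \geq 0$), one concludes $\sum_j|e_j'|^2 = \sum_i|e_i|^2$, and in particular $T_t F = F$.

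With $T$-invariance of $F$ secured, part (i) is immediate: ergodicity of $(\uX;T)$ forces $F$ to be essentially a constant, necessarily at most $m$ since $\int_\uX F\,\mathrm{d}\mu_\uX = \int_\uY \mathbb{E}_\uY F\,\mathrm{d}\mu_\uY \leq m$, so each $|e_i| \leq \sqrt m$ is bounded, and any $f \in \Gamma$ can be written as $f = \sum_i \mathbb{E}_\uY(f\bar e_i)e_i$ with coefficients in $\mathrm{L}^\infty(\uY)$, giving $\Gamma \subset \mathrm{L}^\infty(\uX)$. For (ii) the same strategy works via truncation: $F_n \coloneqq F \wedge n$ remains $T$-invariant by the lattice property and lies in $\mathrm{L}^\infty \cap \fix(T) \subset J(\fix(S) \cap \mathrm{L}^2)$ by relative ergodicity, so $F_n = J h_n$ with $h_n = \mathbb{E}_\uY F_n \leq m$; the monotone limit then gives $F = Jh$ for an $S$-invariant $h \leq m$, so $F \leq m$ in $\mathrm{L}^\infty(\uX)$ and consequently $\Gamma \subset \mathrm{L}^\infty(\uX)$. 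The conclusion of (ii) then holds with the constant sequence $\Gamma_n \coloneqq \Gamma$. The main obstacle is the basis-independence of $F$: in the genuine orthonormal case (all $A_i = Y$) it is the standard identity $F(t)^*F(t) = I$ for the coefficient matrix, but the varying support indicators $\mathbbm{1}_{A_i}$ of a suborthonormal basis require careful use of the projection formula above rather than a direct matrix-unitarity argument.
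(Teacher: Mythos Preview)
Your argument is correct, and for part (ii) it is in fact sharper than the paper's. The paper also considers $e \coloneqq \bigl(\sum_i |e_i|^2\bigr)^{1/2}$ and invokes \cite[Lemma 8.3]{EHK2021} (rather than your direct basis-independence argument) to get $e \in \fix(T)$; for part (i) both proofs then conclude identically. For part (ii), however, the paper only uses $e = Jf$ with $f \in \fix(S)$ and then truncates via the indicators $f_n \coloneqq \mathbbm{1}_{|f|\leq n}$, setting $\Gamma_n \coloneqq f_n\Gamma$ and checking these approximate $\Gamma$ in $\mathrm{L}^2$. Your extra observation that $\mathbb{E}_\uY F = \sum_i \mathbbm{1}_{A_i} \leq m$ is bounded, combined with relative ergodicity applied to the truncations $F_n = F \wedge n$, gives $F \leq m$ outright and hence $\Gamma \subset \mathrm{L}^\infty(\uX)$; the statement of (ii) then holds with the constant sequence $\Gamma_n = \Gamma$. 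This is a genuine simplification: it shows that under relative ergodicity the full module is already bounded, whereas the paper's route leaves $f$ potentially unbounded and relies on the approximation to recover boundedness. Either approach suffices for the downstream applications in the paper.
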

\begin{proof}
	Let $\Gamma$ be a $T$-invariant, finitely generated KH-submodule of $\mathrm{L}^2(\uX|\uY)$ and choose a suborthonormal basis $e_1, \dots, e_m$ for $\Gamma$. Then,
		\begin{align*}
			e \coloneqq \left(\sum_{i=1}^m |e_i|^2\right)^\frac{1}{2} \in \fix(T)
		\end{align*}
	by \cite[Lemma 8.3]{EHK2021}. Thus, if $(\uX;T)$ is ergodic, then there is a constant $c \geq 0$ with $\sum_{i=1}^m |e_i|^2 = c\mathbbm{1}$ and hence $|e_i| \leq c \mathbbm{1}$ for all $i \in \{1, \dots, m\}$. This shows (i).
	If only $J$ is ergodic, then we find $f \in \fix(S)$ with $Jf = e$. For $n \in \N$, consider $f_n \coloneqq \mathbbm{1}_{|f| \leq n} \in \fix(S) \cap \mathrm{L}^\infty(\uY)$. Then $\Gamma_n \coloneqq f_n \Gamma$ is a $T$-invariant, finitely generated KH-submodule of $\mathrm{L}^2(\uX|\uY)$ with suborthonormal basis $f_ne_1, \dots, f_ne_m$. Moreover, 
		\begin{align*}
			\left(\sum_{i=1}^m|f_ne_i|^2\right)^{\frac{1}{2}} = J|f_n| \cdot e = J(|f_n|f) \leq n \mathbbm{1}
		\end{align*}
	and hence $f_ne_i \in \mathrm{L}^\infty(\uX)$ for every $i \in \{1, \dots, m\}$. This shows $\Gamma_n \subset \mathrm{L}^\infty(\uX)$. Finally, 
	\begin{align*}
		\|f_ne_i - e_i\|_{\mathrm{L}^2(\uX)}^2 = \int_{\uY} \mathbb{E}_{\uY}|f_ne_i - e_i|^2\,\mathrm{d}\mu_{\uY} = \int_{|f| > n}\mathbb{E}_{\uY}|e_i|^2\,\mathrm{d}\mu_{\uY} \rightarrow 0
	\end{align*}	
	for $n \rightarrow \infty$. We therefore obtain that the union of the modules $\Gamma_n$ for $n \in \N$ is dense in $\Gamma$.	
\end{proof}

We also use the following auxiliary result.

\begin{lemma}\label{proj}
    Let $J \colon (\uY;S) \rightarrow (\uX;T)$ be an extension of measure-preserving systems. Then every finitely generated KH-submodule $\Gamma \subset \mathrm{L}^\infty(\uX)$ over $\mathrm{L}^\infty(\uY)$ is closed and projective.
\end{lemma}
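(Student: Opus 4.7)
The plan is to exploit the suborthonormal basis of $\Gamma$ to realise it explicitly as a direct summand of a free $\mathrm{L}^\infty(\uY)$-module, and to exhibit an explicit bounded projection of $\mathrm{L}^\infty(\uX)$ onto $\Gamma$. Fixing a suborthonormal basis $e_1,\dots,e_m$ of $\Gamma$ with $\mathbb{E}_{\uY}(e_i\overline{e_j}) = \delta_{ij} p_i$ for characteristic functions $p_i \in \mathrm{L}^\infty(\uY)$, the first step will be the auxiliary identity $J(p_i)e_i = e_i$. This should follow from the $\mathrm{L}^\infty(\uY)$-linearity of the conditional expectation together with $p_i^2 = p_i$, via
\begin{align*}
    \mathbb{E}_{\uY}|(\mathbbm{1} - J(p_i))e_i|^2 = (\mathbbm{1}-p_i)\,\mathbb{E}_{\uY}|e_i|^2 = (\mathbbm{1}-p_i)\,p_i = 0,
\end{align*}
which forces $(\mathbbm{1}-J(p_i))e_i = 0$ in $\mathrm{L}^2(\uX)$.

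Next, I would show that every $v \in \Gamma$ admits a unique representation $v = \sum_{i=1}^m g_i e_i$ with $g_i \in p_i\,\mathrm{L}^\infty(\uY)$. Indeed, starting from any expression $v = \sum_i f_i e_i$ with $f_i \in \mathrm{L}^\infty(\uY)$ and setting $g_j \coloneqq \mathbb{E}_{\uY}(v\,\overline{e_j})$, the suborthonormality gives $g_j = f_j p_j \in p_j\,\mathrm{L}^\infty(\uY)$, and the identity $J(p_j)e_j = e_j$ yields $\sum_j g_j e_j = \sum_j f_j p_j e_j = \sum_j f_j e_j = v$. Uniqueness of the coefficients in $\bigoplus_i p_i\,\mathrm{L}^\infty(\uY)$ follows by applying $\mathbb{E}_{\uY}(\,\cdot\,\overline{e_k})$.

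This produces an $\mathrm{L}^\infty(\uY)$-module isomorphism $\Gamma \cong \bigoplus_{i=1}^m p_i\,\mathrm{L}^\infty(\uY)$. Since each $p_i\,\mathrm{L}^\infty(\uY)$ is a direct summand of $\mathrm{L}^\infty(\uY)$ via $\mathrm{L}^\infty(\uY) = p_i\,\mathrm{L}^\infty(\uY) \oplus (\mathbbm{1}-p_i)\,\mathrm{L}^\infty(\uY)$, the direct sum $\Gamma \oplus \bigoplus_i (\mathbbm{1}-p_i)\,\mathrm{L}^\infty(\uY)$ is isomorphic to $\mathrm{L}^\infty(\uY)^m$, a free $\mathrm{L}^\infty(\uY)$-module of rank $m$. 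Hence $\Gamma$ is projective. For closedness in $\mathrm{L}^\infty(\uX)$, I would introduce the linear map $P \colon \mathrm{L}^\infty(\uX) \to \mathrm{L}^\infty(\uX)$, $Pv \coloneqq \sum_{i=1}^m \mathbb{E}_{\uY}(v\,\overline{e_i})\,e_i$. The estimate $\|Pv\|_\infty \leq \bigl(\sum_i \|e_i\|_\infty^2\bigr)\|v\|_\infty$ makes $P$ bounded, and the previous step shows that $P$ takes values in $\Gamma$ and restricts to the identity on $\Gamma$, so $P$ is a continuous projection onto $\Gamma$ and therefore $\Gamma = \ker(\mathrm{Id}-P)$ is closed.

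The only real subtlety I anticipate lies in the first step: the identity $J(p_i)e_i = e_i$ is precisely where the suborthonormality condition and the Kaplansky--Hilbert module structure intertwine, and once it is established the projectivity and closedness assertions reduce to essentially algebraic and elementary Banach-algebraic manipulations.
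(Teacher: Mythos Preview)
Your proof is correct and follows essentially the same route as the paper: both arguments use the suborthonormal basis expansion $v = \sum_i \mathbb{E}_{\uY}(v\overline{e_i})\,e_i$ on $\Gamma$, and both establish projectivity by exhibiting $\Gamma \oplus \bigoplus_i (\mathbbm{1}-p_i)\,\mathrm{L}^\infty(\uY)$ as a free module of rank $m$ (with $p_i = \mathbb{E}_{\uY}|e_i|^2$). The only cosmetic differences are that you derive the expansion formula by hand via the identity $J(p_i)e_i = e_i$ whereas the paper quotes it from \cite[Lemma 2.10]{EHK2021}, and that you package closedness as $\Gamma = \ker(\mathrm{Id}-P)$ for the bounded projection $P$ while the paper runs the same computation sequentially on a convergent sequence.
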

\begin{proof}
     Take a finitely generated KH-submodule $\Gamma \subset \mathrm{L}^\infty(\uX)$ with suborthonormal basis $e_1, \dots, e_m \in \Gamma$. One can deduce that $\Gamma$ is closed in $\mathrm{L}^\infty(\uX)$ from the general fact that Kaplansky--Hilbert modules are complete with respect to their canonical norm (see \cite[Theorems 2.2.3 and 7.1.3]{Kusr2000}). We recall the argument in our framework: $(f_n)_{n \in \N}$ is a sequence in $\Gamma$ converging to some $f \in \mathrm{L}^\infty(\uX)$, then $(\mathbb{E}_{\uY}(f_n \overline{e_i}))_{n \in \N}$ converges to  $\mathbb{E}_{\uY}(f\overline{e_i})$ for $i \in \{1, \dots, m\}$. Hence, using the expansion with respect to the suborthonormal basis $e_1, \dots, e_m$ (see \cite[Lemma 2.10]{EHK2021}), we obtain 
        \begin{align*}
            \lim_{n \rightarrow \infty} f_n = \lim_{n \rightarrow \infty} \sum_{i=1}^m \mathbb{E}_{\uY}(f_n\overline{e_i})e_i = \sum_{i=1}^m \mathbb{E}_{\uY}(f\overline{e_i})e_i \in \Gamma.
        \end{align*}
    To see that $\Gamma$ is projective, observe that 
        \begin{align*}
            \Gamma \oplus \bigoplus_{i=1}^m \mathrm{L}^\infty(\uY)(\mathbbm{1} - \mathbb{E}_{\uY}|e_i|^2)
        \end{align*}
    is a free $\mathrm{L}^\infty(\uY)$-module with a basis $e_1', \dots, e_m'$ given by
        \begin{align*}
            e_i' \coloneqq (e_i, 0, \dots, 0, \mathbbm{1} - \mathbb{E}_{\uY}|e_i|^2, 0, \dots,0) \in  \Gamma \oplus \bigoplus_{i=1}^n \mathrm{L}^\infty(\uY)(\mathbbm{1} - \mathbb{E}_{\uY}|e_i|^2)
        \end{align*}
   , where $\mathbbm{1} - \mathbb{E}_{\uY}|e_i|^2$ is at the $(i+1)$th position for $i \in \{1, \dots, m\}$. Thus, $\Gamma$ is projective.
\end{proof}

With the help of Lemmas \ref{submodules} and \ref{proj} we can now build topological models for ergodic extensions with relative discrete spectrum (see also \cite[Theorem 3.5]{EdKr2022} for the case of extensions between ergodic systems).

\begin{definition}
    Let $J \colon (\uY;S) \rightarrow (\uX;T)$ be a relatively ergodic extension of measure-preserving systems with relative discrete spectrum. Let $\mathcal{B} \coloneqq \mathrm{L}^\infty(\uY)$ and let $\mathcal{A}$ be the unital C*-subalgebra of $\mathrm{L}^\infty(\uX)$ generated by all finitely generated, projective, closed, $T$-invariant $\mathrm{L}^\infty(\uY)$-submodules of $\mathrm{L}^\infty(\uX)$. We call the associated topological model  $(q;\mu;I^{\flat};I^{\sharp})$ (see Section \ref{measpres}) the \emph{natural model} of $J$.
\end{definition}

The following result shows that the natural model fits into the framework developed in Section \ref{sectop}.
\begin{proposition}\label{model}
    Let $J \colon (\uY;S) \rightarrow (\uX;T)$ be an ergodic extension with relative discrete spectrum and $(q;\mu;I^{\flat};I^{\sharp})$ its natural model. Then $q$ is an open topologically ergodic extension with relative discrete spectrum over a Stonean space and $q_*\mu$ is a normal measure.
\end{proposition}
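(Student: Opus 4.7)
I will verify the four assertions about $q$ by directly unwinding the construction of the natural model, treating the easier ones first and leaving the openness of $q$, which is the main obstacle, for last.

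By construction, $L$ is the Stone space of $\mathcal{B} = \mathrm{L}^\infty(\uY)$, so $L$ is Stonean and $q_{*}\mu$ is a normal measure on $L$ (cf.\ the discussion of the Stone model in Subsection \ref{measpres}). Moreover, $\mathcal{A}$ is by definition generated as a unital $\mathrm{C}^{*}$-algebra by the finitely generated, projective, closed, $T$-invariant $\mathcal{B}$-submodules of $\mathrm{L}^\infty(\uX)$, which lie in $\mathcal{A}$ as part of the generating set; the $\mathrm{L}^2$-density of their union (which is needed for $(q;\mu;I^{\flat};I^{\sharp})$ to be a valid topological model with fully supported $\mu$) follows from the relative discrete spectrum of $J$ together with Proposition \ref{chardiscrspectrum3} and Lemmas \ref{submodules} and \ref{proj}. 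Under the $\mathrm{C}^{*}$-isomorphism $I^{\sharp}|_{\mathrm{C}(K)} \colon \mathrm{C}(K) \to \mathcal{A}$ combined with $I^{\flat}$, such $\mathcal{B}$-submodules correspond bijectively to finitely generated, projective, closed, $T_{\varphi}$-invariant $\mathrm{C}(L)$-submodules of $\mathrm{C}(K)$; hence $\mathrm{C}(K)$ is $\mathrm{C}^{*}$-generated by such submodules, which is relative discrete spectrum of $q$ in the sense of Definition \ref{reldiscspdef}.

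Topological ergodicity of $q$ asserts $J_{q}(\fix(T_{\psi})) = \fix(T_{\varphi})$. Since $(I^{\flat}; I^{\sharp})$ is an isomorphism of extensions, $I^{\sharp} \circ J_{q} = J \circ I^{\flat}$ and both $I^{\flat}, I^{\sharp}$ intertwine the Koopman operators, so this reduces to the equality $J(\fix(S) \cap \mathcal{B}) = \fix(T) \cap \mathcal{A}$. The inclusion ``$\subset$'' is clear. For ``$\supset$'', take $f \in \fix(T) \cap \mathcal{A}$; relative ergodicity of $J$ gives $\fix(T) = J(\fix(S))$, so $f = Jg$ for some $g \in \fix(S)$. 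Since $J$ is a Markov embedding, $|Jg| = J|g|$ and hence the essential $\mathrm{L}^{\infty}$-norm of $g$ agrees with that of $f$, forcing $g \in \mathrm{L}^{\infty}(\uY) = \mathcal{B}$.

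The remaining task is openness of $q$, and this is where the real work lies. I will use the standard order-theoretic characterization already invoked in the proof of Lemma \ref{lemstonean} (see \cite[Proposition III.9.3]{Schaefer1970} and \cite[Satz 2.4]{Nage1972}): $q$ is open if and only if the unital embedding $J_{q} \colon \mathrm{C}(L) \to \mathrm{C}(K)$ is order continuous, i.e., maps bounded decreasing nets with infimum $0$ in $\mathrm{C}(L)$ to nets with infimum $0$ in $\mathrm{C}(K)$. Via $I^{\flat}$ and $I^{\sharp}$ this reduces to order continuity of the restricted embedding $J|_{\mathcal{B}} \colon \mathcal{B} \to \mathcal{A}$. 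So let $(f_{\alpha}) \subset \mathcal{B}_{+}$ be a bounded decreasing net with $\inf_{\alpha} f_{\alpha} = 0$ in $\mathcal{B}$. Working inside the Dedekind complete Banach lattice $\mathrm{L}^{2}(\uY)$, one first checks that $f_{\alpha} \to 0$ in $\mathrm{L}^{2}(\uY)$: the $\mathrm{L}^{2}$-infimum $h$ of $(f_{\alpha})$ satisfies $0 \leq h \leq f_{\alpha_{0}}$, hence lies in $\mathrm{L}^{\infty}(\uY) = \mathcal{B}$, and as an $\mathrm{L}^{\infty}$-lower bound of $(f_{\alpha})$ it must be $\leq 0$, so $h = 0$, whence monotone convergence yields $f_{\alpha} \to 0$ in $\mathrm{L}^{2}(\uY)$. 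Because $J$ is an $\mathrm{L}^{2}$-isometric lattice homomorphism, $J f_{\alpha} \to 0$ in $\mathrm{L}^{2}(\uX)$ while decreasing and positive, which forces $J f_{\alpha} \downarrow 0$ in $\mathrm{L}^{\infty}(\uX)$. Finally, any lower bound $a \in \mathcal{A}_{+}$ of $(J f_{\alpha})$ satisfies $a \leq 0$ in $\mathrm{L}^{\infty}(\uX)$ and therefore in $\mathcal{A}$, so $a = 0$, showing $\inf_{\alpha} J f_{\alpha} = 0$ in $\mathcal{A}$ and completing the proof.
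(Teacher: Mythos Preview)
Your proof is correct. The treatment of $L$ being Stonean, $q_*\mu$ being normal, and $q$ having relative discrete spectrum matches the paper's (all are read off from the construction of the natural model), and your verification of topological ergodicity simply spells out what the paper dismisses with ``by definition of the natural model.''

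The one genuine difference is the openness of $q$. The paper disposes of this in one line by citing \cite[Corollary 1.9]{ellis}, which establishes openness for extensions arising from topological models built over the Stone model. You instead give a self-contained argument: you reduce to order continuity of $J|_{\mathcal{B}} \colon \mathcal{B} \to \mathcal{A}$ via the Schaefer/Nagel characterization (the same tool the paper uses in Lemma \ref{lemstonean}), and then exploit that $\mathcal{B} = \mathrm{L}^\infty(\uY)$ is order-dense in $\mathrm{L}^2(\uY)$ with order-continuous norm to push $f_\alpha \downarrow 0$ through $J$ and down into $\mathcal{A}$. Your route is more elementary in that it avoids importing Ellis's result and makes transparent exactly which features of the natural model are used (namely that $\mathcal{B}$ is all of $\mathrm{L}^\infty(\uY)$ and $\mathcal{A}$ sits inside $\mathrm{L}^\infty(\uX)$ with the inherited order); the paper's route is shorter but treats the openness as a black box. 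In substance, your argument is essentially a reproof of what \cite[Corollary 1.9]{ellis} provides in this setting.
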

\begin{proof}
    Write $q \colon (K;\varphi) \rightarrow (L;\psi)$. Since $(L;\psi;q_*\mu;V)$ is the Stone model (see Section \ref{measpres} above), we know that $L$ is Stonean and $q_*\mu$ is normal. Moreover, $q$ is open by \cite[Corollary 1.9]{ellis}. Finally, $q$ is topologically ergodic and has relative discrete spectrum by definition of the natural model.
\end{proof}
We can therefore apply Theorem \ref{reptheorem1} to represent relatively ergodic extensions with relative discrete spectrum as  relative skew-product extensions. However, before we prove the following extension property which will allow us to show that the closed subgroup bundle of the constructed skew-product extension is open (cf. Remark \ref{gap}). Recall that every bounded continuous function defined on a dense subset $D$ of a Stonean space $L$ can be extended uniquely to the whole space (see \cite[Exercise 24.2.11]{semadeni}), i.e., the map restriction map $\mathrm{C}(L) \rightarrow \mathrm{C}_{\mathrm{b}}(D), \, f \mapsto f|_D$ to the space of bounded continuous functions on $D$ is surjective (and hence an isomorphism of unital commutative C*-algebras).

In our natural model we obtain a similar property for closed, finitely generated, projective, invariant submodules. Observe here that for an extension  $q \colon (K;\varphi) \rightarrow (L;\psi)$ of topological dynamical systems and a subset $D \subset L_{\mathrm{\fix}}$ the preimage $(q \circ q_{\fix})^{-1}(D)$ is $\varphi$-invariant and hence we may consider the Koopman operator $T_\varphi$ on $\mathrm{C}_{\mathrm{b}}((q \circ q_{\fix})^{-1}(D))$. Moreover, $\mathrm{C}_{\mathrm{b}}((q \circ q_{\fix})^{-1}(D))$ is canonically a $\mathrm{C}(L)$-module.

\begin{lemma}\label{extensionlemma}
    Let $J \colon (\uY;S) \rightarrow (\uX;T)$ be a relatively ergodic extension with relative discrete spectrum and $(q;\mu;I^{\flat};I^{\sharp})$ with $q \colon (K;\varphi) \rightarrow (L;\psi)$ its natural model. Take
        \begin{enumerate}[(i)]
            \item a dense subset $D \subset L_{\fix}$, and
            \item a finitely generated, projective, closed, $T_\varphi$-invariant $\mathrm{C}(L)$-submodule $\Gamma \subset \mathrm{C}_{\mathrm{b}}((q_{\fix}\circ q)^{-1}(D))$.
        \end{enumerate} 
    Then there is a unique finitely generated, projective, closed, $T_\varphi$-invariant $\mathrm{C}(L)$-submodule $\Lambda \subset \mathrm{C}(K)$ with $\Lambda|_{(q_{\fix}\circ q)^{-1}(D)} = \Gamma$.
\end{lemma}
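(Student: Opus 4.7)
I will treat uniqueness first (it is essentially a density argument) and then existence (which requires identifying $\Gamma$ with an appropriate submodule of $\mathrm{C}(K)$ via the natural model).

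\emph{Uniqueness.} By Proposition~\ref{model} the extension $q$ is open, and by Lemma~\ref{lemstonean} the factor map $q_{\fix}$ is open. Consequently $q_{\fix} \circ q \colon K \to L_{\fix}$ is open, so the preimage $Q \coloneqq (q_{\fix} \circ q)^{-1}(D)$ of the dense set $D$ is dense in $K$. Suppose $\Lambda_1, \Lambda_2 \subset \mathrm{C}(K)$ both satisfy the restriction condition. For any $f \in \Lambda_1$, the restriction $f|_Q$ lies in $\Gamma = \Lambda_2|_Q$, so there is $g \in \Lambda_2$ with $f|_Q = g|_Q$; since $Q$ is dense in $K$ and $f, g$ are continuous, $f = g \in \Lambda_2$. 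Hence $\Lambda_1 \subset \Lambda_2$, and symmetry yields $\Lambda_1 = \Lambda_2$.

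\emph{Existence.} The plan is to exploit the defining property of the natural model: by construction $\mathrm{C}(K)$ corresponds, under $I^{\sharp}$, to the $\mathrm{C}^*$-subalgebra $\mathcal{A} \subset \mathrm{L}^\infty(\uX)$ generated by all finitely generated projective closed $T$-invariant $\mathrm{L}^\infty(\uY)$-submodules. So it suffices to lift $\Gamma$ to such a submodule inside $\mathrm{L}^\infty(\uX)$, and then read off $\Lambda$ via $I^{\sharp}$. Concretely, by Serre--Swan (Proposition~\ref{SerreSwan}) the module $\Gamma$ is the space of global continuous sections of a locally trivial vector bundle $E \to L$, equipped (via Proposition~\ref{existinnerprod}) with a continuous inner product. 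Because $L$ is Stonean, combining local triviality with clopen partitions of $L$ lets me cover $L$ by clopen subsets $U_\alpha$ on each of which $E|_{U_\alpha}$ is globally trivialized by $T_\varphi$-compatible generators that, by the suborthonormal-basis technique of Lemma~\ref{submodules}, may be assumed to be bounded. These generators, viewed under the inclusion $\Gamma \hookrightarrow \mathrm{C}_{\mathrm{b}}(q^{-1}(D'))$ with $D' = q_{\fix}^{-1}(D)$, therefore define bounded measurable functions on $K$ (via the natural inclusion $\mathrm{C}_{\mathrm{b}}(q^{-1}(D')) \hookrightarrow \mathrm{L}^\infty(K,\mu)$ since $q^{-1}(D')$ is dense and $\mu$ is normal on the Stonean base $L$). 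They span a finitely generated projective closed $T$-invariant $\mathrm{L}^\infty(\uY)$-submodule $\tilde{\Gamma} \subset \mathrm{L}^\infty(\uX)$. By the definition of $\mathcal{A}$ in the natural model, $\tilde{\Gamma} \subset \mathcal{A}$, so $\Lambda \coloneqq (I^{\sharp})^{-1}(\tilde{\Gamma}) \subset \mathrm{C}(K)$ inherits the required structure, and its restriction to $Q$ coincides with $\Gamma$ by construction (and by the uniqueness of continuous extensions from the dense set $Q$).

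\emph{Main obstacle.} The critical and delicate step is the passage $\Gamma \rightsquigarrow \tilde{\Gamma} \subset \mathrm{L}^\infty(\uX)$: one must verify that bounded continuous functions on the dense (but possibly not Borel-measurable in a nice way) subset $q^{-1}(D')$ determine well-defined $\mathrm{L}^\infty(K,\mu)$-classes and that this assignment respects the $\mathrm{C}(L) \cong \mathrm{L}^\infty(\uY)$-module and $T_\varphi$-module structures, preserving finite generation and projectivity. This is where the normality of $q_*\mu$ (Proposition~\ref{model}) and Proposition~\ref{disintegration}(\ref{disintegration2}) (disintegration along $q$) are crucial: they guarantee that $q^{-1}(D')$ is a full-measure set (since $D'$ is dense open-modulo-null in the Stonean $L$ under a normal measure), so the restriction map $\mathrm{L}^\infty(K,\mu) \to \mathrm{L}^\infty(q^{-1}(D'), \mu|)$ is an isomorphism of $\mathrm{L}^\infty(\uY)$-modules, and the extension is both canonical and module-compatible. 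Once this identification is in place, everything falls out formally from the natural model's universal property.
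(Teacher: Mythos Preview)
Your uniqueness argument is correct and matches the paper.

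For existence, there is a genuine gap precisely at the step you flag as the ``main obstacle''. The hypothesis only says $D \subset L_{\fix}$ is dense; no measurability is assumed. Hence $Q = (q_{\fix}\circ q)^{-1}(D)$ need not be a Borel (or Baire) set, and there is no canonical map $\mathrm{C}_{\mathrm{b}}(Q) \to \mathrm{L}^\infty(K,\mu)$. Your claim that ``$D'$ is dense open-modulo-null'' is simply false for arbitrary dense $D$, and Proposition~\ref{disintegration} does not help here: it says nothing about arbitrary dense subsets having full measure. So the embedding $\Gamma \hookrightarrow \mathrm{L}^\infty(\uX)$ that your whole argument rests on is not available.

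The paper's proof bypasses measure theory entirely by exploiting a second Stonean space. It passes to the Stone model $(K_{\mathrm{st}};\varphi_{\mathrm{st}})$ of $(\uX;T)$ and the canonical extension $q_{\mathrm{st}}\colon K_{\mathrm{st}} \to K$. Pulling $\Gamma$ back along $q_{\mathrm{st}}$ gives a submodule of $\mathrm{C}_{\mathrm{b}}(D_{K_{\mathrm{st}}})$ with $D_{K_{\mathrm{st}}} = q_{\mathrm{st}}^{-1}(Q)$ dense in $K_{\mathrm{st}}$. Since $K_{\mathrm{st}}$ is Stonean, every bounded continuous function on a dense subset extends uniquely and continuously to all of $K_{\mathrm{st}}$; this is a purely topological fact requiring no measurability of the dense set. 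The extension map $\mathrm{ext}\colon \mathrm{C}_{\mathrm{b}}(D_{K_{\mathrm{st}}}) \to \mathrm{C}(K_{\mathrm{st}})$ is a $\mathrm{C}(L)$-linear unital $*$-isomorphism intertwining the dynamics, so the image $\Gamma' \subset \mathrm{C}(K_{\mathrm{st}})$ is again finitely generated, projective, closed and invariant. By the definition of the natural model, any such $\mathrm{C}(L)$-submodule of $\mathrm{C}(K_{\mathrm{st}}) \cong \mathrm{L}^\infty(\uX)$ already lies in $J_{q_{\mathrm{st}}}(\mathrm{C}(K))$; its preimage $\Lambda \subset \mathrm{C}(K)$ is the desired module, and a commutative-square check gives $\Lambda|_Q = \Gamma$. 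The Serre--Swan and inner-product machinery you invoke is not needed.
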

\begin{proof}
    Let $(K_{\mathrm{st}};\varphi_{\mathrm{st}})$ be the Stone model of $(\uX;T)$  and $q_{\mathrm{st}}\colon (K_{\mathrm{st}};\varphi_{\mathrm{st}}) \rightarrow (K;\varphi)$ the induced extension of topological dynamical systems  (see Sections  \ref{sectopdyn} and \ref{measpres}). We abbreviate 
        \begin{align*}
            D_{K} &\coloneqq (q_{\fix} \circ q)^{-1}(D) \subset K, \textrm{ and }\\
            D_{K_{\mathrm{st}}} &\coloneqq q_{\mathrm{st}}^{-1}(D_K) = (q_{\fix} \circ q \circ q_{\mathrm{st}})^{-1}(D) \subset K_{\mathrm{st}}
        \end{align*}
    and observe that these sets are dense in $K$ and $K_{\mathrm{st}}$, respectively, since the maps $q_{\fix} \circ q$, and $q_{\fix} \circ q \circ q_{\mathrm{st}}$ are open ($q$ and $q \circ q_{\mathrm{st}}$ are open by \cite[Corollary 1.9]{ellis} and $q_{\fix}$ is open by Lemma \ref{lemstonean}). 
    Observe now that, since $K_{\mathrm{st}}$ is Stonean, every bounded continuous function $f \colon D_{K_{\mathrm{st}}} \rightarrow \C$ has a unique continuous extension $\mathrm{ext}(f) \in \mathrm{C}(K_{\mathrm{st}})$.
	We take a finitely generated, projective, closed, and $T_\varphi$-invariant $\mathrm{C}(L)$-submodule $\Gamma \subset \mathrm{C}_{\mathrm{b}}(D_K)$. The maps
	    \begin{align*} 
	        I_{q_{\mathrm{st}}} &\colon \mathrm{C}_{\mathrm{b}}(D_K)  \rightarrow \mathrm{C}_{\mathrm{b}}(D_{K_{\mathrm{st}}}), \quad f \mapsto f \circ q_{\mathrm{st}} \textrm{ and }\\
	        \mathrm{ext} & \colon \mathrm{C}_{\mathrm{b}}(D_{K_{\mathrm{st}}})  \rightarrow \mathrm{C}(K_{\mathrm{st}}), \quad f \mapsto \mathrm{ext}(f)
	    \end{align*}
	  are $\mathrm{C}(L)$-linear embeddings of unital C*-algebras and intertwine the dynamics (the map $\mathrm{ext}$ is even an isomorphism and the inverse of the restriction map $\mathrm{C}(K_{\mathrm{st}}) \rightarrow \mathrm{C}_{\mathrm{b}}(D_{K_{\mathrm{st}}}),\, f \mapsto f|_{D_{K_{\mathrm{st}}}}$). We therefore obtain that the composition $\mathrm{ext} \circ  I_{q_{\mathrm{st}}}$ embeds $\Gamma$ into $\mathrm{C}(K_{\mathrm{st}})$ as a finitely generated, projective, closed and $T_\varphi$-invariant $\mathrm{C}(L)$-submodule $\Gamma'$. By definition of the natural model and the Stone model, $\Gamma'$ is contained in the image of $J_{q_{\mathrm{st}}}(\mathrm{C}(K))$, i.e., we find a finitely generated, projective, closed and $T_\varphi$-invariant $\mathrm{C}(L)$-submodule $\Lambda \subset \mathrm{C}(K)$ with $J_{q_{\mathrm{st}}}\Lambda = \Gamma'$.
	  Since the diagram
       \[
        	\xymatrix{
			\mathrm{C}(K) \ar[d]_{f \mapsto f|_{D_K}} \ar[r]^{J_{q_{\mathrm{st}}}} & \mathrm{C}(K_{\mathrm{st}})\\
			\mathrm{C}_{\mathrm{b}}(D_K) \ar[r]_{I_{q_{\mathrm{st}}}}   & \mathrm{C}_{\mathrm{b}}(D_{K_{\mathrm{st}}}) \ar[u]_{\mathrm{ext}}
		    }
	    \]	
	commutes, we obtain 
	    \begin{align*}
	        (\mathrm{ext} \circ I_{q_{\mathrm{st}}})(\Lambda|_{D_K}) = J_{q_{\mathrm{st}}}\Lambda = \Gamma' = (\mathrm{ext} \circ I_{q_{\mathrm{st}}})(\Gamma)
	    \end{align*}
	and therefore $\Lambda|_{D_K} = \Gamma$. This shows existence of the desired submodule. Since $D_K$ is dense in $K$, uniqueness is clear.
\end{proof}
\subsection{The Mackey--Zimmer representation theorem for relatively ergodic extensions}\label{mainres}
    We finally come to the main result of our article which is a converse of Theorem \ref{skewdiscr} for relatively ergodic extensions.
    \begin{theorem}\label{mz}
    Let $J \colon (\uY;S) \rightarrow (\uX;T)$ be a relatively ergodic extension of measure-preserving systems with relative discrete spectrum. Then there are
        \begin{enumerate}[(i)]
            \item a topological dynamical system $(L;\psi)$,
            \item an open compact group bundle $p \colon H \rightarrow L_{\fix}$,
            \item an open core-free closed subgroup bundle $p|_U \colon U \rightarrow L_{\fix}$ of $p$, 
            \item an ergodic relative cocycle $c \colon G \times L \rightarrow H$, and
            \item a normal invariant measure $\mu_L \in \mathrm{P}^\psi(L)$ on $L$,
        \end{enumerate}
            such that the induced relative skew-product extension 
                \begin{align*}
                    J_{q_c} \colon (L,\mu_L;T_\psi) \rightarrow \left(L \times_{L_{\fix}} H/U, \int_L \delta_L \otimes \lambda_{H/U} \, \mathrm{d}\mu_L;T_{\varphi_c}\right)
                \end{align*}
            is isomorphic to $J$.
    \end{theorem}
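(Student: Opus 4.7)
The strategy is to pass to the natural topological model of $J$, apply the topological Mackey--Zimmer theorem (Theorem \ref{reptheorem1}), identify the invariant measure via the dynamical disintegration, and finally upgrade the closed subgroup bundle to an open one using the extension lemma.

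I would first form the natural model $(q;\mu;I^{\flat};I^{\sharp})$ of $J$, where $q\colon (K;\varphi) \to (L;\psi)$. By Proposition \ref{model}, this extension is open, topologically ergodic, has relative discrete spectrum, its base $L$ is Stonean, and $\mu_L \coloneqq q_*\mu$ is normal. Theorem \ref{reptheorem1} then supplies an open compact group bundle $p\colon H \to L_{\fix}$, a core-free closed subgroup bundle $p|_U\colon U \to L_{\fix}$, an ergodic relative cocycle $c\colon G \times L \to H$, and a topological isomorphism $\Phi\colon q_c \to q$ of extensions over $(L;\psi)$. Transporting $\mu$ through $\Phi^{-1}$ yields an invariant measure $\mu_K \in \mathrm{P}^{\varphi_c}(L \times_{L_{\fix}} H/U)$ with $(q_c)_*\mu_K = \mu_L$.

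Next I would identify $\mu_K$ with the canonical measure from Theorem \ref{uniquerel}. Normality of $\mu_L$ combined with Proposition \ref{dynamicaldisint} provides a unique relatively invariant measure $\nu \in \mathrm{RIM}(q_c)$ satisfying $\mu_K = \int_L \nu\, \mathrm{d}\mu_L$. Core-freeness of $p|_U$ together with ergodicity of $c$ then force $\nu = \delta_L \otimes \lambda_{H/U}$ by Theorem \ref{uniquerel}, so $\mu_K = \int_L \delta_L \otimes \lambda_{H/U}\, \mathrm{d}\mu_L$. Composing $\Phi$ (now a measure-preserving isomorphism) with the isomorphisms $(I^{\flat}, I^{\sharp})$ of the natural model delivers the desired isomorphism of extensions $J_{q_c} \to J$.

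The main obstacle is to show that $p|_U$ can be chosen to be open, since Theorem \ref{reptheorem1} does not directly guarantee this. I would address it by exploiting the maximality of the C*-algebra defining the natural model together with Lemma \ref{extensionlemma} and Theorem \ref{loctrivrep2}. Fix a local continuous section $\hat\sigma\colon O \to \hat H$ of the dual; by Lemma \ref{lemstonean} one may shrink $O$ to a clopen subset of $L_{\fix}$. On the dense open subset $O' \subset O$ where the dimension of $\mathrm{R}'_{\hat\sigma(m),U_m}$ attains its maximum, the restricted bundle $s|_{\mathrm{R}'_{\hat\sigma,U}}$ is locally trivial, and by Lemma \ref{lemmareldiscsp} the corresponding space of relative representative functions (transported through $\Phi$) defines a finitely generated, projective, closed, $T_\varphi$-invariant $\mathrm{C}(L)$-submodule of $\mathrm{C}_{\mathrm{b}}((q_{\fix}\circ q)^{-1}(O'))$. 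Lemma \ref{extensionlemma} then extends it uniquely to a finitely generated, projective, closed, $T_\varphi$-invariant $\mathrm{C}(L)$-submodule of $\mathrm{C}(K)$. A Serre--Swan type argument via Proposition \ref{SerreSwan} and Corollary \ref{charloctriv} forces the dimension of $\mathrm{R}'_{\hat\sigma(m),U_m}$ to be locally constant on all of $O$, and Theorem \ref{loctrivrep2} then yields openness of $p|_U$, completing the proof.
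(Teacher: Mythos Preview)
Your proposal is essentially the paper's own proof: natural model, Theorem \ref{reptheorem1}, measure identification via Proposition \ref{dynamicaldisint} and Theorem \ref{uniquerel}, and then openness of $p|_U$ via Theorem \ref{loctrivrep2} combined with Lemma \ref{extensionlemma}. Two small imprecisions in your openness sketch: the set $O'$ where $\dim \mathrm{R}'_{\hat\sigma(m),U_m}$ is maximal is open but not dense in $O$ in general, so the paper instead proves it is \emph{clopen} (by extending a densely defined orthonormal basis across $\overline{O'}$ and showing the extended sections still land in $\mathrm{R}_{\hat\sigma,U}$) and then iterates on $O\setminus O'$; and you will need the uniform bound of Lemma \ref{boundedlemma} to guarantee those local orthonormal sections are bounded, which is what allows their continuous extension over the Stonean base.
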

    \begin{remark}
        Our result is in the spirit of Ellis' version of the Mackey--Zimmer theorem representing an extension with relative discrete spectrum as a (relative) skew-product extension over the Stone model of the factor system (see \cite[Theorem 5.36]{ellis}, see also \cite[Theorem 5.3]{jamneshan2019fz}). When working with an at most countable group $G$ and standard Borel probability spaces $\uX$ and $\uY$ one can expect to deduce a representation over the original space $\uY$ from Theorem \ref{mz} similar to the one of Austin (see \cite[Proposition 5.7]{austin-fund}) by applying \cite[Proposition 3.2]{jt19} to obtain concrete measure-preserving maps  (cf. the discussion in \cite[Section 11]{jt20}). We leave the details to the interested reader. 
    \end{remark}
    We need the following auxiliary result.
    \begin{lemma}\label{boundedlemma}
        Let $H$ be a compact group and $[\pi] \in \hat{H}$. Then 
            \begin{align*}
                \|f\|_{\mathrm{L}^2(H)} \leq \|f\|_{\mathrm{C}(H)}\leq \dim(\pi)^2 \cdot \|f\|_{\mathrm{L}^2(H)}
            \end{align*}
        for all $f \in C_{[\pi]}$.
    \end{lemma}
    \begin{proof}
        Set $n \coloneqq \dim(\pi)$ and $f \in C_{[\pi]}$. The first inequality is trivial. For the second, let $(\pi_{jk})_{j,k}$ be a matrix representation of $[\pi]$. Then the elements $\pi_{jk}(x) \in \C$ for $j,k \in \{1, \dots, n\}$ form a unitary matrix for each $x \in H$. Thus,  $\|\pi_{jk}\|_{\mathrm{C}(H)} \leq 1$ for all $j,k \in \{1, \dots, n\}$. We therefore obtain
            \begin{align*}
               \|f\|_{\mathrm{C}(H)} &= \left\|\sum_{j,k=1}^n (f|\pi_{jk})_{\mathrm{L}^2(H)} \pi_{jk}\right\|_{\mathrm{C}(H)} \leq \sum_{j,k=1}^n \|f\|_{\mathrm{L}^2(H)} \cdot \|\pi_{jk}\|_{\mathrm{L}^2(H)} \|\pi_{jk}\|_{\mathrm{C}(H)}\\
               &\leq n^2\|f\|_{\mathrm{L}^2(H)}
            \end{align*}
       by the Cauchy--Schwarz inequality.
    \end{proof}
    \begin{proof}[Proof of Theorem \ref{mz}.]
       Let $(q;\mu;I^{\flat},I^{\sharp})$ with $q \colon (K;\varphi) \rightarrow (L;\psi)$ be the natural model of $J$. Moreover, let $\mu_L \coloneqq q_*\mu$ be the measure of the Stone model. By Proposition \ref{model} and Theorem \ref{reptheorem1}, there is an open compact group bundle $p \colon H \rightarrow L_{\fix}$, a core-free closed subgroup bundle $p|_U \colon U \rightarrow L_{\fix}$ of $p$, 
         and an ergodic relative cocycle $c \colon G \times L \rightarrow H$ such that $q$ is isomorphic as an extension of topological dynamical systems to the relative skew-product extension $q_c \colon (L \times_{L_{\fix}} H/U;\varphi_c) \rightarrow (L;\psi)$. By Proposition \ref{dynamicaldisint} and Theorem \ref{uniquerel}, $\int_L \delta_L \otimes \lambda_{H/U} \, \mathrm{d}\mu_L$ is the only invariant measure $\tilde{\mu} \in \mathrm{P}^{\varphi}(L \times_{L_{\fix}} H)$ with $(q_c)_*\tilde{\mu} = \mu_L$. Hence, we see (by considering the push-forward of $\mu$) that the isomorphism $\Phi \colon q \rightarrow q_c$ is measure-preserving and consequently is an isomorphism of extensions $(\mathrm{Id}_{\mathrm{L}^2(L,\mu_L)};J_{\Phi}) \colon J_{q_c} \rightarrow J_q$.\medskip\\
         We finally show that the core-free closed subgroup bundle $p|_U \colon U \rightarrow L_{\fix}$ is open. In view of Theorem \ref{loctrivrep2} it suffices to show that  $s|_{\mathrm{R}_{\hat{\sigma},U}}\colon \mathrm{R}_{\hat{\sigma},U} \rightarrow O$ is a locally trivial vector bundle for every local continuous section $\hat{\sigma} \colon O \rightarrow \hat{H}$. We take such a section $\hat{\sigma} \colon O \rightarrow \hat{H}$ and may assume, by restricting $\hat{\sigma}$, that $O$ is a clopen subset of $L_{\fix}$ and that $\dim \circ \hat{\sigma} \colon O \rightarrow \N$ is equal to some constant $n \in \N$ (see Proposition \ref{proptop} (\ref{proptop4})). By Lemma \ref{locconst} and Lemma \ref{subbundleopen2} it suffices to show that the dimension map
            \begin{align*}
                \dim \colon O \rightarrow \N_0, \quad m \mapsto \mathrm{R}_{\hat{\sigma}(m),U_m}
            \end{align*}
        is locally constant.
        Let $d$ be the maximal dimension among fibers of $\mathrm{R}_{\hat{\sigma},U}$ and consider the subset
            \begin{align*}
                O_{d} \coloneqq \{m \in O\mid \dim \mathrm{R}_{\hat{\sigma}(m),U_m} = d\}
            \end{align*}
        which is open by \cite[Theorem 18.2]{Gierz1982}. We show that it is also closed. Using Proposition \ref{locconst} $p^{-1}(O_{d}) \rightarrow O_{d}$ is a locally trivial bundle over $O_{d}$ and by Proposition \ref{denselydeforth} there is a densely defined local orthonormal basis  $\tau_1, \dots, \tau_{d} \colon V \rightarrow \mathrm{R}_{\hat{\sigma},U}$ for this bundle with respect to the continuous inner product induced by the Haar system of $H$ (cf. Proposition \ref{Renault}). By Lemma \ref{boundedlemma} we obtain $\|\tau_i(m)\|_{\mathrm{C}(H_m)} \leq n^2$ for all $i \in \{1, \dots, d\}$ and $m \in V$. Since bounded subsets of the locally trivial bundle $\mathrm{R}_{\hat{\sigma}}$ are relatively compact by \cite[Lemma 4.15]{EdKr2021} and $L$ is a Stonean space, every $\tau_i$ can be extended uniquely to a continuous section $\tau_i\colon \overline{V} \rightarrow \mathrm{R}_{\hat{\sigma}}$ (see again \cite[Exercise 24.2.11]{semadeni}) for $i \in \{1, \dots, d\}$. By continuity we still have $(\tau_i(m)|\tau_j(m)) = \delta_{ij}$ for all $i,j \in \{1, \dots, d\}$ and $m \in \overline{V} = \overline{O_{d}}$. Thus, if we can show that $\tau_i(m) \in \mathrm{R}_{\hat{\sigma},U}$ for all $m \in \overline{V}$, then $O_{d}$ is closed in $L_{\fix}$ as claimed.\medskip\\
        To see that $\tau_i$ indeed maps to $\mathrm{R}_{\hat{\sigma},U}$ for every $i \in \{1, \dots, d\}$ we apply Lemma \ref{extensionlemma}. To do so, we first have to construct a dense subset $D \subset L_{\fix}$ and finitely generated, projective, closed and $T_{\varphi_c}$-invariant $\mathrm{C}(L)$-submodule $\Gamma$ of $\mathrm{C}_{\mathrm{b}}((q_{\fix} \circ q)^{-1}(D))$.
        
        We start with the dense subset $D \coloneqq V \cup L_{\fix}\setminus \overline{V}$ of $L_{\fix}$. By Lemma \ref{boundedlemma},  there are bounded continuous functions $f_i\colon (q_{\fix} \circ q_c)^{-1}(W)\subset L \times_{L_{\fix}} H/U \rightarrow \C$ for $i \in \{1, \dots, d\}$ by setting 
            \begin{align*}
                f_i(l,xU) \coloneqq \begin{cases} \tau_i(q_{\fix}(l))(x) & \textrm{ if } (l,xU) \in (q_{\fix} \circ q_c)^{-1}(V),\\
                0 & \textrm{ if } (l,xU) \in (q_{\fix} \circ q_c)^{-1}(L_{\fix}\setminus \overline{V}).
                \end{cases}
            \end{align*}
            We claim that the $\mathrm{C}(L)$-submodule $\Gamma$ of $\mathrm{C}_{\mathrm{b}}((q_{\fix} \circ q_c)^{-1}(D))$ generated by $f_1, \dots, f_{d}$  is projective, closed and $T_{\varphi_c}$-invariant. In fact, if $g \in \Gamma$, then for every $l \in q_{\fix}^{-1}(V)$ and $m \coloneqq q_{\fix}(l) \in V$ the function $g_l \colon H_{m} \rightarrow \C, \, x \mapsto g(l,xU_{m})$ is contained in $\mathrm{R}_{\hat{\sigma}(m),U_m}$, and hence 
            \begin{align*}
                g_l = \sum_{i=1}^{d} (g_l|\tau_i(m))_{\mathrm{L}^2(H_m)} \tau_i(m)
            \end{align*}
        by choice of $\tau_1, \dots, \tau_{d}$. Therefore, the restriction $g_{\{l\} \times H_m/U_m} \in \mathrm{C}(\{l\} \times H_m/U_m)$ satisfies
            \begin{align*}
                g|_{\{l\} \times H_{m}/U_{m}} = \sum_{i=1}^{d} (g|f_i)_{\mathrm{L}^2(\{l\} \times H_{m}/U_{m})} f_i|_{\{l\} \times H_{m}/U_{m}}
            \end{align*}
       , where $\{l\} \times H_m/U_m$ is equipped with the measure $\delta_l \times \lambda_{H_m/U_m}$. 
        Both sides of this identity are equal to zero if $l \in L$ with $m \coloneqq q_{\fix}(l) \in L_{\fix}\setminus \overline{V}$, hence the identity even holds for every $l \in L_{\fix}$ with $m \coloneqq q_{\fix}(l) \in D$. Now observe that 
            \begin{align*}
                (g|f_i) \colon q_{\fix}^{-1}(W) \rightarrow \C, \quad l \mapsto (g|f_i)_{\mathrm{L}^2(\{l\} \times H_{q_{\fix}(l)}/U_{q_{\fix}(l)})}
            \end{align*}
        is a bounded continuous function defined on a dense subset of $L$, and hence has a unique continuous extension $\mathrm{ext}((g|f_i))\colon L \rightarrow \C$ for every $i \in \{1, \dots, d\}$. Then
            \begin{align*}
                g = \sum_{i=1}^d \mathrm{ext}((g|f_i)) f_i
            \end{align*}
        in the $\mathrm{C}(L)$-module $\mathrm{C}_{\mathrm{b}}((q_{\fix} \circ q_c)^{-1}(D))$. Using that $\mathrm{ext}((f_i|f_j)) = \delta_{ij}\mathbbm{1}_{\overline{q_{\fix}^{-1}(D)}}$ for all $i,j \in \{1, \dots, d\}$ we argue as in the proof of Proposition \ref{model} to obtain that $\Gamma$ is a projective and closed $\mathrm{C}(L)$-submodule of $\mathrm{C}_{\mathrm{b}}((q_{\fix} \circ q_c)^{-1}(D))$. 
        A moment's thought reveals that 
            \begin{align*}
                (T_{\varphi_c})_t f_i = \sum_{j=1}^{d} \mathrm{ext}( ((T_{\varphi_c})_t f_i|f_j)) f_j,
            \end{align*}
        hence $(T_{\varphi_c})_t f_i \in \Gamma$ for all $i \in \{1, \dots, d\}$ and $t \in G$. Thus, $\Gamma$ is also $T_{\varphi_c}$-invariant. We conclude from Lemma \ref{extensionlemma} that there is a unique finitely generated, projective, closed and $T_{\varphi_c}$-invariant $\mathrm{C}(L)$-submodule $\Lambda \subset \mathrm{C}(L \times_{L_{\fix}} H/U)$ with $\Lambda|_{(q_{\fix}\circ q)^{-1}(D)} = \Gamma$. In particular, $f_1, \dots, f_{d}$ can be extended to continuous functions on the whole space $L \times_{L_{\fix}} H/U$, and we use the same symbols for them. Now, if $x \in H$ we choose any $l \in L$ with $m \coloneqq p(x) = q_{\fix}(l) \in \overline{V}$. We then find, since $p$ and $q_{\fix}$ are open (see Lemma \ref{lemstonean}), nets $(x_\alpha)_{\alpha \in A}$ in $H$ converging to $x$ and $(l_\alpha)_{\alpha \in A}$ in $L$ converging to $l$ such that $m_\alpha \coloneqq p(x_\alpha) =  q_{\mathrm{fix}}(l_\alpha) \in V$ for every $\alpha \in A$. This implies
            \begin{align*}
                f_i(l,xU_m) = \lim_{\alpha} f_i(l_\alpha, x_\alpha U_{m_\alpha}) = \lim_\alpha \tau_i(m_\alpha)(x_\alpha) = \tau_i(m)(x)
            \end{align*}
        for every $i \in \{1, \dots, d\}$. This shows $\tau_i(m)(xu) = \tau_i(m)(x)$ for all $x \in H_m$, $u \in U_m$, hence $\tau_i(m) \in \mathrm{R}_{\hat{\sigma},U}$ for every $m \in \overline{V}$ as claimed.\medskip\\
        As stated above, this shows that $O_{d}$ is closed, hence clopen in $L_{\fix}$. Restricting the section to the clopen subset $O \setminus O_{d}$ and iterating the argument yields the claim.
    \end{proof}
    Note that in the case of ergodic systems, our proof of Theorem \ref{mz} simplifies considerably (it is basically a combination of the topological representation theorem Corollary \ref{corold} with Proposition \ref{model}, Proposition \ref{dynamicaldisint} and Theorem \ref{uniquerel}). In this case we obtain the following version of the Mackey--Zimmer representation theorem which is close to \cite[Theorem 5.36]{ellis}.
    \begin{theorem}
         Let $J \colon (\uY;S) \rightarrow (\uX;T)$ be an extension of ergodic measure-preserving systems with relative discrete spectrum. Then there are
           \begin{enumerate}[(i)]
            \item a topological dynamical system $(L;\psi)$,
            \item a compact group $H$,
            \item a core-free closed subgroup $U \subset H$, 
            \item an ergodic cocycle $c \colon G \times L \rightarrow H$, and
            \item a normal measure $\mu_L$ on $L$,
        \end{enumerate}
            such that the induced skew-product extension 
                \begin{align*}
                    J_{q_c} \colon (L,\mu_L;T_\psi) \rightarrow \left(L \times H/U, \mu_L \times \lambda_{H/U};T_{\varphi_c}\right)
                \end{align*}
            is isomorphic to $J$.
    \end{theorem}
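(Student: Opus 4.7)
The plan is to deduce this ergodic special case directly from the natural model construction of Subsection \ref{canmod} combined with the topological representation result of Corollary \ref{corold} and the measure-theoretic disintegration machinery developed in Subsections \ref{RelInvMeas} and \ref{measpres}.

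\textbf{Step 1 (Pass to the natural model).} I first pass to the natural model $(q;\mu;I^{\flat},I^{\sharp})$ of $J$, where $q\colon (K;\varphi) \to (L;\psi)$ is an extension of topological dynamical systems. By Proposition \ref{model}, $q$ is open and topologically ergodic with relative discrete spectrum, $L$ is Stonean, and $\mu_L \coloneqq q_*\mu$ is normal and fully supported. Since $(\uY;S)$ is ergodic, $\fix(T_\psi) = J_q^{-1}(\fix(T)) \cong \fix(S)$ is one-dimensional, so the fixed factor $L_{\fix}$ collapses to a singleton; that is, $(L;\psi)$ is itself topologically ergodic. This is precisely the setting of Corollary \ref{corold}.

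\textbf{Step 2 (Apply the topological representation theorem).} Corollary \ref{corold} now supplies a compact group $H$, a core-free closed subgroup $U \subset H$, and an ergodic cocycle $c\colon G \times L \to H$ together with an isomorphism of topological extensions $\Phi\colon q \to q_c$, where $q_c\colon (L \times H/U;\varphi_c) \to (L;\psi)$ is the skew-product. In this trivial-base-fibration case, $L_{\fix}$ being a point forces $H \to L_{\fix}$ to be a single compact group, so the relative skew-product of Definition \ref{relativecoc} reduces to the classical skew-product of Example \ref{skewprod}, and the openness issue flagged in Remark \ref{gap} disappears since every closed subgroup bundle over a point is automatically open.

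\textbf{Step 3 (Transfer to the measure-theoretic setting).} It remains to identify the pushforward $\Phi_*\mu$ with the product measure $\mu_L \times \lambda_{H/U}$. By Theorem \ref{uniquerel} applied to $q_c$ with $U$ core-free and $c$ ergodic, the only relatively invariant measure for $q_c$ is $l \mapsto \delta_l \otimes \lambda_{H/U}$. Since $\mu_L$ is normal, Proposition \ref{dynamicaldisint} gives a bijection between $\mathrm{RIM}(q_c)$ and the invariant measures on $L \times H/U$ that project to $\mu_L$. The invariant measure $\Phi_*\mu$ projects to $(q_c)_*\Phi_*\mu = q_*\mu = \mu_L$, so by the bijection it must equal the integral of the unique relatively invariant measure, namely $\int_L \delta_l \otimes \lambda_{H/U}\, \mathrm{d}\mu_L(l) = \mu_L \times \lambda_{H/U}$. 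Thus $\Phi$ is measure-preserving and $(\mathrm{Id}_{\mathrm{L}^2(L,\mu_L)};J_\Phi)\colon J_{q_c} \to J_q$ is an isomorphism of extensions of measure-preserving systems. Composing with the isomorphism $(I^{\flat};I^{\sharp})\colon J_q \to J$ furnished by the natural model yields the desired isomorphism $J_{q_c} \cong J$.

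\textbf{Main obstacle.} There is no serious obstacle here — all of the heavy lifting has already been done in the earlier sections. The only care needed is in Step 1, where one must verify that ergodicity of $(\uY;S)$ indeed makes the base of the natural model topologically ergodic; this is immediate from the correspondence between the fixed spaces under the Koopman functor. In particular, the delicate openness of the subgroup bundle that required the extension lemma (Lemma \ref{extensionlemma}) in the proof of Theorem \ref{mz} is not needed in the present case, which is why the proof simplifies considerably.
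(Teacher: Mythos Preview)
Your proposal is correct and follows essentially the same route as the paper: the paper's own footnote describes the proof as ``basically a combination of the topological representation theorem Corollary \ref{corold} with Proposition \ref{model}, Proposition \ref{dynamicaldisint} and Theorem \ref{uniquerel},'' which is exactly your Steps 1--3. You are also right that the openness of the subgroup bundle, which forces the extra work via Lemma \ref{extensionlemma} in the general case, is automatic here since $L_{\fix}$ is a point.
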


\end{document}